\documentclass[11pt,reqno]{amsart}

\usepackage[utf8]{inputenc}
\usepackage[T1]{fontenc}
\usepackage[english]{babel}
\usepackage{csquotes}


\usepackage[margin=3cm]{geometry}


\usepackage{xcolor}
\usepackage[colorlinks=true, linkcolor=black, citecolor=black, urlcolor=black]{hyperref}
\usepackage[nameinlink]{cleveref}


\setlength{\parindent}{0cm}
\usepackage[onehalfspacing]{setspace}

\numberwithin{equation}{section} 


\usepackage{paralist}
\setlength{\plitemsep}{50em}

\usepackage{enumitem}
\setlist[enumerate,1]{label = (\alph*),leftmargin=*, topsep=1mm, itemsep=1mm}
\setlist[enumerate,2]{label = (\roman*),leftmargin=*, topsep=1mm, itemsep=1mm}
\setlist[itemize,1]{leftmargin=*, topsep=1mm, itemsep=1mm}


\usepackage{times}
\usepackage{txfonts}
\usepackage{dsfont}
\renewcommand{\mathbb}{\mathds}
\usepackage{yhmath}


\usepackage{amsmath,amssymb}
\usepackage{mathdots}
\usepackage{extpfeil}
\usepackage{extarrows}
\usepackage{oubraces}
\usepackage{array}


\usepackage{tikz}
\usetikzlibrary{cd,arrows,decorations.markings}

\tikzset{
	marrow/.style={decoration={markings,mark=at position 0.75 with {\arrow{#1}}}, postaction=decorate}
}

\tikzcdset{
	arrow style=tikz,
	diagrams={>={Straight Barb[scale=0.9, width=5pt]}} }

\tikzset{epi/.code={\pgfsetarrowsend{Computer Modern Rightarrow[width=5pt, length=3pt] Computer Modern Rightarrow[width=5pt, length=3pt]}}}

\tikzset{epi_mini/.code={\pgfsetarrowsend{Computer Modern Rightarrow[width=5pt, length=3pt, scale=0.85] Computer Modern Rightarrow[width=5pt, length=3pt, scale=0.85]}}}


\usepackage[backend=biber,style=alphabetic,maxbibnames=4,doi=false,isbn=false,url=false]{biblatex}
\bibliography{stabmMor.bib}


\theoremstyle{plain}
\newtheorem{thm}{Theorem}[section]
\newtheorem{prp}[thm]{Proposition}
\newtheorem{cor}[thm]{Corollary}
\newtheorem{lem}[thm]{Lemma}

\newtheorem{thmA}{Theorem}

\theoremstyle{definition}
\newtheorem{dfn}[thm]{Definition}
\newtheorem{ntn}[thm]{Notation}
\newtheorem{con}[thm]{Construction}

\newtheorem{cnv}[thm]{Convention}

\theoremstyle{remark}
\newtheorem{rmk}[thm]{Remark}

\Crefname{subsection}{Subsection}{Subsections}
\Crefname{thm}{Theorem}{Theorems}
\Crefname{thmA}{Theorem}{Theorems}
\Crefname{prp}{Proposition}{Propositions}
\Crefname{cor}{Corollary}{Corollaries}
\Crefname{lem}{Lemma}{Lemmas}
\Crefname{cnj}{Conjecture}{Conjectures}
\Crefname{dfn}{Definition}{Definitions}
\Crefname{ntn}{Notation}{Notations}
\Crefname{con}{Construction}{Constructions}
\Crefname{asn}{Assumption}{Assumptions}
\Crefname{cnv}{Convention}{Conventions}
\Crefname{rmk}{Remark}{Remarks}
\Crefname{exa}{Example}{Examples}


\newcommand{\ZZ}{\mathbb{Z}}
\newcommand{\NN}{\mathbb{N}}

\newcommand{\A}{\mathcal{A}}
\newcommand{\B}{\mathcal{B}}
\newcommand{\C}{\mathcal{C}}
\newcommand{\D}{\mathcal{D}}
\newcommand{\E}{\mathcal{E}}
\newcommand{\F}{\mathcal{F}}
\newcommand{\M}{\mathcal{M}}
\newcommand{\T}{\mathcal{T}}
\newcommand{\U}{\mathcal{U}}
\newcommand{\V}{\mathcal{V}}
\newcommand{\W}{\mathcal{W}}


\newcommand{\mMor}{\mm\Mor}
\newcommand{\smMor}{\sm \Mor}


\newcommand{\ul}{\underline}
\newcommand{\ol}{\overline}
\newcommand{\stab}[1]{\ul{\smash{#1}}}

\newcommand{\myvdots}{\raisebox{0pt}[0.6\height][0.0\height]{$ \vdots $}} 

\newcommand{\sm}[1]{{#1}^\textup{sm}}
\newcommand{\mm}[1]{{#1}^\textup{m}}

\newcommand{\Vect}{\mathrm{Vect}}
\newcommand{\lo}[1]{{}^\bot#1}
\newcommand{\ro}[1]{#1^\bot}


\newcommand{\epic}[2]{\begin{tikzcd}[cramped, sep=small, ampersand replacement=\&] #1 \ar[r, two heads] \& #2 \end{tikzcd}}
\newcommand{\monic}[2]{\begin{tikzcd}[cramped, sep=small, ampersand replacement=\&] #1 \ar[r, tail] \& #2 \end{tikzcd}}
\newcommand{\inj}[2]{\begin{tikzcd}[cramped, sep=small, ampersand replacement=\&] #1 \ar[r, hook] \& #2 \end{tikzcd}}


\DeclareMathOperator{\id}{id}
\DeclareMathOperator{\Hom}{Hom}
\DeclareMathOperator{\Mor}{Mor}
\DeclareMathOperator{\Proj}{Proj}
\DeclareMathOperator{\Inj}{Inj}


\title[Structure of stable monomorphism categories]{On the triangulated structure\\of stable monomorphism categories}

\author[J.~Frank]{Jonas Frank}
\address{\linebreak
	Jonas Frank\\
	Department of Mathematics, RPTU University Kaiserslautern-Landau\\ 
	67663 Kaiserslautern\\
	Germany
}
\email{\href{mailto:jfrank@rptu.de}{jfrank@rptu.de}}

\author[M.~Schulze]{Mathias Schulze}
\address{\linebreak
	Mathias Schulze\\
	Department of Mathematics, RPTU University Kaiserslautern-Landau\\ 
	67663 Kaiserslautern\\
	Germany
}
\email{\href{mailto:mschulze@rptu.de}{mschulze@rptu.de}} 

\subjclass[2020]{Primary 18G80; Secondary 18G65, 16G20}


\keywords{Monomorphism category, triangulated category, semiorthogonal decomposition, recollement, mutation, Serre functor.}

\thanks{JF would like to thank Svetlana Makarova for helpful discussions.}

\begin{document}

\begin{abstract}
	We investigate the triangulated structure of stable monomorphism categories (filtered chain categories) over a Frobenius category. The high degree of symmetry of linear quivers leads to a plethora of semiorthogonal decompositions into smaller categories of the same type. These form polygons of recollements, in which a full turn of mutations is a power of a particular auto-equivalence of the stable monomorphism category. A certain power of this auto-equivalence is the square of the suspension functor. We describe the infinite chains of adjoint pairs obtained from the polygons. As an application, we explicate the construction of Bondal and Kapranov for lifting representing objects of dualized hom-functors in our setup.
\end{abstract}

\maketitle

\tableofcontents

\section{Introduction}

 A celebrated result of Buchweitz's gives a triangle equivalence between the singularity category of a Gorenstein ring $R$ and the stable category of maximal Cohen-Macaulay, that is, Gorenstein projective $R$-modules, see {\cite[Thm.~4.4.1]{Buc21}}. Over a hypersurface ring $R=S/\langle f \rangle$, the latter category is triangle equivalent to the homotopy category of matrix factorizations of $f$ (with two factors) due to Eisenbud's Theorem, see {\cite[Thm.~7.4]{Yos90}}.\\
 In {\cite[Thm.~A]{FS24}} and {\cite[Thms.~4.12, 5.3]{BM24}}, Buchweitz's theorem was generalized, replacing the category of Gorenstein projective modules by the monomorphism category $\mMor_{l}(\F)$, consisting of chains
 \[
 \begin{tikzcd}
 	X=(X, \alpha)\colon \; X^0 \ar[r, tail, "\alpha^0"] & X^1 \ar[r, tail, "\alpha^1"] & \cdots \ar[r, tail, "\alpha^{l-1}"] & X^l
 \end{tikzcd}
 \]
 of $l$ admissible monomorphisms over a Frobenius subcategory $\F$ of an exact category $\E$. This is a fully exact subcategory of the category $\Mor_{l}(\E)$ of chains of $l$ morphisms in $\E$. 
 For the subcategory $\F$ of Gorenstein projectives of suitable module categories $\E$, $\mMor_{l}(\F)$ is the subcategory of Gorenstein projectives of $\Mor_{l}(\E)$, see {\cite[Cor.~3.6]{JK11}} and {\cite[Thm.~4.1]{SZ26}}. The triangle equivalence with matrix factorizations generalizes to factorizations with $l+2$ factors, see {\cite[Thm.~4.6]{SZ26}}. Notably, the (homotopy) category of such factorizations has a cyclic symmetry.\\
 In this article, we investigate the triangulated structure of the above mentioned stable category $\M_l := \stab \mMor_{l}(\F)$. Our approach is inspired by the work of Iyama, Kato, and Miyachi on the homotopy category of complexes over split monomorphism categories, see {\cite[\S4]{IKM16}}.
 We consider two operations relating chains of monomorphisms of different lengths: By \emph{contraction} of an interval $[s,t] \subseteq \{0,\dots,l\}$, a chain $X = (X,\alpha) \in \mMor_{l}(\F)$ as above is sent to 
\[\begin{tikzcd}[cramped] \gamma^{[s,t]}(X) \colon \; X^0 \ar[r, tail, "\alpha^0"] & \cdots \ar[r, tail, "\alpha^{s-2}"] & X^{s-1} \ar[r, tail, "\alpha^t \cdots \alpha^{s-1}"] & X^{t+1} \ar[r, tail, "\alpha^{t+1}"] & \cdots \ar[r, tail, "\alpha^{l-1}"] & X^l \end{tikzcd} \in \mMor_{l-t+s-1}(\F),\]
by \emph{expansion} with identities to 
\[\begin{tikzcd}[cramped] \delta^{[s,t]}(X) \colon \; X^0 \ar[r, tail, "\alpha^0"] & \cdots \ar[r, tail, "\alpha^{s-1}"] & X^s \ar[r, equal] & \cdots \ar[r, equal] & X^s \ar[r, tail, "\alpha^s"] & \cdots \ar[r, tail, "\alpha^{l-1}"] & X^l \end{tikzcd} \in \mMor_{l+t-s}(\F).\]
Both assignments give rise to exact functors inducing triangulated functors $\stab \gamma^{[s,t]}$ and $\stab \delta^{[s,t]}$ between the respective stable categories. The kernel $\Gamma^{[s,t]}$ of $\stab \gamma^{[s,t]}$ and the image $\Delta^{[s,t]}$ of $\stab \delta^{[s,t]}$ (with $l$ replaced by $l-t+s$) are triangulated subcategories of $\stab \mMor_{l}(\F)$. These occur in various \emph{semiorthogonal decompositions}:

\begin{thmA} \label{thmA: SODs}
	The category $\stab \mMor_{l}(\F)$ admits the following semiorthogonal decompositions:
	\begin{enumerate}
		\item $\left(\Gamma^{[s+1, l]}, \Gamma^{[0, s]}\right)$ if $s < l$,
		\item $\left(\Gamma^{[s, t]}, \Delta^{[s, t+1]}\right)$ if $t < l$,
		\item $\left(\Delta^{[s-1, t]}, \Gamma^{[s, t]}\right)$ if $s > 0$.
	\end{enumerate}
\end{thmA}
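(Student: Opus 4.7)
The strategy is to derive all three semiorthogonal decompositions from adjunction properties of the contraction functors $\gamma^{[s,t]}$. I would first establish, at the exact category level and then descending to the stable level, the adjoint triple
\[ \delta^{[s-1,t]} \dashv \gamma^{[s,t]} \dashv \delta^{[s,t+1]}, \]
where the left (resp.\ right) adjoint exists provided $s \geq 1$ (resp.\ $t \leq l-1$). The Hom-set bijections can be verified by direct inspection of chain morphisms: a morphism in or out of a chain with an interval of identity morphisms is determined by its components at the endpoints of that interval, thanks to compatibility with the admissible monomorphisms of the opposing chain. Both $\delta$'s are sections of $\gamma^{[s,t]}$ and hence fully faithful; since the functors are exact and preserve projective-injectives, the adjunctions descend to $\M_l$ with the same properties.

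Parts (b) and (c) are then immediate applications of the standard correspondence between an adjunction with a fully faithful adjoint and a semiorthogonal decomposition: a fully faithful left adjoint $L \dashv \gamma$ yields the SOD with factors $\operatorname{im}(L)$ and $\ker \gamma$, and dually for right adjoints. Applied to $\delta^{[s-1,t]} \dashv \gamma^{[s,t]}$ this gives (c) with factors $\Delta^{[s-1,t]}$ and $\Gamma^{[s,t]}$; applied to $\gamma^{[s,t]} \dashv \delta^{[s,t+1]}$ it gives (b) analogously. Semiorthogonality follows from the kernel/image description of the two factors, and the requisite SOD triangle is the unit or counit triangle of the adjunction.

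Part (a) involves two kernels rather than a kernel and an image, and hence does not arise from a single adjunction; this will be the main obstacle. Here I would combine the two boundary SODs $(\Delta^{[s,l]}, \Gamma^{[s+1,l]})$ and $(\Gamma^{[0,s]}, \Delta^{[0,s+1]})$, coming from the unique adjoints of $\gamma^{[s+1,l]}$ and $\gamma^{[0,s]}$ respectively (the opposite adjoints being undefined at the boundaries $t=l$ and $s=0$), and show that the left mutation of $\Delta^{[s,l]}$ past $\Gamma^{[s+1,l]}$ coincides with $\Gamma^{[0,s]}$. Concretely, for each $X \in \M_l$ one starts from the counit triangle $\delta^{[s,l]} \gamma^{[s+1,l]}(X) \to X \to C$, whose third term $C$ is automatically in $\Gamma^{[s+1,l]}$, and modifies the first term by a projective-injective extension of its constant top to obtain an object whose top is projective-injective in $\M_{l-s-1}$, placing it in $\Gamma^{[0,s]}$. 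The semiorthogonality $\Hom(\Gamma^{[s+1,l]}, \Gamma^{[0,s]}) = 0$ is then verified by combining both adjunctions with the explicit description of projective-injectives in monomorphism categories established in the literature cited in the introduction.
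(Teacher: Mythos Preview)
Your approach to (b) and (c) via the adjoint triple $\delta^{[s-1,t]} \dashv \gamma^{[s,t]} \dashv \delta^{[s,t+1]}$ is correct and is, in fact, the reverse of the paper's logical order: the paper first constructs the distinguished triangles of \Cref{prp: SODs} directly (by writing down the obvious unit/counit-type map and computing its cone or cocone in $\mMor_l(\F)$), and only afterwards deduces the adjunctions in \Cref{lem: adj}. Your route is more conceptual and equally valid; the exact-level adjunctions are elementary bookkeeping, and they descend because both $\gamma$ and $\delta$ preserve projectives, so the Hom-bijection respects the ideal of null-homotopic maps.

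For (a), however, there is a genuine gap. Framing the argument via mutation is circular: to define the left mutation of $\Delta^{[s,l]}$ through $\Gamma^{[s+1,l]}$ you already need a semiorthogonal decomposition $(\Gamma^{[s+1,l]}, \W)$ for some $\W$, which is exactly the statement you are trying to prove. Your two boundary SODs furnish only the \emph{wrong} adjoints: $(\Delta^{[s,l]}, \Gamma^{[s+1,l]})$ gives the inclusion of $\Gamma^{[s+1,l]}$ a \emph{left} adjoint, whereas for $(\Gamma^{[s+1,l]}, \Gamma^{[0,s]})$ you need a \emph{right} adjoint; dually for $\Gamma^{[0,s]}$. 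Your ``concrete'' step does not resolve this: in the counit triangle $\delta^{[s,l]}\gamma^{[s+1,l]}(X) \to X \to C$ the roles are $\Delta^{[s,l]} \to X \to \Gamma^{[s+1,l]}$, whereas you need $\Gamma^{[s+1,l]} \to X \to \Gamma^{[0,s]}$. Replacing the first term by its injective-extended variant $(X^0,\dots,X^s,I(X^s),\dots,I(X^s))$ does not produce a triangle with the same middle term $X$; you would have to construct the map $X \to (X^0,\dots,X^s,I(X^s),\dots,I(X^s))$ and compute its cocone from scratch --- but that is precisely the paper's direct argument in \Cref{prp: SOD-Gamma-Gamma}, not a consequence of the adjunctions you have. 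The honest fix is to prove (a) directly: take the evident map $(0,\dots,0,X^{s+1},\dots,X^l) \to X$, compute its cone by pushout along $i_{X^{s+1}}$, and check Hom-vanishing by an explicit factorisation through a projective object of $\mMor_l(\F)$.
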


From these decompositions, we derive further results. Two semiorthogonal decompositions $(\U, \V)$ and $(\V, \W)$ define a \emph{recollement}, see {\cite[Prop.~1.2]{IKM16}}. As a direct consequence, we find a family of \emph{polygons of recollements}:

\begin{thmA} \label{thmA: polygon}
There are the following $(2l+4)$-gons of recollements in $\stab \mMor_{l}(\F)$ for $s < l$:
	\[
		\begin{tikzcd}[sep=tiny]
			& \Delta^{[0, s+1]} \ar[r, -, marrow=>] & \cdots \ar[r, -, marrow=>] & \Gamma^{[k, k+s]} \ar[r, -, marrow=>] & \Delta^{[k, k+s+1]} \ar[r, -, marrow=>] & \Gamma^{[k+1,k+s+1]} \ar[r, -, marrow=>] & \cdots \ar[r, -, marrow=>] & \Gamma^{[l-s, l]} \ar[rd, -, marrow=>, bend left=5mm] & \\
			\Gamma^{[0, s]} \ar[ru, -, marrow=>, bend left=5mm] &&&&&&&& \Gamma^{[0, l-s-1]} \ar[ld, -, marrow=>, bend left=5mm] \\
			&\Gamma^{[s+1, l]} \ar[lu, -, marrow=>, bend left=5mm] & \cdots \ar[l, -, marrow=>] & \Delta^{[k+1, k+l-s+1]} \ar[l, -, marrow=>] & \Gamma^{[k+1, k+l-s]} \ar[l, -, marrow=>] & \Delta^{[k, k+l-s]} \ar[l, -, marrow=>] & \cdots \ar[l, -, marrow=>] & \Delta^{[0, l-s]} \ar[l, -, marrow=>]
		\end{tikzcd}
	\]
\end{thmA}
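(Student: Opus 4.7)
The plan is to read off the polygon directly from \Cref{thmA: SODs} by identifying every edge as one of the three SOD types (a)--(c). Once all $2l+4$ edges are accounted for, the recollement at each of the $2l+4$ vertices follows from {\cite[Prop.~1.2]{IKM16}}: two semiorthogonal decompositions $(\U,\V)$ and $(\V,\W)$ sharing their middle term assemble into a recollement. Thus the entire proof reduces to index bookkeeping.

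First I would list the vertices cyclically. Starting at $\Gamma^{[0,s]}$, the upper arc alternates $\Gamma^{[k,k+s]}, \Delta^{[k,k+s+1]}, \Gamma^{[k+1,k+s+1]}, \dots$ for $k = 0, 1, \dots, l-s-1$, terminates with $\Gamma^{[l-s,l]}$, and then jumps to $\Gamma^{[0,l-s-1]}$, contributing $2(l-s)+2$ vertices. The lower arc is obtained from the upper arc by the substitution $s \mapsto s' := l-s-1$: alternating $\Gamma^{[k,k+s']}, \Delta^{[k,k+s'+1]}, \dots$ for $k = 0, 1, \dots, s$, terminating with $\Gamma^{[s+1,l]}$, and then closing the polygon by returning to $\Gamma^{[0,s]}$. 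The lower arc contributes $2s+2$ new vertices, so the polygon has $2(l-s)+2 + 2s+2 = 2l+4$ vertices in total, matching the claimed $(2l+4)$-gon.

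Next I would match each edge to an SOD from \Cref{thmA: SODs}. An edge of the form $(\Gamma^{[k,k+s]}, \Delta^{[k,k+s+1]})$ is an instance of (b) with parameters $(k,k+s)$, valid because $k+s < l$ along the upper arc; the lower-arc analogue is the same statement with $s$ replaced by $s'$. An edge $(\Delta^{[k,k+s+1]}, \Gamma^{[k+1,k+s+1]})$ is an instance of (c) with parameters $(k+1, k+s+1)$, valid because $k+1 > 0$. The two corner edges $(\Gamma^{[l-s,l]}, \Gamma^{[0,l-s-1]})$ and $(\Gamma^{[s+1,l]}, \Gamma^{[0,s]})$ are instances of (a), valid because $0 \leq s \leq l-1$.

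Each of the $2l+4$ edges being a semiorthogonal decomposition, at every vertex $\V$ the two adjacent decompositions share $\V$ as middle term, so {\cite[Prop.~1.2]{IKM16}} applied at each vertex yields the desired recollement. The only technical obstacle is the correct enumeration of indices and making sure that the two corners -- where the alternating pattern (b)/(c) transitions to a type-(a) SOD -- fit cleanly with the $s \leftrightarrow l-s-1$ symmetry of the two arcs. No new categorical input beyond \Cref{thmA: SODs} is required.
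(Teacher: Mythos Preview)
Your proposal is correct and takes essentially the same approach as the paper: in the paper the result is stated as \Cref{cor: polygon} with the one-line justification ``Combining \Cref{prp: SOD-Gamma-Gamma} and \Cref{prp: SODs}'', i.e., exactly reading off each edge as an SOD of type (a), (b), or (c) from \Cref{thmA: SODs}. Your explicit vertex count and edge-by-edge matching is a faithful unpacking of this; the only superfluous step is the appeal to {\cite[Prop.~1.2]{IKM16}}, since by the paper's definition an $n$-gon of recollements \emph{is} a cyclic sequence of SODs, so nothing further needs to be verified once all $2l+4$ edges are identified.
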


We describe the \emph{left mutations} $L$ defined by the recollements in these polygons explicitly in terms of an auto-equivalence $\Theta$ of $\stab \mMor_{l}(\F)$, see \Cref{con: theta}, which corresponds to the rotation functor for matrix factorizations with $l+2$ factors. The result can be visualized by two nested polygons of triangle equivalences, see \Cref{fig: poly-mut}, where $\stab \delta^{[s,t]^c}$ is another type of expansion functor with image $\Gamma^{[s,t]}$, see \Cref{con: delta-compl}.

\begin{figure}[h] 
	\begin{footnotesize}
		\begin{center}
			\begin{tikzpicture}[commutative diagrams/every diagram]
				\node (P1) at (90:4.5cm) {$\Gamma^{[0, s]}$};
				\node (P2) at (90-180/11:4.5cm) {$\Delta^{[0, s+1]}$} ;
				\node (P3) at (90-2*180/11:4.5cm) {$\Gamma^{[1, s+1]}$};
				\node (P4) at (90-3*180/11:4.5cm) {$\Delta^{[1, s+2]}$};
				\node (P5) at (90-4*180/11:4.5cm) {\rotatebox{120}{$\cdots$}};
				\node (P6) at (90-5*180/11:4.5cm) {$\Gamma^{[k, s+k]}$};
				\node (P7) at (90-6*180/11:4.5cm) {$\Delta^{[k, s+k+1]}$};
				\node (P8) at (90-7*180/11:4.5cm) {\rotatebox{60}{$\cdots$}};
				\node (P9) at (90-8*180/11:4.5cm) {$\Gamma^{[l-s-1, l-1]}$};
				\node (P10) at (90-9*180/11:4.5cm) {$\Delta^{[l-s-1, l]}$};
				\node (P11) at (90-10*180/11:4.5cm) {$\Gamma^{[l-s, l]}$};
				\node (P12) at (90-11*180/11:4.5cm) {$\Gamma^{[0, l-s-1]}$};
				\node (P13) at (90-12*180/11:4.5cm) {$\Delta^{[0, l-s]}$};
				\node (P14) at (90-13*180/11:4.5cm) {$\Gamma^{[1, l-s]}$};
				\node (P15) at (90-14*180/11:4.5cm) {$\Delta^{[1, l-s+1]}$};
				\node (P16) at (90-15*180/11:4.5cm) {\rotatebox{120}{$\cdots$}};
				\node (P17) at (90-16*180/11:4.5cm) {$\Gamma^{[k, l-s+k-1]}$};
				\node (P18) at (90-17*180/11:4.5cm) {$\Delta^{[k, l-s+k]}$};
				\node (P19) at (90-18*180/11:4.5cm) {\rotatebox{60}{$\cdots$}};
				\node (P20) at (90-19*180/11:4.5cm) {$\Gamma^{[s, l-1]}$};
				\node (P21) at (90-20*180/11:4.5cm) {$\Delta^{[s, l]}$};
				\node (P22) at (90-21*180/11:4.5cm) {$\Gamma^{[s+1, l]}$};
				\node (I1) at (90:2.5cm) {$\M_s$};
				\node (I3) at (90-2*180/11:2.5cm) {$\M_s$};
				\node (I6) at (90-5*180/11:2.5cm) {$\M_s$};
				\node (I9) at (90-8*180/11:2.5cm) {$\M_s$};
				\node (I11) at (90-10*180/11:2.5cm) {$\M_s$};
				\node (I13) at (90-12*180/11:2.5cm) {$\M_s$};
				\node (I15) at (90-14*180/11:2.5cm) {$\M_s$};
				\node (I18) at (90-17*180/11:2.5cm) {$\M_s$};
				\node (I21) at (90-20*180/11:2.5cm) {$\M_s$};
				\node (O2) at (90-180/11:6.5cm) {$\M_{l-s-1}$};
				\node (O4) at (90-3*180/11:6.5cm) {$\M_{l-s-1}$};
				\node (O7) at (90-6*180/11:6.5cm) {$\M_{l-s-1}$};
				\node (O10) at (90-9*180/11:6.5cm) {$\M_{l-s-1}$};
				\node (O12) at (90-11*180/11:6.5cm) {$\M_{l-s-1}$};
				\node (O14) at (90-13*180/11:6.5cm) {$\M_{l-s-1}$};
				\node (O17) at (90-16*180/11:6.5cm) {$\M_{l-s-1}$};
				\node (O20) at (90-19*180/11:6.5cm) {$\M_{l-s-1}$};
				\node (O22) at (90-21*180/11:6.5cm) {$\M_{l-s-1}$};
				\path[commutative diagrams/.cd, every arrow, every label]
				
				(P1) edge [bend right=10mm] node[below] {$L$} (P3)
				(P3) edge [bend right=10mm] node[left] {$L^{\{k-1\}}$} (P6)
				(P6) edge [bend right=10mm] node[left] {$L^{\{l-s-k-1\}}$} (P9)
				(P9) edge [bend right=10mm] node[above] {$L$} (P11)
				(P11) edge [bend right=10mm] node[above] {$L$} (P13)
				(P13) edge [bend right=10mm] node[above] {$L$} (P15)
				(P15) edge [bend right=10mm] node[right] {$L^{\{k-1\}}$} (P18)
				(P18) edge [bend right=10mm] node[right] {$L^{\{s-k\}}$} (P21)
				(P21) edge [bend right=10mm] node[below] {$L$} (P1)
				
				(P2) edge [bend left=20mm] node[above] {$L$} (P4)
				(P4) edge [bend left=20mm] node[right] {$L^{\{k-1\}}$} (P7)
				(P7) edge [bend left=20mm] node[right] {$L^{\{l-s-k-1\}}$} (P10)
				(P10) edge [bend left=20mm] node[below] {$L$} (P12)
				(P12) edge [bend left=20mm] node[below] {$L$} (P14)
				(P14) edge [bend left=20mm] node[left] {$L^{\{k-1\}}$} (P17)
				(P17) edge [bend left=20mm] node[left] {$L^{\{s-k\}}$} (P20)
				(P20) edge [bend left=20mm] node[above] {$L$} (P22)
				(P22) edge [bend left=20mm] node[above] {$L$} (P2)
				
				(I1) edge [bend right=10mm] node[below] {$\Theta$} (I3)
				(I3) edge [bend right=10mm] node[left] {$\Theta^{\{k-1\}}$} (I6)
				(I6) edge [bend right=10mm] node[left] {$\Theta^{\{l-s-k-1\}}$} (I9)
				(I9) edge [bend right=10mm] node[above] {$\Theta$} (I11)
				(I11) edge [bend right=10mm] node[above] {$\id$} (I13)
				(I13) edge [bend right=10mm] node[above] {$\id$} (I15)
				(I15) edge [bend right=10mm] node[right] {$\id$} (I18)
				(I18) edge [bend right=10mm] node[right] {$\id$} (I21)
				(I21) edge [bend right=10mm] node[below] {$\id$} (I1)
				
				(O2) edge [bend left=10mm] node[above] {$\id$} (O4)
				(O4) edge [bend left=10mm] node[right] {$\id$} (O7)
				(O7) edge [bend left=10mm] node[right] {$\id$} (O10)
				(O10) edge [bend left=10mm] node[below] {$\id$} (O12)
				(O12) edge [bend left=10mm] node[below] {$\Theta$} (O14)
				(O14) edge [bend left=10mm] node[left] {$\Theta^{\{k-1\}}$} (O17)
				(O17) edge [bend left=10mm] node[left] {$\Theta^{\{s-k\}}$} (O20)
				(O20) edge [bend left=10mm] node[above] {$\Theta$} (O22)
				(O22) edge [bend left=10mm] node[above] {$\id$} (O2)
				
				(I1) edge node {$\stab \delta^{[0, s]^c}$} (P1)
				(I3) edge node[yshift=-2.5pt] {$\stab \delta^{[1, s+1]^c}$} (P3)
				(I6) edge node[xshift=5pt] {$\stab \delta^{[k, s+k]^c}$} (P6)
				(I9) edge node[left, xshift=-2.5pt] {$\stab \delta^{[l-s-1, l-1]^c}$} (P9)
				(I11) edge node {$\stab \delta^{[l-s, l]^c}$} (P11)
				(I13) edge node {$\stab \delta^{[0, l-s]}$} (P13)
				(I15) edge node {$\stab \delta^{[1, l-s+1]}$} (P15)
				(I18) edge[below, yshift=-2.5pt] node {$\stab \delta^{[k, l-s+k]}$} (P18)
				(I21) edge node {$\stab \delta^{[s, l]}$} (P21)
				
				(O2) edge node {$\stab \delta^{[0, s+1]}$} (P2)
				(O4) edge node {$\stab \delta^{[1, s+2]}$} (P4)
				(O7) edge node[below, yshift=-1pt] {$\stab \delta^{[k, s+k+1]}$} (P7)
				(O10) edge node {$\stab \delta^{[l-s-1, l]}$} (P10)
				(O12) edge node {$\stab \delta^{[0, l-s-1]^c}$} (P12)
				(O14) edge node {$\stab \delta^{[1, l-s]^c}$} (P14)
				(O17) edge node[above] {$\stab \delta^{[k, l-s+k-1]^c}$} (P17)
				(O20) edge node {$\stab \delta^{[s, l-1]^c}$} (P20)
				(O22) edge node {$\stab \delta^{[s+1, l]^c}$} (P22);
			\end{tikzpicture}
		\end{center}
	\end{footnotesize}
	\caption{Polygons of recollements and mutations} \label{fig: poly-mut}
\end{figure}
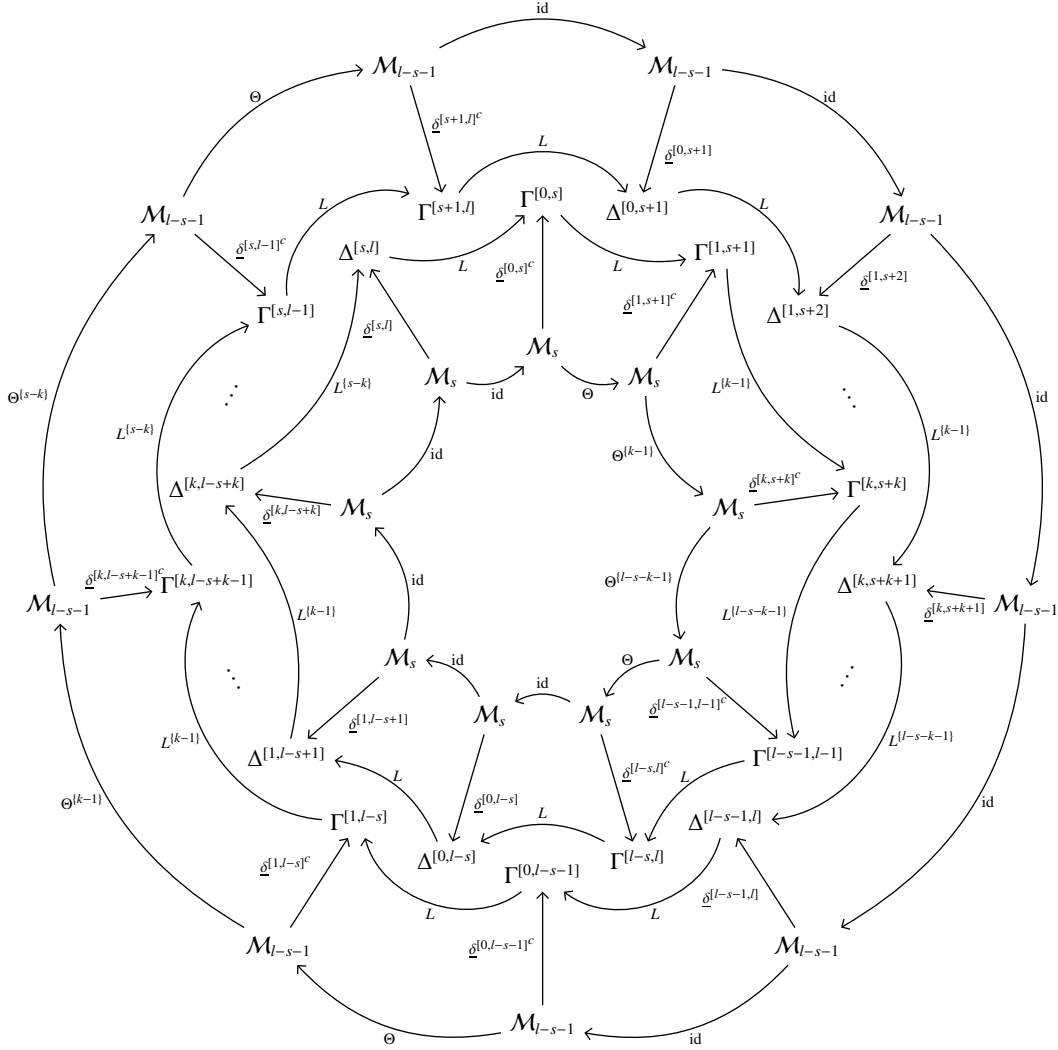

 A full turn of left mutations in each such polygons corresponds to a certain power of $\Theta$. We relate such powers to the suspension functor $\Sigma$ of $\stab \mMor_{l}(\F)$, which shows, in particular, that $\Sigma^2 \cong \id$ for matrix factorizations with $l+2$ factors, see {\cite[Prop.~5.3]{Tri21}}:

\begin{thmA} \label{thmA: Sigma-Theta}
There is an isomorphism $\Theta^{l+2} \cong \Sigma^2$ of endofunctors on $\stab \mMor_{l}(\F)$. 
\end{thmA}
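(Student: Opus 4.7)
My goal is to identify $\Theta^{l+2}$ with $\Sigma^2$ as endofunctors on the Frobenius stable category $\stab \mMor_{l}(\F)$. The most direct attack is to unwind \Cref{con: theta}: for a chain $X \colon X^0 \hookrightarrow \cdots \hookrightarrow X^l$, the endofunctor $\Theta$ should be a concrete rotation involving cokernel formation at one end of the chain and an injective envelope at the other. This rotational structure reflects the cyclic symmetry of the $(l+2)$-factor matrix factorization interpretation and is what makes the exponent $l+2$ special.

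I would then compute $\Theta^{l+2}(X)$ by induction on the iteration count, tracking the sequence of injective envelopes introduced at each step and the resulting cokernels. After $l+2$ iterations, each of the $l+1$ positions of the original chain will have cycled through once, with two additional ``overrotations''; these should assemble into two stacked conflations $X \hookrightarrow I_1 \twoheadrightarrow \Sigma X$ and $\Sigma X \hookrightarrow I_2 \twoheadrightarrow \Sigma^2 X$, which is precisely the data computing $\Sigma^2(X)$ in a Frobenius stable category.

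An alternative route uses the polygon of \Cref{thmA: polygon}. A full turn of left mutations around the polygon is an auto-equivalence of $\stab \mMor_l(\F)$ that, via the explicit description in \Cref{fig: poly-mut}, equals a specific power of $\Theta$ obtained by summing the exponent labels around the inner polygon. Independently, one would argue that a full turn of mutations around any polygon of recollements in the present setup is naturally isomorphic to $\Sigma^2$, a Serre-functor-type statement that one can prove by repeatedly applying the defining triangles of the recollements. Matching the two expressions yields the desired isomorphism after verifying that the power is $l+2$.

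The main obstacle in the direct approach is coherence: the identification $\Theta^{l+2}(X) \cong \Sigma^2(X)$ must be upgraded to a natural isomorphism of functors. Since both constructions depend on choices of injective envelopes, functoriality only holds after passing to the stable category, where such choices become irrelevant modulo projective-injective summands. The polygon approach largely circumvents the explicit bookkeeping but shifts the difficulty to justifying the general ``full-turn equals $\Sigma^2$'' principle, whose proof again leans on the Frobenius structure and on the explicit models for $\Sigma$ and $\Theta$ constructed earlier in the paper.
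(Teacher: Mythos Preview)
Your direct approach is essentially the paper's: it replaces $\Theta$ by the isomorphic model $\tilde\Theta$ of \Cref{rmk: tilde-Theta}, writes out the $(l+2)$-fold iteration as a staircase of bicartesian squares in $\F$, and extracts from it two short exact sequences in $\mMor_l(\F)$ whose middle terms are injective, witnessing $\Sigma X$ and then $\Sigma^2 X$. The one ingredient you gloss over is how exactly those two conflations in $\mMor_l(\F)$ are read off the grid: the objects of $\mMor_l(\F)$ sit on the \emph{anti}diagonals of the staircase, and one needs a small lemma (the paper's \Cref{lem: Theta-Sigma-prep}) to turn a $2\times 2$ block of bicartesian squares into a short exact sequence in $\mMor_1(\E)$ of the required shape. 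Once that is in hand, your two ``stacked conflations'' are immediate, and naturality follows as you say.

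Your alternative polygon route has a genuine gap. The principle ``a full turn of left mutations around a polygon of recollements is $\Sigma^2$'' is not a general theorem you can invoke; it fails for arbitrary polygons and, in the cases where it holds, is typically \emph{deduced} from an identification like $\Theta^{l+2}\cong\Sigma^2$ rather than proved independently. Moreover, in \Cref{fig: poly-mut} the $\Theta$-labels live on the inner and outer rings, which parametrize the \emph{smaller} categories $\M_s$ and $\M_{l-s-1}$, so summing those exponents does not directly compute a power of $\Theta$ on $\M_l$. If you tried to make this route rigorous you would be led back to the explicit staircase computation, so the polygon picture is better viewed as a corollary than as an independent proof strategy.
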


There is a triangle equivalence between $\stab \mMor_{l}(\F)$ and the homotopy category of acyclic $(l+2)$-complexes over $\F$ with projective objects, see {\cite[Thm.~A]{FS24}}, under which $\Theta$ corresponds to the shift functor and \Cref{thmA: Sigma-Theta} to {\cite[Thm.~2.4]{IKM17}}.\\
We include two further types of contraction functors $\stab {\hat \gamma}^{[s,t]^c}$ and $\stab {\check \gamma}^{[s,t]^c}$, see \Cref{con: gamma-compl}, to form \emph{infinite adjoint chains}:
\begin{thmA} \label{thmA: adjoint-chain}
	There are the following infinite adjoint chains:
	{\setlength{\jot}{10pt}
		\begin{gather*}
			\cdots \dashv \Theta^{t-s+1} \stab {\hat \gamma}^{[0, l-t+s-1]^c}\\
			\dashv \stab \delta^{[0, l-t+s-1]^c} \Theta^{-t+s-1} \dashv \cdots \dashv \Theta \stab {\hat \gamma}^{[t-s, l-1]^c} \dashv \stab \delta^{[t-s, l-1]^c} \Theta^{-1} \dashv \stab {\hat \gamma}^{[t-s+1, l]^c} \dashv \stab \delta^{[t-s+1, l]^c} \\
			\dashv \stab \gamma^{[0, t-s]} \dashv \cdots \dashv \stab \gamma^{[s-1,t-1]} \dashv \stab \delta^{[s-1, t]} \dashv \stab \gamma^{[s, t]} \dashv \stab \delta^{[s, t+1]} \dashv \stab \gamma^{[s+1,t+1]} \dashv \cdots \dashv \stab \gamma^{[l-t+s, l]} \\
			\dashv \stab \delta^{[0, l-t+s-1]^c} \dashv \stab{\check \gamma}^{[0, l-t+s-1]^c} \dashv \stab \delta^{[1, l-t+s]^c} \Theta \dashv \Theta^{-1} \stab{\check \gamma}^{[1, l-t+s]^c} \dashv \cdots \dashv \stab \delta^{[t-s+1, l]^c} \Theta^{t-s+1} \\
			\dashv \Theta^{-t+s-1} \stab{\check \gamma}^{[t-s+1, l]^c} \dashv \cdots,
		\end{gather*}
	}
	where $\stab {\hat \gamma}^{[0, l-t+s-1]^c}=\stab { \gamma}^{[l-t+s,l]}$ and $\stab{\check \gamma}^{[t-s+1, l]^c}=\stab \gamma^{[0,t-s]}$.
\end{thmA}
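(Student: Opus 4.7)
The plan is to build the adjoint chain in three stages: a finite middle segment obtained directly from the semiorthogonal decompositions of \Cref{thmA: SODs}, and two infinite tails propagated by the auto-equivalence $\Theta$.

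\emph{Middle segment.} Each semiorthogonal decomposition $(\U, \V)$ of a triangulated category yields adjunctions $q_\V \dashv i_\V$ with $q_\V$ the Verdier localization, and $i_\U \dashv \rho_\U$ with $\rho_\U$ the colocalization. From part (b) of \Cref{thmA: SODs}, namely $(\Gamma^{[s,t]}, \Delta^{[s,t+1]})$ for $t < l$, the contraction $\stab \gamma^{[s,t]}$, whose kernel is $\Gamma^{[s,t]}$, factors as an equivalence composed with $q_{\Delta^{[s,t+1]}}$, the quasi-inverse of the equivalence being induced by $\stab \delta^{[s,t+1]}$; combining these yields $\stab \gamma^{[s,t]} \dashv \stab \delta^{[s,t+1]}$. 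Dually, part (c), that is, $(\Delta^{[s-1,t]}, \Gamma^{[s,t]})$ for $s > 0$, yields $\stab \delta^{[s-1,t]} \dashv \stab \gamma^{[s,t]}$. Iterating over shifted index pairs with fixed interval length $t-s$ produces the finite middle segment ranging from $\stab \gamma^{[0, t-s]}$ to $\stab \gamma^{[l-t+s, l]}$.

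\emph{Boundary and initial tail terms.} To extend left beyond $\stab \gamma^{[0, t-s]}$, invoke part (a) with parameter $s' = t-s$, namely $(\Gamma^{[t-s+1, l]}, \Gamma^{[0, t-s]})$. The kernel $\Gamma^{[0, t-s]}$ of $\stab \gamma^{[0, t-s]}$ is the right piece, so the functor factors through the colocalization onto $\Gamma^{[t-s+1, l]}$; composing with the equivalence between $\stab \mMor_{l-t+s-1}(\F)$ and $\Gamma^{[t-s+1, l]}$ given by $\stab \delta^{[t-s+1, l]^c}$ (see \Cref{con: delta-compl}) yields $\stab \delta^{[t-s+1, l]^c} \dashv \stab \gamma^{[0, t-s]}$. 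The same subcategory $\Gamma^{[t-s+1, l]}$ also appears as the right piece of the adjacent polygon SOD $(\Delta^{[t-s, l]}, \Gamma^{[t-s+1, l]})$ (part (c) with $(s,t)=(t-s+1,l)$), furnishing a left adjoint to its inclusion and hence, through the same equivalence, the further adjunction $\stab {\hat \gamma}^{[t-s+1, l]^c} \dashv \stab \delta^{[t-s+1, l]^c}$ from \Cref{con: gamma-compl}. A symmetric argument at the right endpoint $\stab \gamma^{[l-t+s, l]}$ produces $\stab \gamma^{[l-t+s, l]} \dashv \stab \delta^{[0, l-t+s-1]^c} \dashv \stab {\check \gamma}^{[0, l-t+s-1]^c}$.

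\emph{Infinite iteration via $\Theta$.} To continue both tails indefinitely, exploit the $\Theta$-equivariance of the construction: the auto-equivalence $\Theta$ shifts the intervals $[s,t]$ by one position and intertwines the functors $\stab \delta^{[\,\cdot\,]^c}$, $\stab {\hat \gamma}^{[\,\cdot\,]^c}$, $\stab {\check \gamma}^{[\,\cdot\,]^c}$ accordingly. Combined with the general rule that $F \dashv G$ implies $\Theta F \dashv G \Theta^{-1}$ and $F \Theta^{-1} \dashv \Theta G$ for any auto-equivalence, this converts the basic adjunctions at each shifted interval $[t-s-k+1, l-k]$ into the claimed alternating pairs $\Theta^k \stab {\hat \gamma}^{[t-s-k+1, l-k]^c} \dashv \stab \delta^{[t-s-k+1, l-k]^c} \Theta^{-k}$ on the left tail, with the analogous construction producing $\stab \delta^{[k, l-t+s-1+k]^c} \Theta^k \dashv \Theta^{-k} \stab {\check \gamma}^{[k, l-t+s-1+k]^c}$ on the right. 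The resulting adjunctions link into a single chain through the previously established middle and boundary adjunctions.

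The principal obstacle is the verification that $\Theta$ indeed intertwines the subcategories $\Gamma^{[s,t]}$ and the expansion/contraction functors by the expected index shift, so that the $\Theta$-twisted adjunctions coincide with the precise form stated. This hinges on the explicit description of $\Theta$ from \Cref{con: theta} together with the formulas of \Cref{con: delta-compl} and \Cref{con: gamma-compl}; once these intertwining identities are pinned down, the assembly of the infinite chain reduces to the standard composition of adjunctions and the cyclic propagation around the polygon of \Cref{thmA: polygon}.
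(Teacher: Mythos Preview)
Your approach is essentially the same as the paper's: the middle segment is the paper's \Cref{lem: adj}(a)--(b), your boundary step matches \Cref{lem: adj}(c)--(d) and \Cref{lem: adj-compl}, and the tail propagation is the paper's \Cref{lem: adj-compl-mut}. One point deserves sharpening: your phrase ``$\Theta$-equivariance'' is slightly off, since $\Theta$ on $\M_l$ does not literally conjugate $\stab\delta^{[s,t]^c}$ to $\stab\delta^{[s+1,t+1]^c}$; what actually holds (and what \Cref{con: theta} says) is that $\Theta$ on the \emph{small} category $\M_{t-s}$ corresponds to the mutation $L_{\Delta^{[s,t+1]}}$ under the identifications $\stab\delta^{[\cdot]^c}$. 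The linking adjunction $\stab\delta^{[s-1,t-1]^c}\Theta^{-1} \dashv \stab{\hat\gamma}^{[s,t]^c}$ then follows not from a conjugation identity but from \Cref{rmk: mutations}, which says that the right mutation composed with the inclusion is left adjoint to the projection --- this is exactly how the paper packages the step in \Cref{lem: adj-compl-mut}. Once you use that rather than a bare intertwining, your assembly goes through verbatim.
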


As an application of our results, we explicitly describe \emph{representing objects} of \emph{dualized hom-functors} on $\stab \mMor_{l}(\F)$ lifted from $\stab \F$ using the construction of Bondal and Kapranov:

\begin{thmA} \label{thmA: rep}
	Let $\F$ be a Frobenius category, linear over a field, such that $\stab \F$ is hom-finite. Suppose that the dualized hom-functors $\Hom(A, -)^\ast \colon \stab \F \to \Vect$ for all $A \in \F$ are representable. Then the dualized hom-functor $\Hom(X, -)^\ast \colon \stab \mMor_{l}(\F) \to \Vect$ for any $X = (X, \alpha) \in \mMor_{l}(\F)$ is representable by an object $\tilde X$, obtained as the rightmost column of any diagram
	\[
		\begin{tikzcd}[sep={17.5mm,between origins}, ampersand replacement=\&]
			\tilde X^{0,0} \ar[r] \ar[d, tail] \ar[rd, phantom, "\square"] \& \tilde X^{0,1} \ar[r] \ar[d, tail, ] \ar[rd, phantom, "\square"] \& \tilde X^{0,2} \ar[r] \ar[d, tail] \& \cdots \ar[r] \ar[rd, phantom, "\square"] \& \tilde X^{0,l-2} \ar[r] \ar[d, tail] \ar[rd, phantom, "\square"] \& \tilde X^{0,l-1} \ar[r] \ar[d, tail] \ar[rd, phantom, "\square"] \& \tilde X^{0,l} \ar[d, tail] \\
			I^0 \ar[r] \& \tilde X^{1,1} \ar[r] \ar[d, tail] \ar[rd, phantom, "\square"] \& \tilde X^{1,2} \ar[r] \ar[d, tail] \ar[ru, phantom, "\square"] \& \cdots \ar[r] \ar[rd, phantom, "\square"] \& \tilde X^{1,l-2} \ar[r] \ar[d, tail] \ar[rd, phantom, "\square"] \& \tilde X^{1,l-1} \ar[r] \ar[d, tail] \ar[rd, phantom, "\square"] \& \tilde X^{1,l} \ar[d, tail] \\
			\& I^1 \ar[r] \& \tilde X^{2,2} \ar[r] \ar[d, tail] \ar[ru, phantom, "\square"] \& \cdots \ar[r] \& \tilde X^{2,l-2} \ar[r] \ar[d, tail] \& \tilde X^{2,l-1} \ar[r] \ar[d, tail] \& \tilde X^{2,l} \ar[d, tail] \\
			\&\& I^2 \ar[r] \ar[ru, phantom, "\square"] \& \cdots \& \myvdots \ar[d, tail] \ar[rd, phantom, "\square"] \ar[ru, phantom, "\square"] \& \myvdots \ar[d, tail] \ar[rd, phantom, "\square"] \ar[ru, phantom, "\square"] \& \myvdots \ar[d, tail] \\
			\&\&\&\& I^{l-2} \ar[r] \& \tilde X^{l-1,l-1} \ar[r] \ar[d, tail] \ar[rd, phantom, "\square"] \& \tilde X^{l-1,l} \ar[d, tail] \\
			\&\&\&\&\& I^{l-1} \ar[r] \& \tilde X^{l,l}
		\end{tikzcd}
	\]
	of bicartesian squares in $\F$ with $I^j \in \Inj(\F)$, where the first row consists of the representing objects $\tilde X^{0,j}$ of $\Hom(X^j, -)^\ast$ and the morphisms corresponding to $\alpha^j$.
\end{thmA}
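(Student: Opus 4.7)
The plan is to proceed by induction on $l$, invoking Bondal--Kapranov's lifting of representing objects along a semiorthogonal decomposition. For the base case $l = 0$, the category $\stab\mMor_0(\F) = \stab\F$ coincides with the ambient category, the diagram degenerates to the single cell $\tilde X^{0,0}$, and the statement reduces to the hypothesis.

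For the inductive step at $l \geq 1$, I would apply the semiorthogonal decomposition $(\Gamma^{[0,0]}, \Delta^{[0,1]})$ of $\stab\mMor_l(\F)$ from \Cref{thmA: SODs}(b) (with $s = t = 0$; the condition $t < l$ becomes $0 < l$, which holds). The equivalences $\stab\delta^{[0,0]^c} \colon \stab\F \xrightarrow{\sim} \Gamma^{[0,0]}$ and $\stab\delta^{[0,1]} \colon \stab\mMor_{l-1}(\F) \xrightarrow{\sim} \Delta^{[0,1]}$ transfer representability of dualized Hom-functors to both SOD components --- $\Gamma^{[0,0]}$ by hypothesis, $\Delta^{[0,1]}$ by the induction hypothesis. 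Bondal--Kapranov's construction then supplies a representing object for $\Hom(X,-)^*$ in $\stab\mMor_l(\F)$, built by gluing the component representing objects through a canonical mutation triangle.

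The next step is to match the Bondal--Kapranov output with the staircase diagram. Constructibility of the diagram is routine: every object of $\F$ admits an admissible monic into an injective by the Frobenius property, and pushouts of admissible monics along arbitrary morphisms are admissible monics in any exact category, so all rows and bicartesian squares can be successively built. To identify the rightmost column as the Bondal--Kapranov output, I would analyze the mutation triangle under $\stab\delta^{[0,0]^c}$: the representing object $\tilde X^{0,0}$ of $\Gamma^{[0,0]}$ contributes via its injective-envelope suspension in $\stab\mMor_l(\F)$, and this suspension is realized concretely by pushout along an admissible monic $\tilde X^{0,0} \hookrightarrow I^0$. Applying the induction hypothesis to $\gamma^{[0,0]}(X) \in \mMor_{l-1}(\F)$ (whose first-row representing objects are $\tilde X^{0,1}, \ldots, \tilde X^{0,l}$), and propagating the pushouts through the subsequent rows, then builds up the full staircase whose rightmost column is the desired representing object.

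The principal technical obstacle will be showing that the pushout-along-an-injective procedure precisely realizes the Bondal--Kapranov mutation triangle for the SOD $(\Gamma^{[0,0]}, \Delta^{[0,1]})$. This requires identifying the suspension and the counit of the inclusion of $\Gamma^{[0,0]}$ inside $\stab\mMor_l(\F)$, both of which are controlled in the Frobenius setting by injective envelopes; the adjunctions from \Cref{thmA: adjoint-chain} should provide the explicit data to carry this out. Independence of the final representing object from the specific choices of injectives $I^i$ will then follow from uniqueness of representing objects in the stable category, where injectives vanish.
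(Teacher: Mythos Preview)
Your overall strategy---induction on $l$ via a semiorthogonal decomposition and the Bondal--Kapranov lifting---is the same as the paper's. But there are two substantive issues.

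\textbf{First, the choice of SOD.} You peel from the left using $(\Gamma^{[0,0]}, \Delta^{[0,1]})$, so your inductive input is $\gamma^0(X) = (X^1,\dots,X^l)$. The paper peels from the right using $\U = \Delta^{[l-1,l]}$ and $\V = \Gamma^l$ (together with the neighboring SODs $(\Gamma^{l-1},\Delta^{[l-1,l]})$ and $(\Gamma^l,\Gamma^{[0,l-1]})$ needed for the refined BK construction), so the inductive input is $\gamma^{l-1}(X)$. This matters for matching the given staircase: deleting column $l-1$ and row $l$ from the staircase diagram yields a \emph{valid sub-staircase} for $\gamma^{l-1}(X)$ (the injectives $I^0,\dots,I^{l-2}$ remain in place), and its rightmost column is exactly $\gamma^l(\tilde X) = (\tilde X^{0,l},\dots,\tilde X^{l-1,l})$, i.e.\ the claimed representing object with its last entry removed. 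By contrast, no sub-diagram of the given staircase is a valid staircase for $\gamma^0(X)$: the candidate first row would be $\tilde X^{0,1},\dots,\tilde X^{0,l}$, but the object below $\tilde X^{0,1}$ is $\tilde X^{1,1}$, which is not injective. So with your SOD the induction produces the rightmost column of some \emph{other} staircase, and you would need an additional argument to transport the result to the given one.

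\textbf{Second, the compatibility conditions.} The real technical content---what you flag as ``the principal technical obstacle'' but do not resolve---is that the BK gluing produces the \emph{specific} object in the staircase only if the representing isomorphisms $\eta^{i,j}\colon h_{\tilde X^{i,j}} \cong (h^{X^{i,j}})^\ast$ are compatible with the staircase morphisms in the precise sense $e_{X^{i,j}}(-\circ\overline{\beta^{i,j}}) = e_{X^{i+1,j}}(\overline{\tilde\alpha^{i,j}}\circ -)$. These identities are what force the BK morphism $v$ to equal the expected monic $\tilde\alpha^{l-1}$, and they must be \emph{propagated} through the induction. The paper handles this in two pieces: \Cref{thm: mMor-rep} carries an explicit inductive hypothesis of this form and verifies it is preserved (the long computation identifying $v$), and \Cref{lem: mMor-hyp} together with \Cref{lem: square-rep} shows that any staircase built by pushouts from the first row can be equipped with such compatible isomorphisms, using that bicartesian squares in $\F$ become homotopy cartesian in $\stab\F$. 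Your appeal to the adjoint chain does not substitute for this; the adjunctions describe the SOD structure, not the compatibility of representing data with the pushout squares.
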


In particular, if $\stab \F$ admits a Serre functor sending the diagram in $\stab \F$ defined by any $X \in \mMor_{l}(\F)$ to the one defined by the first row of the diagram in \Cref{thmA: rep}, then $\stab \mMor_{l}(\F)$ admits a Serre functor, sending $X$ to $\tilde X$, see {\cite[Prop.~3.4.(a)]{BK89}}. It remains to describe this Serre functor on morphisms.

\smallskip

\Cref{thmA: SODs} summarizes \Cref{prp: SOD-Gamma-Gamma,prp: SODs} of the main part. \Cref{thmA: polygon,thmA: adjoint-chain,thmA: Sigma-Theta} correspond to \Cref{cor: polygon}, \Cref{prp: Theta-Sigma-mMor}, and \Cref{thm: adj-chain}. \Cref{thmA: rep} joins \Cref{thm: mMor-rep} and \Cref{lem: mMor-hyp}.

\section{Triangulated categories of monomorphisms}

In this section, we review preliminaries on monomorphism categories in the context of exact and triangulated categories.

\smallskip

Unless stated otherwise, all (sub)categories and functors considered are assumed to be (full) additive.\\
Our main reference on the topic of \emph{triangulated categories} is Neeman's book {\cite{Nee01}}. However, we require the more general definition of a triangulated category whose suspension functor is only an auto-equivalence instead of an automorphism. These two definitions agree up to a triangulated equivalence, see {\cite[\S 2]{KV87}} and {\cite[\S 2]{May01}}.\\
Recall that a \emph{triangle equivalence} is a triangulated functor which is an equivalence of categories. Its quasi-inverse is automatically a triangulated functor, see {\cite[Prop.~1.4]{BK89}} for a more general statement.

\begin{dfn} \label{dfn: sod}
	A pair $(\U, \V)$ of triangulated subcategories of a triangulated category $\T$ is called a \textbf{semiorthogonal decomposition} of $\T$ if
	\begin{enumerate}
		\item \label{dfn: sod-1} $\Hom_\T(\U, \V) = 0$ and
		\item \label{dfn: sod-2} each $T \in \T$ fits into a distinguished triangle $U \to T \to V$, where $U \in \U$ and $V \in \V$.
	\end{enumerate}
\end{dfn}

\begin{rmk}\label{rmk: Hom=0}
	Consider two triangulated subcategories $\U$ and $\V$ of $\T$ with $\Hom_\T(\U, \V) = 0$. Then any solid diagram 
	\[
		\begin{tikzcd}[sep={17.5mm,between origins}]
			U \ar[r] \ar[d, dashed] & X \ar[r] \ar[d, "f"]& V \ar[d, dashed] \\
			U' \ar[r] & X' \ar[r] & V'
		\end{tikzcd}
	\]
	in $\T$ with $U, U' \in \U$ and $V, V' \in \V$ whose rows are distinguished triangles extends uniquely by dashed arrows to a commutative diagram.
\end{rmk}

This leads to the following

\begin{prp}[{\cite[Prop.~1.2]{IKM11}}] \label{prp: SOD-adjoints}
	Let $(\U, \V)$ be a semiorthogonal decomposition of a triangulated category $\T$. Then the inclusion functors $i_!\colon \U \to \T$ and $j_\ast\colon \V \to \T$ have a respective right adjoint $i^!\colon \T \to \U$ and left adjoint $j^\ast\colon \T \to \V$. They are given by fixing for each $X \in \T$ a distinguished triangle
	\[
	\begin{tikzcd} i_! i^! X \ar[r] & X \ar[r] & j_\ast j^\ast X \end{tikzcd}
	\]
	in $\T$, whose morphisms are then given by the respective unit and counit. These adjoints induce triangle equivalences $\T/\V \to \U$ and $\T/\U \to \V$, quasi-inverse to the composed canonical functors $\inj{\U}{\T}\to \T/\V$ and $\inj{\V}{\T} \to \T/\U$, respectively. \qed
\end{prp}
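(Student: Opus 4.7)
The plan is to construct $i^!$ and $j^\ast$ object-wise from the semiorthogonal triangles granted by \Cref{dfn: sod} and to upgrade them to functors via the uniqueness in \Cref{rmk: Hom=0}.

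For each $X \in \T$, fix a distinguished triangle
\[ \begin{tikzcd} U_X \ar[r, "\eta_X"] & X \ar[r, "\varepsilon_X"] & V_X \end{tikzcd} \]
with $U_X \in \U$ and $V_X \in \V$, and set $i^! X := U_X$ and $j^\ast X := V_X$. For $f \colon X \to X'$, \Cref{rmk: Hom=0} applied to the pair of fixed triangles produces unique morphisms $i^! f$ and $j^\ast f$ completing the diagram; uniqueness then forces functoriality and turns $\eta$ and $\varepsilon$ into natural transformations. To establish $i_! \dashv i^!$, I would apply $\Hom_\T(U, -)$ for $U \in \U$ to the triangle at $X$ and use $\Hom_\T(U, V_X) = 0 = \Hom_\T(U, \Sigma^{-1} V_X)$ (the latter because $\V$, being a triangulated subcategory, is closed under $\Sigma^{-1}$); this shows that post-composition with $\eta_X$ gives a natural isomorphism $\Hom_\U(U, i^! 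X) \xrightarrow{\sim} \Hom_\T(i_! U, X)$, identifying $\eta$ as the counit. The adjunction $j^\ast \dashv j_\ast$ is dual, with $\varepsilon$ as the unit.

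For the induced equivalences, consider $q \circ i_! \colon \U \to \T/\V$ with $q$ the Verdier localization. Essential surjectivity is immediate: $\eta_X$ has cone $V_X \in \V$, so $q(\eta_X)$ is invertible and $q(X) \cong q(i_! i^! X)$. For fullness and faithfulness, represent a morphism $U \to U'$ in $\T/\V$ by a roof $U \xleftarrow{s} W \to U'$ with $\mathrm{cone}(s) \in \V$. Post-composing $s$ with $\eta_W$, whose cone also lies in $\V$, and invoking the octahedral axiom to control the resulting cone, I may assume $W \in \U$. Then $s \colon W \to U$ is a morphism between objects of the triangulated subcategory $\U$, so $\mathrm{cone}(s) \in \U \cap \V$; but any $Y \in \U \cap \V$ satisfies $\id_Y \in \Hom_\T(Y, Y) = 0$ by semiorthogonality, whence $Y \cong 0$ and $s$ is an isomorphism. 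Hence the roof is equivalent to a genuine morphism, giving $\Hom_{\T/\V}(U, U') = \Hom_\T(U, U')$. Thus $q \circ i_!$ is an equivalence, and its quasi-inverse is induced by $i^!$; the case $\T/\U \simeq \V$ is symmetric, using $\varepsilon_X$ and $\Sigma U_X \in \U$.

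The main subtle point I anticipate is this last calculation: the SOD hypothesis must be used twice — once via $\eta_W$ to reduce the middle of the roof to $\U$, and once to witness $\U \cap \V = 0$ — and some care is required when invoking the octahedral axiom to control cones under composition.
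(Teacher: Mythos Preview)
Your proof is correct in substance. Note, however, that the paper does not prove this proposition at all: it is quoted from \cite{IKM11} and closed with \qed, so there is no ``paper's own proof'' to compare against. What you have written is a standard and essentially complete argument for the cited result.

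Two small remarks on presentation. First, in the roof argument you say ``post-composing $s$ with $\eta_W$'', but since $\eta_W \colon U_W \to W$ and $s \colon W \to U$, you mean the composite $s \circ \eta_W$; the octahedral step then correctly shows its cone is an extension of two objects of $\V$, hence lies in the triangulated subcategory $\V$. Second, your full-faithfulness argument via roofs is valid, but the step from ``every roof is equivalent to a genuine morphism'' to ``$\Hom_{\T/\V}(U,U') = \Hom_\T(U,U')$'' implicitly also uses faithfulness: if $f \colon U \to U'$ vanishes in $\T/\V$, there is $s \colon W \to U$ with cone in $\V$ and $f s = 0$; your reduction lets you take $W \in \U$, so $s$ is an isomorphism and $f = 0$. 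It would be cleaner (and closer to how this is usually packaged) to observe that $i^!$ annihilates $\V$ and hence factors through $\T/\V$ by the universal property of Verdier localization, then check that the two composites with $q \circ i_!$ are naturally isomorphic to identities via $\eta$; this avoids manipulating roofs directly.
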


\begin{ntn}
	Adjoint functors $L \dashv R$ between categories $\C$ and $\D$ will be displayed as
	\[
		\begin{tikzcd}
			\D \ar[rr, bend right=7.5mm, "R"'] && \C, \ar[ll, bend right=7.5mm, "L"']
		\end{tikzcd}
	\]
	where the left adjoint $L$ is always the upper arrow, the right adjoint $R$ the lower arrow.
\end{ntn}

\begin{dfn} \label{dfn: mutations}
	Due to \Cref{prp: SOD-adjoints}, two semiorthogonal decompositions $(\U, \V)$ and $(\V, \W)$ of a triangulated category $\T$ patch together to form a \textbf{recollement}, see {\cite[Prop.~1.2]{IKM17}},
	\[
		\begin{tikzcd}[sep={15mm,between origins}]
			&&& \U \ar[ld, hook, bend right=5mm] \\
			\V \ar[rr, hook] && \T \ar[rr, epi] \ar[ru, bend right=5mm] \ar[rd, bend left=5mm] \ar[ll, bend left=10mm] \ar[ll, bend right=10mm] && \T/\V \ar[lu, ->, "\simeq"'] \ar[ld, ->, "\simeq"] \\
			&&& \W. \ar[lu, hook, bend left=5mm]
		\end{tikzcd}
	\]
	 The composed triangle equivalence 
	 \[L := L_\V\colon\U \hookrightarrow \T \to \W =: L_\V(\U) =: L(\U)\] is the \textbf{left mutation} of $\U$ through $\V$, its quasi-inverse
	\[R := R_\V\colon\W \hookrightarrow \T \to \U =: R_\V(\W) =: R(\W)\] the \textbf{right mutation} of $\W$ through $\V$.
\end{dfn}

\begin{rmk} \label{rmk: mutations}
	In \Cref{dfn: mutations}, the composition $\inj{\U \xrightarrow{L} \W}{\T}$ is right adjoint to $\T \to \U$, the composition $\inj{\W \xrightarrow{R} \U}{\T}$ is left adjoint to $\T \to \W$.
\end{rmk}

\begin{dfn}[{\cite[Def.~1.3]{IKM16}}]
An \textbf{$\boldsymbol n$-gon of recollements} of a triangulated category $\T$ consists of $n \geq 2$ triangulated subcategories $\U_1,\dots,\U_n$ such that $(\U_i, \U_{i+1})$ is a semiorthogonal decomposition for all $i \in \ZZ/n\ZZ$.
\end{dfn}

Prominent examples of triangulated categories, dubbed \emph{algebraic} by Keller {\cite[\S3.6]{Kel06}}, are the stable categories of Frobenius (exact) categories, see {\cite[\S2]{Hap88}}. Our main reference on the topic of \emph{exact categories} is Bühler's expository article {\cite{Buh10}}. We review selected material on these types of categories. Admissible monics and epics are represented by $\monic{\!}{\!}$ and $\epic{\!}{\!}$, respectively.

\begin{prp}[{\cite[Prop.~2.9]{Buh10}}] \label{prp: Buehler2.9}
	In an exact category, finite direct sums of short exact sequences are again short exact. In particular, any split short exact sequence is short exact. \qed
\end{prp}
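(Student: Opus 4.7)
The plan is to reduce the claim about finite direct sums to the binary case by an immediate induction. So given two short exact sequences $A_i \xrightarrow{\alpha_i} B_i \xrightarrow{\beta_i} C_i$ in $\E$ for $i = 1, 2$, it suffices to show that the termwise direct sum
\[
A_1 \oplus A_2 \xrightarrow{\alpha_1 \oplus \alpha_2} B_1 \oplus B_2 \xrightarrow{\beta_1 \oplus \beta_2} C_1 \oplus C_2
\]
is again short exact.

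The central step is to verify that $\alpha_1 \oplus \alpha_2$ is an admissible monic; the corresponding statement for $\beta_1 \oplus \beta_2$ then follows by the same argument applied in $\op{\E}$. I would factor
\[
A_1 \oplus A_2 \xrightarrow{\alpha_1 \oplus \id_{A_2}} B_1 \oplus A_2 \xrightarrow{\id_{B_1} \oplus \alpha_2} B_1 \oplus B_2
\]
and, using the universal property of biproducts in an additive category, recognize each factor as the pushout of the admissible monic $\alpha_i$ along the coproduct inclusion from $A_i$. The pushout-stability axiom then makes each factor admissible monic, and closure under composition takes care of the composite. A direct biproduct calculation afterwards identifies $\beta_1 \oplus \beta_2$ as the cokernel of $\alpha_1 \oplus \alpha_2$, completing the verification that the displayed sequence is short exact.

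For the \emph{in particular} statement, any split short exact sequence is by definition isomorphic to a canonical split sequence $A \xrightarrow{\iota_A} A \oplus C \xrightarrow{\pi_C} C$, which is the direct sum of the trivially short exact sequences $A \xrightarrow{\id_A} A \to 0$ and $0 \to C \xrightarrow{\id_C} C$ (both are short exact since any identity is admissible with zero kernel and cokernel). The first part of the proposition then applies.

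The main point to watch is to stay strictly within the exact-category axioms and avoid circular reasoning. In particular, one must not presume upfront that coproduct inclusions $A \to A \oplus C$ are admissible monic; instead, this has to be derived from the pushout axiom applied to the admissible monic $0 \to C$ (which itself arises as the kernel of the admissible epic $\id_C$) along $0 \to A$.
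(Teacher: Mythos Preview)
Your proposal is correct and follows the standard argument; the paper itself gives no proof here, simply citing B\"uhler \cite[Prop.~2.9]{Buh10}, whose proof proceeds exactly as you outline: factor $\alpha_1 \oplus \alpha_2$ as a composite of two pushouts of admissible monics, invoke closure under composition, and dualize. Your derivation of the split case as a direct sum of the trivial sequences $A \xrightarrow{\id} A \to 0$ and $0 \to C \xrightarrow{\id} C$ is also the intended route, and your closing caution about not presupposing that section inclusions are admissible is well taken.
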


\begin{prp}[{\cite[Prop.~2.12]{Buh10}}] \label{prp: Buehler2.12} \
	\begin{enumerate}[leftmargin=*]
		\item \label{prp: Buehler2.12-push} For a square
		\[
			\begin{tikzcd}[sep={17.5mm,between origins}]
				A \ar[r, tail, "i"] \ar[d, "f"] & B \ar[d, "f'"] \\
				A' \ar[r, tail, "i'"] & B'
			\end{tikzcd}
		\]
		in an exact category, the following statements are equivalent:
		\begin{enumerate}[label=(\arabic*)]
			\item \label{prp: Buehler2.12-push-1} The square is a pushout.
			\item \label{prp: Buehler2.12-push-2} The square is bicartesian.
			\item \label{prp: Buehler2.12-push-3} The sequence \begin{tikzcd}[sep=large] A \ar[r, "\begin{pmatrix} i \\ -f \end{pmatrix}", tail] & B \oplus A' \ar[r, "\begin{pmatrix} f' \hspace{2mm} i' \end{pmatrix}", two heads] & B' \end{tikzcd} is short exact.
			\item \label{prp: Buehler2.12-push-4} The square is part of a commutative diagram
			\[
				\begin{tikzcd}[sep={17.5mm,between origins}]
					A \ar[r, tail, "i"] \ar[d, "f"] & B \ar[d, "f'"] \ar[r, two heads] & C \ar[d, equal] \\
					A' \ar[r, tail, "i'"] & B' \ar[r, two heads] & C.
				\end{tikzcd}
			\]
		\end{enumerate}
		
		\smallskip
		
		\item \label{prp: Buehler2.12-pull} For a square
		\[
			\begin{tikzcd}[sep={17.5mm,between origins}]
				A \ar[r, two heads, "p'"] \ar[d, "g'"] & B \ar[d, "g"] \\
				A' \ar[r, two heads, "p"] & B'
			\end{tikzcd}
		\]
		in an exact category, the following statements are equivalent:
		\begin{enumerate}[label=(\arabic*)]
			\item The square is a pullback.
			\item The square is bicartesian.
			\item The sequence \begin{tikzcd}[sep=large] A \ar[r, "\begin{pmatrix} p' \\ g' \end{pmatrix}", tail] & B \oplus A' \ar[r, "\begin{pmatrix} - g \hspace{2mm} p \end{pmatrix}", two heads] & B' \end{tikzcd} is short exact.
			\item The square is part of a commutative diagram
			\[
				\begin{tikzcd}[sep={17.5mm,between origins}]
					K \ar[r, tail] \ar[d, equal] & A \ar[r, two heads, "p'"] \ar[d, "g'"] & B \ar[d, "g"] \\
					K \ar[r, tail] & A' \ar[r, two heads, "p"] & B'.
				\end{tikzcd}
			\]
		\end{enumerate}
	\end{enumerate}
	\qed 
\end{prp}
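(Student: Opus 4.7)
The two halves of the proposition are categorical duals: part (b) is obtained from part (a) by passing to the opposite exact category, where admissible monics and epics are interchanged. The plan is therefore to concentrate on part (a) and deduce (b) by duality.

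For part (a), my plan is to establish the cycle $(1) \Leftrightarrow (4) \Rightarrow (3) \Rightarrow (1)$ together with $(1) \Leftrightarrow (2)$. The equivalence $(1) \Leftrightarrow (4)$ is essentially built into the axioms of an exact category: the pushout of an admissible monic $i$ along an arbitrary morphism $f$ exists, the induced arrow $i'$ is again an admissible monic, and $i, i'$ share a common cokernel $C$. This is exactly the content of the $2 \times 3$ diagram in (4). Conversely, given such a diagram, the universal property of the pushout of $(i, f)$ is inherited from the fact that both rows exhibit $C$ as a cokernel and the right-hand square is trivially commutative.

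For $(4) \Rightarrow (3)$, I would take the diagram in (4) and verify directly that the total sequence $A \to B \oplus A' \to B'$ is short exact. The maps are the obvious ones coming from the two rows, and the kernel--cokernel pair property follows from a standard diagram chase, using that both rows are short exact with the same cokernel $C$. The admissibility of the middle arrow can be packaged using the pushout axiom together with closure of admissible monics under composition with isomorphisms, as in B\"uhler's exposition. For $(3) \Rightarrow (1)$, a short exact sequence of the displayed shape presents $B'$ as the cokernel of $\binom{i}{-f}$, and this cokernel is by definition the categorical pushout of $i$ and $f$, so $(1)$ follows at once.

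Finally, $(1) \Leftrightarrow (2)$ is the assertion that in an exact category a pushout square along an admissible monic is automatically a pullback. This is most efficiently extracted from $(3)$: in the short exact sequence from $(3)$, $A$ is recovered as the kernel of the admissible epic $\bigl(f'\ i'\bigr)\colon B \oplus A' \to B'$, which is equivalent to the pullback property of the square. The main obstacle is really just bookkeeping with the exact-category axioms; the genuine input is the stability of admissible monics under pushouts along arbitrary morphisms, which is one of the defining axioms. Every remaining step is a formal consequence of universal properties and standard diagram manipulations, so no genuinely difficult step is anticipated.
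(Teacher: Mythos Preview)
The paper does not supply its own proof of this proposition: it is stated as a citation of \cite[Prop.~2.12]{Buh10} and closed immediately with a \qed. Your sketch is a correct outline of the standard argument found in B\"uhler's exposition, and there is nothing to compare it against within the present paper.

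One small point of precision: in the square as stated, both $i$ and $i'$ are already drawn as admissible monics, so strictly speaking you are not invoking the axiom that pushouts of admissible monics exist and yield admissible monics in order to \emph{construct} $i'$; rather, you are using that axiom (and the uniqueness of pushouts up to unique isomorphism) to identify the given square with the canonical pushout square and thereby obtain the shared cokernel $C$ for $(1)\Rightarrow(4)$. Your argument for $(4)\Rightarrow(1)$ via the common cokernel, for $(3)\Leftrightarrow(1)$ via the universal property of cokernels, and for $(1)\Leftrightarrow(2)$ via the kernel description in $(3)$ are all standard and correct. The duality reduction of (b) to (a) is likewise fine.
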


\begin{lem}[Noether lemma, {\cite[Ex.~3.7]{Buh10}}] \label{lem: Noether}
	Any solid commutative diagram
	\[
		\begin{tikzcd}[sep={17.5mm,between origins}]
			A' \ar[r, tail] \ar[d, tail] & B' \ar[r, two heads] \ar[d, tail] & C' \ar[d, tail, dashed]\\
			A \ar[r, tail] \ar[d, two heads] & B \ar[r, two heads] \ar[d, two heads] & C \ar[d, two heads, dashed]\\
			A'' \ar[r, tail] & B'' \ar[r, two heads] & C'' 
		\end{tikzcd}
	\]
	
	in an exact category with short exact rows and columns can be uniquely completed by a short exact sequence $\begin{tikzcd}[cramped, sep=small]
		C' \ar[r, tail] & C \ar[r, two heads] & C''
	\end{tikzcd}$. \qed
\end{lem}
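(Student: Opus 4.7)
The plan is to construct the two dashed arrows via the universal properties of cokernels and then to establish short exactness of the third column by interposing a pushout between $A$ and $B$. The key technical tool throughout is \Cref{prp: Buehler2.12}.

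For the top dashed arrow, the composition $A' \rightarrowtail B' \rightarrowtail B \twoheadrightarrow C$ equals $A' \rightarrowtail A \rightarrowtail B \twoheadrightarrow C$ by commutativity of the upper-left square and therefore vanishes by exactness of the middle row. Hence the universal property of the cokernel $B' \twoheadrightarrow C'$ supplies a unique $C' \to C$ rendering the upper-right square commutative. Dually, the composition $A \rightarrowtail B \twoheadrightarrow B'' \twoheadrightarrow C''$ factors via the lower-left square as $A \twoheadrightarrow A'' \rightarrowtail B'' \twoheadrightarrow C''$, which vanishes, so the universal property of $B \twoheadrightarrow C$ supplies a unique $C \to C''$.

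To establish short exactness of the third column, form the pushout $P := A \sqcup_{A'} B'$ of the admissible monics $A' \rightarrowtail A$ and $A' \rightarrowtail B'$. By \Cref{prp: Buehler2.12}, both arms $A \rightarrowtail P$ and $B' \rightarrowtail P$ are admissible monics whose cokernels are those of the opposite arms of the pushout square, namely $C'$ and $A''$, respectively. The universal property of $P$ induces a canonical morphism $P \to B$ through which both $A \rightarrowtail B$ and $B' \rightarrowtail B$ factor; precomposing with $B' \rightarrowtail P$ shows that $P \to B$ realizes a morphism of short exact sequences from $B' \rightarrowtail P \twoheadrightarrow A''$ to $B' \rightarrowtail B \twoheadrightarrow B''$ whose left component is the identity and whose right component is the admissible monic $A'' \rightarrowtail B''$. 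Consequently, $P \rightarrowtail B$ is itself an admissible monic with cokernel $C''$---this is the pushout-along-an-admissible-monic consequence of \Cref{prp: Buehler2.12}, or a short-five-lemma-type argument in the exact category. Assembling $A \rightarrowtail P \rightarrowtail B$ into a chain of admissible monics with successive cokernels $C'$, $C''$ and total cokernel $C$, the Noether isomorphism for filtrations in exact categories---again a direct consequence of \Cref{prp: Buehler2.12} applied to the canonical bicartesian square with shared kernel $A$---yields the desired short exact sequence $C' \rightarrowtail C \twoheadrightarrow C''$, whose arrows coincide with those produced in the first step by uniqueness of factorizations through cokernels.

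The main obstacle is verifying that $P \to B$ is admissible monic with cokernel $C''$, which is not formal from the universal property of a pushout but requires invoking the specific pushout-pullback calculus of an exact category encoded in \Cref{prp: Buehler2.12}. Once this step is secured, the remainder of the argument is essentially the formal bookkeeping of short exact sequences and their cokernels.
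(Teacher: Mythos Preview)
The paper does not prove this lemma; it is stated with an immediate \qed and attributed to B\"uhler. Your argument is the standard one and is correct in outline.

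Regarding the step you flag as the main obstacle: the cleanest way to secure it from \Cref{prp: Buehler2.12} alone is as follows. From your morphism of short exact sequences $B' \rightarrowtail P \twoheadrightarrow A''$ to $B' \rightarrowtail B \twoheadrightarrow B''$ with identity on the left, \Cref{prp: Buehler2.12}.\ref{prp: Buehler2.12-pull}, condition (4), shows that the square with horizontals $P \twoheadrightarrow A''$ and $B \twoheadrightarrow B''$ is a pullback. Since $B \twoheadrightarrow B'' \twoheadrightarrow C''$ is a composite of admissible epics, it is itself an admissible epic, and the pullback description identifies $P \to B$ with its kernel: a map $T \to B$ is killed by $B \to C''$ iff $T \to B \to B''$ factors through $A'' = \ker(B'' \twoheadrightarrow C'')$, which by the pullback property is equivalent to factoring through $P$. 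Hence $P \rightarrowtail B$ is an admissible monic with cokernel $C''$. This is a touch more than a direct citation of \Cref{prp: Buehler2.12}.\ref{prp: Buehler2.12-push}, but it does follow from the stated material. Your final step, extracting $C' \rightarrowtail C \twoheadrightarrow C''$ from the filtration $A \rightarrowtail P \rightarrowtail B$, then follows cleanly from \Cref{prp: Buehler2.12}.\ref{prp: Buehler2.12-push} applied to the pushout of $P \rightarrowtail B$ along $P \twoheadrightarrow C'$, exactly as you indicate.
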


\begin{dfn}
	A subcategory $\E' \subseteq \E$ of an exact category is called \textbf{(fully) exact} if it is an exact category itself, and if the inclusion functor preserves (and reflects) short exact sequences.\footnote{Bühler uses the term \emph{fully exact} for the stronger notion of extension-closedness, see {\cite[Lem.~10.20]{Buh10}}.} 
\end{dfn}

\begin{prp}[{\cite[Ex.~13.5, Prop.~11.3, Cor.~11.4]{Buh10}}] \label{prp: Proj-Inj}
	The subcategories $\Proj(\E)$ of projective objects and $\Inj(\E)$ of injective objects are fully exact with the split exact structure, where $\Proj(\E)$ is closed under kernels of admissible epics and $\Inj(\E)$ under cokernels of admissible monics.
\end{prp}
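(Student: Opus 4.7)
The plan is to reduce by duality to the statement about projectives, and prove it in two main steps: first, that $\Proj(\E)$ is closed under kernels of admissible epics and under direct summands, and second, that every admissible short exact sequence in $\E$ with all three terms in $\Proj(\E)$ splits, so the inherited exact structure is the split one.

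First I would observe that for any admissible epic $\epic{P}{Q}$ in $\E$ with $Q \in \Proj(\E)$, the defining lifting property of the projective $Q$ applied to $\id_Q$ produces a section, so the sequence $\monic{K}{P} \epic{\;}{Q}$ splits. Iterating via \Cref{prp: Buehler2.9}, this gives an isomorphism $P \cong K \oplus Q$. I would then record the standard fact that $\Proj(\E)$ is closed under direct summands: given $P = P_1 \oplus P_2 \in \Proj(\E)$ and a lifting problem for $P_1$ against an admissible epic $\epic{X}{Y}$, extend it to $P$ by zero on $P_2$, lift using projectivity of $P$, and restrict back. Combining these two facts, if $\epic{P}{P'}$ is an admissible epic in $\E$ with $P, P' \in \Proj(\E)$, its kernel $K$ is a direct summand of $P$, hence in $\Proj(\E)$. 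This establishes closure under kernels of admissible epics.

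Next I would verify the "fully exact with split exact structure" claim. The split short exact sequences in the additive category $\Proj(\E)$ form an exact structure (the minimal one), so it only remains to check that the inclusion $\Proj(\E) \hookrightarrow \E$ both preserves and reflects short exact sequences. Preservation is immediate from \Cref{prp: Buehler2.9}, since every split short exact sequence in $\Proj(\E)$ is, in particular, a split short exact sequence in $\E$ and hence admissible. For reflection, suppose $\monic{P'}{P} \epic{\;}{P''}$ is admissible in $\E$ with $P', P, P'' \in \Proj(\E)$; by the splitting argument from the first paragraph (applied with $Q = P''$ projective), this sequence splits, and is therefore short exact in the split exact structure on $\Proj(\E)$.

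Finally, the dual statement for $\Inj(\E)$ follows by applying everything to the opposite exact category $\op{\E}$, under which projectives in $\op{\E}$ correspond to injectives in $\E$, admissible epics to admissible monics, and kernels to cokernels. The only step requiring genuine care is the closure under direct summands of projectives, since one has to check that the formal lift constructed from the summand decomposition is a morphism in $\E$ (which it is, being assembled from morphisms in $\E$ via the biproduct structure); I expect no serious obstacle beyond bookkeeping.
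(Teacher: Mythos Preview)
The paper does not supply its own proof of this proposition; it is stated with a citation to B\"uhler's survey and left unproved. Your argument is a correct, self-contained verification of the statement along entirely standard lines: the splitting of any admissible epic onto a projective, closure of projectives under direct summands, and the resulting identification of the inherited exact structure with the split one. This is exactly the content of the cited results in B\"uhler, so your proof is essentially what one would find by unwinding the reference.
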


\begin{cnv} \label{cnv: I-P}
	Let $\E$ be an exact category.
	\begin{enumerate}
		\item \label{cnv: I-P-i} If $\E$ has enough injectives, we fix for any $A \in \E$ an admissible monic $i=i_A\colon \monic{A}{I(A)}$ with $I(A) \in \Inj(\E)$ and a cokernel $\Sigma A$ of $i_A$. We choose $i_A=\id_A$ whenever $A \in \Inj(\E)$.
		
		\item \label{cnv: I-P-p} If $\E$ has enough projectives, we fix for any $A \in \E$ an admissible epic $p=p_A\colon \epic{P(A)}{A}$ with $P(A) \in \Proj(\E)$ and a kernel $\Sigma^{-1}A$ of $p_A$. We choose $p_A=\id_A$ whenever $A \in \Proj(\E)$.
	\end{enumerate}
\end{cnv}

\begin{con} \label{con: cone}
	Let $\F$ be a Frobenius category. Then $\Sigma$ becomes an endo-functor of the stable category $\stab \F$, defined on representatives of morphisms by a commutative diagram 
	\[
		\begin{tikzcd}[sep={17.5mm,between origins}]
			X \ar[r, tail, "i_X"] \ar[d] & I(X) \ar[r, two heads] \ar[d, dashed] & \Sigma X \ar[d, dashed]\\
			Y \ar[r, tail, "i_Y"] & I(Y) \ar[r, two heads] & \Sigma Y
		\end{tikzcd}
	\]
	in $\F$, independent of choices up to isomorphism of functors, see {\cite[Rem.~2.2]{Hap88}}. The dual diagram makes $\Sigma^{-1}$ an endo-functor of $\stab \F$. For any morphism $f\colon X \to Y$ in $\F$, \Cref{prp: Buehler2.12}.\ref{prp: Buehler2.12-push} yields a pushout diagram
	\begin{equation*}
		\begin{tikzcd}[sep={17.5mm,between origins}] 
			X \ar[r, tail, "i"] \ar[d, "f"] \ar[rd, phantom, "\square"] & I(X) \ar[r, two heads] \ar[d, "g"] & \Sigma X \ar[d, equal] \\
			Y \ar[r, tail, "j"] & C(f) \ar[r, two heads] & \Sigma X
		\end{tikzcd}
	\end{equation*}
	and a short exact sequence
	\begin{equation*}
		\begin{tikzcd}[sep=large]
			X \ar[r, "\begin{pmatrix} i \\ -f \end{pmatrix}", tail] & I(X) \oplus Y \ar[r, "\begin{pmatrix} g \hspace{2mm} j \end{pmatrix}", two heads] & C(f).
		\end{tikzcd}
	\end{equation*}
	The object $C(f) \in \F$ is called a \textbf{cone} of $f$ and the sequence
	\begin{equation*}
		\begin{tikzcd}[sep=large] 
				X \ar[r, "f"] & Y \ar[r] & C(f) \ar[r] & \Sigma X
			\end{tikzcd}
	\end{equation*}
	in $\underline \F$ a \textbf{standard triangle}. Dually, a \textbf{cocone} $C^\ast(f)$ of $f$ fits into a pullback diagram
	\begin{equation*}
		\begin{tikzcd}[sep={17.5mm,between origins}]
			\Sigma^{-1}Y \ar[r, tail] \ar[d, equal] & C^\ast(f) \ar[r, two heads] \ar[d] \ar[rd, phantom, "\square"] & X \ar[d, "f"] \\
			\Sigma^{-1}Y \ar[r, tail] & P(Y) \ar[r, two heads, "p"] & Y
		\end{tikzcd}
	\end{equation*}
	and into a short exact sequence
	\begin{equation} \label{eqn: std-sequence-dual}
		\begin{tikzcd}
			C^\ast(f) \ar[r, tail] & X \oplus P(Y) \ar[r, two heads, "\begin{pmatrix} -f \hspace{2mm} p \end{pmatrix}"] & Y.
		\end{tikzcd}
	\end{equation}
\end{con}

\begin{thm}[{\cite[Rem.~2.2, Thm.~2.6]{Hap88}}] \label{thm: stable-Frobenius}
	The stable category of a Frobenius category is triangulated. The suspension functor is $\Sigma$ with quasi-inverse $\Sigma^{-1}$, the distinguished triangles are the candidate triangles isomorphic to standard triangles, see \Cref{con: cone}. \qed
\end{thm}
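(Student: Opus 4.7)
The plan is to verify the axioms of a triangulated category directly for $\stab \F$, using $\Sigma$ as suspension and the class of candidate triangles isomorphic to standard triangles from \Cref{con: cone} as the distinguished ones. The preliminary step is to show that $\Sigma$ and $\Sigma^{-1}$ from \Cref{con: cone} are mutually quasi-inverse auto-equivalences of $\stab \F$. Well-definedness up to natural isomorphism follows from the convention $i_A = \id_A$ for $A \in \Inj(\F)$ together with the standard observation that two choices of injective embedding of $X$ differ by the addition of an injective direct summand, which is zero in $\stab \F$. The mutual quasi-inverse property is then read off by dualizing the defining short exact sequence $X \rightarrowtail I(X) \twoheadrightarrow \Sigma X$.

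Axiom (TR1) is immediate: the trivial triangle $X \xrightarrow{\id} X \to 0 \to \Sigma X$ is isomorphic to a standard one because $C(\id_X) \cong I(X)$ vanishes in $\stab \F$, and every morphism in $\stab \F$ admits a representative in $\F$ to which \Cref{con: cone} applies. For the rotation axiom (TR2), one verifies that the rotation of a standard triangle is isomorphic to a standard triangle by identifying a cone of $Y \to C(f)$ with $\Sigma X$ in $\stab \F$: one assembles the pushout square defining $C(f)$ together with the injective embedding $i_Y$ into a $3 \times 3$ array of admissible short exact sequences, applies \Cref{lem: Noether} to close it, and observes that the resulting comparison map differs from the identity on $\Sigma X$ by an injective direct summand, hence becomes an isomorphism in $\stab \F$.

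Axiom (TR3), the fill-in property, is handled by first lifting a morphism of triangles in $\stab \F$ to a morphism of diagrams in $\F$, possibly after adjusting representatives through injectives so that the relevant square commutes strictly in $\F$. The pushout characterization in \Cref{prp: Buehler2.12}.\ref{prp: Buehler2.12-push} then produces the required morphism on cones, which descends to the desired morphism of triangles in $\stab \F$.

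The main obstacle is the octahedral axiom (TR4). For composable morphisms $f \colon X \to Y$ and $g \colon Y \to Z$, the strategy is to build the cones $C(f)$, $C(gf)$, and $C(g)$ using compatible pushout data along $i_X$, $i_X$, and $i_Y$, and to extract natural morphisms $C(f) \to C(gf) \to C(g)$ from these pushouts. The technical core is to exhibit a short exact sequence of the form $C(f) \rightarrowtail C(gf) \oplus I(X) \twoheadrightarrow C(g)$ in $\F$, which by the biproduct characterization \Cref{prp: Buehler2.12}.\ref{prp: Buehler2.12-push}.\ref{prp: Buehler2.12-push-3} certifies that $C(f) \to C(gf) \to C(g) \to \Sigma C(f)$ is isomorphic to a standard triangle. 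This identification is obtained by arranging the three pushout squares into a $3 \times 3$ grid of short exact rows and columns and applying \Cref{lem: Noether} twice. The compatibility of the connecting map with $\Sigma$ follows from the naturality of the pushout construction, and the remaining (enhanced octahedral) commutativities are checked directly on the $3 \times 3$ grid. Once (TR4) is in hand, the inverse rotation in (TR2) follows formally from (TR1)--(TR3) and octahedral.
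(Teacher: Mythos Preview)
The paper does not give a proof; the theorem is quoted from Happel with a bare \qed, so there is nothing in the paper itself to compare against beyond the citation. Your outline is essentially the argument one finds in the cited reference.

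One concrete slip in your (TR4) sketch: the short exact sequence cannot take the form $C(f) \rightarrowtail C(gf) \oplus I(X) \twoheadrightarrow C(g)$ with $I(X)$ as the injective summand --- already a dimension count in a module category rules this out. The injective that must appear is one into which $C(f)$ admissibly embeds, for instance $I(C(f))$, or $I(Y) \oplus I(\Sigma X)$ obtained via the horseshoe on $Y \rightarrowtail C(f) \twoheadrightarrow \Sigma X$; once this is corrected, the $3 \times 3$ strategy via \Cref{lem: Noether} goes through as you describe. A minor remark: deriving the inverse rotation in (TR2) from the octahedral axiom is unusual --- in Happel's treatment it is verified directly, dually to forward rotation, using the cocone construction in place of the cone.
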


\begin{dfn}
	We call a morphism in a Frobenius category $\F$ a \textbf{stable isomorphism} if it is an isomorphism in the stable category $\stab \F$.
\end{dfn}

\begin{lem}[{\cite[Lem.~2.7]{Hap88}}] \label{lem: ses-triangle}
	Any short exact sequence \begin{tikzcd}[cramped, sep=small] X \ar[r, tail, "i"] & Y \ar[r, two heads, "p"] & Z \end{tikzcd}
	in a Frobenius category $\F$ induces a distinguished triangle
	\begin{tikzcd}[cramped, sep=small] 
		X \ar[r, "i"] & Y \ar[r, "p"] & Z
	\end{tikzcd}
	in $\underline \F$. In particular, any morphism $f$ in $\F$ yields a distinguished triangle \begin{tikzcd}[cramped, sep=small]
		C^\ast(f) \ar[r] & X \ar[r, "f"] & Y
	\end{tikzcd} in $\underline \F$, see \eqref{eqn: std-sequence-dual}. \qed
\end{lem}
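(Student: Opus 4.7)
The plan is to exhibit the candidate triangle induced by the short exact sequence as isomorphic, in $\stab \F$, to a standard triangle, which is distinguished by \Cref{thm: stable-Frobenius}. Given $X \xrightarrow{i} Y \xrightarrow{p} Z$, I would first form the cone $C(i)$ via the pushout in \Cref{con: cone} to obtain the standard triangle $X \xrightarrow{i} Y \xrightarrow{j} C(i) \xrightarrow{q} \Sigma X$, together with the second pushout leg $g\colon I(X) \to C(i)$.

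I would then exploit the symmetry of the pushout square: reading the same square with $i_X$ as the horizontal monic being pushed out along the admissible monic $i$, \Cref{prp: Buehler2.12}.\ref{prp: Buehler2.12-push} produces a commutative diagram
\[
\begin{tikzcd}
X \ar[r, tail, "i"] \ar[d, tail, "i_X"] & Y \ar[r, two heads, "p"] \ar[d, tail, "j"] & Z \ar[d, equal] \\
I(X) \ar[r, tail, "g"] & C(i) \ar[r, two heads, "\psi"] & Z
\end{tikzcd}
\]
with short exact rows, thereby defining the sought morphism $\psi\colon C(i) \to Z$ satisfying $\psi j = p$. Since $I(X) \in \Inj(\F) = \Proj(\F)$, the lower row splits in $\F$, so $\psi$ becomes an isomorphism in $\stab \F$. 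Setting $\phi := q \psi^{-1}$ in $\stab \F$, the triple $(\id_X, \id_Y, \psi)$ then defines an isomorphism of candidate triangles from the standard one to $X \xrightarrow{i} Y \xrightarrow{p} Z \xrightarrow{\phi} \Sigma X$, which is therefore distinguished.

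For the second assertion, I would apply the first part to the short exact sequence \eqref{eqn: std-sequence-dual} from \Cref{con: cone}, producing a distinguished triangle $C^\ast(f) \to X \oplus P(Y) \to Y \to \Sigma C^\ast(f)$ in $\stab \F$. Since $P(Y) \in \Proj(\F)$ vanishes in $\stab \F$, the canonical inclusion $X \hookrightarrow X \oplus P(Y)$ is an isomorphism there, and composing the distinguished triangle with its inverse yields the stated triangle $C^\ast(f) \to X \xrightarrow{f} Y \to \Sigma C^\ast(f)$, possibly up to a sign in $f$ that can be absorbed by the automorphism $-\id_X$.

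The main subtlety I anticipate is the symmetry step: making precise that the \emph{same} pushout square of \Cref{con: cone} furnishes, on one hand, the cokernel $q\colon C(i) \twoheadrightarrow \Sigma X$ entering the standard triangle, and, on the other, the cokernel $\psi\colon C(i) \twoheadrightarrow Z$ mediating with the original sequence, without circularity. A clean alternative would be to derive both short exact rows in a single application of \Cref{lem: Noether} to the $3\times 3$ grid built from the columns $X \rightarrowtail I(X) \twoheadrightarrow \Sigma X$ and $Y \rightarrowtail C(i) \twoheadrightarrow \Sigma X$ against the rows $X \rightarrowtail Y \twoheadrightarrow Z$ and $I(X) \rightarrowtail C(i) \twoheadrightarrow Z$, which simultaneously fixes the compatibility of $q$, $\psi$, and $\phi$.
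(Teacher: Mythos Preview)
The paper does not prove this lemma at all: it is stated with a citation to Happel \cite[Lem.~2.7]{Hap88} and closed immediately with \qed. Your argument is the standard one (essentially Happel's), and it is correct as written; the symmetry step is not circular, since \Cref{prp: Buehler2.12}.\ref{prp: Buehler2.12-push} applied to the pushout square with $i$ as the admissible monic being pushed along $i_X$ gives exactly the diagram you wrote, and the two cokernels $q$ and $\psi$ arise from the two readings of the same bicartesian square.
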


\begin{prp}[{\cite[Prop.~7.3]{IKM16}}] \label{prp: stable-functor-triang}
	Any exact functor $F\colon \F' \to \F$ of Frobenius categories preserving projective-injectives induces a triangulated functor $\underline F\colon \underline \F' \to \underline \F$ of the respective stable categories.\qed
\end{prp}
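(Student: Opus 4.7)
To prove \Cref{prp: stable-functor-triang}, I would proceed in three stages: well-definedness of $\stab F$, construction of a natural isomorphism $\eta \colon \stab F \Sigma' \xrightarrow{\sim} \Sigma \stab F$, and verification that standard triangles are sent to distinguished triangles.

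First, $F$ is additive and, by hypothesis, sends $\Proj(\F') = \Inj(\F')$ into $\Proj(\F) = \Inj(\F)$. Hence a morphism in $\F'$ that factors through a projective-injective object is sent by $F$ to one that factors through a projective-injective object in $\F$. This shows that $F$ descends to an additive functor $\stab F \colon \stab{\F'} \to \stab{\F}$.

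Next, I construct $\eta$. For each $X \in \F'$, the defining short exact sequence $X \rightarrowtail I(X) \twoheadrightarrow \Sigma' X$ is mapped by the exact functor $F$ to a short exact sequence $FX \rightarrowtail FI(X) \twoheadrightarrow F\Sigma' X$, in which $FI(X) \in \Inj(\F)$. Comparing this with $FX \rightarrowtail I(FX) \twoheadrightarrow \Sigma FX$ via the lifting property of injective objects produces a morphism $FI(X) \to I(FX)$ extending $\id_{FX}$, which in turn induces $\eta_X \colon F\Sigma' X \to \Sigma FX$ on cokernels. The class of $\eta_X$ in $\stab{\F}$ is independent of the chosen lift (since two lifts differ by a morphism factoring through $FI(X)$, hence through a projective-injective). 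By the same argument as in \cite[Rem.~2.2]{Hap88}, $\eta_X$ is an isomorphism in $\stab{\F}$: swapping the roles of the two sequences yields an inverse. Naturality in $X$ follows by applying $F$ to the naturality square used to define $\Sigma'$ on morphisms, again using uniqueness up to factorization through injectives.

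Finally, I would show that $\stab F$ sends standard triangles to distinguished triangles up to $\eta$. Given $f \colon X \to Y$ in $\F'$, the cone $C(f)$ is constructed by the pushout square of $i_X$ along $f$, which by \Cref{prp: Buehler2.12}.\ref{prp: Buehler2.12-push} corresponds to a short exact sequence $X \rightarrowtail I(X) \oplus Y \twoheadrightarrow C(f)$. Since $F$ is exact and additive, applying it gives a short exact sequence $FX \rightarrowtail FI(X) \oplus FY \twoheadrightarrow FC(f)$, which, again by \Cref{prp: Buehler2.12}.\ref{prp: Buehler2.12-push}, exhibits $FC(f)$ as a pushout of $Fi_X$ along $Ff$. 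Composing the comparison $FI(X) \to I(FX)$ constructed above with this pushout produces a canonical morphism $FC(f) \to C(Ff)$ which, together with $\eta_X$, fits into a morphism from $\stab F$ applied to the standard triangle of $f$ to the standard triangle of $Ff$. The three-by-three diagram shows that this morphism is an isomorphism of candidate triangles in $\stab{\F}$, so the image triangle is distinguished by \Cref{thm: stable-Frobenius}.

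The main obstacle is organizational rather than conceptual: one must verify that the comparison isomorphisms $\eta_X$ and the cone comparison are natural and compatible in the precise sense required by the axioms of a triangulated functor, which amounts to a careful diagram chase exploiting that any two morphisms between the relevant diagrams with injective middle term agree modulo factorizations through projective-injectives.
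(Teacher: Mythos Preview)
Your proposal is correct and follows the standard argument for this result. Note, however, that the paper does not give its own proof: the statement is cited from \cite[Prop.~7.3]{IKM16} and closed with a \texttt{\textbackslash qed}, so there is nothing to compare against beyond the reference itself; your outline is essentially what one finds there.
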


We further specialize to monomorphism categories, which are our objects of study.

\begin{ntn}[{\cite[Def.~3.1]{BM24}}, {\cite[Def.~4.1]{IKM17}}] 
	Let $\E$ be an exact category and $l \in \NN$.
	\begin{enumerate}
		\item Let $\Mor_{l}(\E)$ denote the category of diagrams of Dynkin type $A_{l+1}$ in $\E$, where $A_{l+1}$ is the linear quiver
		\[
			\begin{tikzcd} \underset 0 \bullet \ar[r] & \underset 1 \bullet \ar[r] & \cdots \ar[r] & \underset {l-1} \bullet \ar[r] & \underset l \bullet \end{tikzcd}
		\]
		with $l$ arrows. We denote elements of $\Mor_{l}(\E)$ by
		\[
			\begin{tikzcd}
				X=(X, \alpha)\colon \; X^0 \ar[r, "\alpha^0"] & X^1 \ar[r, "\alpha^1"] & \cdots \ar[r, "\alpha^{l-1}"] & X^l.
			\end{tikzcd}
		\]
		By $\mMor_l(\E)$ and $\smMor_l(\E)$ we denote the subcategories of $\Mor_l(\E)$, where all arrows are admissible or split monics, respectively.
		
		\item Given an object $A \in \E$ and $n \in \{1, \dots, l+1\}$, we define the object
		\[
			\begin{tikzcd}
				\mu_n(A)\colon \; 0 \ar[r, equal] & \cdots \ar[r, equal] & 0 \ar[r, tail] & A^{l-n+1} \ar[r, equal] & \cdots \ar[r, equal] & A^{l}
			\end{tikzcd}
		\]
		of $\smMor_{l}(\E)$ by $A^k := A$ for $k \in \{l-n+1, \dots, l\}$.
	\end{enumerate}
\end{ntn}

\begin{thm}[{\cite[Props.~3.5, 3.9, 3.11, Thm.~3.12]{BM24}}] \label{thm: mMor}
	Let $\E$ be an exact category.
	\begin{enumerate}
		\item \label{thm: mMor-exact} The category $\mMor_l(\E)$ is exact with the termwise exact structure.
		
		\item \label{thm: mMor-Proj} We have $\Proj(\mMor_{l}(\E))=\smMor_l(\Proj(\E))$ and $\Inj(\mMor_{l}(\E))=\smMor_l(\Inj(\E))=\mMor_l(\Inj(\E))$.
		
		\item \label{thm: mMor-enough} If $\E$ has enough projectives resp. injectives, then so has $\mMor_l(\E)$.
	\end{enumerate}
 In particular, if $\F$ is a Frobenius category, then so is $\mMor_l(\F)$, and $\Proj(\mMor_{l}(\F))=\mMor_l(\Proj(\F))$. \qed
\end{thm}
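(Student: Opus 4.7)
The plan is to prove the four claims in the order (a), (c), (b), deriving the Frobenius assertion as a corollary.

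For (a), the functor category $\Mor_l(\E)$ carries a termwise exact structure by the standard pointwise construction for functors from a small category into an exact category. To show $\mMor_l(\E)$ is extension-closed in $\Mor_l(\E)$, I take a termwise short exact sequence $X \rightarrowtail Y \twoheadrightarrow Z$ in $\Mor_l(\E)$ with $X, Z \in \mMor_l(\E)$. At each position, the two consecutive rows together with the admissible-monic outer columns form a configuration to which the standard $3{\times}3$-argument (\Cref{lem: Noether} applied after completing with cokernels, or a direct analysis via \Cref{prp: Buehler2.12}) applies, forcing the middle column $Y^i \to Y^{i+1}$ to be an admissible monic. Hence $Y \in \mMor_l(\E)$, and $\mMor_l(\E)$ inherits a fully exact structure.

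For (c), enough injectives are built inductively. Given $X \in \mMor_l(\E)$, set $\eta^0 \colon X^0 \rightarrowtail I^0 \in \Inj(\E)$ from \Cref{cnv: I-P}. At step $i{+}1$, form the pushout $P^{i+1} := X^{i+1} \sqcup_{X^i} I^i$ along $\alpha^i$ and $\eta^i$; by \Cref{prp: Buehler2.12}, the induced map $X^{i+1} \rightarrowtail P^{i+1}$ is admissible monic, and post-composing with an injective embedding $P^{i+1} \rightarrowtail I^{i+1}$ yields both $\eta^{i+1}$ and a transition $I^i \to I^{i+1}$ that is admissible monic as a composition of two such. The resulting $I \in \mMor_l(\Inj(\E))$ admits an admissible monic $X \rightarrowtail I$ in $\mMor_l(\E)$. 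Enough projectives arise from an analogous construction starting at the top of the chain and propagating downward via projective covers and pullbacks.

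For (b), the equality $\mMor_l(\Inj(\E)) = \smMor_l(\Inj(\E))$ is immediate from \Cref{prp: Proj-Inj}: the exact structure on $\Inj(\E)$ is split, so every admissible monic between injectives splits. For $\Inj(\mMor_l(\E)) = \mMor_l(\Inj(\E))$, the inclusion $\supseteq$ reduces to a componentwise lifting: given an admissible monic $U \rightarrowtail V$ in $\mMor_l(\E)$ and a morphism $U \to X$ into $X \in \mMor_l(\Inj(\E))$, one extends step by step using injectivity of each $X^i$ and a pushout argument to enforce compatibility with the chain structure. The reverse inclusion uses (c): any injective $X$ is a direct summand of the object $I \in \mMor_l(\Inj(\E))$ into which it embeds admissibly, and the summand is obtained termwise compatibly with the split transition maps. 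The projective identification $\Proj(\mMor_l(\E)) = \smMor_l(\Proj(\E))$ is formally dual.

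I expect the main obstacle to be the summand-propagation argument in (b): given an idempotent endomorphism of a chain in $\smMor_l(\Inj(\E))$, I must produce a splitting whose factors still consist of admissible-monic chains of injectives. I would handle this by induction on $l$, invoking \Cref{prp: Proj-Inj} (summands of injectives are injective) at each position and splitting the transition maps along the induced component decompositions. The Frobenius conclusion then follows for free: if $\Proj(\F) = \Inj(\F)$, parts (b) and \Cref{prp: Proj-Inj} combine to give $\Proj(\mMor_l(\F)) = \smMor_l(\Proj(\F)) = \mMor_l(\Proj(\F)) = \mMor_l(\Inj(\F)) = \Inj(\mMor_l(\F))$, and (c) supplies enough objects of this common class, so $\mMor_l(\F)$ is Frobenius with the asserted projective-injective class.
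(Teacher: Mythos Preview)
The paper does not prove this theorem; it is cited from \cite{BM24} and closed with a bare \qed. So there is no ``paper's proof'' to compare against, though \Cref{con: mMor-enough} in the paper does record the explicit projective covers and injective hulls that witness \ref{thm: mMor-enough}. Your outline for \ref{thm: mMor-exact} and for the injective half of \ref{thm: mMor-enough} is correct and matches \Cref{con: mMor-enough}.\ref{con: mMor-enough-inj}.

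There is, however, a genuine gap on the projective side. The category $\mMor_l(\E)$ is \emph{not} self-dual: its opposite is an epimorphism category over $\op\E$, not a monomorphism category. Your sentence ``enough projectives arise from an analogous construction \dots\ via projective covers and pullbacks'' does not survive this asymmetry. Concretely, pulling back $p_{X^l}\colon P(X^l)\twoheadrightarrow X^l$ along $\alpha^{l-1}\colon X^{l-1}\rightarrowtail X^l$ yields an object $Q$ with $Q\rightarrowtail P(X^l)$ admissible monic and $Q\twoheadrightarrow X^{l-1}$ admissible epic; covering $Q$ by $P^{l-1}:=P(Q)$ produces a transition map
\[
P^{l-1}\twoheadrightarrow Q\rightarrowtail P(X^l)=P^l,
\]
which is a composite (epic)\,$\circ$\,(monic) and in general is neither an admissible nor a split monic. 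So the resulting chain need not lie in $\smMor_l(\Proj(\E))$, let alone in $\mMor_l(\E)$. The correct construction is the direct-sum formula of \Cref{con: mMor-enough}.\ref{con: mMor-enough-proj}: set $P^k:=\bigoplus_{j\le k}P(X^j)$ with the evident split inclusions and the epic $p^k=(\alpha^{k-1}\cdots\alpha^j p_{X^j})_{j\le k}$. For the same reason, ``formally dual'' does not suffice for $\Proj(\mMor_l(\E))=\smMor_l(\Proj(\E))$; the inclusion $\supseteq$ is best seen by observing that every object of $\smMor_l(\Proj(\E))$ decomposes as $\bigoplus_n \mu_n(P_n)$ and that $\mu_n$ is left adjoint to the exact evaluation functor $X\mapsto X^{l-n+1}$, hence preserves projectives.

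A second, smaller issue: your argument for the inclusion $\Inj(\mMor_l(\E))\subseteq \mMor_l(\Inj(\E))$ invokes \ref{thm: mMor-enough}, so it tacitly assumes that $\E$ has enough injectives, whereas \ref{thm: mMor-Proj} is asserted for arbitrary $\E$. The clean fix avoids \ref{thm: mMor-enough} altogether: since $\mu_{l-i+1}$ is left adjoint to the exact functor $X\mapsto X^i$, any injective $X\in\mMor_l(\E)$ has $X^i\in\Inj(\E)$ for every $i$; combined with \Cref{prp: Proj-Inj} this gives $X\in\mMor_l(\Inj(\E))=\smMor_l(\Inj(\E))$ directly. (Your summand-propagation worry is, incidentally, not an obstacle: if $Q\in\smMor_l(\Proj(\E))$ and $P$ is a summand of $Q$ in $\mMor_l(\E)$, the transition maps of $Q$ are block-diagonal in the induced decomposition, so a retraction of $\alpha_Q^j$ restricts to one of $\alpha_P^j$.) With these two repairs the Frobenius conclusion goes through as you wrote it.
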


For later reference, we construct sections that witness the injectivity of objects of $\mMor_l(\Inj(\E))$:

\begin{lem} \label{lem: extend-left-inverse}
	Let $\E$ be an exact category, $l \in \NN$, and $I \in \mMor_{l}(\Inj(\E))$. For an admissible monic $s \colon \monic{I}{X}$ in $\mMor_{l}(\E)$, any left-inverse $r^0 \colon X^0 \to I^0$ of $s^0$ extends to a left-inverse $r \colon X \to I$ of $s$.
\end{lem}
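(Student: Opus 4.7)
My plan is to proceed by induction on $j \in \{0, 1, \ldots, l\}$, writing $\alpha^j_X$ and $\alpha^j_I$ for the structural admissible monics of $X$ and $I$, and at each step constructing $r^j\colon X^j \to I^j$ satisfying the left-inverse condition $r^j s^j = \id_{I^j}$ together with the compatibility $r^j \alpha^{j-1}_X = \alpha^{j-1}_I r^{j-1}$ for $j \geq 1$, which is needed for $r = (r^j)_j$ to assemble into a morphism in $\mMor_l(\E)$. The base case $j = 0$ is the given hypothesis.

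For the inductive step, I first use that $I^{j+1} \in \Inj(\E)$ to extend $\id_{I^{j+1}}$ along the admissible monic $s^{j+1}$, producing some $\tilde\rho\colon X^{j+1} \to I^{j+1}$ with $\tilde\rho s^{j+1} = \id_{I^{j+1}}$. This naive choice may fail compatibility with $r^j$, so I measure its defect via
\[ \delta := \tilde\rho \alpha^j_X - \alpha^j_I r^j \colon X^j \to I^{j+1}. \]
The commutativity $s^{j+1} \alpha^j_I = \alpha^j_X s^j$ together with the inductive hypothesis $r^j s^j = \id_{I^j}$ yields $\delta s^j = 0$, so $\delta$ factors through the cokernel $\pi^j\colon X^j \twoheadrightarrow C^j := X^j/I^j$ of $s^j$ as $\delta = \bar\delta\,\pi^j$ for a unique $\bar\delta\colon C^j \to I^{j+1}$.

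The crucial structural input now enters: because $s\colon I \rightarrowtail X$ is an admissible monic in $\mMor_l(\E)$ with respect to the termwise exact structure (\Cref{thm: mMor}.\ref{thm: mMor-exact}), its termwise cokernel $C = X/I$ is again an object of $\mMor_l(\E)$. In particular, the induced map $\bar\alpha^j\colon C^j \to C^{j+1}$, characterized by $\bar\alpha^j \pi^j = \pi^{j+1} \alpha^j_X$, is itself an admissible monic in $\E$. Injectivity of $I^{j+1}$ therefore lets me extend $\bar\delta$ along $\bar\alpha^j$ to some $\bar\epsilon\colon C^{j+1} \to I^{j+1}$ with $\bar\epsilon\, \bar\alpha^j = \bar\delta$, and I set $r^{j+1} := \tilde\rho - \bar\epsilon\,\pi^{j+1}$. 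A direct check using $\pi^{j+1} s^{j+1} = 0$ and $\pi^{j+1} \alpha^j_X = \bar\alpha^j \pi^j$ then yields $r^{j+1} s^{j+1} = \id_{I^{j+1}}$ and $r^{j+1} \alpha^j_X = \alpha^j_I r^j$, closing the induction.

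The main (and essentially only) subtle point is the observation that $\bar\alpha^j$ is automatically an admissible monic. For an arbitrary commutative square of admissible monics in $\E$ this would fail, so the argument genuinely uses that $s$ is admissible in $\mMor_l(\E)$ rather than merely termwise. Once this is in hand, everything reduces to the familiar "extend and correct" pattern enabled by injectivity of the $I^j$.
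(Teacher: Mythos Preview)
Your proof is correct and takes a genuinely different route from the paper's. The paper reduces to the case $l=1$ and then works with the decomposition of $I^1$: using that $\Inj(\E)$ is closed under cokernels of admissible monics (\Cref{prp: Proj-Inj}), the cokernel $J^1$ of $\iota^0\colon I^0\rightarrowtail I^1$ is injective, so both the top row and the right column of the $3\times 3$ Noether diagram split. From these splittings and a lift $\tilde r^0$ of $r^0$ along $\alpha^0$, the paper writes down an explicit formula for $r^1$ and verifies the two required identities by hand.

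Your argument instead exploits the cokernel of $s$ rather than the cokernel inside $I$: the key structural fact you use is that $C=X/I$ lies in $\mMor_l(\E)$, so each $\bar\alpha^j\colon C^j\to C^{j+1}$ is an admissible monic, which is exactly what is needed to extend the defect $\bar\delta$ and run the ``extend and correct'' scheme. This is cleaner and more conceptual---the induction handles arbitrary $l$ directly, and no explicit formula needs to be checked---while the paper's version trades conceptual clarity for a concrete expression of $r^1$. Both approaches ultimately hinge on the same phenomenon (admissibility of $s$ in $\mMor_l(\E)$ rather than merely termwise), just seen from different angles.
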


\begin{proof}
	Write $I=(I, \iota)$ and $X=(X, \alpha)$. We may assume that $l=1$. Due to \Cref{lem: Noether}, the admissible monic $s$ gives rise to a commutative diagram
	\[
		\begin{tikzcd}[sep={17.5mm,between origins}]
			I^0 \ar[r, tail, "\iota^0"] \ar[d, tail, "s^0"] & I^1 \ar[d, tail, "s^1"] \ar[r, two heads, "\kappa^1"] \ar[l, bend right=15mm, dotted, "\kappa^0"'] & J^1 \ar[d, dashed, tail, "j^1"] \ar[l, bend right=15mm, dotted, "\iota^1"'] \\
			X^0 \ar[r, tail, "\alpha^0"] \ar[d, two heads] \ar[u, bend left=15mm, "r^0"]& X^1 \ar[r, two heads, "\beta^1"] \ar[d, two heads] \ar[lu, "\tilde r^0"', dotted] \ar[u, bend right=15mm, dotted, "r^1"'] & Y^1 \ar[d, dashed, two heads] \ar[u, bend right=15mm, dotted, "k^1"'] \\
			\bullet \ar[r, tail] & \bullet \ar[r, two heads] & \bullet
		\end{tikzcd}
	\]
	of solid and dashed arrows in $\E$ with short exact rows and columns. By \Cref{prp: Proj-Inj}, we have $J^1 \in \Inj(\E)$. The dotted arrows are now obtained as follows: The upper row and the right-hand column split since $I^0, J^1 \in \Inj(\E)$. Hence, there are a left-inverse $k^1$ of $j^1$ and morphisms $\iota^1$ and $\kappa^0$ such that $\iota^0 \kappa^0 + \iota^1\kappa^1 = \id_{I^1}$. Using $I^0 \in \Inj(\E)$ again, we lift $r^0$ along $\alpha^0$ to obtain $\tilde r^0 \colon X^1 \to I^0$ with $\tilde r^0 \alpha^0 = r^0$. Set $r^1 := \iota^0 \tilde r^0 - \iota^0\tilde r^0 s^1 \iota^1 k^1 \beta^1 +\iota^1 k^1 \beta^1$. Then $r:=(r^0, r^1) \colon X \to I$ is a morphism in $\mMor_{l}(\E)$ since $\beta^1 \alpha^0 = 0$ and hence $r^1 \alpha^0 = \iota^0 \tilde r^0 \alpha^0 = \iota^0 r^0$. Using that $k^1 \beta^1 s^1 = k^1 j^1 \kappa^1 = \kappa^1$, we compute
	\begin{align*}
		r^1 s^1 & = \iota^0 \tilde r^0 s^1 - \iota^0 \tilde r^0 s^1 \iota^1 \circ \left(k^1 \beta^1 s^1 \right) + \iota^1 \circ \left(k^1 \beta^1 s^1\right) = \iota^0 \tilde r^0 s^1 \circ \left(\iota^0 \kappa^0 + \iota^1\kappa^1\right) - \iota^0 \tilde r^0 s^1 \iota^1 \kappa^1 + \iota^1\kappa^1 \\
		&= \iota^0 \tilde r^0 s^1 \iota^0 \kappa^0 + \iota^1\kappa^1 = \iota^0 \tilde r^0 \alpha^0 s^0 \kappa^0 + \iota^1\kappa^1 = \iota^0 r^0 s^0 \kappa^0 + \iota^1\kappa^1 = \iota^0 \kappa^0 + \iota^1\kappa^1 = \id_{I^1},
	\end{align*}
	as desired.
\end{proof}

\begin{ntn}
	For a Frobenius category $\F$ and $l \in \NN$, we denote the stable category of $\mMor_{l}(\F)$ by $\M_l := \stab \mMor_{l}(\F)$. It is a triangulated category, see \Cref{thm: stable-Frobenius}.
\end{ntn}

\begin{con}[Projectives and injectives in $\mMor_{l}(\E)$] \label{con: mMor-enough}
	Let $\E$ be an exact category, $l \in \NN$, and $X=(X, \alpha) \in \mMor_{l}(\E)$. If $\E$ has enough projectives or injectives, we construct $i_X$ and $p_X$ based on $i_A$ and $p_A$, where $A \in \E$, see \Cref{cnv: I-P}:
	\begin{enumerate}[wide]
		\item \label{con: mMor-enough-proj} There is an admissible epic $p_X=(p^k)_{k=0,\dots,l}\colon \epic P X$ in $\mMor_l( \E)$ with $P := \bigoplus_{k=0}^l \mu_{l-k+1} P(X^k)\in \Proj(\mMor_l( \E))$ defined by
		\[p^k := \begin{pmatrix}
			\alpha^{k-1} \cdots \alpha^0 p_{X^0} & \cdots & \alpha^{k-1} p_{X^{k-1}} & p_{X^k}
		\end{pmatrix} \colon P^k := P(X^0) \oplus \dots \oplus P(X^k) \to X^k.\]
		
		\item \label{con: mMor-enough-inj} Consider $i^0 := i_{X^0} \colon \monic{X^0}{I(X^0) =: I^0}$ and form a diagram
		\[
			\begin{tikzcd}[sep={17.5mm,between origins}]
				X^0 \ar[r, tail, "\alpha^0"] \ar[d, tail, "i^0"] \ar[rd, phantom, "\square"] & X^1 \ar[r, equal] \ar[d, tail] & X^1 \ar[r, tail, "\alpha^1"] \ar[d, tail] \ar[rd, phantom, "\square"] & X^2 \ar[r, equal] \ar[d, tail] & \cdots \ar[r, equal] & X^{l-1} \ar[r, tail, "\alpha^{l-1}"] \ar[d, tail] \ar[rd, phantom, "\square"] & X^l \ar[r, equal] \ar[d, tail] & X^l \ar[d, tail] \\
				I^0 \ar[r, tail] \ar[d, two heads] & W^1 \ar[r, tail, "i^1"] \ar[d, two heads] \ar[rd, phantom, "\square"] & I^1 \ar[r, tail] \ar[d, two heads] & W^2 \ar[r, tail, "i^2"] \ar[d, two heads] & \cdots \ar[r, tail, "i^{l-1}"] \ar[rd, phantom, "\square"] & I^{l-1} \ar[r, tail] \ar[d, two heads] & W^l \ar[r, tail, "i^l"] \ar[d, two heads] \ar[rd, phantom, "\square"] & I^l \ar[d, two heads] \\
				Y^0 \ar[r, equal] & Y^0 \ar[r, tail] & Y^1 \ar[r, equal] & Y^1 \ar[r, tail] \ar[ru, phantom, "\square"] & \cdots \ar[r, tail] & Y^{l-1} \ar[r, equal] & Y^{l-1} \ar[r, tail] & Y^l
			\end{tikzcd}
		\]
		of bicartesian squares with $i^k := i_{W^k} \colon \monic{W^k}{I(W^k) =: I^k}$ for $k \in \{1, \dots, l\}$ and short exact columns, see \Cref{prp: Buehler2.12}. Composing horizontal monics to eliminate the columns involving $W^1, \dots, W^l$ yields a short exact sequence \begin{tikzcd}[cramped, sep=small] X \ar[r, tail, "i_X"] & I \ar[r, two heads] & Y \end{tikzcd} in $\mMor_{l}( \E)$ with $I \in \Inj(\mMor_{l}( \E))$ as desired, see \Cref{thm: mMor}.
	\end{enumerate}
\end{con}

\begin{rmk} \label{rmk: cone-termwise}
	Any pushout along an admissible monic in $\mMor_l(\E)$ for $l \in \NN$ yields termwise pushouts along admissible monics in an exact category $\E$, see \Cref{prp: Buehler2.12}.\ref{prp: Buehler2.12-push} and \Cref{thm: mMor}.\ref{thm: mMor-exact}. The obvious dual statement holds for pullbacks along admissible epics. In particular, for any Frobenius category $\F$, the (co)cone of a morphism in $\mMor_l(\F)$ yields termwise (co)cones in $\F$, see \Cref{con: cone} and \Cref{thm: mMor}.\ref{thm: mMor-Proj}.
\end{rmk}

\section{Contraction and expansion} \label{sec: con-exp}

In this section, we introduce contraction and expansion functors of stable monomorphism categories. Their respective kernels and images are the triangulated subcategories, which form various semiorthogonal decompositions, see \Cref{sec: SOD}. We give a convenient representation of their objects up to special choices of stable isomorphisms.

\begin{dfn} \label{dfn: contraction-expansion}
	Let $\E$ be an exact category, $l \in \NN$, and $s, t \in \{0, \dots, l\}$ with $s \leq t$.
	\begin{enumerate}
		\item \label{dfn: contraction-expansion-gamma} We define the \textbf{contraction} functor $\gamma^{[s, t]}:=\gamma^{[s, t]}_l\colon \mMor_l(\E) \to \mMor_{l-t+s-1}(\E)$ by sending an object $X=(X, \alpha) \in \mMor_l(\E)$ to \[\begin{tikzcd}[cramped] X^0 \ar[r, tail, "\alpha^0"] & \cdots \ar[r, tail, "\alpha^{s-2}"] & X^{s-1} \ar[r, tail, "\alpha^t \cdots \alpha^{s-1}"] & X^{t+1} \ar[r, tail, "\alpha^{t+1}"] & \cdots \ar[r, tail, "\alpha^{l-1}"] & X^l \end{tikzcd},\]
		and a morphism $(f^0, \dots, f^l)$ to $(f^0, \dots, f^{s-1}, f^{t+1} , \dots, f^l)$. We write $\gamma^s := \gamma^s_l := \gamma^{[s,s]}_l$, \[\gamma^{[s, t]^c} := \gamma^{[s, t]^c}_l := \gamma^{[0, s-1]}_t \circ \gamma^{[t+1, l]}_l \colon \mMor_{l}(\F) \to \mMor_{t-s}(\F),\] and $\gamma^{s^c} := \gamma^{s^c}_l := \gamma^{[s,s]^c}_l$.
		\item \label{dfn: contraction-expansion-delta} We define the \textbf{expansion} functor $\delta^{[s,t]}:=\delta^{[s,t]}_l \colon \mMor_l(\E) \to \mMor_{l+t-s}(\E)$ by sending an object $X=(X, \alpha) \in \mMor_{l}(\E)$ to
		\[\begin{tikzcd}[cramped] X^0 \ar[r, tail, "\alpha^0"] & \cdots \ar[r, tail, "\alpha^{s-1}"] & X^s \ar[r, equal] & \cdots \ar[r, equal] & X^s \ar[r, tail, "\alpha^s"] & \cdots \ar[r, tail, "\alpha^{l-1}"] & X^l \end{tikzcd},\]
		and a morphism $(f^0, \dots, f^l)$ to $(f^0, \dots, f^s, \dots, f^s , \dots, f^l)$, where the $X^s$ resp.~$f^s$ are placed at positions $s, \dots, t$. We write $\delta^s := \delta^s_l := \delta^{[s,s +1]}_l$. Note that $\delta^{[s,s]}_l=\id_{\mMor_l(\E)}$.
	\end{enumerate}
\end{dfn}

\begin{rmk} \label{rmk: gamma-delta}
	Let $\E$ be an exact category, $l \in \NN$, and $s, t \in \{0, \dots, l\}$ with $s \leq t$.
	\begin{enumerate}
		\item \label{rmk: delta-gamma-rels}
		We have $\gamma^s_{l-1} \circ \gamma^t_l = \gamma^{t-1}_{l-1} \circ \gamma^s_l$ and $\delta^t_{l+1} \circ \delta^s_l = \delta^s_{l+1} \circ \delta^{t-1}_l$ if $s < t$.
		
		\item \label{rmk: gamma-delta-comp} We have $\gamma^{[s, t]}_l = \gamma^s_{l-t+s} \circ \dots \circ \gamma^{t-1}_{l-1} \circ \gamma^t_l$ and $\delta^{[s,t]}_l = \delta^{t-1}_{l-s+t-1} \circ \dots \circ\delta^{s+1}_{l+1 } \circ \delta^s_l$. These representations are not unique, see \ref{rmk: delta-gamma-rels}.
		 
		\item \label{rmk: gamma-delta-id} We have $\gamma^{[s, t]}_{l+t-s+1} \circ \delta^{[s, t+1]}_l = \id_{\mMor_l(\E)}$ if $t < l$, and $\gamma^{[s+1, t]}_{l+t-s} \circ \delta^{[s, t]}_l = \id_{\mMor_l(\E)}$ if $s < t$. In particular, $\delta^{[s, t]}$ is fully faithful and $\gamma^{[s, t]}$ full.
	\end{enumerate}
\end{rmk}

\begin{rmk} \label{rmk: stab-delta-gamma}
	Let $\F$ be a Frobenius category, $l \in \NN$, and $s, t \in \{0, \dots, l\}$ with $s \leq t$.
	\begin{enumerate}
		\item \label{rmk: stab-delta-gamma-triang} The functors $\gamma^{[s, t]}_l$ and $\delta^{[s, t]}_l$ from \Cref{dfn: contraction-expansion} are exact functors and preserve projectives, see \Cref{thm: mMor}. As such they induce triangulated functors \[\stab \gamma^{[s, t]}:=\stab\gamma^{[s, t]}_l\colon \stab\mMor_l(\F) \to \stab\mMor_{l-t+s-1}(\F) \hspace{2.5mm} \textup{ and } \hspace{2.5mm} \stab \delta^{[s,t]}:= \stab\delta^{[s,t]}_l \colon \stab \mMor_l(\F) \to \stab \mMor_{l+t-s}(\F)\] of stable categories, see \Cref{prp: stable-functor-triang}. We write $\stab \gamma^s := \stab \gamma^s_l := \stab \gamma^{[s,s]}_l$, $\stab \gamma^{[s,t]^c} := \stab \gamma^{[s,t]^c}_l := \stab \gamma^{[0, s-1]}_t \circ \stab \gamma^{[t+1, l]}_l$, $\stab \gamma^{s^c} := \stab \gamma^{s^c}_l := \stab \gamma^{[s,s]^c}_l$, and $\stab \delta^s := \stab \delta^s_l := \stab \delta^{[s,s+1]}$.
		
		\item \label{rmk: stab-delta-gamma-id} Due to \Cref{rmk: gamma-delta}.\ref{rmk: gamma-delta-id}, $\stab \gamma^{[s, t]}_{l+t-s+1} \circ \stab \delta^{[s, t+1]}_l = \id_{\stab \mMor_l(\F)}$ if $t < l$, and $\stab \gamma^{[s+1, t]}_{l+t-s} \circ \stab \delta^{[s, t]}_l = \id_{\stab \mMor_l(\F)}$ if $s < t$. In particular, $\stab \delta^{[s, t]}$ is fully faithful and $\stab \gamma^{[s, t]}$ full.
	\end{enumerate}
\end{rmk}

\begin{dfn}
	Let $\F$ be a Frobenius category, $l \in \NN$, and $s, t \in \{0, \dots, l\}$ with $s \leq t$.
	\begin{enumerate}
		\item By $\Gamma^{[s,t]} := \Gamma^{[s,t]}_l$ we denote the kernel of $\stab \gamma^{[s,t]}_l$, that is, the subcategory of $\stab \mMor_{l}(\F)$ with objects $X$, where $X^k \in \Proj(\F)=\Inj(\F)$ for all $k \in \{0, \dots, s-1, t+1, \dots, l\}$. We set $\Gamma^s := \Gamma^s_l := \Gamma^{[s,s]}_l$.
		\item By $\Delta^{[s,t]} := \Delta^{[s,t]}_l$ we denote the image of $\stab \delta^{[s, t]}_{l-t+s}$.
	\end{enumerate}
	By \Cref{rmk: stab-delta-gamma}.\ref{rmk: stab-delta-gamma-triang}, both $\Gamma^{[s,t]}_l$ and $\Delta^{[s,t]}_l$ are triangulated subcategories of $\stab \mMor_l(\F)$, see {\cite[Lem.~2.1.4]{Nee01}}
\end{dfn}

\begin{lem} \label{lem: Delta-mMor}
	Let $\F$ be a Frobenius category and $s, t \in \{0, \dots, l\}$ with $s \leq t$. Then, $\Delta^{[s,t]}$ is the replete hull of the subcategory of $\stab \mMor_l(\F)$ with objects of the form
	\[
		\begin{tikzcd}
			X^0 \ar[r, tail] & \cdots \ar[r, tail] & X^s \ar[r, equal] & \cdots \ar[r, equal] & X^t \ar[r, tail] & \cdots \ar[r, tail] & X^l.
		\end{tikzcd}
	\]
	The restriction of $\stab \gamma^{[s, t-1]}$ or, equally, $\stab \gamma^{[s+1, t]}$ to $\Delta^{[s, t]}$ is quasi-inverse to the triangle equivalence $\stab \mMor_{l+s-t}(\F) \xrightarrow{\simeq} \Delta^{[s, t]}$ given by $\stab \delta^{[s, t]}$.
\end{lem}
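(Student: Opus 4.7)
My plan is to split the lemma into its two assertions and dispatch each by unpacking the relevant definitions and then applying \Cref{rmk: stab-delta-gamma}.\ref{rmk: stab-delta-gamma-id}. For the first assertion, I would verify the description of $\Delta^{[s,t]}$ directly from the formula for $\delta^{[s,t]}$ in \Cref{dfn: contraction-expansion}.\ref{dfn: contraction-expansion-delta}: the strict image of the expansion $\delta^{[s,t]}_{l-t+s}\colon\mMor_{l-t+s}(\F)\to\mMor_l(\F)$ on objects is precisely the class of chains of the displayed shape, with identity arrows in positions $s,\dots,t-1$. Since $\Delta^{[s,t]}$ is defined as the essential image of $\stab\delta^{[s,t]}_{l-t+s}$ inside $\stab\mMor_l(\F)$, it is automatically replete and agrees with the replete hull, taken in the stable category, of this strict image.

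For the second assertion, my first step is to read the two identities in \Cref{rmk: stab-delta-gamma}.\ref{rmk: stab-delta-gamma-id} under the appropriate index shifts, to identify both $\stab\gamma^{[s+1,t]}_l$ and $\stab\gamma^{[s,t-1]}_l$ as strict left inverses of $\stab\delta^{[s,t]}_{l-t+s}$ on $\M_{l-t+s}$. Next, using the fully faithfulness of $\stab\delta^{[s,t]}_{l-t+s}$ (also from that remark) and its essential surjectivity onto $\Delta^{[s,t]}$ (established in the first part), I conclude that the corestriction $\stab\delta^{[s,t]}_{l-t+s}\colon \M_{l-t+s}\to\Delta^{[s,t]}$ is a triangle equivalence. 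Finally, I invoke the general categorical fact that any strict left inverse $G$ of an equivalence $F$ is automatically a quasi-inverse: for any quasi-inverse $H$ of $F$ one has
\[G\cong G\circ (F\circ H)=(G\circ F)\circ H=H.\]
Applying this with $F=\stab\delta^{[s,t]}_{l-t+s}$ corestricted to $\Delta^{[s,t]}$ and with $G$ equal to either of the two contraction functors restricted to $\Delta^{[s,t]}$ finishes the proof. The degenerate case $s=t$, where $\delta^{[s,s]}=\id$ and the two contraction intervals are empty, needs only to be singled out as a trivial case.

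The main obstacle is purely organizational, namely the careful bookkeeping of the indices $s$, $t$, and $l$ when shifting the two identities of \Cref{rmk: stab-delta-gamma}.\ref{rmk: stab-delta-gamma-id} to the form required here. No substantive mathematical difficulty is expected beyond that step and the invocation of the general principle above.
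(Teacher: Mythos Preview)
Your proposal is correct and follows essentially the same route as the paper: the first claim is immediate from the definition of $\delta^{[s,t]}$, and the second combines the left-inverse identities of \Cref{rmk: stab-delta-gamma}.\ref{rmk: stab-delta-gamma-id} with the general principle that a strict left inverse of an equivalence is a quasi-inverse, which is exactly \Cref{rmk: quasi-inverse}. Your write-up simply unpacks what the paper compresses into two citation references.
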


\begin{proof}
	The first claim holds by definition, the second due to Remarks \ref{rmk: stab-delta-gamma}.\ref{rmk: stab-delta-gamma-id} and \ref{rmk: quasi-inverse}.
\end{proof}

\begin{rmk} \label{rmk: quasi-inverse}
	Let $\eta\colon F \to G$ be a natural transformation of functors $F, G \colon \A \to \B$. If $H\colon \B \to \C$ is another functor, then the \emph{whiskering} $H\eta := (H(\eta_X))_{X \in \A}$ of $\eta$ on the right by $H$ is a natural transformation $H F \to H G$. It is an isomorphism if $\eta$ is so. In particular, if $F$ is an equivalence with quasi-inverse $F^{-1}$ and $F': \B \to \A$ a functor with $F' F = \id_\A$, then $F^{-1} = F'FF^{-1} \simeq F'$. So, $F'$ is already a quasi-inverse of $F$.
\end{rmk}

\begin{lem} \label{lem: Gamma-s-t}
	Let $\E$ be an exact category, $l \in \NN$, and $s, t \in \{0, \dots, l\}$ with $s \leq t$. Given $X = (X, \alpha) \in \mMor_{l}(\E)$ with $X^k \in \Inj(\E)$ for all $k \in \{0, \dots, s-1, t+1, \dots, l\}$, there is 
	\begin{enumerate}
		\item \label{lem: Gamma-s-t-epi} a split short exact sequence
		\[\begin{tikzcd}[sep={15mm,between origins}]
			I \ar[d, tail] & X^0 \ar[r, tail, "\alpha^0"] \ar[d, equal] & \cdots \ar[r, tail, "\alpha^{s-2}"] \ar[rd, phantom, "\square"] & X^{s-1} \ar[r, equal] \ar[d, equal] & X^{s-1} \ar[r, equal] \ar[d, tail, "\alpha^{s-1}"] & \cdots \ar[r, equal] & X^{s-1} \ar[r, equal] \ar[d, tail, "\alpha^{t-1} \cdots \alpha^{s-1}"'] & I^{t+1} \ar[r, tail] \ar[d, tail, "\alpha^t \cdots \alpha^{s-1}"'] & \cdots \ar[r, tail] \ar[rd, phantom, "\square"] & I^l \ar[d, tail]\\
			X \ar[d, two heads, "p"] & X^0 \ar[r, tail, "\alpha^0"] \ar[d, two heads, "p^0"] \ar[ru, phantom, "\square"] & \cdots \ar[r, tail, "\alpha^{s-2}"] & X^{s-1} \ar[r, tail, "\alpha^{s-1}"] \ar[d, two heads, "p^{s-1}"'] \ar[rd, phantom, "\square"] & X^s \ar[r, tail, "\alpha^{s}"] \ar[d, two heads, "p^s"] & \cdots \ar[r, tail, "\alpha^{t-1}"] \ar[rd, phantom, "\square"] & X^t \ar[r, tail, "\alpha^t"] \ar[d, two heads, "p^t"] \ar[rd, phantom, "\square"] & X^{t+1} \ar[r, tail, "\alpha^{t+1}"] \ar[d, two heads, "p^{t+1}"] \ar[ru, phantom, "\square"] & \cdots \ar[r, tail, "\alpha^{l-1}"] & X^l \ar[d, two heads, "p^l"] \\
			\tilde X & 0 \ar[r, equal] & \cdots \ar[r, equal] & 0 \ar[r, tail] & \tilde X^s \ar[r, tail] \ar[ru, phantom, "\square"] & \cdots \ar[r, tail] & \tilde X^t \ar[r, tail] & \tilde X^{t+1} \ar[r, equal] & \cdots \ar[r, equal] & \tilde X^{t+1}
		\end{tikzcd}\]
		
		\item \label{lem: Gamma-s-t-mono} with reverse split short exact sequence
		\[\begin{tikzcd}[sep={15mm,between origins}]
			\tilde X \ar[d, tail, "i"] & 0 \ar[r, equal] \ar[d, tail, "i^{0}"] & \cdots \ar[r, equal] & 0 \ar[r, tail] \ar[d, tail, "i^{s-1}"] \ar[rd, phantom, "\square"] & \tilde X^s \ar[r, tail] \ar[d, tail, "i^{s}"] & \cdots \ar[r, tail] \ar[rd, phantom, "\square"] & \tilde X^t \ar[r, tail] \ar[d, tail, "i^{t}"] \ar[rd, phantom, "\square"] & \tilde X^{t+1} \ar[r, equal] \ar[d, tail, "i^{t+1}"] & \cdots \ar[r, equal] & \tilde X^{t+1} \ar[d, tail, "i^{l}"] \\
			X \ar[d, two heads] & X^0 \ar[r, tail, "\alpha^0"] \ar[d, equal] & \cdots \ar[r, tail, "\alpha^{s-2}"] \ar[rd, phantom, "\square"] & X^{s-1} \ar[r, tail, "\alpha^{s-1}"] \ar[d, equal] & X^s \ar[r, tail, "\alpha^s"] \ar[d, two heads] \ar[ru, phantom, "\square"] & \cdots \ar[r, tail, "\alpha^{t-1}"] & X^t \ar[r, tail, "\alpha^t"] \ar[d, two heads] & X^{t+1} \ar[r, tail, "\alpha^{t+1}"] \ar[d, two heads] & \cdots \ar[r, tail, "\alpha^{l-1}"] \ar[rd, phantom, "\square"] & X^l \ar[d, two heads] \\
			I & X^0 \ar[r, tail, "\alpha^0"] \ar[ru, phantom, "\square"] & \cdots \ar[r, tail, "\alpha^{s-2}"] & X^{s-1} \ar[r, equal] & X^{s-1} \ar[r, equal]& \cdots \ar[r, equal] & X^{s-1} \ar[r, equal] & I^{t+1} \ar[r, tail] \ar[ru, phantom, "\square"] & \cdots \ar[r, tail] & I^l
		\end{tikzcd}\]
	\end{enumerate}
	in $\mMor_{l}(\E)$, where $\tilde X^{t+1} \in \Inj(\E)$, $I \in \Inj(\mMor_{l}(\E))$, and $X^{-1}:=0$. The morphism $i^{t+1}$ can be chosen as an arbitrary right-inverse of $p^{t+1}$.\\
	In particular, if $\E$ is Frobenius, then $p$ and $i$ are stable isomorphisms and $\Gamma^{[s,t]}_l$ is the replete hull of the subcategory of $\stab \mMor_l(\E)$ with objects of the form
	\[\begin{tikzcd}[sep={15mm,between origins}]
		0 \ar[r, equal] & \cdots \ar[r, equal] & 0 \ar[r, tail] & X^s \ar[r, tail] & \cdots \ar[r, tail] & X^t \ar[r, tail] & X^{t+1} \ar[r, equal] & \cdots \ar[r, equal] & X^{t+1},
	\end{tikzcd}\]
	where $X^{t+1} \in \Proj(\E)=\Inj(\E)$.
\end{lem}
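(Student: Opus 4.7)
The driving observation is that the hypothesis $X^{s-1} \in \Inj(\E)$ makes every composition $\alpha^{k-1}\cdots\alpha^{s-1}\colon X^{s-1} \rightarrowtail X^k$ a split admissible monic for $k \geq s$, since $X^{s-1}$ being injective forces any admissible monic out of $X^{s-1}$ to split. My plan is to first construct the quotient object $\tilde X$ and an admissible epic $p\colon X \twoheadrightarrow \tilde X$, then identify the kernel $I$ as termwise injective in $\mMor_l(\E)$, and finally arrange the resulting splitting so that $i^{t+1}$ is an arbitrary right-inverse of $p^{t+1}$. Parts (a) and (b) then express the same split short exact sequence $I \rightarrowtail X \twoheadrightarrow \tilde X$ viewed from two sides, with $r\colon X \to I$ in (b) being the retraction complementary to the section $i\colon \tilde X \to X$ in (a).

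I would set $\tilde X^k := 0$ for $k < s$, $\tilde X^k := \mathrm{cok}(\alpha^{k-1}\cdots\alpha^{s-1})$ for $s \leq k \leq t+1$ (with transition maps $\tilde\alpha^k$ admissible monics by the Noether lemma, \Cref{lem: Noether}), and $\tilde X^k := \tilde X^{t+1}$ with identity transitions for $k > t+1$. Since $X^{s-1}$ and $X^{t+1}$ are both injective, $\tilde X^{t+1}$ is injective by \Cref{prp: Proj-Inj}. For $p$, I take the cokernel projection for $s \leq k \leq t+1$, zero for $k < s$, and inductively extend $p^{t+1}$ along the admissible monics $\alpha^k\colon X^k\rightarrowtail X^{k+1}$ for $k \geq t+1$ using the injectivity of $\tilde X^{t+1}$. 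Computing $I := \ker p$ termwise via \Cref{thm: mMor}.\ref{thm: mMor-exact}, I obtain $I^k = X^k$ for $k < s$ and $I^k \cong X^{s-1}$ (identified via $\alpha^{k-1}\cdots\alpha^{s-1}$) for $s \leq k \leq t$; for $k \geq t+1$ the sequence $I^k \rightarrowtail X^k \twoheadrightarrow \tilde X^{t+1}$ splits because $\tilde X^{t+1}$ is injective, exhibiting $I^k$ as a direct summand of the injective $X^k$. Hence $I \in \mMor_l(\Inj(\E)) = \Inj(\mMor_l(\E))$ by \Cref{thm: mMor}.\ref{thm: mMor-Proj}, which splits $I \rightarrowtail X$ in $\mMor_l(\E)$ and simultaneously yields (a) and (b).

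To accommodate an arbitrary $i^{t+1}$, I would set $i^k := \alpha^{k-1}\cdots\alpha^{t+1}\, i^{t+1}$ for $k > t+1$ and $i^k := 0$ for $k < s$; for $s \leq k \leq t$, extract the complementary retraction $\rho^{t+1}\colon X^{t+1} \to X^{s-1}$ determined by $i^{t+1}$ (via $j^{t+1}\rho^{t+1} + i^{t+1}p^{t+1} = \id$), put $\rho^k := \rho^{t+1}\alpha^t\cdots\alpha^k$, and take $i^k$ as the inclusion of the second summand of the splitting $X^k \cong X^{s-1} \oplus \tilde X^k$ induced by $(\rho^k, p^k)$. By telescoping, $\rho^k$ is a retraction of $X^{s-1}\rightarrowtail X^k$ and $\rho^{k+1}\alpha^k = \rho^k$, so verifying $\alpha^k i^k = i^{k+1}\tilde\alpha^k$ reduces to showing both $p^{k+1}$ and $\rho^{k+1}$ annihilate the difference, which is immediate from naturality of $p$ and the telescoping identity. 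For the Frobenius consequences, $I$ is projective-injective and thus vanishes in $\M_l$, so $p$ and $i$ descend to quasi-isomorphisms; the characterization of $\Gamma^{[s,t]}_l$ as a replete hull follows since the defining condition on $X \in \Gamma^{[s,t]}_l$ matches the hypothesis of the lemma and $\tilde X$ has exactly the displayed shape. The main technical hurdle I anticipate is the coherent propagation of the section through positions $s \leq k \leq t$ from the arbitrary $i^{t+1}$; the trick is to derive all retractions $\rho^k$ as pullbacks of the single $\rho^{t+1}$ rather than independently.
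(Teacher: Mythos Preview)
Your approach is correct and somewhat more direct than the paper's. For part (a), the paper proceeds in two stages: it first quotients $X$ by the constant-$X^{s-1}$ subobject via successive pushouts to obtain an intermediate $Y$, then applies \Cref{lem: split-off-equalities} to $\gamma^{[0,t]}(Y)$ to split off the injective complements $\tilde J^k$ of $\tilde X^{t+1}$ inside $Y^k$ for $k>t+1$, and composes the two quotients using \Cref{lem: epi-comp} to obtain $I = J \oplus \tilde J$. You skip the intermediate object by extending $p^{t+1}$ to $p^k$ for $k>t+1$ via the injectivity of $\tilde X^{t+1}$; each such $p^k$ is then a split epic (any section $i^{t+1}$ of $p^{t+1}$ yields the section $\alpha^{k-1}\cdots\alpha^{t+1}i^{t+1}$), hence admissible. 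For part (b), the paper builds the \emph{retraction} $r\colon X\to I$ and, because its $I^k$ for $k>t+1$ is the nontrivial $X^{s-1}\oplus\tilde J^k$, invokes \Cref{lem: extend-left-inverse} to extend $r^{t+1}$ coherently; you build the complementary \emph{section} $i\colon\tilde X\to X$, for which the formula $i^k=\alpha^{k-1}\cdots\alpha^{t+1}i^{t+1}$ settles $k>t+1$ trivially. Your telescoping $\rho^k=\rho^{t+1}\alpha^t\cdots\alpha^k$ for $s\le k\le t$ matches the paper's $r^k$ in that range verbatim. In summary, you trade three auxiliary lemmas for a reliance on \Cref{thm: mMor}.\ref{thm: mMor-exact} to place the termwise kernel $I$ in $\mMor_l(\E)$; if you want that step explicit for $k\ge t+1$, note that $I^k$ is injective and $I^k \to I^{k+1}\rightarrowtail X^{k+1}$ equals the admissible monic $\alpha^k j^k$, so $I^k\to I^{k+1}$ is a split monic, hence admissible.
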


\begin{proof} \
	\begin{enumerate}[wide]
		\item Due to \Cref{prp: Buehler2.12}.\ref{prp: Buehler2.12-push}, forming successive pushouts yields a short exact sequence
		\begin{gather} \label{diag: split-off-inj}
			\begin{tikzcd}[sep={15mm,between origins}, ampersand replacement=\&]
				J \ar[d, tail] \& X^0 \ar[r, tail, "\alpha^0"] \ar[d, equal] \& \cdots \ar[r, tail, "\alpha^{s-2}"] \& X^{s-1} \ar[r, equal] \ar[d, equal] \& X^{s-1} \ar[r, equal] \ar[d, tail, "\alpha^{s-1}"] \& \cdots \ar[r, equal] \& X^{s-1} \ar[r, equal] \ar[d, tail, "\alpha^{t-1} \cdots \alpha^{s-1}"] \& X^{s-1} \ar[r, equal] \ar[d, tail, "\alpha^t \cdots \alpha^{s-1}"] \& \cdots \ar[r, equal] \& X^{s-1} \ar[d, tail, "\alpha^{l-1} \cdots \alpha^{s-1}"] \\
				X \ar[d, two heads] \& X^0 \ar[r, tail, "\alpha^0"] \ar[d, two heads] \& \cdots \ar[r, tail, "\alpha^{s-2}"] \& X^{s-1} \ar[r, tail, "\alpha^{s-1}"] \ar[d, two heads] \ar[rd, phantom, "\square"] \& X^s \ar[r, tail, "\alpha^s"] \ar[d, two heads] \& \cdots \ar[r, tail, "\alpha^{t-1}"] \ar[rd, phantom, "\square"] \& X^t \ar[r, tail, "\alpha^t"] \ar[d, two heads]\ar[rd, phantom, "\square"] \& X^{t+1} \ar[r, tail, "\alpha^{t+1}"] \ar[d, two heads] \& \cdots \ar[r, tail, "\alpha^{l-1}"] \ar[rd, phantom, "\square"] \& X^l \ar[d, two heads] \\
				Y \& 0 \ar[r, equal] \& \cdots \ar[r, equal] \& 0 \ar[r, tail] \& \tilde X^s \ar[r, tail] \ar[ru, phantom, "\square"] \& \cdots \ar[r, tail] \& \tilde X^t \ar[r, tail] \& \tilde X^{t+1} \ar[r, tail] \ar[ru, phantom, "\square"] \& \cdots \ar[r, tail] \& \tilde X^l
			\end{tikzcd}
		\end{gather}
		in $\mMor_{l}(\E)$. If $s=0$, use $\id_{X^0}\colon X^0 \to X^0 =: \tilde X^0$ for the leftmost pushout. By \Cref{thm: mMor}.\ref{thm: mMor-Proj}, $J \in \Inj(\mMor_{l}(\E))$ and the sequence splits. In particular, $\epic{X}{Y}$ is a stable isomorphism if $\E$ is Frobenius, see \Cref{lem: ses-triangle}. If $t=l$, this proves the claim with $\tilde X = Y$ and $I=J$.\\
		Otherwise, for each $k \in \{t+1, \dots, l\}$, the short exact sequence 
		\begin{tikzcd}[cramped, sep=small]
			X^{s-1} \ar[r, tail] & X^k \ar[r, two heads] & \tilde X^k
		\end{tikzcd}
		yields $\tilde X^k \in \Inj(\E)$ such that the admissible monic $\monic{\tilde X^{t+1}}{\tilde X^k}$ splits, which gives rise to a biproduct $\tilde X^k \cong \tilde X^{t+1} \oplus \tilde J^k$ with $\tilde J^k \in \Inj(\E)$, see \Cref{prp: Proj-Inj}. \Cref{lem: split-off-equalities} applied to $\gamma^{[0,t]}(Y)$ then yields a short exact sequence
		\[\begin{tikzcd}[sep={15mm,between origins}]
				\tilde J \ar[d, tail] & 0 \ar[r, equal] \ar[d, equal] & \cdots \ar[r, equal] & 0 \ar[r, equal] \ar[d, equal] & 0 \ar[r, equal] \ar[d, tail] & \cdots \ar[r, equal] & 0 \ar[r, equal] \ar[d, tail] & 0 \ar[r, tail] \ar[d, tail] & \tilde J^{t+2} \ar[r, tail] \ar[d, tail] & \cdots \ar[r, tail] \ar[rd, phantom, "\square"] & \tilde J^l \ar[d, tail]\\
				Y \ar[d, two heads] & 0 \ar[r, equal] \ar[d, equal] & \cdots \ar[r, equal] & 0 \ar[r, tail] \ar[d, equal] & \tilde X^s \ar[r, tail] \ar[d, equal] & \cdots \ar[r, tail] & \tilde X^t \ar[r, tail] \ar[d, equal] & \tilde X^{t+1} \ar[r, tail] \ar[d, equal] \ar[ru, phantom, "\square"] & \tilde X^{t+2} \ar[r, tail] \ar[d, two heads] \ar[ru, phantom, "\square"] & \cdots \ar[r, tail] & \tilde X^l \ar[d, two heads]\\
				\tilde X & 0 \ar[r, equal] & \cdots \ar[r, equal] & 0 \ar[r, tail] & \tilde X^s \ar[r, tail] & \cdots \ar[r, tail] & \tilde X^t \ar[r, tail] & \tilde X^{t+1} \ar[r, equal] & \tilde X^{t+1} \ar[r, equal] & \cdots \ar[r, equal]& \tilde X^{t+1}
			\end{tikzcd}\]
		in $\mMor_l(\E)$ with $\tilde J \in \Inj(\mMor_l(\E))$, see \Cref{thm: mMor}.\ref{thm: mMor-Proj}. As above, $\epic{Y}{\tilde X}$ splits and is a stable isomorphism if $\E$ is Frobenius. Due to \Cref{lem: epi-comp}, $I := J \oplus \tilde J$ is the kernel of the composition $\begin{tikzcd}[cramped, sep=small] p \colon X \ar[r, two heads] & Y \ar[r, two heads] & \tilde X \end{tikzcd}$.
		
		\item A right-inverse $i^{t+1}$ of $p^{t+1}$ corresponds to a left-inverse $r^{t+1} \colon \epic{X^{t+1}}{I^{t+1}}$ of $\alpha^t \cdots \alpha^{s-1}$. Set $r^k := r^{t+1} \alpha^t \cdots \alpha^k$ for $k \in \{s, \dots, t\}$ and $r^k := \id_{X^k}$ for $k \in \{0, \dots, s-1\}$. Apply $\gamma^{[0,t]}$ followed by \Cref{lem: extend-left-inverse} to define $r^k$ for $k \in \{t+2, \dots, l\}$ starting from $r^{t+1}$. Then $r := \left(r^0, \dots, r^l\right) \colon X \to I$ is a left-inverse of $\monic{I}{X}$, which corresponds to the desired right-inverse of $p$. The claimed bicartesian squares are due to \Cref{prp: Buehler2.12}. \qedhere
	\end{enumerate}
\end{proof}

\begin{lem} \label{lem: split-off-equalities}
	Let $\E$ be an exact category and $l \in \NN$. Then any $X=(X, \alpha) \in \smMor_l(\E)$ fits into a termwise split short exact sequence
	\[\begin{tikzcd}[sep={17.5mm,between origins}]
			0 \ar[r, tail] \ar[d, tail] \ar[rd, phantom, "\square"] & \tilde X^1 \ar[r, tail] \ar[d, tail] \ar[rd, phantom, "\square"] & \tilde X^2 \ar[r, tail] \ar[d, tail] & \cdots \ar[r, tail] \ar[rd, phantom, "\square"] & \tilde X^l \ar[d, tail] \\
			X^0 \ar[r, tail, "\alpha^0"] \ar[d, equal] & X^1 \ar[r, tail, "\alpha^1"] \ar[d, two heads, "\tilde \beta^1"] & X^2 \ar[r, tail, "\alpha^2"] \ar[d, two heads, "\tilde \beta^2"] \ar[ru, phantom, "\square"] & \cdots \ar[r, tail, "\alpha^{l-1}"] & X^l \ar[d, two heads, "\tilde \beta^l"]\\
			X^0 \ar[r, equal] & X^0 \ar[r, equal] & X^0 \ar[r, equal] & \cdots \ar[r, equal] & X^0
		\end{tikzcd}\] 
	in $\mMor_{l}(\E)$, where $\tilde X^k$ is the cokernel of $\tilde \alpha^{k-1} := \alpha^{k-1}\cdots \alpha^0 \colon \monic{X^0}{X^k}$ for $k \in \{1, \dots, l\}$.
\end{lem}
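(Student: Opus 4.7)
My plan is to build compatible retractions $\tilde\beta^k \colon X^k \to X^0$ of the composite split monics $\tilde\alpha^{k-1} = \alpha^{k-1}\cdots\alpha^0$ satisfying $\tilde\beta^{k+1}\alpha^k = \tilde\beta^k$; after that, the entire diagram is forced by the termwise direct-sum decompositions $X^k \cong X^0 \oplus \tilde X^k$ induced by the splittings.

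Since $X \in \smMor_l(\E)$, I first choose retractions $s^k \colon X^{k+1} \to X^k$ of each $\alpha^k$, set $\tilde\beta^0 := \id_{X^0}$, and recursively $\tilde\beta^{k+1} := \tilde\beta^k s^k$. This construction immediately gives $\tilde\beta^{k+1}\alpha^k = \tilde\beta^k$, so the $\tilde\beta^k$ assemble into a morphism in $\mMor_l(\E)$ from $X$ to the constant diagram on $X^0$; a short induction on $k$ then yields $\tilde\beta^k \tilde\alpha^{k-1} = \id_{X^0}$, so each column $X^0 \rightarrowtail X^k \twoheadrightarrow \tilde X^k$ is a split short exact sequence.

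Using these splittings I identify $X^k \cong X^0 \oplus \tilde X^k$. The compatibilities $\tilde\beta^{k+1}\alpha^k = \tilde\beta^k$ and $\alpha^k\tilde\alpha^{k-1} = \tilde\alpha^k$ force $\alpha^k$ to take the block-diagonal form $\id_{X^0} \oplus \bar\alpha^k$ under this identification, where $\bar\alpha^k \colon \tilde X^k \to \tilde X^{k+1}$ is the unique map induced on cokernels. By \Cref{prp: Buehler2.9}, $\bar\alpha^k$ is then an admissible monic, so the top row is an object of $\mMor_l(\E)$, and the three rows together form a termwise split short exact sequence in $\mMor_l(\E)$. The marked bicartesian squares are read off directly from the block-diagonal form via the pushout and pullback characterizations in \Cref{prp: Buehler2.12}.

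The only genuine subtlety is coherence: arbitrary termwise retractions of the $\tilde\alpha^{k-1}$ need not assemble into a morphism in $\mMor_l(\E)$, since that would require $\tilde\beta^{k+1}\alpha^k = \tilde\beta^k$. The recursive definition $\tilde\beta^{k+1} := \tilde\beta^k s^k$ is tailored precisely to enforce this identity, and everything else in the lemma follows from straightforward direct-sum bookkeeping.
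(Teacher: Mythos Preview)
Your proof is correct and takes essentially the same approach as the paper's: both build the compatible retractions $\tilde\beta^k$ recursively from retractions of the individual $\alpha^k$ (the paper writes $\tilde\beta^k = \beta^1\cdots\beta^k$, which is exactly your $s^0\cdots s^{k-1}$), and then verify the resulting diagram. The paper obtains the admissible monics $\tilde X^k \rightarrowtail \tilde X^{k+1}$ via the Noether lemma (\Cref{lem: Noether}) rather than your block-diagonal argument; your appeal to \Cref{prp: Buehler2.9} is slightly imprecise since that proposition gives the converse direction (direct sums of admissible monics are admissible), but because each $\alpha^k$ is a \emph{split} monic the block form $\id_{X^0}\oplus\bar\alpha^k$ immediately exhibits a retraction of $\bar\alpha^k$, so $\bar\alpha^k$ is split and hence admissible, and the argument goes through.
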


\begin{proof}
	By assumption, for any $k \in \{1, \dots, l\}$, there is $\beta^k \colon X^k \to X^{k-1}$ such that $\beta^k \alpha^{k-1} = \id_{X^{k-1}}$. In particular, $\tilde \beta^k := \beta^1 \cdots \beta^k \colon X^k \to X^0$ satisfies $\tilde \beta^k \tilde \alpha^{k-1} = \id_{X^0}$ and $\tilde \beta^k \alpha^{k-1} = \tilde \beta^{k-1}$, which establishes the vertical split short exact sequences and the lower half of the diagram. \Cref{lem: Noether} applied to
	\[
		\begin{tikzcd}[sep={17.5mm,between origins}]
			\tilde X^k \ar[r, tail, dashed] \ar[d, tail] & \tilde X^{k+1} \ar[r, two heads, dashed] \ar[d, tail] & \bullet \ar[d, equal] \\
			X^k \ar[r, tail, "\alpha^k"] \ar[d, two heads, "\tilde \beta^k"] & X^{k+1} \ar[r, two heads] \ar[d, two heads, "\tilde \beta^k"] & \bullet \ar[d, two heads] \\
			X^0 \ar[r, equal] & X^0 \ar[r, two heads] & 0
		\end{tikzcd}
	\]
	for $k \in \{1, \dots, l-1\}$ then completes the diagram. The bicartesian squares are due to \Cref{prp: Buehler2.12}.\ref{prp: Buehler2.12-pull}.
\end{proof}

\begin{lem} \label{lem: epi-comp}
	In an exact category, any split short exact sequence $\begin{tikzcd}[cramped, sep=small] A \ar[r, tail, "i"] & B \ar[r, two heads, "p"] & C \end{tikzcd}$ and short exact sequence $\begin{tikzcd}[cramped, sep=small] A' \ar[r, tail, "i'"] & C \ar[r, two heads, "p'"] & D \end{tikzcd}$ give rise to a short exact sequence
	\[\begin{tikzcd}[ampersand replacement=\&] A \oplus A' \ar[>->]{r}{\begin{pmatrix}
				i & ji'
		\end{pmatrix}} \& B \ar[r, two heads, "p'p"] \& D, \end{tikzcd}\]
	where $j\colon C \to B$ is any right-inverse of $p$.
\end{lem}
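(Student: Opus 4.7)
The plan is to exploit the splitting of the first sequence to reduce the claim to a direct sum of two short exact sequences, and then transport that sum along the resulting biproduct isomorphism. Since $j$ is a right inverse of $p$, one obtains a retraction $r \colon B \to A$ of $i$ with $ir + jp = \id_B$, so the pair $(i, j) \colon A \oplus C \to B$ is an isomorphism with inverse $\bigl(\begin{smallmatrix} r \\ p \end{smallmatrix}\bigr) \colon B \to A \oplus C$. This identifies the given split short exact sequence $A \rightarrowtail B \twoheadrightarrow C$ with the canonical biproduct sequence of $A$ and $C$.

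Next, I would take the direct sum of the trivially exact sequence $A \xrightarrow{\id_A} A \to 0$ and the given short exact sequence $A' \rightarrowtail C \twoheadrightarrow D$. By \Cref{prp: Buehler2.9}, this is a short exact sequence
\[
A \oplus A' \xrightarrow{\;\begin{pmatrix} \id & 0 \\ 0 & i' \end{pmatrix}\;} A \oplus C \xrightarrow{\;\begin{pmatrix} 0 & p' \end{pmatrix}\;} D.
\]

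Finally, I would transport this sequence along the isomorphism $(i, j) \colon A \oplus C \xrightarrow{\simeq} B$; since short exact sequences are closed under isomorphism, and a brief matrix multiplication shows that the monic becomes $\begin{pmatrix} i & ji' \end{pmatrix}$ while the epic becomes $\begin{pmatrix} 0 & p' \end{pmatrix}\bigl(\begin{smallmatrix} r \\ p \end{smallmatrix}\bigr) = p'p$, the claim follows. No real obstacle is expected; the only point worth noting is that the resulting monic visibly depends only on $i$, $j$, and $i'$ — not on the auxiliary retraction $r$ — so the formulation of the lemma in terms of $j$ alone is well-posed.
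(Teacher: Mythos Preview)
Your proof is correct and arguably more direct than the paper's. The paper instead starts by choosing a kernel $k\colon E \rightarrowtail B$ of the composite $p'p$ and applies the Noether lemma (\Cref{lem: Noether}) to obtain a $3\times 3$ diagram whose left column is a short exact sequence $A \rightarrowtail E \twoheadrightarrow A'$; it then observes that this column splits (using the retraction $q$ of $i$ corresponding to $j$) and identifies $k$ with $\begin{pmatrix} i & ji' \end{pmatrix}$ under the resulting biproduct decomposition of $E$. Your route avoids the Noether lemma entirely by building the desired sequence \emph{forwards} as a direct sum of short exact sequences (\Cref{prp: Buehler2.9}) and then transporting along the biproduct isomorphism $B \cong A \oplus C$ furnished by the splitting. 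Both arguments rely on the same underlying splitting data, but yours packages it more economically; the paper's version has the minor conceptual advantage of exhibiting $A \oplus A'$ as \emph{the} kernel of $p'p$ from the outset rather than verifying the sequence after the fact.
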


\begin{proof}
	Given a kernel $k \colon \monic{E}{B}$ of $p'p$, \Cref{lem: Noether} yields a commutative diagram
	\[
		\begin{tikzcd}[sep={17.5mm,between origins}]
			A \ar[r, equal] \ar[d, tail, dashed, "t"] & A \ar[r, two heads] \ar[d, tail, "i"] & 0 \ar[d, tail] \\
			E \ar[r, tail, "k"] \ar[d, two heads, dashed] & B \ar[r, two heads, "p'p"] \ar[d, two heads, "p"] \ar[u, bend right=15mm, dotted, "q"'] & D \ar[d, equal] \\
			A' \ar[r, tail, "i'"] & C \ar[r, two heads, "p'"] \ar[u, bend right=15mm, dotted, "j"'] & D
		\end{tikzcd}
	\]
	of solid and dashed arrows with short exact rows and columns. With $q$ the left-inverse of $i$ corresponding to $j$, then $qkt=qi=\id_A$ and the left column splits. With respect to this splitting, $k$ corresponds to $\begin{pmatrix}
		i & ji'
	\end{pmatrix}$ and the claim follows.
\end{proof}

\section{Semiorthogonal decompositions} \label{sec: SOD}

In this section, we establish various semiorthogonal decompositions of stable monomorphism categories using the subcategories introduced in \Cref{sec: con-exp}. These decompositions form polygons of recollements.

\begin{prp} \label{prp: SOD-Gamma-Gamma}
	Let $\F$ be a Frobenius category, $l \in \NN$, and $s \in \{0, \dots, l-1\}$. Then $\stab \mMor_{l}(\F)$ admits the semiorthogonal decomposition $\left(\Gamma^{[s+1, l]}, \Gamma^{[0, s]}\right)$ as follows:
	
	\begin{enumerate}
		\item \label{prp: SOD-Gamma-Gamma-diag} Any $X= (X, \alpha) \in \mMor_{l}(\F)$ fits into a distinguished triangle in $\stab \mMor_{l}(\F)$ represented by the commutative diagram in $\F$
		\[
			\begin{tikzcd}[row sep={15mm,between origins}, column sep={17.5mm,between origins}]
				0 \ar[r, equal] \ar[d] & \cdots \ar[r, equal] & 0 \ar[r, tail] \ar[d] & X^{s+1} \ar[r, tail, "\alpha^{s+1}"] \ar[d, equal] & X^{s+2} \ar[r, tail, "\alpha^{s+2}"] \ar[d, equal] & \cdots \ar[r, tail, "\alpha^{l-1}"] & X^l \ar[d, equal] \\
				X^0 \ar[r, tail, "\alpha^0"] \ar[d, equal] & \cdots \ar[r, tail, "\alpha^{s-1}"] & X^{s} \ar[r, tail, "\alpha^{s}"] \ar[d, equal] & X^{s+1} \ar[r, tail, "\alpha^{s+1}"] \ar[d, "i_{X^{s+1}}", tail] & X^{s+2} \ar[r, tail, "\alpha^{s+2}"] \ar[d] & \cdots \ar[r, tail, "\alpha^{l-1}"] & X^l \ar[d] \\
				X^0 \ar[r, tail, "\alpha^0"] & \cdots \ar[r, tail, "\alpha^{s-1}"] & X^{s} \ar[r, tail] & I(X^{s+1}) \ar[r, equal] & I(X^{s+1}) \ar[r, equal] & \cdots \ar[r, equal] & I(X^{s+1}).
			\end{tikzcd}
		\]
		
		\item \label{prp: SOD-Gamma-Gamma-right} The inclusion $\inj{\Gamma^{[s+1, l]}}{\stab \mMor_{l}(\F)}$ has a right adjoint which sends any $X \in \mMor_{l}(\F)$ to the object in the upper row of the diagram in \ref{prp: SOD-Gamma-Gamma-diag}.
		
		\item \label{prp: SOD-Gamma-Gamma-left} The inclusion $\inj{\Gamma^{[0, s]}}{\stab \mMor_{l}(\F)}$ has a left adjoint which sends any $X \in \mMor_{l}(\F)$ to the object in the lower row of the diagram in \ref{prp: SOD-Gamma-Gamma-diag}.
	\end{enumerate}
\end{prp}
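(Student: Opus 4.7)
The plan is to verify the two axioms of \Cref{dfn: sod} for the pair $(\Gamma^{[s+1,l]}, \Gamma^{[0,s]})$ and then derive parts (b) and (c) by applying \Cref{prp: SOD-adjoints}, matching the adjoints to the upper and lower rows of the diagram in part (a).

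For the Hom-vanishing $\Hom_{\M_l}(U, V) = 0$, I would use \Cref{lem: Gamma-s-t} to replace $U$ and $V$ up to isomorphism in $\M_l$ by representatives with $U^k = 0$ for $k \leq s$ and $V^{s+1} = \cdots = V^l = J$ for some $J \in \Inj(\F)$, with identity transitions in $V$ from position $s+1$ onward. The commutativity conditions on any morphism $f \colon U \to V$ in $\mMor_l(\F)$ then force $f^{k+1} \alpha_U^k = f^k$ for all $k \geq s+1$. Setting $I := (0, \ldots, 0, J, \ldots, J) \in \mMor_l(\Inj(\F)) = \Inj(\mMor_l(\F))$, which is injective by \Cref{thm: mMor}.\ref{thm: mMor-Proj}, I would then verify that $f = h \circ g$ for $g \colon U \to I$ with components $(0, \ldots, 0, f^{s+1}, \ldots, f^l)$ and $h \colon I \to V$ with components $(0, \ldots, 0, \id_J, \ldots, \id_J)$. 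This factorization through an injective shows $f = 0$ in $\M_l$.

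For the distinguished triangle in part (a), I would first construct the diagram as stated, producing the lifts $X^k \to I(X^{s+1})$ for $k > s+1$ successively using injectivity of $I(X^{s+1})$ along the admissible monics $X^{s+1} \rightarrowtail X^k$. Writing $U$ for the upper row, $V$ for the lower row, and $f \colon U \to X$ for the displayed map, the key is to identify $V$ with the cone $C(f)$ of \Cref{con: cone} up to isomorphism in $\M_l$. A termwise computation of the pushout $X \sqcup_U I(U)$ defining $C(f)$, with $I(U)$ built by \Cref{con: mMor-enough}.\ref{con: mMor-enough-inj}, gives $C(f)^k = X^k$ for $k \leq s$, $C(f)^{s+1} = I(X^{s+1})$, and $C(f)^k \in \Inj(\F)$ for all $k \geq s+1$; in particular $C(f) \in \Gamma^{[0,s]}$. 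An explicit quasi-isomorphism $C(f) \to V$ is then obtained from the universal property of the pushout applied to the vertical map $X \to V$ together with a compatible map $I(U) \to V$, which exists by injectivity of $V^{s+1} = \cdots = V^l = I(X^{s+1})$.

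The main obstacle lies in this identification of the cone $C(f)$ with the specific $V$ of the diagram, as the naive termwise cokernel of $U \to X$ fails to define an object in $\mMor_l(\F)$ (the putative transition $X^s \to 0$ is not an admissible monic), ruling out a direct short-exact-sequence origin for the triangle. The identification therefore requires the pushout construction above together with a termwise verification that $C(f) \to V$ is a quasi-isomorphism, effectively an application of the splitting machinery of \Cref{lem: Gamma-s-t} to $C(f)$. Once part (a) is in place, parts (b) and (c) follow at once from \Cref{prp: SOD-adjoints}: the right adjoint to the inclusion $\Gamma^{[s+1,l]} \hookrightarrow \M_l$ sends $X$ to the upper row $U$, and the left adjoint to $\Gamma^{[0,s]} \hookrightarrow \M_l$ sends $X$ to the lower row $V$.
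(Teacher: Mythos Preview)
Your proposal is correct and follows essentially the same route as the paper's proof: verify \Cref{dfn: sod}.\ref{dfn: sod-1} by factoring any morphism through a projective object of $\mMor_l(\F)$, then establish \ref{dfn: sod-2} by computing the cone of the canonical map $f\colon U\to X$ via the pushout along $i_U$ and identifying it with the lower row using the normal form from \Cref{lem: Gamma-s-t}; parts \ref{prp: SOD-Gamma-Gamma-right} and \ref{prp: SOD-Gamma-Gamma-left} then drop out of \Cref{prp: SOD-adjoints}. The only cosmetic difference is in the Hom-vanishing step: the paper does not normalize $V$ all the way to a constant injective tail but merely assumes $V^k\in\Proj(\F)$ for $k>s$, and factors $f$ through $(0,\dots,0,V^{s+1},\dots,V^l)$ instead of your $(0,\dots,0,J,\dots,J)$; both middle objects lie in $\mMor_l(\Proj(\F))$ and hence are projective by \Cref{thm: mMor}.\ref{thm: mMor-Proj}.
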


\begin{proof}
	For condition \ref{dfn: sod-1} in \Cref{dfn: sod}, consider a morphism $f\colon X \to Y$ in $\mMor_{l}(\F)$ with $X \in \Gamma^{[s+1,l]}$ and $Y \in \Gamma^{[0, s]}$. Due to \Cref{lem: Gamma-s-t}, we may assume that $X^k = 0$ for $k \in \{0, \dots, s\}$ and $Y^k \in \Proj(\F)$ for $k \in \{s+1, \dots, l\}$. Then $f$ factors as follows:
	\[
		\begin{tikzcd}[sep={15mm,between origins}]
			0 \ar[r, equal] \ar[d, equal] & \cdots \ar[r, equal] & 0 \ar[r, tail] \ar[d, equal] & X^{s+1} \ar[r, tail] \ar[d, "f^{s+1}"] & \cdots \ar[r, tail] & X^l \ar[d, "f^l"] \\
			0 \ar[r, equal] \ar[d, "f^0"] & \cdots \ar[r, equal] & 0 \ar[r, tail] \ar[d, "f^s"] & Y^{s+1} \ar[r, tail] \ar[d, equal] & \cdots \ar[r, tail] & Y^l \ar[d, equal] \\
			Y^{0} \ar[r, tail] & \cdots \ar[r, tail] & Y^{s} \ar[r, tail] & Y^{s+1} \ar[r, tail] & \cdots \ar[r, tail] & Y^l
		\end{tikzcd}
	\]
	The middle object is projective, see \Cref{thm: mMor}.\ref{thm: mMor-Proj}, and hence $f = 0$ in $\stab \mMor_l(\F)$.\\
	For condition \ref{dfn: sod-2} in \Cref{dfn: sod}, we prove that for any $X \in \mMor_l(\F)$ the morphism
	\[
		\begin{tikzcd}[sep={15mm,between origins}]
			\tilde X \ar[d, "f"] & 0 \ar[r, equal] \ar[d] & \cdots \ar[r, equal] & 0 \ar[r, tail] \ar[d] & X^{s+1} \ar[r, tail] \ar[d, equal] & \cdots \ar[r, tail] & X^l \ar[d, equal] \\
			X & X^0 \ar[r, tail] & \cdots \ar[r, tail] & X^{s} \ar[r, tail] & X^{s+1} \ar[r, tail] & \cdots \ar[r, tail] & X^l
		\end{tikzcd}
	\]
	in $\mMor_l(\F)$ with $\tilde X \in \Gamma^{[s+1, l]}$ has its cone $C := C(f)$ in $\Gamma^{[0, s]}$. To this end, we consider the pushout of $f$ along the admissible monic $i:=i_{\tilde X}\colon\monic{\tilde X}{I(\tilde X) =: I}$, where $I^k = 0$ for all $k \in \{0, \dots, s\}$ and $i^{s+1}=i_{X^{s+1}} \colon X^{s+1} \to I(X^{s+1}) = I^{s+1}$, see \Cref{cnv: I-P}, \Cref{con: mMor-enough}.\ref{con: mMor-enough-inj}, and \Cref{rmk: cone-termwise}:
	\[
		\begin{tikzcd}[column sep={7.5mm,between origins}, row sep={10mm,between origins}]
			& I \ar[dd] && 0 \ar[rr, equal] \ar[dd] && \cdots \ar[rr, equal] && 0 \ar[rr, tail] \ar[dd] && I^{s+1} \ar[rr, tail] \ar[dd, equal] && I^{s+2} \ar[rr, tail] \ar[dd, equal] && \cdots \ar[rr, tail] && I^l \ar[dd, equal]\\
			\tilde X \ar[ru, tail, "i"] && 0 \ar[rr, equal, crossing over] \ar[ru, equal] && \cdots \ar[rr, equal] && 0 \ar[rr, tail, crossing over] \ar[ru, equal] && X^{s+1} \ar[rr, tail, crossing over] \ar[ru, tail] && X^{s+2} \ar[rr, tail, crossing over] \ar[ru, tail] && \cdots \ar[rr, tail] && X^l \ar[ru, tail]\\
			& C && X^0 \ar[rr, tail] && \cdots \ar[rr, tail] && X^{s} \ar[rr, tail] && I^{s+1} \ar[rr, tail] && I^{s+2} \ar[rr, tail] && \cdots \ar[rr, tail] && I^l\\
			X \ar[ru, tail] \ar[uu, <-, "f"] && X^{0} \ar[rr, tail] \ar[ru, equal] \ar[uu, <-] && \cdots \ar[rr, tail] && X^{s} \ar[rr, tail] \ar[ru, equal] \ar[uu, <-, crossing over] && X^{s+1} \ar[rr, tail] \ar[ru, tail, "i^{s+1}"', pos=1.25] \ar[uu, equal, crossing over] && X^{s+2} \ar[rr, tail] \ar[ru, tail] \ar[uu, equal, crossing over] && \cdots \ar[rr, tail] && X^l \ar[ru, tail] \ar[uu, equal, crossing over]
		\end{tikzcd}
	\]
	We postcompose $X \to C$ with the stable isomorphism $C \to \tilde C$ from \Cref{lem: Gamma-s-t}.\ref{lem: Gamma-s-t-epi} to obtain a distinguished triangle
	\begin{align}
		\begin{tikzcd}[row sep={15mm,between origins}, column sep={17.5mm,between origins}, ampersand replacement=\&]
			\tilde X \ar[d, "f"] \& 0 \ar[r, equal] \ar[d] \& \cdots \ar[r, equal] \& 0 \ar[r, tail] \ar[d] \& X^{s+1} \ar[r, tail] \ar[d, equal] \& X^{s+2} \ar[r, tail] \ar[d, equal] \& \cdots \ar[r, tail] \& X^l \ar[d, equal] \\
			X \ar[d] \& X^0 \ar[r, tail] \ar[d, equal] \& \cdots \ar[r, tail] \& X^{s} \ar[r, tail] \ar[d, equal] \& X^{s+1} \ar[r, tail] \ar[d, "i_{X^{s+1}}", tail] \& X^{s+2} \ar[r, tail] \ar[d] \& \cdots \ar[r, tail] \& X^l \ar[d] \\
			\tilde C \& X^0 \ar[r, tail] \& \cdots \ar[r, tail] \& X^{s} \ar[r, tail] \& I(X^{s+1}) \ar[r, equal] \& I(X^{s+1}) \ar[r, equal] \& \cdots \ar[r, equal] \& I(X^{s+1})
		\end{tikzcd}
	\end{align}
	in $\stab \mMor_{l}(\F)$ with $\tilde X \in \Gamma^{[s+1, l]}$ and $\tilde C \in \Gamma^{[0, s]}$. The claims on adjoints are due to \Cref{prp: SOD-adjoints}.
\end{proof}

\begin{cor} \label{cor: append-I}
	Let $\F$ be a Frobenius category, $l \in \NN$, and $s \in \{0, \dots, l\}$. Given $f\colon X \to Y$ in $\mMor_{s}(\F)$ and two admissible monics $i\colon\monic{X^s}{I}$ and $j\colon\monic{Y^s}{J}$ in $\F$ with $I, J \in \Proj(\F)$, there is a unique morphism of the form
	\[
		\begin{tikzcd}[sep={15mm,between origins}]
			\tilde X \ar[d, "\tilde f"] & X^0 \ar[r, tail] \ar[d, "f^0"] & \cdots \ar[r, tail] & X^s \ar[r, tail, "i"] \ar[d, "f^s"] & I \ar[r, equal] \ar[d] & \cdots \ar[r, equal] & I \ar[d] \\
			\tilde Y & Y^0 \ar[r, tail] & \cdots \ar[r, tail] & Y^s \ar[r, tail, "j"] & J \ar[r, equal] & \cdots \ar[r, equal] & J
		\end{tikzcd}
	\]
	in $\stab \mMor_{l}(\F)$. In particular, $\tilde f$ is an isomorphism if $f=\id_X$.
\end{cor}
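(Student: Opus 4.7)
The plan is to prove existence of the morphism $\tilde f$ in $\mMor_{l}(\F)$ and uniqueness only after passing to $\stab\mMor_{l}(\F)$, then deduce the isomorphism claim by a symmetric application of uniqueness.

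For existence, I would define $\tilde f^k := f^k$ for $k \leq s$, and construct the common component on the constant tail as follows. Since $J \in \Proj(\F) = \Inj(\F)$, the morphism $j f^s \colon X^s \to J$ extends along the admissible monic $i \colon \monic{X^s}{I}$ to some $g \colon I \to J$ with $gi = jf^s$. Setting $\tilde f^k := g$ for all $k > s$ yields a chain morphism in $\mMor_{l}(\F)$: the only nontrivial square to check is the one at position $s \to s+1$, where $\tilde f^{s+1} \circ i = gi = jf^s = j \circ \tilde f^s$; the squares among the constant tails $I \xrightarrow{\id} I$ and $J \xrightarrow{\id} J$ commute automatically.

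For uniqueness in $\stab\mMor_{l}(\F)$, suppose $\tilde f$ and $\tilde f'$ both extend $f$ in the required form. Their difference $d := \tilde f - \tilde f'$ then satisfies $d^k = 0$ for $k \leq s$, while $d^{s+1} = \cdots = d^l =: h$ for some $h \colon I \to J$ with $hi = j \cdot 0 = 0$. I would factor $d$ through the object $W := \mu_{l-s}(J) \in \Inj(\mMor_{l}(\F))$, which is projective-injective by \Cref{thm: mMor}.\ref{thm: mMor-Proj} since $J \in \Inj(\F) = \Proj(\F)$. The map $\tilde X \to W$ is zero on positions $\leq s$ and equals $h$ on positions $> s$; the only square requiring verification is at position $s \to s+1$, where the condition $hi = 0$ is precisely what is needed. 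The map $W \to \tilde Y$ is zero on positions $\leq s$ and the identity on positions $> s$, and trivially commutes with structure maps. The composition recovers $d$ termwise, so $d$ vanishes in $\stab\mMor_{l}(\F)$.

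For the ``in particular'' claim: when $f = \id_X$, existence produces a morphism $\tilde f \colon \tilde X \to \tilde Y$ and, by symmetry, a morphism $\tilde f' \colon \tilde Y \to \tilde X$ of the same form, both extending $\id_X$. Each composition $\tilde f' \circ \tilde f$ and $\tilde f \circ \tilde f'$ is again an extension of $\id_X$ of the same form, and so are $\id_{\tilde X}$ and $\id_{\tilde Y}$; uniqueness in $\stab\mMor_{l}(\F)$ forces the compositions to equal the identities, making $\tilde f$ an isomorphism there. The only conceptual step is recognizing $\mu_{l-s}(J)$ as the correct projective-injective through which to factor the difference; everything else is diagram chasing using the definitions and \Cref{thm: mMor}.
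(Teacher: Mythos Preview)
Your proof is correct, but it takes a different route from the paper. The paper argues via the semiorthogonal decomposition $(\Gamma^{[s+1,l]},\Gamma^{[0,s]})$ from \Cref{prp: SOD-Gamma-Gamma}: the objects $\tilde X$ and $\tilde Y$ are the $\Gamma^{[0,s]}$-components of $\delta^{[s,l]}_s(X)$ and $\delta^{[s,l]}_s(Y)$ in the associated distinguished triangles, and \Cref{rmk: Hom=0} then forces $\delta^{[s,l]}_s(f)$ to extend uniquely to a morphism of triangles, yielding $\tilde f$. Your approach is more elementary and self-contained: you construct $\tilde f$ directly using injectivity of $J$, and prove uniqueness by factoring the difference through the explicit projective-injective $\mu_{l-s}(J)$. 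The paper's argument has the advantage of explaining why the corollary is really a shadow of the SOD structure (and gets the isomorphism statement for free from uniqueness, just as you do), while yours has the advantage of not needing \Cref{prp: SOD-Gamma-Gamma} as input and making the null-homotopy completely explicit. Both arguments ultimately encode the same orthogonality---your factorization through $\mu_{l-s}(J)$ is precisely a witness to $\Hom_{\M_l}(\Gamma^{[s+1,l]},\Gamma^{[0,s]})=0$ in this special case.
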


\begin{proof}
	The objects $\tilde X, \tilde Y \in \Gamma^{[0, s-1]}$ appear in the distinguished triangles of \Cref{prp: SOD-Gamma-Gamma}.\ref{prp: SOD-Gamma-Gamma-diag} applied to $\delta^{[s, l]}_s(X)$ with $i_{X^s}=i$ and $\delta^{[s, l]}_s(Y)$ with $i_{Y^s}=j$. \Cref{rmk: Hom=0} yields the claim.
\end{proof}

Combining the particular claims of \Cref{lem: Gamma-s-t} and \Cref{cor: append-I} yields

\begin{cor} \label{cor: Gamma-s-t}
	Let $\F$ be a Frobenius category, $l \in \NN$, and $s, t \in \{0, \dots, l\}$ with $s \leq t$. Then $\Gamma^{[s,t]} \subseteq \stab \mMor_{l}(\F)$ is the replete hull of the subcategory of $\stab \mMor_l(\F)$ with objects of the form
	\pushQED{\qed}
	\[\begin{tikzcd}[sep={15mm,between origins}]
		0 \ar[r, equal] & \cdots \ar[r, equal] & 0 \ar[r, tail] & X^s \ar[r, tail] & \cdots \ar[r, tail] & X^t \ar[r, tail, "i_{X^t}"] & I(X^t) \ar[r, equal] & \cdots \ar[r, equal] & I(X^t). 
	\end{tikzcd}\qedhere\]
\end{cor}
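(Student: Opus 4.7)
The plan is to leverage the final claim of \Cref{lem: Gamma-s-t}, which already identifies $\Gamma^{[s,t]}$ as the replete hull of the subcategory of objects
\[\begin{tikzcd}[cramped, sep=small] 0 \ar[r, equal] & \cdots \ar[r, equal] & 0 \ar[r, tail] & Y^s \ar[r, tail] & \cdots \ar[r, tail] & Y^t \ar[r, tail, "\alpha^t"] & Y^{t+1} \ar[r, equal] & \cdots \ar[r, equal] & Y^{t+1} \end{tikzcd}\]
with $Y^{t+1}\in\Proj(\F)=\Inj(\F)$, and then use \Cref{cor: append-I} to ``rigidify'' the appended projective-injective to the fixed choice $I(Y^t)$ coming from \Cref{cnv: I-P}. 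Since both descriptions sit inside $\Gamma^{[s,t]}$ (each object of the displayed form has zero entries in positions $0,\dots,s-1$ and entries equal to $I(X^t)\in\Inj(\F)=\Proj(\F)$ in positions $t+1,\dots,l$, so is annihilated by $\stab\gamma^{[s,t]}$), it suffices to show that every $Y$ of the shape above is isomorphic, in $\stab\mMor_l(\F)$, to an object of the displayed form.

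First, I would set up the input to \Cref{cor: append-I} with its parameter ``$s$'' specialised to our $t$: take $X=Y$ equal to the $A_{t+1}$-chain $0 \tailto \cdots \tailto 0 \tailto Y^s \tailto \cdots \tailto Y^t$ in $\mMor_t(\F)$, let $f=\id$ on this chain, and take as the two admissible monics into projectives the original map $\alpha^t\colon Y^t\tailto Y^{t+1}$ (with $Y^{t+1}\in\Proj(\F)$ by hypothesis) and the canonical one $i_{Y^t}\colon Y^t\tailto I(Y^t)$ fixed in \Cref{cnv: I-P}, with $I(Y^t)\in\Inj(\F)=\Proj(\F)$.

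Second, I would invoke \Cref{cor: append-I} itself: it produces a unique morphism in $\stab\mMor_l(\F)$ between the two ``completions''---one being $Y$ (appended via $\alpha^t$), the other being the object of the displayed form (appended via $i_{Y^t}$)---and, by the ``in particular'' clause, this morphism is an isomorphism because $f=\id$. This isomorphism exhibits $Y$ as repletely equivalent to an object of the displayed form, completing the inclusion of replete hulls.

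The only delicate point is bookkeeping the indices: \Cref{cor: append-I} is phrased with its parameter $s$ playing the role of the ``last non-constant position,'' whereas here we need it in role $t$; once this relabelling is done, both hypotheses (admissible monic, projective codomain) are satisfied by the Frobenius equality $\Proj(\F)=\Inj(\F)$, and no further computation is required. I expect no substantive obstacle beyond this index matching.
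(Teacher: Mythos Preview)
Your proposal is correct and matches the paper's own argument essentially verbatim: the paper simply states that the corollary follows by combining the particular claim of \Cref{lem: Gamma-s-t} with \Cref{cor: append-I}, which is precisely the two-step strategy you outline (first get the description with an arbitrary projective-injective at positions $t+1,\dots,l$, then use the $f=\id$ case of \Cref{cor: append-I} to replace it by the canonical $I(X^t)$). Your index bookkeeping is also right.
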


\begin{prp} \label{prp: SODs}
	Let $\F$ be a Frobenius category and $l \in \NN$.
	\begin{enumerate}
		\item \label{prp: SODs-Gamma-Delta} For $s, t \in \{0, \dots, l-1\}$ with $s \leq t$, $\stab \mMor_{l}(\F)$ admits the semiorthogonal decomposition $\left(\Gamma^{[s,t]}, \Delta^{[s, t+1]}\right)$ as follows:
		\begin{enumerate}
			\item \label{prp: SODs-Gamma-Delta-diag} Any $X = (X, \alpha) \in \mMor_{l}(\F)$ fits into a distinguished triangle in $\stab \mMor_{l}(\F)$ represented by the commutative diagram in $\F$
			\[
				\begin{tikzcd}[row sep={15mm,between origins}, column sep={16.5mm,between origins}]
					0 \ar[r, equal] \ar[d] & \cdots \ar[r, equal] & 0 \ar[r, tail] \ar[d] & Y^s \ar[r, tail] \ar[d, two heads] & \cdots \ar[r, tail] \ar[rd, phantom, "\square"] & Y^t \ar[r, tail] \ar[d, two heads] \ar[rd, phantom, "\square"] & P(X^{t+1}) \ar[r, equal] \ar[d, two heads, "p_{X^{t+1}}"] & \cdots \ar[r, equal] & P(X^{t+1}) \ar[d]\\
					X^0 \ar[r, tail, "\alpha^0"] \ar[d, equal] & \cdots \ar[r, tail, "\alpha^{s-2}"] & X^{s-1} \ar[r, tail, "\alpha^{s-1}"] \ar[d, equal] & X^s \ar[r, tail, "\alpha^s"] \ar[d, tail, "\alpha^t \cdots \alpha^s"] \ar[ru, phantom, "\square"] & \cdots \ar[r, tail, "\alpha^{t-1}"] & X^t \ar[r, tail, "\alpha^t"] \ar[d, tail, "\alpha^t"] & X^{t+1} \ar[r, tail, "\alpha^{t+1}"] \ar[d, equal] & \cdots \ar[r, tail, "\alpha^{l-2}"] & X^l \ar[d, equal]\\
					X^0 \ar[r, tail, "\alpha^0"] & \cdots \ar[r, tail, "\alpha^{s-2}"] & X^{s-1} \ar[r, tail, "\alpha^t \cdots \alpha^{s-1}"] & X^{t+1} \ar[r, equal] & \cdots \ar[r, equal] & X^{t+1} \ar[r, equal] & X^{t+1} \ar[r, tail, "\alpha^{t+1}"] & \cdots \ar[r, tail, "\alpha^{l-1}"] & X^l.
				\end{tikzcd}
			\]
			\item \label{prp: SODs-Gamma-Delta-right} The inclusion $\inj{\Gamma^{[s,t]}}{\stab \mMor_{l}(\F)}$ has a right adjoint which sends any $X \in \mMor_{l}(\F)$ to the object in the upper row of the diagram in \ref{prp: SODs-Gamma-Delta-diag}.
			
			\item \label{prp: SODs-Gamma-Delta-left} For $s<t$, the inclusion $\inj{\Delta^{[s, t]}}{\stab \mMor_{l}(\F)}$ has a left adjoint which sends any $X \in \mMor_{l}(\F)$ to the object $\delta^{[s, t]} \gamma^{[s, t-1]}(X)$ in the lower row of the diagram in \ref{prp: SODs-Gamma-Delta-diag}.
		\end{enumerate}
		
		\item \label{prp: SODs-Delta-Gamma} For $s, t \in \{1, \dots, l\}$ with $s \leq t$, $\stab \mMor_{l}(\F)$ admits the semiorthogonal decomposition $\left(\Delta^{[s-1,t]}, \Gamma^{[s, t]}\right)$ as follows:
		\begin{enumerate}
			\item \label{prp: SODs-Delta-Gamma-diag} Any $X = (X, \alpha) \in \mMor_{l}(\F)$ fits into a distinguished triangle in $\stab \mMor_{l}(\F)$ represented by the commutative diagram in $\F$
			\[
				\begin{tikzcd}[row sep={15mm,between origins}, column sep={16.5mm,between origins}]
					X^0 \ar[r, tail, "\alpha^0"] \ar[d, equal] & \cdots \ar[r, tail, "\alpha^{s-2}"] & X^{s-1} \ar[r, equal] \ar[d, equal] & X^{s-1} \ar[r, equal] \ar[d, tail, "\alpha^{s-1}"] & \cdots \ar[r, equal] & X^{s-1} \ar[r, tail, "\alpha^t \cdots \alpha^{s-1}"] \ar[d, tail, "\alpha^{t-1} \cdots \alpha^{s-1}"] & X^{t+1} \ar[r, tail, "\alpha^{t+1}"] \ar[d, equal] & \cdots \ar[r, tail, "\alpha^{l-1}"] & X^l \ar[d, equal] \\
					X^0 \ar[r, tail, "\alpha^0"] \ar[d] & \cdots \ar[r, tail, "\alpha^{s-2}"] & X^{s-1} \ar[r, tail, "\alpha^{s-1}"] \ar[d, two heads] \ar[rd, phantom, "\square"] & X^s \ar[r, tail, "\alpha^s"] \ar[d, two heads] & \cdots \ar[r, tail, "\alpha^{t-1}"] \ar[rd, phantom, "\square"] & X^{t} \ar[r, tail, "\alpha^t"] \ar[d, two heads] & X^{t+1} \ar[r, tail, "\alpha^{t+1}"] \ar[d] & \cdots \ar[r, tail, "\alpha^{l-1}"] & X^l \ar[d] \\
					0 \ar[r, equal] & \cdots \ar[r, equal] & 0 \ar[r, tail] & Y^s \ar[r, tail] \ar[ru, phantom, "\square"] & \cdots \ar[r, tail] & Y^t \ar[r, tail, "i_{Y^t}"] & I(Y^t) \ar[r, equal] & \cdots \ar[r, equal] & I(Y^t).
				\end{tikzcd}
			\]
			
			\item \label{prp: SODs-Delta-Gamma-right} For $s<t$, the inclusion $\inj{\Delta^{[s, t]}}{\stab \mMor_{l}(\F)}$ has a right adjoint which sends any $X \in \mMor_{l}(\F)$ to the object $\delta^{[s, t]} \gamma^{[s+1, t]}(X)$ in the upper row of the diagram in \ref{prp: SODs-Delta-Gamma-diag}.
			
			\item \label{prp: SODs-Delta-Gamma-left} The inclusion $\inj{\Gamma^{[s,t]}}{\stab \mMor_{l}(\F)}$ has a left adjoint which sends any $X \in \mMor_{l}(\F)$ to the object in the lower row of the diagram in \ref{prp: SODs-Delta-Gamma-diag}.
		\end{enumerate}
	\end{enumerate}
\end{prp}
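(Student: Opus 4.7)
Both statements follow a pattern parallel to \Cref{prp: SOD-Gamma-Gamma}, and parts (a) and (b) are formally dual (pullback vs.~pushout, $p_{X^{t+1}}$ vs.~$i_{Y^t}$), so the plan is to treat only (a) in detail; (b) then follows by reversing arrows.

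For condition \ref{dfn: sod-1} in \Cref{dfn: sod}, let $f\colon X \to Y$ with $X \in \Gamma^{[s,t]}$ and $Y \in \Delta^{[s,t+1]}$. By \Cref{cor: Gamma-s-t} we may take $X^k = 0$ for $k<s$ and $X^k = I(X^t)$ for $k>t$ with $\alpha^t = i_{X^t}$, and by \Cref{lem: Delta-mMor} we may take $Y^s = \cdots = Y^{t+1}$ with identity arrows between. The chain relations force $f^k = 0$ for $k<s$ and $f^k = f^{t+1}\alpha^t\cdots\alpha^k$ on $[s,t]$. The object $Z := \mu_{l-s+1}(I(X^t))$ is projective-injective in $\mMor_l(\F)$ by \Cref{thm: mMor}.\ref{thm: mMor-Proj}, and $f$ factors through it via the morphism $X \to Z$ given termwise by $i_{X^t}\alpha^{t-1}\cdots\alpha^k$ on $[s,t]$ and $\id$ on $[t+1,l]$, and $Z \to Y$ given termwise by $f^{t+1}$ on $[s,t+1]$ and $f^k$ on $[t+2,l]$. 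A routine verification of the chain conditions shows that the composition equals $f$, forcing $f = 0$ in $\stab\mMor_l(\F)$.

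For condition \ref{dfn: sod-2}, construct $\tilde X$ as follows: for each $k \in [s,t]$, form the pullback $Y^k$ of $\alpha^t\cdots\alpha^k\colon\monic{X^k}{X^{t+1}}$ along $p_{X^{t+1}}$ via \Cref{prp: Buehler2.12}.\ref{prp: Buehler2.12-pull}. This yields bicartesian squares together with admissible monics $\monic{Y^k}{P(X^{t+1})}$, admissible epics $\epic{Y^k}{X^k}$, and, by universality, admissible monics $Y^s \hookrightarrow \cdots \hookrightarrow Y^t \hookrightarrow P(X^{t+1})$, realising the upper half of the diagram. The object $\tilde X := (0,\dots,0,Y^s,\dots,Y^t,P(X^{t+1}),\dots,P(X^{t+1}))$ lies in $\Gamma^{[s,t]}$, and the induced morphism $\tilde X \to X$ has cone, by \Cref{con: cone} and \Cref{rmk: cone-termwise}, computed termwise: positions $k<s$ give $X^k$; positions $k\in[s,t]$ give the cone of the admissible epic $\epic{Y^k}{X^k}$, whose kernel is the pullback of $\ker p_{X^{t+1}} \cong \Sigma^{-1}X^{t+1}$, so the cone is quasi-isomorphic to $X^{t+1}$; positions $k\geq t+1$ give $X^k$ since $P(X^{t+1})$ is projective-injective. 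Mirroring the final step of the proof of \Cref{prp: SOD-Gamma-Gamma}, one postcomposes with a splitting quasi-isomorphism analogous to \Cref{lem: Gamma-s-t}.\ref{lem: Gamma-s-t-epi} to strip off residual injective summands, identifying the cone with $\delta^{[s,t+1]}\gamma^{[s,t]}(X) \in \Delta^{[s,t+1]}$.

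The main technical obstacle is assembling these termwise quasi-isomorphism types into an actual chain morphism in $\mMor_l(\F)$, realising the lower row as the third vertex of the triangle; this is precisely where the explicit bicartesian squares of the diagram are essential, as they supply the required compatibility between the pullbacks at neighbouring positions. The adjoint descriptions \ref{prp: SODs-Gamma-Delta-right} and \ref{prp: SODs-Gamma-Delta-left} then follow from \Cref{prp: SOD-adjoints}. Part (b) is proved dually, with pushouts along $i_{Y^t}$ replacing pullbacks along $p_{X^{t+1}}$: the Hom-vanishing factors through the projective-injective $\mu_{l-t}(I(Y^t))$, and the cone construction uses the cokernels $Y^k$ of $\alpha^{k-1}\cdots\alpha^{s-1}\colon\monic{X^{s-1}}{X^k}$ for $k \in [s,t]$ to realise the lower row $(0,\dots,0,Y^s,\dots,Y^t,I(Y^t),\dots,I(Y^t)) \in \Gamma^{[s,t]}$.
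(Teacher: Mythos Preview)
Your Hom-vanishing argument for (a) is correct; it differs from the paper's only in that you first normalize $X$ via \Cref{cor: Gamma-s-t} and factor through $\mu_{l-s+1}(I(X^t))$, whereas the paper works with a general representative and factors directly through $(X^0,\dots,X^{s-1},X^{t+1},\dots,X^{t+1},X^{t+2},\dots,X^l)$, which is projective since every entry lies in $\Proj(\F)$.

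For the triangle, your route and the paper's diverge. You construct the $\Gamma^{[s,t]}$-object (top row) first via pullbacks and then try to compute the \emph{cone} of the resulting map into $X$. The paper instead observes that the $\Delta^{[s,t+1]}$-object is simply $\delta^{[s,t+1]}\gamma^{[s,t]}(X)$ --- no construction needed --- and computes the \emph{cocone} of the tautological morphism $f\colon X \to \delta^{[s,t+1]}\gamma^{[s,t]}(X)$. The point is that a cocone is a pullback along a projective cover of the target, and one is free to choose this cover as $p := \delta^{[s,t+1]}(p_{\gamma^{[s,t]}(X)})$ rather than the generic one from \Cref{con: mMor-enough}. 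Because $p$ then has identities at positions $s,\dots,t+1$, the termwise pullbacks become tractable and can be matched with the claimed $Y^k$ via \Cref{lem: pushpull-sep}; a final pass with \Cref{lem: Gamma-s-t} yields the stated normal form. The same pattern recurs in (b): there the paper computes the cone of $\delta^{[s-1,t]}\gamma^{[s,t]}(X) \to X$ using an injective envelope that inherits identities at positions $s-1,\dots,t$ from the $\delta$-image structure.

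Your direction instead requires an injective envelope of the $\Gamma^{[s,t]}$-object and the corresponding pushout. Since that object is not in the image of any $\delta$, there is no analogously clean choice of envelope, and you do not supply one. The ``technical obstacle'' you flag --- assembling termwise cone types into an actual chain object with the correct arrows --- is exactly the content that the paper's choice of cover (respectively envelope) resolves; as written, your argument identifies the problem but does not close it.
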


\begin{proof} \
	\begin{enumerate}[wide]
		\item For condition \ref{dfn: sod-1} in \Cref{dfn: sod}, consider a morphism $f\colon (X, \alpha) \to (Y, \beta)$ in $\mMor_{l}(\F)$ with $X \in \Gamma^{[s,t]}$ and $Y \in \Delta^{[s, t+1]}$. It factors as
		\[
			\begin{tikzcd}[row sep={15mm,between origins}, column sep={16.5mm,between origins}]
				X^{0} \ar[r, tail, "\alpha^0"] \ar[d, equal] & \cdots \ar[r, tail, "\alpha^{s-2}"] & X^{s-1} \ar[r, tail, "\alpha^{s-1}"] \ar[d, equal] & X^s \ar[r, tail, "\alpha^s"] \ar[d, tail, "\alpha^t \cdots \alpha^s"] & \cdots \ar[r, tail, "\alpha^{t-1}"] & X^t \ar[r, tail, "\alpha^{t}"] \ar[d, tail, "\alpha^t"] & X^{t+1} \ar[r, tail, "\alpha^{t+1}"] \ar[d, equal] & X^{t+2} \ar[r, tail, "\alpha^{t+2}"] \ar[d, equal] & \cdots \ar[r, tail, "\alpha^{l-1}"] & X^l \ar[d, equal]\\
				X^{0} \ar[r, tail, "\alpha^0"] \ar[d, "f^{0}"] & \cdots \ar[r, tail, "\alpha^{s-2}"] & X^{s-1} \ar[r, tail, "\alpha^t \cdots \alpha^{s-1}"] \ar[d, "f^{s-1}"] & X^{t+1} \ar[r, equal] \ar[d, "f^{t+1}"] & \cdots \ar[r, equal] & X^{t+1} \ar[r, equal] \ar[d, "f^{t+1}"] & X^{t+1} \ar[r, tail, "\alpha^{t+1}"] \ar[d, "f^{t+1}"] & X^{t+2} \ar[r, tail, "\alpha^{t+2}"] \ar[d, "f^{t+2}"] & \cdots \ar[r, tail, "\alpha^{l-1}"] & X^l \ar[d, "f^l"]\\
				Y^{0} \ar[r, tail, "\beta^0"] & \cdots \ar[r, tail, "\beta^{s-2}"] & Y^{s-1} \ar[r, tail, "\beta^{s-1}"] & Y^s \ar[r, equal] & \cdots \ar[r, equal] & Y^t \ar[r, equal] & Y^{t+1} \ar[r, tail, "\beta^{t+1}"] & Y^{t+2} \ar[r, tail, "\beta^{t+2}"] & \cdots \ar[r, tail, "\beta^{l-1}"] & Y^l,
			\end{tikzcd}
		\]
		since $\beta^k = \id_{Y^k}$ and hence $f^k = f^{t+1} \alpha^t \cdots \alpha^k$ for $k \in \{s, \dots, t\}$. The object in the middle row is projective since $X^k \in \Proj(\F)$ for $k \in \{0, \dots, s-1, t+1, \dots, l\}$, see \Cref{thm: mMor}.\ref{thm: mMor-Proj}, and hence $f = 0$ in $\stab \mMor_l(\F)$.\\
		For condition \ref{dfn: sod-2} in \Cref{dfn: sod}, let $X = (X, \alpha) \in \mMor_l(\F)$ be arbitrary, and set $\tilde X := \delta^{[s, t+1]}\gamma^{[s, t]}(X) \in \Delta^{[s, t+1]}$. We prove that the morphism
		\[
			\begin{tikzcd}[row sep={15mm,between origins}, column sep={16.5mm,between origins}]
				X \ar[d, "f"'] & X^0 \ar[r, tail, "\alpha^0"] \ar[d, equal] & \cdots \ar[r, tail, "\alpha^{s-2}"] & X^{s-1} \ar[r, tail, "\alpha^{s-1}"] \ar[d, equal] & X^s \ar[r, tail, "\alpha^s"] \ar[d, tail, "\alpha^t \cdots \alpha^s"] & \cdots \ar[r, tail, "\alpha^{t-1}"] & X^t \ar[r, tail, "\alpha^t"] \ar[d, tail, "\alpha^t"] & X^{t+1} \ar[r, tail, "\alpha^{t+1}"] \ar[d, equal] & \cdots \ar[r, tail, "\alpha^{l-1}"] & X^l \ar[d, equal]\\
				\tilde X & X^0 \ar[r, tail, "\alpha^0"] & \cdots \ar[r, tail, "\alpha^{s-2}"] & X^{s-1} \ar[r, tail, "\alpha^t \cdots \alpha^{s-1}"] & X^{t+1} \ar[r, equal] & \cdots \ar[r, equal] & X^{t+1} \ar[r, equal] & X^{t+1} \ar[r, tail, "\alpha^{t+1}"] & \cdots \ar[r, tail, "\alpha^l"] & X^l
			\end{tikzcd}
		\]
		in $\mMor_{l}(\F)$ has its cocone $C^\ast := C^\ast(f)$ in $\Gamma^{[s, t]}$. To this end, we consider the pullback of $f$ along the admissible epic $p:=\delta^{[s, t+1]}(p_{\gamma^{[s,t]}(X)})\colon\epic{P }{\tilde X}$ with $P \in \mMor_{l}(\Proj(\F))$ and
		\begin{align} \label{diag: p}
			p^k = \begin{pmatrix}
				\alpha^{t} \cdots \alpha^{s-1} p^{s-1} & p_{X^{t+1}}
			\end{pmatrix}\colon \epic{P^k = P^{s-1} \oplus P(X^{t+1})}{X^{t+1}}
		\end{align}
		for all $k \in \{s, \dots, t+1\}$, see \Cref{con: mMor-enough}.\ref{con: mMor-enough-proj}. Due to \Cref{rmk: cone-termwise}, it displays as a termwise pullback:
		\[
			\begin{tikzcd}[column sep={7.5mm,between origins}, row sep={10mm,between origins}]
				& X \ar[dd, "f"'] && X^{0} \ar[rr, tail] \ar[dd, equal] && \cdots \ar[rr, tail] && X^{s-1} \ar[rr, tail] \ar[dd, equal] && X^s \ar[rr, tail] \ar[dd, tail] && \cdots \ar[rr, tail] && X^t \ar[rr, tail] \ar[dd, tail] && X^{t+1} \ar[rr, tail] \ar[dd, equal] && \cdots \ar[rr, tail] && X^l \ar[dd, equal] \\
				C^\ast \ar[ru, two heads] && P^0 \ar[rr, tail, crossing over]\ar[ru, two heads] && \cdots \ar[rr, tail] && P^{s-1} \ar[rr, tail, crossing over] \ar[ru, two heads] && C^s \ar[rr, tail, crossing over] \ar[ru, two heads] && \cdots \ar[rr, tail] && C^{t} \ar[rr, tail, crossing over] \ar[ru, two heads] && P^{t+1} \ar[rr, tail, crossing over] \ar[ru, two heads] && \cdots \ar[rr, tail] && P^l \ar[ru, two heads] \\
				& \tilde X && X^{0} \ar[rr, tail] && \cdots \ar[rr, tail] && X^{s-1} \ar[rr, tail] && X^{t+1} \ar[rr, equal] && \cdots \ar[rr, equal] && X^{t+1} \ar[rr, equal] && X^{t+1} \ar[rr, tail] && \cdots \ar[rr, tail] && X^l \\
				P \ar[uu, <-] \ar[ru, two heads, "p"'] && P^0 \ar[rr, tail] \ar[uu, equal] \ar[ru, two heads] && \cdots \ar[rr, tail] && P^{s-1} \ar[rr, tail] \ar[uu, equal, crossing over] \ar[ru, two heads] && P^{s} \ar[rr, equal] \ar[uu, <-<, crossing over] \ar[ru, two heads] && \cdots \ar[rr, equal] && P^{t} \ar[rr, equal] \ar[uu, <-<, crossing over] \ar[ru, two heads] && P^{t+1} \ar[rr, tail] \ar[uu, equal, crossing over] \ar[ru, two heads] && \cdots \ar[rr, tail] && P^l \ar[uu, equal, crossing over] \ar[ru, two heads]
			\end{tikzcd}
		\]
		In particular, for each $k \in \{s, \dots, t\}$, the square
		\[
			\begin{tikzcd}[row sep={15mm,between origins}, column sep={16.5mm,between origins}]
				C^k \ar[r, tail] \ar[d, two heads] \ar[rd, phantom, "\square"]& P^k \mathrlap{\; = P^{t+1}} \ar[d, two heads] \\
				X^k \ar[r, tail, "\alpha^t \cdots \alpha^k"] & X^{t+1}
			\end{tikzcd}
		\]
		is bicartesian and, for $k < t$, so is the left square in the diagram
		\[
			\begin{tikzcd}[row sep={15mm,between origins}, column sep={16.5mm,between origins}]
				C^k \ar[r, tail] \ar[d, two heads] \ar[rd, phantom, "\square"] & C^{k+1} \ar[r, tail] \ar[d, two heads] & P^{t+1} \ar[d, two heads] \\
				X^k \ar[r, tail, "\alpha^k"] & X^{k+1} \ar[r, tail, "\alpha^t \cdots \alpha^{k+1}"] & X^{t+1}
			\end{tikzcd}
		\]
		due to 	\Cref{lem: pushpull-sep}.\ref{lem: pushpull-sep-pullback}. Using the right-inverse of $\epic{P^{t+1}}{P(X^{t+1})}$ given by the biproduct in \eqref{diag: p}, \Cref{lem: Gamma-s-t}.\ref{lem: Gamma-s-t-mono} yields a stable isomorphism $\tilde C^\ast \to C^\ast$. By composition we obtain a stably isomorphic cocone of $f$:
		\[ 
			\begin{tikzcd}[row sep={15mm,between origins}, column sep={16.5mm,between origins}]
				\tilde C^\ast \ar[d] & 0 \ar[r, equal] \ar[d, tail] & \cdots \ar[r, equal] & 0 \ar[r, tail] \ar[d] \ar[rd, phantom, "\square"] & \tilde C^s \ar[r, tail] \ar[d, tail] & \cdots \ar[r, tail] \ar[rd, phantom, "\square"] & \tilde C^t \ar[r, tail] \ar[d, tail] \ar[rd, phantom, "\square"] & P(X^{t+1}) \ar[r, equal] \ar[d, tail] & \cdots \ar[r, equal] & P(X^{t+1}) \ar[d, tail]\\
				C^\ast \ar[d] & P^0 \ar[r, tail] \ar[d, two heads] & \cdots \ar[r, tail] & P^{s-1} \ar[r, tail] \ar[d, two heads] & C^s \ar[r, tail] \ar[d, two heads] \ar[ru, phantom, "\square"] & \cdots \ar[r, tail] \ar[rd, phantom, "\square"] & C^t \ar[r, tail] \ar[d, two heads] \ar[rd, phantom, "\square"] & P^{t+1} \ar[r, tail] \ar[d, two heads] & \cdots \ar[r, tail] & P^l \ar[d, two heads]\\
				X & X^0 \ar[r, tail, "\alpha^0"] & \cdots \ar[r, tail, "\alpha^{s-2}"] & X^{s-1} \ar[r, tail, "\alpha^{s-1}"] & X^{s} \ar[r, tail, "\alpha^s"] \ar[ru, phantom, "\square"] & \cdots \ar[r, tail, "\alpha^{t-1}"] & X^{t} \ar[r, tail, "\alpha^t"] & X^{t+1} \ar[r, tail, "\alpha^{t+1}"] & \cdots \ar[r, tail, "\alpha^{l-1}"] & X^l.
			\end{tikzcd}
		\]
		Due to \Cref{lem: ses-triangle}, this yields a distinguished triangle
		\[
			\begin{tikzcd}[row sep={15mm,between origins}, column sep={16.5mm,between origins}]
				\tilde C^\ast \ar[d] & 0 \ar[r, equal] \ar[d] & \cdots \ar[r, equal] & 0 \ar[r, tail] \ar[d,] & 	\tilde C^s \ar[r, tail] \ar[d, two heads] & \cdots \ar[r, tail] \ar[rd, phantom, "\square"] & \tilde C^t \ar[r, tail] \ar[d, two heads] \ar[rd, phantom, "\square"] & P(X^{t+1}) \ar[d, two heads, "p_{X^{t+1}}"] \ar[r, equal] & \cdots \ar[r, equal] & P(X^{t+1}) \ar[d]\\
				X \ar[d, "f"'] & X^0 \ar[r, tail, "\alpha^0"] \ar[d, equal] & \cdots \ar[r, tail, "\alpha^{s-2}"] & X^{s-1} \ar[r, tail, "\alpha^{s-1}"] \ar[d, equal] & X^s \ar[r, tail, "\alpha^s"] \ar[d, tail, "\alpha^t \cdots \alpha^s"] \ar[ru, phantom, "\square"] & \cdots \ar[r, tail, "\alpha^{t-1}"] & X^t \ar[r, tail, "\alpha^t"] \ar[d, tail, "\alpha^t"] & X^{t+1} \ar[r, tail, "\alpha^{t+1}"] \ar[d, equal] & \cdots \ar[r, tail, "\alpha^{l-1}"] & X^l \ar[d, equal]\\
				\tilde X & X^0 \ar[r, tail, "\alpha^0"] & \cdots \ar[r, tail, "\alpha^{s-2}"] & X^{s-1} \ar[r, tail, "\alpha^t \cdots \alpha^{s-1}"] & X^{t+1} \ar[r, equal] & \cdots \ar[r, equal] & X^{t+1} \ar[r, equal] & X^{t+1} \ar[r, tail, "\alpha^{t+1}"] & \cdots \ar[r, tail, "\alpha^l"] & X^l
			\end{tikzcd}
		\]
		in $\mMor_{l}(\F)$ with $\tilde X = \delta^{[s, t+1]} \gamma^{[s, t]}(X) \in \Delta^{[s, t+1]}$ and $\tilde C^\ast \in \Gamma^{[s, t]}$.
		\Cref{prp: SOD-adjoints} yields the claims on adjoints. 		
		
		\item For condition \ref{dfn: sod-1} in \Cref{dfn: sod}, consider a morphism $f\colon (X, \alpha) \to (Y, \beta)$ in $\mMor_{l}(\F)$ with $X \in \Delta^{[s-1, t]}$ and $Y \in \Gamma^{[s,t]}$. It factors as
		\[
			\begin{tikzcd}[row sep={15mm,between origins}, column sep={16.5mm,between origins}]
				X^0 \ar[r, tail, "\alpha^0"] \ar[d, "f^0"] & \cdots \ar[r, tail, "\alpha^{s-3}"] & X^{s-2} \ar[r, tail, "\alpha^{s-2}"] \ar[d, "f^{s-2}"] & X^{s-1} \ar[r, equal] \ar[d, "f^{s-1}"] & X^s \ar[r, equal] \ar[d, "f^{s-1}"] & \cdots \ar[r, equal] & X^{t} \ar[r, tail, "\alpha^t"] \ar[d, "f^{s-1}"] & X^{t+1} \ar[r, tail, "\alpha^{t+1}"] \ar[d, "f^{t+1}"] & \cdots \ar[r, tail, "\alpha^{l-1}"] & X^l \ar[d, "f^{l}"] \\
				Y^0 \ar[r, tail, "\beta^0"] \ar[d, equal] & \cdots \ar[r, tail, "\beta^{s-3}"] & Y^{s-2} \ar[r, tail, "\beta^{s-2}"] \ar[d, equal] & Y^{s-1} \ar[r, equal] \ar[d, equal] & Y^{s-1} \ar[r, equal] \ar[d, "\beta^{s-1}"] & \cdots \ar[r, equal] & Y^{s-1} \ar[r, tail, "\beta^t \cdots \beta^{s-1}"] \ar[d, "\beta^{t-1} \cdots \beta^{s-1}"] & Y^{t+1} \ar[r, tail, "\beta^{t+1}"] \ar[d, equal] & \cdots \ar[r, tail, "\beta^{l-1}"] & Y^l \ar[d, equal] \\
				Y^0 \ar[r, tail, "\beta^0"] & \cdots \ar[r, tail, "\beta^{s-3}"] & Y^{s-2} \ar[r, tail, "\beta^{s-2}"] & Y^{s-1} \ar[r, tail, "\beta^{s-1}"] & Y^s \ar[r, tail, "\beta^s"] & \cdots \ar[r, tail, "\beta^{t-1}"] & Y^t \ar[r, tail, "\beta^{t}"] & Y^{t+1} \ar[r, tail, "\beta^{t+1}"] & \cdots \ar[r, tail, "\beta^{l-1}"] & Y^l,
			\end{tikzcd}
		\]
		since $\alpha^{k-1}=\id_{X^{k-1}}$ and hence $f^k=\beta^{k-1} \cdots \beta^{s-1} f^{s-1}$ for $k \in \{s,\dots, t\}$. The object in the middle row is projective since $Y^k \in \Proj(\F)$ for all $k \in \{0, \dots, s-1, t+1, \dots, l\}$, see \Cref{thm: mMor}.\ref{thm: mMor-Proj}, and hence $f = 0$ in $\stab \mMor_l(\F)$.\\
		For condition \ref{dfn: sod-2} in \Cref{dfn: sod}, let $X = (X, \alpha) \in \mMor_l(\F)$ be arbitrary, and set $\tilde X := \delta^{[s-1, t]}\gamma^{[s,t]}(X) \in \Delta^{[s-1, t]}$. We prove that the morphism 
		\[
			\begin{tikzcd}[row sep={15mm,between origins}, column sep={16.5mm,between origins}]
				\tilde X \ar[d, "f"'] & X^0 \ar[r, tail, "\alpha^0"] \ar[d, equal] & \cdots \ar[r, tail, "\alpha^{s-2}"] & X^{s-1} \ar[r, equal] \ar[d, equal] & X^{s-1} \ar[r, equal] \ar[d, tail, "\alpha^{s-1}"] & \cdots \ar[r, equal] & X^{s-1} \ar[r, tail, "\alpha^t \cdots \alpha^{s-1}"] \ar[d, tail, "\alpha^{t-1} \cdots \alpha^{s-1}"] & X^{t+1} \ar[r, tail, "\alpha^{t+1}"] \ar[d, equal] & \cdots \ar[r, tail, "\alpha^{l-1}"] & X^l \ar[d, equal] \\
				X & X^0 \ar[r, tail, "\alpha^0"] & \cdots \ar[r, tail, "\alpha^{s-2}"] & X^{s-1} \ar[r, tail, "\alpha^{s-1}"] & X^s \ar[r, tail, "\alpha^s"] & \cdots \ar[r, tail, "\alpha^{t-1}"] & X^{t} \ar[r, tail, "\alpha^t"] & X^{t+1} \ar[r, tail, "\alpha^{t+1}"] & \cdots \ar[r, tail, "\alpha^{l-1}"] & X^l 
			\end{tikzcd}
		\]
		in $\stab \mMor_{l}(\F)$ has its cone $C := C(f)$ in $\Gamma^{[s, t]}$. To this end, we consider the pushout of $f$ along the admissible monic $i := i_{\tilde X}\colon\monic{\tilde X}{I(\tilde X) := I}$ with $i^k =i^{s-1}$ for all $k \in \{s, \dots, t\}$, see \Cref{con: mMor-enough}.\ref{con: mMor-enough-inj}. Due to \Cref{rmk: cone-termwise}, it displays as a termwise pushout:
		\[
			\begin{tikzcd}[column sep={7.5mm,between origins}, row sep={10mm,between origins}]
				& I \ar[dd] && I^{0} \ar[rr, tail] \ar[dd, equal] && \cdots \ar[rr, tail] && I^{s-1} \ar[rr, equal] \ar[dd, equal] && I^{s} \ar[rr, equal] \ar[dd, tail] && \cdots \ar[rr, equal] && I^{t} \ar[rr, tail] \ar[dd, tail] && I^{t+1} \ar[rr, tail] \ar[dd, equal] && \cdots \ar[rr, tail] && I^l \ar[dd, equal]\\
				\tilde X \ar[ru, tail, "i"] && X^{0} \ar[rr, tail, crossing over] \ar[ru, tail] && \cdots \ar[rr, tail] && X^{s-1} \ar[rr, equal, crossing over] \ar[ru, tail] && X^{s-1} \ar[rr, equal, crossing over] \ar[ru, tail] && \cdots \ar[rr, equal] && X^{s-1} \ar[rr, tail, crossing over] \ar[ru, tail] && X^{t+1} \ar[rr, tail, crossing over] \ar[ru, tail] && \cdots \ar[rr, tail] && X^l \ar[ru, tail]\\
				& C && I^{0} \ar[rr, tail] && \cdots \ar[rr, tail] && I^{s-1} \ar[rr, tail] && C^s \ar[rr, tail] && \cdots \ar[rr, tail] && C^t \ar[rr, tail] && I^{t+1} \ar[rr, tail] && \cdots \ar[rr, tail] && I^l\\
				X \ar[ru, tail] \ar[uu, "f", <-] && X^{0} \ar[rr, tail] \ar[ru, tail] \ar[uu, equal] && \cdots \ar[rr, tail] && X^{s-1} \ar[rr, tail] \ar[ru, tail] \ar[uu, equal, crossing over] && X^s \ar[rr, tail] \ar[ru, tail] \ar[uu, <-<, crossing over] && \cdots \ar[rr, tail] && X^t \ar[rr, tail] \ar[ru, tail] \ar[uu, <-<, crossing over] && X^{t+1} \ar[rr, tail] \ar[ru, tail] \ar[uu, equal, crossing over] && \cdots \ar[rr, tail] && X^l \ar[ru, tail] \ar[uu, equal, crossing over]
			\end{tikzcd}
		\]
		In particular, for each $k \in \{s, \dots, t\}$, the square
		\[
			\begin{tikzcd}[row sep={15mm,between origins}, column sep={18.5mm,between origins}]
				X^{s-1} \ar[r, tail, "\alpha^{k-1} \cdots \alpha^{s-1}"] \ar[d, tail] \ar[rd, phantom, "\square"] & X^k \ar[d, tail] \\
				\mathllap{I^{s-1} = \;} I^k \ar[r, tail] & C^k
			\end{tikzcd}
		\]
		is bicartesian and, for $k>s$, so is right square in the diagram
		\[
			\begin{tikzcd}[row sep={15mm,between origins}, column sep={19.5mm,between origins}]
				X^{s-1} \ar[r, tail, "\alpha^{k-2} \cdots \alpha^{s-1}"] \ar[d, tail] \ar[rd, phantom, "\square"] & X^{k-1} \ar[d, tail] \ar[r, tail, "\alpha^{k-1}"] \ar[rd, phantom, "\square"] & X^k \ar[d, tail] \\
				I^{s-1} \ar[r, tail] & C^{k-1} \ar[r, tail] & C^k
			\end{tikzcd}
		\]
		due to \Cref{prp: Buehler2.12}.\ref{prp: Buehler2.12-push} and \Cref{lem: pushpull-sep}.\ref{lem: pushpull-sep-pushout}. We postcompose $X \to C$ with the stable isomorphism $C \to \tilde C$ from \Cref{lem: Gamma-s-t}.\ref{lem: Gamma-s-t-epi} and the one from \Cref{cor: append-I} (applied to the identity morphism of $\gamma^{[t+1,l]}(\tilde C)$ and the monics $ \monic{\tilde C^t}{\tilde C^{t+1} =: J \in \Proj(\F)}$ and $ i_{\tilde C^t}$) to obtain a stably isomorphic cone of $f$:
		\[
			\begin{tikzcd}[row sep={15mm,between origins}, column sep={16.5mm,between origins}]
				X \ar[d] & X^0 \ar[r, tail, "\alpha^0"] \ar[d, tail] & \cdots \ar[r, tail, "\alpha^{s-2}"] & X^{s-1} \ar[r, tail, "\alpha^{s-1}"] \ar[d, tail] \ar[rd, phantom, "\square"] & X^s \ar[r, tail, "\alpha^s"] \ar[d, tail] & \cdots \ar[r, tail, "\alpha^{t-1}"] \ar[rd, phantom, "\square"] & X^{t} \ar[r, tail, "\alpha^t"] \ar[d, tail] & X^{t+1} \ar[r, tail, "\alpha^{t+1}"] \ar[d, tail] & \cdots \ar[r, tail, "\alpha^{l-1}"] & X^l \ar[d, tail] \\
				C \ar[d] & I^{0} \ar[r, tail] \ar[d] & \cdots \ar[r, tail] & I^{s-1} \ar[r, tail] \ar[d, two heads] \ar[rd, phantom, "\square"] & C^s \ar[r, tail] \ar[d, two heads] \ar[ru, phantom, "\square"] & \cdots \ar[r, tail] \ar[rd, phantom, "\square"] & C^t \ar[r, tail] \ar[d, two heads] \ar[rd, phantom, "\square"] & I^{t+1} \ar[r, tail] \ar[d, two heads] & \cdots \ar[r, tail] & I^l \ar[d, two heads] \\
				\tilde C \ar[d] & 0 \ar[r, equal] \ar[d, equal] & \cdots \ar[r, equal] & 0 \ar[r, tail] \ar[d, equal] \ar[rd, phantom, "\square"] & \tilde C^s \ar[r, tail] \ar[ru, phantom, "\square"] \ar[d, equal] & \cdots \ar[r, tail] \ar[rd, phantom, "\square"] & \tilde C^t \ar[r, tail] \ar[d, equal] & J \ar[r, equal] \ar[d] & \cdots \ar[r, equal] & J \ar[d] \\
				\hat C & 0 \ar[r, equal] & \cdots \ar[r, equal] & 0 \ar[r, tail] & \tilde C^s \ar[r, tail] \ar[ru, phantom, "\square"] & \cdots \ar[r, tail] & \tilde C^t \ar[r, tail, "i_{\tilde C^t}"] & I(\tilde C^t) \ar[r, equal] & \cdots \ar[r, equal] & I(\tilde C^t)
			\end{tikzcd}
		\]
		This yields a distinguished triangle
		\[
			\begin{tikzcd}[row sep={15mm,between origins}, column sep={16.5mm,between origins}]
				\tilde X \ar[d, "f"'] & X^0 \ar[r, tail, "\alpha^0"] \ar[d, equal] & \cdots \ar[r, tail, "\alpha^{s-2}"] & X^{s-1} \ar[r, equal] \ar[d, equal] & X^{s-1} \ar[r, equal] \ar[d, tail, "\alpha^{s-1}"] & \cdots \ar[r, equal] & X^{s-1} \ar[r, tail, "\alpha^t \cdots \alpha^{s-1}"] \ar[d, tail, "\alpha^{t-1} \cdots \alpha^{s-1}"] & X^{t+1} \ar[r, tail, "\alpha^{t+1}"] \ar[d, equal] & \cdots \ar[r, tail, "\alpha^{l-1}"] & X^l \ar[d, equal] \\
				X \ar[d] & X^0 \ar[r, tail, "\alpha^0"] \ar[d] & \cdots \ar[r, tail, "\alpha^{s-2}"] & X^{s-1} \ar[r, tail, "\alpha^{s-1}"] \ar[rd, phantom, "\square"] \ar[d, two heads] & X^s \ar[r, tail, "\alpha^s"] \ar[d, two heads] & \cdots \ar[r, tail, "\alpha^{t-1}"] \ar[rd, phantom, "\square"] & X^{t} \ar[r, tail, "\alpha^t"] \ar[d, two heads] & X^{t+1} \ar[r, tail, "\alpha^{t+1}"] \ar[d] & \cdots \ar[r, tail, "\alpha^{l-1}"] & X^l \ar[d] \\
				\hat C & 0 \ar[r, equal] & \cdots \ar[r, equal] & 0 \ar[r, tail] & \tilde C^s \ar[r, tail] \ar[ru, phantom, "\square"] & \cdots \ar[r, tail] & \tilde C^t \ar[r, tail, "i_{\tilde C^t}"] & I(\tilde C^t) \ar[r, equal] & \cdots \ar[r, equal] & I(\tilde C^t)
			\end{tikzcd}
		\]
		in $\stab \mMor_{l}(\F)$ with $\tilde X = \delta^{[s-1, t]} \gamma^{[s,t]}(X) \in \Delta^{[s-1, t]}$ and $\hat C \in \Gamma^{[s,t]}$. \Cref{prp: SOD-adjoints} yields the claim on adjoints. \qedhere
	\end{enumerate}
\end{proof}

\begin{rmk} \label{rmk: alt-adjoint}
	There is an alternative distinguished triangle in \Cref{prp: SODs}.\ref{prp: SODs-Delta-Gamma}: In the proof, we replace the stable isomorphism from \Cref{lem: Gamma-s-t}.\ref{lem: Gamma-s-t-epi} by the one obtained from \Cref{lem: split-off-equalities} applied to $\gamma^{[0, t]}(C)$. This results in the distinguished triangle
	\[
		\begin{tikzcd}[row sep={15mm,between origins}, column sep={16.5mm,between origins}]
		\tilde X \ar[d] & X^0 \ar[r, tail, "\alpha^0"] \ar[d, equal] & \cdots \ar[r, tail, "\alpha^{s-2}"] & X^{s-1} \ar[r, equal] \ar[d, equal] & X^{s-1} \ar[r, equal] \ar[d, tail, "\alpha^{s-1}"] & \cdots \ar[r, equal] & X^{s-1} \ar[r, tail, "\alpha^t \cdots \alpha^{s-1}"] \ar[d, tail, "\alpha^{t-1} \cdots \alpha^{s-1}"] & X^{t+1} \ar[r, tail, "\alpha^{t+1}"] \ar[d, equal] & \cdots \ar[r, tail, "\alpha^{l-1}"] & X^l \ar[d, equal] \\
			X \ar[d, "g"] & X^0 \ar[r, tail, "\alpha^0"] \ar[d, tail] & \cdots \ar[r, tail, "\alpha^{s-2}"] & X^{s-1} \ar[r, tail, "\alpha^{s-1}"] \ar[d, tail] \ar[rd, phantom, "\square"] & X^s \ar[r, tail, "\alpha^s"] \ar[d, tail] & \cdots \ar[r, tail, "\alpha^{t-1}"] \ar[rd, phantom, "\square"] & X^{t} \ar[r, tail, "\alpha^t"] \ar[d, tail] & X^{t+1} \ar[r, tail, "\alpha^{t+1}"] \ar[d] & \cdots \ar[r, tail, "\alpha^{l-1}"] & X^l \ar[d] \\
			Y & I^0 \ar[r, tail] & \cdots \ar[r, tail] & I^{s-1} \ar[r, tail] & Y^s \ar[r, tail] \ar[ru, phantom, "\square"] & \cdots \ar[r, tail] & Y^t \ar[r, tail, "i_{Y^t}"] & I(Y^t) \ar[r, equal] & \cdots \ar[r, equal] & I(Y^t)
		\end{tikzcd}
	\]
	in $\stab \mMor_{l}(\F)$, where $\gamma^{[s,l]}(g)=i_{\gamma^{[s,l]}(X)}$. Thus, the inclusion $\inj{\Gamma^{[s,t]}}{\stab \mMor_{l}(\F)}$ has another left adjoint which sends any $X \in \mMor_{l}(\F)$ to $Y$.
\end{rmk}

\begin{lem}[{\cite[Lem.~1.3]{FS24}}] \label{lem: pushpull-sep}
	Consider the following commutative diagram in an additive category:
	\[
		\begin{tikzcd}[sep={17.5mm,between origins}]
			A \ar[r, "r"] \ar[d, "a"] 
			& B \ar[r, "s"] \ar[d, "b"] & C \ar[d, "c"]\\
			A' \ar[r, "r'"] & B' \ar[r, "s'"] & C'
		\end{tikzcd}
	\]
	\begin{enumerate}
		\item \label{lem: pushpull-sep-pushout} If the outer square is a pushout and $\begin{pmatrix} b & r'\end{pmatrix}\colon B \oplus A' \to B'$ is an epic, then the right square is a pushout.
		\item \label{lem: pushpull-sep-pullback} If the outer square is a pullback and $\begin{pmatrix} s \\ b \end{pmatrix}\colon B \to C \oplus B'$ is a monic, then the left square is a pullback. \qed
	\end{enumerate}
\end{lem}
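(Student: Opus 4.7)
My plan is to verify the universal property directly for part (a) and then invoke duality for part (b).

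For part (a), the plan is as follows. Suppose we are given a test cone out of the right span, that is, morphisms $f\colon C \to X$ and $g\colon B' \to X$ with $fs = gb$. I would construct the induced map by feeding the pair $f$ and $gr'\colon A' \to X$ into the outer pushout: they agree on $A$, since
\[ (gr')a = g(br) = (gb)r = (fs)r = f(sr), \]
so the pushout property of the outer square yields a unique $h\colon C' \to X$ with $hc = f$ and $hs'r' = gr'$. It then remains to upgrade $hs'r' = gr'$ to the full identity $hs' = g$. For this I would compute $hs'b = hcs = fs = gb$, so that
\[ hs' \begin{pmatrix} b & r' \end{pmatrix} = \begin{pmatrix} hs'b & hs'r' \end{pmatrix} = \begin{pmatrix} gb & gr' \end{pmatrix} = g \begin{pmatrix} b & r' \end{pmatrix}, \]
and then cancel $\begin{pmatrix} b & r' \end{pmatrix}$ on the right using the epic hypothesis. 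For uniqueness, any $h'$ with $h'c = f$ and $h's' = g$ in particular satisfies $h's'r' = gr'$, and the uniqueness part of the outer pushout then forces $h' = h$.

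Part (b) follows by taking opposite categories: a pullback in $\C$ is a pushout in $\op{\C}$, monics and epics swap roles, and the matrix $\begin{pmatrix} s \\ b \end{pmatrix}$ in $\C$ corresponds to $\begin{pmatrix} s & b \end{pmatrix}$ in $\op{\C}$. Applying (a) to the opposite diagram yields (b) with no further work.

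The argument is essentially a diagram chase, so I do not expect a real obstacle. The only subtle point is recognizing that the epic hypothesis is exactly what lets one bootstrap the weaker identity $hs'r' = gr'$ produced by the outer pushout into the required identity $hs' = g$; without this hypothesis, the right square need not be a pushout even when the outer one is. Since the result is cited from the authors' earlier work \cite{FS24}, presenting this short verification rather than a more elaborate derivation is appropriate.
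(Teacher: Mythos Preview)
Your proof is correct. The paper itself does not supply a proof for this lemma---it simply cites \cite[Lem.~1.3]{FS24} and closes with a \qed---so there is no argument in the paper to compare against; your direct verification of the universal property, using the epic hypothesis precisely to upgrade $hs'r' = gr'$ to $hs' = g$, is the standard and expected argument.
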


Combining \Cref{prp: SOD-Gamma-Gamma} and \Cref{prp: SODs} yields

\begin{cor} \label{cor: polygon}
	Let $\F$ be a Frobenius category and $l \in \NN_{\geq 1}$. For each $s\in \{0, \dots, l-1\}$, there is the following $(2l+4)$-gon of recollements in $\stab \mMor_{l}(\F)$:
	\[\Gamma^{[0, s]}, \dots, \underbrace{\Delta^{[k, k+s+1]}, \Gamma^{[k+1, k+s+1]}}_{\text{for } k \, = \, 0, \, \dots, \, l-s-1}, \dots, \Gamma^{[0, l-s-1]}, \dots, \underbrace{\Delta^{[k, k+l-s]}, \Gamma^{[k+1, k+l-s]}}_{\text{for } k \, = \, 0, \, \dots, \, s} \]
	\[
		\begin{tikzcd}[sep=tiny]
			& \Delta^{[0, s+1]} \ar[r, -, marrow=>] & \cdots \ar[r, -, marrow=>] & \Gamma^{[k, k+s]} \ar[r, -, marrow=>] & \Delta^{[k, k+s+1]} \ar[r, -, marrow=>] & \Gamma^{[k+1,k+s+1]} \ar[r, -, marrow=>] & \cdots \ar[r, -, marrow=>] & \Gamma^{[l-s, l]} \ar[rd, -, marrow=>, bend left=5mm] & \\
			\Gamma^{[0, s]} \ar[ru, -, marrow=>, bend left=5mm] &&&&&&&& \Gamma^{[0, l-s-1]} \ar[ld, -, marrow=>, bend left=5mm] \\
			&\Gamma^{[s+1, l]} \ar[lu, -, marrow=>, bend left=5mm] & \cdots \ar[l, -, marrow=>] & \Delta^{[k+1, k+l-s+1]} \ar[l, -, marrow=>] & \Gamma^{[k+1, k+l-s]} \ar[l, -, marrow=>] & \Delta^{[k, k+l-s]} \ar[l, -, marrow=>] & \cdots \ar[l, -, marrow=>] & \Delta^{[0, l-s]} \ar[l, -, marrow=>]
		\end{tikzcd}
	\]
	\pushQED{\qed}
	If $l$ is odd and $s = \frac{l-1}{2}$, it is invariant under index shift by $l+2$ and reduces to the $(l+2)$-gon \[\Gamma^{[0, s]}, \underbrace{\Delta^{[k, k+s+1]}, \dots, \Gamma^{[k+1, k+s+1]}}_{\text{for } k \, = \, 0, \, \dots, \, s}. \qedhere\] 
	 
\end{cor}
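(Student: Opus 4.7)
The plan is to verify directly that every consecutive pair of subcategories in the cyclic list constitutes a semiorthogonal decomposition of $\stab \mMor_{l}(\F)$, invoking for each pair the appropriate result from \Cref{prp: SOD-Gamma-Gamma} and \Cref{prp: SODs}. This yields an $n$-gon of recollements in the sense of the definition preceding the corollary, with $n=2l+4$ by a direct count of vertices.

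First I would classify the consecutive pairs. Walking around the polygon starting from $\Gamma^{[0,s]}$, each pair is of one of four types. Pairs of the form $\bigl(\Gamma^{[k,k+s]},\Delta^{[k,k+s+1]}\bigr)$ on the top half (and $\bigl(\Gamma^{[k,k+l-s-1]},\Delta^{[k,k+l-s]}\bigr)$ on the bottom half) are semiorthogonal decompositions by \Cref{prp: SODs}.\ref{prp: SODs-Gamma-Delta}, applied with the parameters $(s,t)$ of that proposition taken to be $(k,k+s)$ and $(k,k+l-s-1)$, respectively; the requirement $t<l$ is met because $k$ ranges only over indices where $k+s<l$ on the top and $k+l-s-1<l$ on the bottom. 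Pairs of the form $\bigl(\Delta^{[k,k+s+1]},\Gamma^{[k+1,k+s+1]}\bigr)$ (top) and $\bigl(\Delta^{[k,k+l-s]},\Gamma^{[k+1,k+l-s]}\bigr)$ (bottom) are semiorthogonal decompositions by \Cref{prp: SODs}.\ref{prp: SODs-Delta-Gamma}, applied with $(s,t)=(k+1,k+s+1)$ and $(k+1,k+l-s)$, where the requirement $s\geq 1$ of that proposition is automatic.

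The remaining two pairs are the ``turning points'' bridging the two halves of the polygon. The pair $\bigl(\Gamma^{[l-s,l]},\Gamma^{[0,l-s-1]}\bigr)$ occurring at the top-right corner is an instance of \Cref{prp: SOD-Gamma-Gamma}, with parameter $s$ there replaced by $l-s-1\in\{0,\dots,l-1\}$; the pair $\bigl(\Gamma^{[s+1,l]},\Gamma^{[0,s]}\bigr)$ closing the cycle is the direct instance of the same proposition. This completes the verification that every cyclically consecutive pair is a semiorthogonal decomposition.

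For the final assertion, suppose $l$ is odd and $s=(l-1)/2$. Then $l-s-1=s$ and $l-s=s+1$, so the substitutions $s\mapsto l-s-1$ and $s+1\mapsto l-s$ are identities on the index labels. Consequently every subcategory appearing in the top half of the polygon equals the one appearing $l+2$ positions later on the bottom half (e.g.\ $\Gamma^{[k,k+s]}=\Gamma^{[k,k+l-s-1]}$ and $\Delta^{[k,k+s+1]}=\Delta^{[k,k+l-s]}$), and the two turning-point pairs coincide. Thus the $(2l+4)$-gon is invariant under a cyclic shift by $l+2$ positions and collapses to the $(l+2)$-gon listed. The only real bookkeeping issue is to confirm the index ranges line up throughout; no new computation beyond what has already been established in \Cref{prp: SOD-Gamma-Gamma} and \Cref{prp: SODs} is required.
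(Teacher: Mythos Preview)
Your proposal is correct and is precisely the approach the paper intends: the paper states the corollary as an immediate consequence of combining \Cref{prp: SOD-Gamma-Gamma} and \Cref{prp: SODs} (it is marked with a \textsc{qed} and no further argument), and you have simply made explicit which of those two propositions supplies each consecutive semiorthogonal decomposition around the polygon, together with the index bookkeeping. Your treatment of the reduction for odd $l$ with $s=(l-1)/2$ is likewise what the paper leaves to the reader.
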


\section{Mutation and suspension} \label{sec: mut}

In this section, we explicitly describe the mutations occurring in the polygon of recollements from \Cref{cor: polygon}. We identify the subcategories from \Cref{sec: con-exp} with smaller stable monomorphism categories using a further type of expansion functors. Under these identifications, certain mutations become the identity functor, while the others agree with one particular, non-trivial auto-equivalence. We show that its $(l+2)$nd power coincides with the square of the suspension functor.

\begin{con}[Mutations] \label{con: mutations}
	The adjoints obtained in \Cref{prp: SOD-Gamma-Gamma,prp: SODs} combined with the description of $\Gamma^{[s,t]}$ in \Cref{cor: Gamma-s-t} allow us to explicate the mutations occurring in the polygon from \Cref{cor: polygon}, see \Cref{dfn: mutations}:
	\begin{enumerate}[wide]
		\item \label{con: mutations-1} Due to Propositions \ref{prp: SODs}.\ref{prp: SODs-Delta-Gamma}.\ref{prp: SODs-Delta-Gamma-left} and \ref{prp: SODs}.\ref{prp: SODs-Gamma-Delta}.\ref{prp: SODs-Gamma-Delta-right}, the mutations $L_{\Delta^{[s, t+1]}}$ and $R_{\Delta^{[s, t+1]}}$ are given by\\
			\begin{tikzcd}[column sep=small]
				\Gamma^{[s, t]} \ar[d, "\simeq", "L_{\Delta^{[s, t+1]}}"'] & \cdots \ar[r, equal] & 0 \ar[r, tail] \ar[d, equal] & X^s \ar[r, tail] \ar[d, two heads] \ar[rd, phantom, "\square"] & X^{s+1} \ar[r, tail] \ar[d, two heads] & \cdots \ar[r, tail] \ar[rd, phantom, "\square"] & X^t \ar[r, tail, "i_{X^t}"] \ar[d, two heads] \ar[rd, phantom, "\square"] & I(X^t) \ar[r, equal] \ar[d, two heads] & I(X^t) \ar[r, equal] \ar[d] & \cdots \\
				\Gamma^{[s+1, t+1]} & \cdots \ar[r, equal] & 0 \ar[r, equal] & 0 \ar[r, tail] & Y^{s+1} \ar[r, tail] \ar[ru, phantom, "\square"] & \cdots \ar[r, tail] & Y^t \ar[r, tail] & Y^{t+1} \ar[r, tail, "i_{Y^{t+1}}"] & I(Y^{t+1}) \ar[r, equal] & \cdots,
			\end{tikzcd}\\
			\begin{tikzcd}[column sep=small]
				\Gamma^{[s,t]} & \cdots \ar[r, equal] & 0 \ar[r, tail] \ar[d, equal] & Y^s \ar[r, tail] \ar[d, two heads] \ar[rd, phantom, "\square"] & Y^{s+1} \ar[r, tail] \ar[d, two heads] & \cdots \ar[r, tail] \ar[rd, phantom, "\square"] & Y^t \ar[r, tail] \ar[d, two heads] \ar[rd, phantom, "\square"] & P(X^{t+1}) \ar[r, equal] \ar[d, two heads, "p_{X^{t+1}}"] & P(X^{t+1}) \ar[r, equal] \ar[d] & \cdots\\
				\Gamma^{[s+1, t+1]} \ar[u, "\simeq"', "R_{\Delta^{[s, t+1]}}"]& \cdots \ar[r, equal] & 0 \ar[r, equal] & 0 \ar[r, tail] & X^{s+1} \ar[r, tail] \ar[ru, phantom, "\square"] & \cdots \ar[r, tail] & X^t \ar[r, tail] & X^{t+1} \ar[r, tail, "i_{X^{t+1}}"] & I(X^{t+1}) \ar[r, equal] & \cdots.
			\end{tikzcd}
			
		\item \label{con: mutations-2} Due to Propositions \ref{prp: SODs}.\ref{prp: SODs-Gamma-Delta}.\ref{prp: SODs-Gamma-Delta-left} and \ref{prp: SODs}.\ref{prp: SODs-Delta-Gamma}.\ref{prp: SODs-Delta-Gamma-right}, the mutations $L_{\Gamma^{[s+1, t]}}$ and $R_{\Gamma^{[s+1, t]}}$ are
		\[
			\begin{tikzcd}[column sep=small, row sep={15mm,between origins}]
				\Delta^{[s, t]} \ar[d, "L_{\Gamma^{[s+1, t]}}"', shift right=2.5mm] \ar[d, phantom, "\simeq"]& \cdots \ar[r, tail] & X^{s-1} \ar[r, tail] & X^s \ar[r, equal] & X^s \ar[r, equal] & \cdots \ar[r, equal] & X^s \ar[r, tail] & X^{t+1} \ar[r, tail] & X^{t+2} \ar[r, tail] & \cdots \\
				\Delta^{[s+1, t+1]} \ar[u, "R_{\Gamma^{[s+1, t]}}"', shift right=2.5mm] & \cdots \ar[r, tail] & X^{s-1} \ar[r, tail] & X^s \ar[r, tail] & X^{t+1} \ar[r, equal] & \cdots \ar[r, equal] & X^{t+1} \ar[r, equal] & X^{t+1} \ar[r, tail] & X^{t+2} \ar[r, tail] & \cdots.
			\end{tikzcd}
		\]

		\item \label{con: mutations-3} Due to Propositions \ref{prp: SOD-Gamma-Gamma}.\ref{prp: SOD-Gamma-Gamma-left} and \ref{prp: SODs}.\ref{prp: SODs-Delta-Gamma}.\ref{prp: SODs-Delta-Gamma-right}, the mutations $L_{\Gamma^{[s+1, l]}}$ and $R_{\Gamma^{[s+1, l]}}$ are 
		\[
			\begin{tikzcd}[column sep=small]
				\Delta^{[s, l]} \ar[d, "L_{\Gamma^{[s+1, l]}}"', shift right=2.5mm] \ar[d, phantom, "\simeq"] & X^0 \ar[r, tail] & \cdots \ar[r, tail] & X^{s-1} \ar[r, tail] & X^s \ar[r, equal] & X^s \ar[r, equal] & \cdots \ar[r, equal] & X^s\\
				\Gamma^{[0, s]} \ar[u, "R_{\Gamma^{[s+1, l]}}"', shift right=2.5mm] & X^0 \ar[r, tail] & \cdots \ar[r, tail] & X^{s-1} \ar[r, tail] & X^s \ar[r, tail] & I(X^s) \ar[r, equal] & \cdots \ar[r, equal] & I(X^s),
			\end{tikzcd}
		\]
		where $R_{\Gamma^{[s+1, l]}}$ is given by $\stab \delta^{[s, l]} \circ \stab \gamma^{[s+1, l]}$.

		\item \label{con: mutations-4} Due to Propositions \ref{prp: SODs}.\ref{prp: SODs-Gamma-Delta}.\ref{prp: SODs-Gamma-Delta-left} and \ref{prp: SOD-Gamma-Gamma}.\ref{prp: SOD-Gamma-Gamma-right}, the mutations $L_{\Gamma^{[0,s-1]}}$ and $R_{\Gamma^{[0,s-1]}}$ are 
		\[
			\begin{tikzcd}[column sep=small]
				\Gamma^{[s, l]} \ar[d, "L_{\Gamma^{[0,s-1]}}"', shift right=2.5mm] \ar[d, phantom, "\simeq"] & 0 \ar[r, tail] & \cdots \ar[r, tail] & 0 \ar[r, tail] & X^s \ar[r, tail] & \cdots \ar[r, tail] & X^l \\
				\Delta^{[0, s]} \ar[u, "R_{\Gamma^{[0,s-1]}}"', shift right=2.5mm] & X^s \ar[r, equal] & \cdots \ar[r, equal] & X^s \ar[r, equal] & X^s \ar[r, tail] & \cdots \ar[r, tail] & X^l,
			\end{tikzcd}
		\]
		where $L_{\Gamma^{[0,s-1]}}$ is given by $\stab \delta^{[0, s]} \circ \stab \gamma^{[0, s-1]}$.
	\end{enumerate}
\end{con}

We use the above mutations to identify the subcategory $\Gamma^{[s,t]}$ of $\stab \mMor_{l}(\F)$ with $\stab \mMor_{t-s}(\F)$:

\begin{con}[The expansion functors $\stab \delta^{[s,t]^c}$] \label{con: delta-compl}
	Let $\F$ be a Frobenius category, $l \in \NN$, and $s, t \in \{0, \dots, l\}$ with $s \leq t$.
	\begin{enumerate}[wide]
		\item \label{con: delta-compl-I} We pre- and postcompose the left mutation $L_{\Gamma^{[t+1, l]}}$ from \Cref{con: mutations}.\ref{con: mutations-3} with the triangle equivalence from \Cref{lem: Delta-mMor} given by $\stab \delta^{[t, l]}_t$ and the inclusion $\Gamma^{[t, l]} \subseteq \stab \mMor_{l}(\F)$ to define a fully faithful triangle functor $\stab \delta^{[0, t]^c} := \stab \delta^{[0, t]^c}_t$:
		\[
		\begin{tikzcd}[sep=large]
			\stab \mMor_t(\F) \ar[r, "\stab \delta^{[t, l]}", "\simeq"'] \ar[rrr, dashed, bend left=7.5mm, "\stab \delta^{[0, t]^c}_t"]& \Delta^{[t, l]} \ar[r, "L_{\Gamma^{[t+1, l]}}", "\simeq"'] & \Gamma^{[0, t]} \ar[r, hook] \ar[ll, bend left=7.5mm, "\stab \gamma^{[t+1, l]}", "\simeq"'] & \stab \mMor_{l}(\F),
		\end{tikzcd}
		\]
		whose quasi-inverse on $\Gamma^{[0, t]}$ is the restriction of $\stab \gamma^{[t+1, l]}_l$, since the quasi-inverse $R_{\Gamma^{[t+1, l]}}$ of $L_{\Gamma^{[t+1, l]}}$ is given by $\stab \delta^{[t, l]} \circ \stab \gamma^{[t+1, l]}$. Explicitly, it sends an object $X \in \mMor_{t}(\F)$ to
		\[
			\begin{tikzcd}
				X^0 \ar[r, tail] & \cdots \ar[r, tail] & X^{t} \ar[r, tail] & I(X^{t}) \ar[r, equal] & \cdots \ar[r, equal] & I(X^{t}). \end{tikzcd}
		\]
	
	\item \label{con: delta-compl-0} We pre- and postcompose the right mutation $R_{\Gamma^{[0, s-1]}}$ from \Cref{con: mutations}.\ref{con: mutations-4} with the triangle equivalence from \Cref{lem: Delta-mMor} given by $\stab \delta^{[0, s]}_{l-s}$ and the inclusion $\Gamma^{[s, l]} \subseteq \stab \mMor_{l}(\F)$ to define a fully faithful triangle functor $\stab \delta^{[s, l]^c} := \stab \delta^{[s, l]^c}_{l-s}$:
	\[
	\begin{tikzcd}[sep=large]
		\stab \mMor_{l-s}(\F) \ar[r, "\stab \delta^{[0, s]}", "\simeq"'] \ar[rrr, dashed, bend left=7.5mm, "\stab \delta^{[s, l]^c}_{l-s}"]& \Delta^{[0, s]} \ar[r, "R_{\Gamma^{[0, s-1]}}", "\simeq"'] & \Gamma^{[s, l]} \ar[r, hook] \ar[ll, bend left=7.5mm, "\stab \gamma^{[0, s-1]}", "\simeq"'] & \stab \mMor_{l}(\F),
	\end{tikzcd}
	\]
	whose quasi-inverse on $\Gamma^{[s, l]}$ is the restriction of $\stab \gamma^{[0, s-1]}_l$, since the quasi-inverse $L_{\Gamma^{[0, s-1]}}$ of $R_{\Gamma^{[0, s-1]}}$ is given by $\stab \delta^{[0, s]} \circ \stab \gamma^{[0, s-1]}$. Explicitly, it sends an object $X \in \mMor_{l-s}(\F)$ to
	\[
		\begin{tikzcd}
			0 \ar[r, equal] & \cdots \ar[r, equal] & 0 \ar[r, tail] & X^{0} \ar[r, tail] & \cdots \ar[r, tail] & X^{l-s}. \end{tikzcd}
	\]
	
	\end{enumerate}
	
	Combining \ref{con: delta-compl-I} and \ref{con: delta-compl-0}, we define the fully faithful triangle functor 
	\[\stab \delta^{[s,t]^c} := \stab \delta^{[s,t]^c}_{t-s} := \stab \delta^{[t, l]^c}_t \circ \stab \delta^{[0, s]^c}_{t-s}\colon \inj{\stab \mMor_{t-s}(\F)}{\stab \mMor_{l}(\F)}\] with image $\Gamma^{[s,t]}$. Explicitly, it sends an object $X \in \mMor_{t-s}(\F)$ to
	\[
		\begin{tikzcd}[sep=small]
			0 \ar[r, equal] & \cdots \ar[r, equal] & 0 \ar[r, tail] & X^0 \ar[r, tail] & \cdots \ar[r, tail] & X^{t-s} \ar[r, tail] & I(X^{t-s}) \ar[r, equal] & \cdots \ar[r, equal] & I(X^{t-s}).
		\end{tikzcd}
	\]
	Its quasi-inverse on $\Gamma^{[s,t]}$ is the restriction of $\stab \gamma^{[s, t]^c} = \stab \gamma^{[0, s-1]}_t \circ \stab \gamma^{[t+1, l]}_l$, see \Cref{dfn: contraction-expansion}.\ref{dfn: contraction-expansion-gamma}. We set $\stab \delta^{s^c} := \stab \delta^{[s,s]^c}$.
\end{con}

\begin{con}[Abstract mutations] \label{con: theta}
	Let $\F$ be a Frobenius category, $l \in \NN$, and $s, t \in \{0,\dots, l\}$ with $s \leq t$. We use the equivalences $\stab \mMor_{l-t+s}(\F) \xrightarrow{\simeq} \Delta^{[s,t]}$ and $\stab \mMor_{t-s}(\F) \xrightarrow{\simeq} \Gamma^{[s, t]}$ given by $\stab \delta^{[s, t]}$ and $\stab \delta^{[s, t]^c}$ from \Cref{rmk: stab-delta-gamma}.\ref{rmk: stab-delta-gamma-triang} and \Cref{con: delta-compl}, respectively, as identifications. In this way, we realize the mutations in \Cref{con: mutations} as triangulated auto-equivalences of stable monomorphism categories. While the mutations in \ref{con: mutations-2}, \ref{con: mutations-3}, and \ref{con: mutations-4} become identity functors, \Cref{con: mutations}.\ref{con: mutations-1} yields a non-trivial auto-equivalence:
	\[\begin{tikzcd}[column sep={15mm,between origins}, row sep={20mm,between origins}]
		&& \Gamma^{[s,t]} \ar[rrrr, "L_{\Delta^{[s, t+1]}}", shift left=2mm] \ar[rrrr, phantom, "{\scriptstyle\simeq}"] \ar[rd, hook] &&&& \Gamma^{[s+1, t+1]} \ar[llll, "R_{\Delta^{[s, t+1]}}", shift left=2mm] \ar[llld, hook] \\
		&&& \stab \mMor_{l}(\F) \\
		\stab \mMor_{t-s}(\F) \ar[rrrr, dashed, shift left=2mm, "\Theta"] \ar[rrrr, phantom, "{\scriptstyle\simeq}"] \ar[rrru, "\delta^{[s,t]^c}"] \ar[rruu, "\simeq"] &&&& \stab \mMor_{t-s}(\F) \ar[llll,dashed, shift left=2mm, "\Theta^{-1}"] \ar[lu, "\delta^{[s+1,t+1]^c}"'] \ar[rruu, "\simeq"']
	\end{tikzcd}\]
	Given $X,Y \in \mMor_{t-s}(\F)$, it is determined by the diagram
	\[
		\begin{tikzcd}[sep={15mm,between origins}]
			\mathllap{\Theta^{-1} Y = \; } X & X^0 \ar[r, tail] \ar[d, two heads] \ar[rd, phantom, "\square"] & X^1 \ar[r, tail] \ar[d, two heads] & \cdots \ar[r, tail] \ar[rd, phantom, "\square"] & X^{t-s} \ar[r, tail, "i"] \ar[d, two heads] \ar[rd, phantom, "\square"] & P \ar[d, two heads, "p"] \\
			\mathllap{\Theta X = \; } Y & 0 \ar[r, tail] & Y^0 \ar[r, tail] \ar[ru, phantom, "\square"] & \cdots \ar[r, tail] & Y^{t-s-1} \ar[r, tail] & Y^{t-s}
		\end{tikzcd}
	\]
	of bicartesian squares in $\F$ with $i = i_{X^{t-s}}$ to define $\Theta$ and $p=p_{Y^{t-s}}$ to define $\Theta^{-1}$.
\end{con}

\begin{rmk} \label{rmk: tilde-Theta}
	Let $\F$ be a Frobenius category and $l \in \NN$. Using the left adjoint from \Cref{rmk: alt-adjoint} to define the left mutation $L_{\Delta^{[s, t+1]}}$ in \Cref{con: mutations}.\ref{con: mutations-1}, \Cref{con: theta} yields a functor $\tilde \Theta\colon \stab \mMor_{l}(\F) \to \stab \mMor_{l}(\F)$, isomorphic to $\Theta$, determined by the diagram
	\[
		\begin{tikzcd}[sep={15mm,between origins}]
			\tilde \Theta^{-1}Y = X & X^0 \ar[r, tail] \ar[d, tail, "i_{X^0}"'] \ar[rd, phantom, "\square"] & X^1 \ar[r, tail] \ar[d, tail] & \cdots \ar[r, tail] \ar[rd, phantom, "\square"] & X^l \ar[r, tail, "i_{X^l}"] \ar[d, tail] \ar[rd, phantom, "\square"] & I(X^l) \ar[d, tail] \\
			\tilde \Theta X = Y & I(X^0) \ar[r, tail] & Y^0 \ar[r, tail] \ar[ru, phantom, "\square"] & \cdots \ar[r, tail] & Y^{l-1} \ar[r, tail] & Y^{l}
		\end{tikzcd}
	\]
	of bicartesian squares in $\F$. For any object $X \in \mMor_{l}(\F)$, $\Theta$ and $\tilde \Theta$ are related by $\tilde \Theta X \cong \Theta X \oplus \mu_{l+1}(I(X^0))$, see \eqref{diag: split-off-inj}.
\end{rmk}

\begin{prp} \label{prp: Theta-Sigma-mMor}
	For any Frobenius category $\F$ and all $l \in \NN$, there is an isomorphism $\Theta^{l+2} \cong \Sigma^2$ of endofunctors on $\stab \mMor_{l}(\F)$, where $\Sigma$ is the suspension functor. 
\end{prp}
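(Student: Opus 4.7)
The plan is to reduce the claim to a known statement about $(l+2)$-complexes via the triangle equivalence of \cite[Thm.~2.50]{FS24} between $\stab \mMor_l(\F)$ and the homotopy category of acyclic $(l+2)$-complexes of projective-injective objects of $\F$. As recorded in the paragraph following \Cref{thmA: Sigma-Theta}, this equivalence intertwines $\Theta$ with the shift functor of $(l+2)$-complexes, so the statement $\Theta^{l+2} \cong \Sigma^2$ becomes the assertion that an $(l+2)$-fold shift is naturally isomorphic to the square of the suspension in the homotopy category of acyclic $(l+2)$-complexes; this is precisely \cite[Thm.~2.4]{IKM17}.

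The main step to make the reduction rigorous is verifying that the equivalence of \cite{FS24} intertwines the shift of $(l+2)$-complexes with $\Theta$ as defined in \Cref{con: theta}. Writing $\operatorname{im} d_k$ for the images of an acyclic $(l+2)$-complex $P_\bullet$ of projective-injective objects, the equivalence sends $P_\bullet$ to a chain of admissible monics $\operatorname{im} d_0 \rightarrowtail \operatorname{im} d_1 \rightarrowtail \cdots \rightarrowtail \operatorname{im} d_l$ (indices taken cyclically modulo $l+2$), and shifting $P_\bullet$ rotates this to $\operatorname{im} d_1 \rightarrowtail \cdots \rightarrowtail \operatorname{im} d_{l+1}$. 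To identify this rotation with $\Theta$, I would construct for each $k$ a bicartesian square in $\F$ realising $\operatorname{im} d_{k+1}$ as the pushout quotient of $\operatorname{im} d_k$ modulo the zeroth image, matching the defining diagram of \Cref{con: theta}; the last position is handled using the admissible monic $i_{X^l}$ into a projective-injective.

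As a sanity check, for $l = 0$ we have $\mMor_0(\F) = \F$ and the defining bicartesian square of $\Theta$ reduces to the pushout of $i_{X^0}\colon X^0 \rightarrowtail I(X^0)$ along $X^0 \twoheadrightarrow 0$, so $\Theta = \Sigma$ on $\stab \F$ and $\Theta^{l+2} = \Sigma^2$ holds trivially. As a direct alternative avoiding \cite{FS24}, one could iterate the defining square of $\Theta$ (or rather of its variant $\tilde \Theta$ from \Cref{rmk: tilde-Theta}, whose symmetric form is better behaved under iteration) to obtain $\Theta^{l+2}(X)$ as the bottom row of an $(l+2) \times (l+2)$ grid of bicartesian squares in $\F$, and then match this explicitly against two iterations of the injective-resolution construction of \Cref{con: mMor-enough}.\ref{con: mMor-enough-inj} that define $\Sigma^2 X$. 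The main obstacle in either approach is bookkeeping the injective-hull choices and ensuring naturality in $X$; the $(l+2)$-complex viewpoint is cleaner precisely because the isomorphism $\Theta^{l+2} \cong \Sigma^2$ becomes essentially tautological once the shift interpretation of $\Theta$ is established.
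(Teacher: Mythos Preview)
Your proposal is correct, and both routes you describe are viable. The paper, however, takes your \emph{second} route (the ``direct alternative''): it works with $\tilde\Theta$ from \Cref{rmk: tilde-Theta}, writes out the $(l+2)\times(l+2)$ grid of bicartesian squares explicitly, and then uses a small lemma (\Cref{lem: Theta-Sigma-prep}) to extract from this grid two short exact sequences in $\mMor_l(\F)$ of the form $X \rightarrowtail I(X)\oplus(\text{injective}) \twoheadrightarrow Y$ and $Y \rightarrowtail I(Y)\oplus(\text{injective}) \twoheadrightarrow \tilde\Theta^{l+2}X$, which immediately give $Y\cong\Sigma X$ and $\tilde\Theta^{l+2}X\cong\Sigma^2 X$ naturally in $X$.

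Your primary approach via the equivalence of \cite[Thm.~2.50]{FS24} is genuinely different and is precisely the route the paper \emph{mentions} in the paragraph after \Cref{thmA: Sigma-Theta} but deliberately does not take. Its advantage is conceptual cleanliness once the intertwining of $\Theta$ with the shift is established; its cost is that this intertwining is exactly the nontrivial step, and you have only sketched it. The paper's direct computation avoids importing the machinery of $N$-complexes and keeps the argument self-contained, at the price of the bookkeeping you anticipated---which is handled by isolating the key diagram-chase as \Cref{lem: Theta-Sigma-prep}.
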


\begin{proof}
	We prove instead that $\tilde \Theta^{l+2} \cong \Sigma^2$, see \Cref{rmk: tilde-Theta}. For an arbitrary $X \in \mMor_{l}(\F)$, set $X^k_0 := X^k$ for $k \in \{0, \dots, l\}$. The $(l+2)$-fold application of $\tilde \Theta$ is given by a diagram
	\begin{align} \label{diag: Theta-Sigma}
		\begin{small}
			\begin{tikzcd}[sep={12.5mm,between origins}, ampersand replacement=\&]
				X^0_0 \ar[r, tail] \ar[d, tail, "i_{X^0_0}"] \ar[rd, phantom, "\square"] \& X^1_0 \ar[r, tail] \ar[d, tail] \ar[rd, phantom, "\square"] \& X^2_0 \ar[r, tail] \ar[d, tail] \& \cdots \ar[r, tail] \ar[rd, phantom, "\square"] \& X^l_0 \ar[r, tail, "i_{X^l_0}"] \ar[d, tail] \ar[rd, phantom, "\square"] \& J_0 \ar[d, tail] \ar[rd, dashed, tail] \&\&\&\&\& \\
				I_1 \ar[r, tail] \ar[rd, dashed, tail] \& X^0_1 \ar[r, tail] \ar[d, tail, "i_{X^0_1}"] \ar[rd, phantom, "\square"] \& X^1_1 \ar[r, tail] \ar[d, tail] \ar[ru, phantom, "\square"] \& \cdots \ar[r, tail] \ar[rd, phantom, "\square"] \& X^{l-1}_1 \ar[r, tail] \ar[d, tail] \ar[rd, phantom, "\square"] \& X^l_1 \ar[r, tail, "i_{X^l_1}"] \ar[d, tail] \ar[rd, phantom, "\square"] \& J_1 \ar[d, tail] \ar[rd, dashed, tail] \\
				\& I_2 \ar[r, tail] \ar[rd, dashed, tail] \& X^0_2 \ar[r, tail] \ar[ru, phantom, "\square"] \& \cdots \ar[r, tail] \& X^{l-2}_2 \ar[r, tail] \ar[d, tail] \& X^{l-1}_2 \ar[r, tail] \ar[d, tail] \& X^l_2 \ar[r, tail, "i_{X^l_2}"] \ar[d, tail] \& J_2 \ar[d, tail] \ar[rd, dashed, tail] \\
				\&\& \ddots \ar[rd, dashed, tail] \&\& \myvdots \ar[d, tail] \ar[rd, phantom, "\square"] \ar[ru, phantom, "\square"] \& \myvdots \ar[d, tail] \ar[rd, phantom, "\square"] \ar[ru, phantom, "\square"] \& \myvdots \ar[d, tail] \ar[rd, phantom, "\square"] \ar[ru, phantom, "\square"] \& \myvdots \ar[d, tail] \& \ddots \ar[rd, dashed, tail] \\
				\&\&\& I_l \ar[r, tail] \ar[rd, dashed, tail] \& X^0_l \ar[r, tail] \ar[d, tail, "i_{X^0_l}"] \ar[rd, phantom, "\square"] \& X^1_l \ar[r, tail] \ar[d, tail]\ar[rd, phantom, "\square"] \& X^2_l \ar[r, tail] \ar[d, tail] \ar[rd, phantom, "\square"] \& X^3_l \ar[r, tail] \ar[d, tail] \& \cdots \ar[r, tail, "i_{X^l_l}"] \ar[rd, phantom, "\square"] \& J_l \ar[d, tail] \ar[rd, dashed, tail] \\
				\&\&\&\& I_{l+1} \ar[r, tail] \ar[rd, dashed, tail] \& X^0_{l+1} \ar[r, tail] \ar[d, tail, "i_{X^0_{l+1}}"] \ar[rd, phantom, "\square"] \& X^1_{l+1} \ar[r, tail] \ar[d, tail] \ar[rd, phantom, "\square"] \& X^2_{l+1} \ar[r, tail] \ar[d, tail] \ar[ru, phantom, "\square"] \& \cdots \ar[r, tail] \ar[rd, phantom, "\square"] \& X^l_{l+1} \ar[r, tail, "i_{X^l_{l+1}}"] \ar[d, tail] \ar[rd, phantom, "\square"] \& J_{l+1} \ar[d, tail] \\
				\&\&\&\&\& I_{l+2} \ar[r, tail] \& X^0_{l+2} \ar[r, tail] \& X^1_{l+2} \ar[r, tail] \ar[ru, phantom, "\square"] \& \cdots \ar[r, tail] \& X^{l-1}_{l+2} \ar[r, tail] \& X^l_{l+2}
			\end{tikzcd}
		\end{small}
	\end{align}
	of bicartesian squares in $\F$, where $J_j := I(X^l_j)$, $I_{j+1} := I(X^0_j)$ and $X_j = \tilde \Theta^{j} X_0$ for $j \in \{0, \dots, l+2\}$. Consider the following objects of $\mMor_{l}(\F)$:
	\begin{align*}
		\begin{tikzcd}[row sep=small, ampersand replacement=\&]
			Y\colon \; X^l_1 \ar[r, tail] \& X^{l-1}_2 \ar[r, tail] \& \cdots \ar[r, tail] \& X^0_{l+1}, \\
			I\colon \; I_1 \ar[r, tail] \& I_2 \ar[r, tail] \& \cdots \ar[r, tail] \& I_{l+1},\\
			J \colon \; J_1 \ar[r, tail] \& J_2 \ar[r, tail] \& \cdots \ar[r, tail] \& J_{l+1}.
		\end{tikzcd}
	\end{align*}
	Note that $I=I(X)$ and $J=I(Y)$ and that the respective morphisms $X \to I$ and $Y \to J$ defined by \eqref{diag: Theta-Sigma} agree with $i_X$ and $i_Y$, respectively, see \Cref{con: mMor-enough}.\ref{con: mMor-enough-inj}. Using \Cref{lem: Theta-Sigma-prep}, \eqref{diag: Theta-Sigma} yields short exact sequences
	\[
		\begin{tikzcd}[row sep={17.5mm,between origins}]
			X \ar[tail]{r}{\begin{pmatrix} i_X \\ \ast \end{pmatrix}} & I(X) \oplus \mu_{l+1}(J_0) \ar[r, two heads] & Y, \\
			Y \ar[tail]{r}{\begin{pmatrix} i_Y \\ \ast \end{pmatrix}} & I(Y) \oplus \mu_{l+1}(I_{l+2}) \ar[r, two heads] & \tilde \Theta^{l+2} X
		\end{tikzcd}
	\]
	in $\mMor_{l}(\F)$, where $\ast$ denotes unspecified morphisms. As a consequence, we obtain isomorphisms $Y \cong \Sigma X $ and $\tilde \Theta^{l+2} X \cong \Sigma Y \cong \Sigma^2 X$ in $\stab \mMor_{l}(\F)$, functorial in $X$, see \Cref{con: cone}.
\end{proof}

\begin{lem} \label{lem: Theta-Sigma-prep}
	In an exact category $\E$, any diagram
	\[
		\begin{tikzcd}[sep={15mm,between origins}]
			A \ar[r, tail, "a"] \ar[d, tail, "i"] \ar[rd, phantom, "\square"] & A' \ar[r, tail, "a'"] \ar[d, tail, "i'"] \ar[rd, phantom, "\square"] & A'' \ar[d, tail, "i''"] \\
			B \ar[r, tail, "b"] & B' \ar[r, tail, "b'"] \ar[d, tail, "j'"] \ar[rd, phantom, "\square"] & B'' \ar[d, tail, "j''"] \\
			 & C' \ar[r, tail, "c'"] & C''
		\end{tikzcd}
	\]
	of bicartesian squares gives rise to a short exact sequence
	\[
		\begin{tikzcd}[sep={25mm,between origins}, ampersand replacement=\&]
			A \ar[r, tail, "\begin{pmatrix} a'a \\ -i \end{pmatrix}"] \ar[d, tail, "a"] \& A'' \oplus B \ar[two heads]{r}{\begin{pmatrix} i'' & b'b \end{pmatrix}} \ar[tail]{d}{\begin{pmatrix} \id_{A''} & 0 \\ 0 & j'b \end{pmatrix}} \& B'' \ar[d, tail, "j''"] \\
			A' \ar[r, tail, "\begin{pmatrix} a' \\ -j'i' \end{pmatrix}"] \& A'' \oplus C' \ar[two heads]{r}{\begin{pmatrix} j''i'' & c' \end{pmatrix}} \& C''
		\end{tikzcd}
	\]
	in $\mMor_1(\E)$.
\end{lem}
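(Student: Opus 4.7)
The plan is to obtain the two horizontal short exact sequences by pasting the given pushout squares, and then to assemble them into a short exact sequence in $\mMor_1(\E)$.

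First, I would invoke the paste-of-pushouts principle: since all four small squares are bicartesian, hence pushouts, the composition of the two top squares is a pushout square with corners $A, A'', B, B''$, and the composition of the two right squares is a pushout square with corners $A', A'', C', C''$. As $a'a$ and $a'$ are admissible monics (being compositions of admissible monics), \Cref{prp: Buehler2.12}.\ref{prp: Buehler2.12-push} converts these outer pushouts into the two short exact sequences with precisely the stated matrix entries, namely $A \xrightarrow{\binom{a'a}{-i}} A''\oplus B \twoheadrightarrow B''$ and $A' \xrightarrow{\binom{a'}{-j'i'}} A''\oplus C' \twoheadrightarrow C''$.

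Second, I would verify the two commutativities of the claimed vertical morphism in $\mMor_1(\E)$ by direct matrix computation. The left square reduces to $j'bi = j'i'a$, which holds because $bi = i'a$ in the top-left bicartesian square; the right square reduces to $j''b'b = c'j'b$, which holds because $j''b' = c'j'$ in the bottom-right bicartesian square. The middle vertical $\id_{A''}\oplus j'b$ is admissible monic: $j'b$ is admissible monic as a composition of admissible monics, and the direct sum with $\id_{A''}$ is admissible monic by \Cref{prp: Buehler2.9}.

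Since $\mMor_1(\E)$ carries the termwise exact structure, see \Cref{thm: mMor}.\ref{thm: mMor-exact}, the resulting diagram with short exact horizontal rows and admissible monic vertical columns constitutes a short exact sequence in $\mMor_1(\E)$. The only anticipated obstacle is the bookkeeping required to match the sign conventions produced by \Cref{prp: Buehler2.12}.\ref{prp: Buehler2.12-push} to the matrix entries appearing in the statement; no categorical subtlety is involved beyond paste-of-pushouts, which is purely formal.
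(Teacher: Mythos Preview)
Your proposal is correct and follows essentially the same route as the paper's proof: pasting laws for bicartesian squares combined with \Cref{prp: Buehler2.12}.\ref{prp: Buehler2.12-push} give the two horizontal short exact sequences, \Cref{prp: Buehler2.9} makes the middle vertical an admissible monic, and the commutativity checks are the ones you spell out. The paper merely states these steps more tersely and cites the pasting law explicitly, while you additionally point to the termwise exact structure via \Cref{thm: mMor}.\ref{thm: mMor-exact}, which the paper leaves implicit.
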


\begin{proof}
	The horizontal short exact sequences in $\E$ arise from \Cref{prp: Buehler2.12}.\ref{prp: Buehler2.12-push}, since concatenation preserves bicartesian squares due to the pasting laws, see {\cite[Ex.~III.4.8]{Mac98}}. The middle morphism is an admissible monic due to \Cref{prp: Buehler2.9}. Commutativity can be checked easily.
\end{proof}

\section{Infinite adjoint chains}

In this section, we establish adjunctions between the contraction and expansion functors from \Cref{sec: con-exp,sec: mut}. In order to form infinite adjoint chains, we introduce two further types of contraction functors. The triangles realizing the semiorthogonal decompositions in \Cref{sec: SOD} can be expressed in terms of the constructed functors.

\begin{lem} \label{lem: adj}
	For any Frobenius category $\F$, $l \in \NN$, and $s, t \in \{0, \dots, l\}$ with $s < t$, there are the following pairs of adjoint functors:
	\begin{enumerate}
		\item \label{lem: adj-standard-1} $\left(\stab \gamma^{[s, t-1]}_l, \stab \delta^{[s, t]}_{l-t+s}\right)$ between the categories $\stab \mMor_{l-t+s}(\F)$ and $\stab \mMor_l(\F)$,
		
		\item \label{lem: adj-standard-2} $\left(\stab \delta^{[s, t]}_{l-t+s}, \stab \gamma^{[s+1, t]}_l\right)$ between the categories $\stab \mMor_l(\F)$ and $\stab \mMor_{l-t+s}(\F)$,
		
		\item \label{lem: adj-boundary-right} $\left(\stab \gamma^{[t, l]}_l, \stab \delta^{[0, t-1]^c}_{t-1}\right)$ between the categories $\stab \mMor_{t-1}(\F)$ and $\stab \mMor_l(\F)$, 
		
		\item \label{lem: adj-boundary-left} $\left( \stab \delta^{[s+1, l]^c}_{l-s-1}, \stab \gamma^{[0, s]}_l \right)$ between the categories $\stab \mMor_l(\F)$ and $\stab \mMor_{l-s-1}(\F)$.
	\end{enumerate}
\end{lem}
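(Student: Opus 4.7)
The plan is to transport the inclusion adjunctions attached to the semiorthogonal decompositions of \Cref{sec: SOD} (supplied by \Cref{prp: SOD-adjoints}) across the triangle equivalences that identify $\Delta^{[s,t]}$ with $\stab \mMor_{l-t+s}(\F)$ (\Cref{lem: Delta-mMor}) and $\Gamma^{[s,t]}$ with $\stab \mMor_{t-s}(\F)$ (\Cref{con: delta-compl}).

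For \emph{(a) and (b)}, I first invoke the semiorthogonal decomposition $(\Gamma^{[s, t-1]}, \Delta^{[s, t]})$ of \Cref{prp: SODs}.\ref{prp: SODs-Gamma-Delta} (whose hypothesis $s < t \leq l$ is satisfied) to obtain, via \Cref{prp: SOD-adjoints} together with its explicit part (iii), the left adjoint $\stab \delta^{[s,t]} \stab \gamma^{[s, t-1]}$ of the inclusion $\inj{\Delta^{[s,t]}}{\stab \mMor_l(\F)}$. Combined with the full faithfulness of $\stab \delta^{[s,t]}$ (\Cref{rmk: stab-delta-gamma}), this gives
\[ \Hom(X, \stab \delta^{[s, t]} Y) \cong \Hom(\stab \delta^{[s, t]} \stab \gamma^{[s, t-1]} X, \stab \delta^{[s, t]} Y) \cong \Hom(\stab \gamma^{[s, t-1]} X, Y), \]
proving (a). For (b), the analogous argument uses the semiorthogonal decomposition $(\Delta^{[s,t]}, \Gamma^{[s+1, t]})$ obtained from \Cref{prp: SODs}.\ref{prp: SODs-Delta-Gamma} by shifting the index $s \mapsto s+1$, and extracts from its explicit description of the right adjoint the identification with $\stab \delta^{[s,t]} \stab \gamma^{[s+1, t]}$.

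For \emph{(c) and (d)}, I use \Cref{prp: SOD-Gamma-Gamma} instead. For (c), the semiorthogonal decomposition $(\Gamma^{[t, l]}, \Gamma^{[0, t-1]})$ (applied with index $s = t-1 < l$) yields by \Cref{prp: SOD-adjoints} a left adjoint $j^\ast$ of the inclusion $j_\ast\colon \Gamma^{[0, t-1]} \hookrightarrow \stab \mMor_l(\F)$. Since $\stab \delta^{[0, t-1]^c}\colon \stab \mMor_{t-1}(\F) \xrightarrow{\simeq} \Gamma^{[0, t-1]}$ is a triangle equivalence with quasi-inverse the restriction of $\stab \gamma^{[t, l]}$ (\Cref{con: delta-compl}), I obtain
\[ \Hom(X, \stab \delta^{[0, t-1]^c} Y) \cong \Hom(j^\ast X, \stab \delta^{[0, t-1]^c} Y) \cong \Hom(\stab \gamma^{[t, l]} j^\ast X, Y). \]
To conclude, I apply the triangulated functor $\stab \gamma^{[t, l]}$ to the distinguished triangle $i_! i^! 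X \to X \to j_\ast j^\ast X$ from \Cref{prp: SOD-adjoints}; its first term is killed because $i_! i^! X \in \Gamma^{[t, l]} = \ker \stab \gamma^{[t, l]}$, giving $\stab \gamma^{[t, l]} j^\ast X \cong \stab \gamma^{[t, l]} X$. Pair (d) is dual: the semiorthogonal decomposition $(\Gamma^{[s+1, l]}, \Gamma^{[0, s]})$ supplies a right adjoint $i^!$ of the inclusion of $\Gamma^{[s+1, l]}$, and the equivalence $\stab \delta^{[s+1, l]^c}\colon \stab \mMor_{l-s-1}(\F) \xrightarrow{\simeq} \Gamma^{[s+1, l]}$ with quasi-inverse the restriction of $\stab \gamma^{[0, s]}$ reduces the claim to $\stab \gamma^{[0, s]} i^! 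Y \cong \stab \gamma^{[0, s]} Y$; this holds because the third term of the associated SOD triangle lies in $\Gamma^{[0, s]} = \ker \stab \gamma^{[0, s]}$.

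The main obstacle is purely bookkeeping: one must verify that the index ranges of the ambient SODs in \Cref{prp: SOD-Gamma-Gamma,prp: SODs} accommodate the hypothesis $s < t$ of \Cref{lem: adj} in each case, and keep straight which of the four inclusion adjoints (left/right from each SOD) supplies the relevant member of the adjoint pair.
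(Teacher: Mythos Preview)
Your proposal is correct and follows essentially the same route as the paper: transport the inclusion adjunctions from the semiorthogonal decompositions of \Cref{prp: SOD-Gamma-Gamma,prp: SODs} across the equivalences of \Cref{lem: Delta-mMor} and \Cref{con: delta-compl}. The only cosmetic difference is in parts (c) and (d), where the paper identifies the relevant adjoint of the inclusion directly as $\stab \delta^{[0,t-1]^c} \circ \stab \gamma^{[t,l]}$ (resp.\ $\stab \delta^{[s+1,l]^c} \circ \stab \gamma^{[0,s]}$) using the explicit description in \Cref{prp: SOD-Gamma-Gamma}, whereas you keep the adjoint abstract and instead extract the needed isomorphism $\stab \gamma^{[t,l]} j^\ast X \cong \stab \gamma^{[t,l]} X$ from the SOD triangle; both arguments are equivalent.
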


\begin{proof}
	For \ref{lem: adj-standard-1} and \ref{lem: adj-standard-2}, note that the left adjoint of the embedding $\inj{\Delta^{[s,t]}}{\stab \mMor_{l}(\F)}$ is given by $\stab \delta^{[s, t]} \circ \stab \gamma^{[s, t-1]}$, the right adjoint by $\stab \delta^{[s, t]} \circ \stab \gamma^{[s+1, t]}$, see \Cref{prp: SODs}.\ref{prp: SODs-Gamma-Delta}.\ref{prp: SODs-Gamma-Delta-left} and \ref{prp: SODs-Delta-Gamma}.\ref{prp: SODs-Delta-Gamma-right}. Together with the equivalence from \Cref{lem: Delta-mMor} and its quasi-inverse, they fit in commutative diagrams of adjoint functors
	\[
		\begin{tikzcd}[sep={3cm,between origins}]
			\stab \mMor_{l-t+s}(\F) \ar[r, "\simeq"] \ar[rr, bend right=5mm, "\stab \delta^{[s,t]}"']& \Delta^{[s, t]} \ar[r, hook] \ar[l, bend right=10mm, "\simeq"'] & \stab \mMor_l(\F), \ar[l, bend right=10mm, "\stab \delta^{[s, t]} \circ \stab \gamma^{[s, t-1]}"', pos=0.5325] \ar[ll, bend right=15mm, "\stab \gamma^{[s, t-1]}"'] 
		\end{tikzcd}
		\begin{tikzcd}[sep={3cm,between origins}]
			\stab \mMor_{l-t+s}(\F) \ar[r, "\simeq"] \ar[rr, bend left=5mm, "\stab \delta^{[s,t]}"] & \Delta^{[s, t]} \ar[r, hook] \ar[l, bend left=10mm, "\simeq"] & \stab \mMor_l(\F). \ar[l, bend left=10mm, "\stab \delta^{[s, t]} \circ \stab \gamma^{[s+1, t]}"] \ar[ll, bend left=15mm, "\stab \gamma^{[s+1, t]}"]
		\end{tikzcd}
	\]
	By composition, this yields the desired pairs of adjoint functors. For \ref{lem: adj-boundary-right} and \ref{lem: adj-boundary-left}, note that the left adjoint of $\inj{\Gamma^{[0, t-1]}}{\stab \mMor_{l}(\F)}$ is given by $\stab \delta^{[0, t-1]^c} \circ \stab \gamma^{[t, l]}$, the right adjoint of $\inj{\Gamma^{[s+1, l]}}{\stab \mMor_{l}(\F)}$ by $\stab \delta^{[s+1, l]^c} \circ \stab \gamma^{[0, s]}$, see \Cref{prp: SOD-Gamma-Gamma}.\ref{prp: SOD-Gamma-Gamma-right} and \ref{prp: SOD-Gamma-Gamma-left} and \Cref{con: delta-compl}. We obtain the following commutative diagrams of adjoint functors, which yield the claim by composition:
	\[
	\begin{tikzcd}[sep={1.5cm,between origins}]
		\stab \mMor_{t-1}(\F) \ar[rr, "\simeq"] \ar[rrrr, bend right=5mm, "\stab \delta^{[0,t-1]^c}"'] && \Gamma^{[0, t-1]} \ar[rr, hook] \ar[ll, bend right=10mm, "\simeq"'] && \stab \mMor_{l}(\F) \ar[ll, bend right=10mm, "\stab \delta^{[0, t-1]^c} \circ \stab \gamma^{[t, l]}"', pos=0.6] \ar[llll, bend right=15mm, "\stab \gamma^{[t, l]}"']
	\end{tikzcd}
	\]
	\[
	\begin{tikzcd}[sep={1.5cm,between origins}]
		\stab \mMor_{l-s-1}(\F) \ar[rr, "\simeq"] \ar[rrrr, bend left=5mm, "\stab \delta^{[s+1,l]^c}"] && \Gamma^{[s+1, l]} \ar[rr, hook] \ar[ll, bend left=10mm, "\simeq"] && \stab \mMor_{l}(\F) \ar[ll, bend left=10mm, "\stab \delta^{[s+1, l]^c} \circ \stab \gamma^{[0, s]}"] \ar[llll, bend left=15mm, "\stab \gamma^{[0, s]}"]
	\end{tikzcd}
	\]
\end{proof}

We need two more functors to form an infinite adjoint chain:

\begin{con}[The contraction functors $\stab {\hat \gamma}^{[s, t]^c}$ and $\stab {\check \gamma}^{[s, t]^c}$] \label{con: gamma-compl}
	Let $\F$ be a Frobenius category, $l \in \NN$, and $s, t \in \{0, \dots, l\}$ with $s \leq t$.
	\begin{enumerate}[wide]
		\item \label{con: gamma-compl-I} Postcomposing the left adjoint of $\inj{\Gamma^{[s, t]}}{\stab \mMor_{l}(\F)}$, see Propositions \ref{prp: SODs}.\ref{prp: SODs-Delta-Gamma}.\ref{prp: SODs-Delta-Gamma-left} and \ref{prp: SOD-Gamma-Gamma}.\ref{prp: SOD-Gamma-Gamma-left}, with the triangle functor $\stab \gamma^{[s, t]^c}$ restricted to $\Gamma^{[s, t]}$, see \Cref{con: delta-compl}, yields a triangle functor $\stab {\hat \gamma}^{[s, t]^c} := \stab {\hat \gamma}^{[s, t]^c}_l$:
		\[
			\begin{tikzcd}[sep={1.5cm,between origins}]
				\stab \mMor_{t-s}(\F) \ar[rr, "\simeq"'] \ar[rrrr, "\stab \delta^{[s, t]^c}"', bend right=5mm] && \Gamma^{[s, t]} \ar[rr, hook] \ar[ll, bend right=10mm, "\stab \gamma^{[s, t]^c}"' near start, "\simeq" near start] && \stab \mMor_l(\F). \ar[ll, bend right=10mm] \ar[llll, bend right=15mm, "\stab {\hat \gamma}^{[s, t]^c}_l"', dashed]
			\end{tikzcd}
		\]
		If $s>0$, it sends an object $X \in \mMor_{l}(\F)$ to $Y \in \mMor_{t-s}(\F)$ given by the following commutative diagram:
		\[
			\begin{tikzcd}[sep={15mm,between origins}]
				X^{s-1} \ar[r, tail] \ar[d, two heads] \ar[rd, phantom, "\square"] & X^s \ar[r, tail] \ar[d, two heads] & \cdots \ar[r, tail] \ar[rd, phantom, "\square"] & X^{t} \ar[d, two heads]\\
				0 \ar[r, tail] & Y^0 \ar[r, tail] \ar[ru, phantom, "\square"] & \cdots \ar[r, tail] & Y^{t-s}
			\end{tikzcd}
		\]
		For $s=0$, we have $\stab {\hat \gamma}^{[0, t]^c} = \stab \gamma^{[0,t]^c} = \stab \gamma^{[t+1,l]}$.
		We set $\stab {\hat \gamma}_l^{s^c} := \stab {\hat \gamma}^{s^c}_l := \stab {\hat \gamma}^{[s,s]^c}_l$.
		
		\item \label{con: gamma-compl-P} Postcomposing the right adjoint of $\inj{\Gamma^{[s, t]}}{\stab \mMor_{l}(\F)}$, see Propositions \ref{prp: SODs}.\ref{prp: SODs-Gamma-Delta}.\ref{prp: SODs-Gamma-Delta-right} and \ref{prp: SOD-Gamma-Gamma}.\ref{prp: SOD-Gamma-Gamma-right}, with the triangle functor $\stab \gamma^{[s, t]^c}$ restricted to $\Gamma^{[s, t]}$, see \Cref{con: delta-compl}, yields a triangle functor $\stab {\check \gamma}^{[s, t]^c} := \stab {\check \gamma}^{[s, t]^c}_l$:
		\[
			\begin{tikzcd}[sep={1.5cm,between origins}]
				\stab \mMor_{t-s}(\F) \ar[rr, "\simeq"] \ar[rrrr, "\stab \delta^{[s, t]^c}", bend left=5mm] && \Gamma^{[s, t]} \ar[rr, hook] \ar[ll, bend left=10mm, "\stab \gamma^{[s, t]^c}" near start, "\simeq"' near start] && \stab \mMor_l(\F). \ar[ll, bend left=10mm] \ar[llll, bend left=15mm, "\stab {\check \gamma}^{[s, t]^c}_l", dashed]
			\end{tikzcd}
		\]
		If $t <l$, it sends an object $X \in \mMor_{l}(\F)$ to $Y \in \mMor_{t-s}(\F)$ given by the following commutative diagram:
		\[
			\begin{tikzcd}[column sep=small]
				Y^0 \ar[r, tail] \ar[d, two heads] & \cdots \ar[r, tail] \ar[rd, phantom, "\square"] & Y^{t-s} \ar[r, tail] \ar[d, two heads] \ar[rd, phantom, "\square"] & P(X^{t+1}) \ar[d, two heads, "p_{X^{t+1}}"] \\
				X^s \ar[r, tail] \ar[ru, phantom, "\square"] & \cdots \ar[r, tail] & X^t \ar[r, tail] & X^{t+1}
			\end{tikzcd}
		\]
		For $t=l$, we have $\stab {\check \gamma}^{[s, l]^c}_l = \stab {\gamma}^{[s, l]^c}_l = \stab {\gamma}^{[0, s-1]}_l$. We set $\stab {\check \gamma}^{s^c} := \stab {\check \gamma}^{s^c}_l := \stab {\check \gamma}^{[s,s]^c}_l$.
	\end{enumerate}
\end{con}

\begin{lem} \label{lem: adj-compl}
	For any Frobenius category $\F$, $l \in \NN$, and $s, t \in \{0, \dots, l\}$ with $s < t$, there are the following pairs of adjoint functors:
	\begin{enumerate}
		\item $\left(\stab {\hat \gamma}^{[s, t]^c}_l, \stab \delta^{[s, t]^c}_{t-s}\right)$ between the categories $\stab \mMor_{t-s}(\F)$ and $\stab \mMor_l(\F)$,
		
		\item $\left(\stab \delta^{[s, t]^c}_{t-s}, \stab {\check \gamma}^{[s, t]^c}_l\right)$ between the categories $\stab \mMor_l(\F)$ and $\stab \mMor_{t-s}(\F)$.
	\end{enumerate}
	
\end{lem}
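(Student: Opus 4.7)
The plan is to observe that the claim is a purely formal consequence of composing adjunctions, once one unravels the definitions of the four functors involved.

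By \Cref{con: delta-compl}, the embedding $\stab \delta^{[s,t]^c}_{t-s}$ factors as $i \circ E$, where $E \colon \stab \mMor_{t-s}(\F) \xrightarrow{\simeq} \Gamma^{[s,t]}$ is the corestriction of $\stab \delta^{[s,t]^c}_{t-s}$, $i \colon \Gamma^{[s,t]} \hookrightarrow \stab \mMor_l(\F)$ is the inclusion, and a quasi-inverse to $E$ is $G := \stab \gamma^{[s,t]^c}_l\vert_{\Gamma^{[s,t]}}$. Being an adjoint equivalence, both $E \dashv G$ and $G \dashv E$ hold. By \Cref{con: gamma-compl}, the functors $\stab {\hat \gamma}^{[s,t]^c}_l$ and $\stab {\check \gamma}^{[s,t]^c}_l$ are defined, respectively, as $G \circ L$ and $G \circ R$, where $L \dashv i$ and $i \dashv R$ are the left and right adjoints of the inclusion furnished by \Cref{prp: SODs}.

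Composing these adjunctions then directly yields both claims: from $L \dashv i$ and $G \dashv E$ one obtains $GL \dashv iE$, which reads $\stab {\hat \gamma}^{[s,t]^c}_l \dashv \stab \delta^{[s,t]^c}_{t-s}$; similarly, from $E \dashv G$ and $i \dashv R$ one obtains $iE \dashv GR$, i.e.\ $\stab \delta^{[s,t]^c}_{t-s} \dashv \stab {\check \gamma}^{[s,t]^c}_l$.

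The argument is essentially bookkeeping; the only non-formal point is to ensure that the required left and right adjoints of $i$ actually exist for all values of $s < t$ in the range $\{0, \dots, l\}$. For interior indices ($s > 0$ and $t < l$) both adjoints are produced by the decompositions $(\Delta^{[s-1,t]}, \Gamma^{[s,t]})$ and $(\Gamma^{[s,t]}, \Delta^{[s,t+1]})$ of \Cref{prp: SODs}; at the boundary ($s=0$ or $t=l$) one instead invokes the decompositions of \Cref{prp: SOD-Gamma-Gamma} to supply the missing adjoint, consistent with how $\stab \delta^{[s,t]^c}_{t-s}$ is built in \Cref{con: delta-compl}. This verification is the main (and only) obstacle, and it is entirely routine.
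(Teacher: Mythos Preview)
Your proof is correct and follows exactly the approach the paper intends: the paper's own proof is the single line ``This is immediate from \Cref{con: gamma-compl}'', and what you have written is precisely the unpacking of that sentence via composition of adjunctions. Your extra care about the boundary cases $s=0$ and $t=l$ is even a slight improvement, since \Cref{con: gamma-compl} literally cites only \Cref{prp: SODs} for the adjoints of the inclusion $\Gamma^{[s,t]}\hookrightarrow\stab\mMor_l(\F)$, whereas at the boundary one indeed needs \Cref{prp: SOD-Gamma-Gamma} instead.
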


\begin{proof}
	This is immediate from \Cref{con: gamma-compl}.
\end{proof}

\begin{lem} \label{lem: adj-compl-mut}
	For any Frobenius category $\F$, $l \in \NN$, and $s, t \in \{0, \dots, l\}$ with $s \leq t$, there are the following pairs of adjoint functors:
	\begin{enumerate}
		\item $\left(\stab \delta^{[s-1, t-1]^c}_{t-s} \circ \Theta^{-1}, \stab {\hat \gamma}^{[s, t]^c}_l\right)$ between the categories $\stab \mMor_l(\F)$ and $\stab \mMor_{t-s}(\F)$ for $s>0$,
		\item $\left(\stab {\check \gamma}^{[s, t]^c}_l, \stab \delta^{[s+1, t+1]^c}_{t-s} \circ \Theta \right)$ between the categories $\stab \mMor_{t-s}(\F)$ and $\stab \mMor_l(\F)$ for $t < l$.
	\end{enumerate}
\end{lem}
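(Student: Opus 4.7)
The plan is to derive both adjunctions from the two recollements around $\Gamma^{[s,t]}$, read off the outer triple adjoints via \Cref{rmk: mutations}, and identify the resulting outgoing functors using the defining formula of $\Theta$.

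For (a), I consider the semiorthogonal decompositions $(\Gamma^{[s-1,t-1]}, \Delta^{[s-1,t]})$ and $(\Delta^{[s-1,t]}, \Gamma^{[s,t]})$ of $\stab \mMor_l(\F)$ from \Cref{prp: SODs}; both are available since $1 \leq s \leq t \leq l-1$. They share the middle $\Delta^{[s-1,t]}$ and therefore form a recollement. By \Cref{con: gamma-compl}.\ref{con: gamma-compl-I}, the functor $\stab {\hat \gamma}^{[s,t]^c}_l$ factors as $\stab \gamma^{[s,t]^c}|_{\Gamma^{[s,t]}} \circ L$, where $L\colon \stab \mMor_l(\F) \to \Gamma^{[s,t]}$ is the left adjoint of the inclusion coming from the second SOD. \Cref{rmk: mutations} applied to this recollement supplies a further left adjoint of $L$, namely the composite $\Gamma^{[s,t]} \xrightarrow{R_{\Delta^{[s-1,t]}}} \Gamma^{[s-1,t-1]} \hookrightarrow \stab \mMor_l(\F)$. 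Composing with the adjoint equivalence $\stab \delta^{[s,t]^c}_{t-s} \dashv \stab \gamma^{[s,t]^c}|_{\Gamma^{[s,t]}}$ from \Cref{con: delta-compl} then exhibits $R_{\Delta^{[s-1,t]}} \circ \stab \delta^{[s,t]^c}_{t-s}$ (with inclusion into $\stab \mMor_l(\F)$ understood) as a left adjoint of $\stab {\hat \gamma}^{[s,t]^c}_l$.

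It remains to rewrite this left adjoint in the stated form. \Cref{con: theta}, applied with indices shifted from $(s,t)$ to $(s-1,t-1)$, gives $L_{\Delta^{[s-1,t]}} \circ \stab \delta^{[s-1,t-1]^c}_{t-s} \cong \stab \delta^{[s,t]^c}_{t-s} \circ \Theta$. Left-composing with the quasi-inverse $R_{\Delta^{[s-1,t]}}$ and right-composing with $\Theta^{-1}$ yields
\[
    R_{\Delta^{[s-1,t]}} \circ \stab \delta^{[s,t]^c}_{t-s} \cong \stab \delta^{[s-1,t-1]^c}_{t-s} \circ \Theta^{-1},
\]
which completes (a).

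Part (b) is entirely dual. I use the recollement formed by $(\Gamma^{[s,t]}, \Delta^{[s,t+1]})$ and $(\Delta^{[s,t+1]}, \Gamma^{[s+1,t+1]})$, both available since $s \leq t \leq l-1$. By \Cref{con: gamma-compl}.\ref{con: gamma-compl-P}, $\stab {\check \gamma}^{[s,t]^c}_l = \stab \gamma^{[s,t]^c}|_{\Gamma^{[s,t]}} \circ R$, where $R$ is the right adjoint of the inclusion coming from the first SOD. \Cref{rmk: mutations} produces a right adjoint of $R$ as the composite $\Gamma^{[s,t]} \xrightarrow{L_{\Delta^{[s,t+1]}}} \Gamma^{[s+1,t+1]} \hookrightarrow \stab \mMor_l(\F)$, and \Cref{con: theta} reads directly $L_{\Delta^{[s,t+1]}} \circ \stab \delta^{[s,t]^c}_{t-s} \cong \stab \delta^{[s+1,t+1]^c}_{t-s} \circ \Theta$. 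Composing with the adjoint equivalence for $\stab \gamma^{[s,t]^c}|_{\Gamma^{[s,t]}}$ establishes the claim. The only real subtlety is to track which adjunction lives on which side and to instantiate \Cref{con: theta} with the correct index shift—by $-1$ in (a) and with the original indices in (b); once the recollements are in place, all remaining steps are formal.
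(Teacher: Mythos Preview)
Your proof is correct and follows the same approach as the paper: both arguments use the recollements around $\Gamma^{[s,t]}$ together with \Cref{rmk: mutations} to produce the additional adjoint, and then invoke the defining relation of $\Theta$ from \Cref{con: theta} (with the appropriate index shift) to identify it with $\stab \delta^{[s\mp 1, t\mp 1]^c}_{t-s} \circ \Theta^{\mp 1}$. Your prose version is in fact a bit more explicit than the paper's diagrammatic presentation.
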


\begin{proof}
	Due to \Cref{con: theta} and \Cref{rmk: mutations}, we have the following commutative diagrams of adjoint functors, see \Cref{con: delta-compl}:
	\[
		\begin{tikzcd}[sep={2cm,between origins}]
			& \stab \mMor_{t-s}(\F) \ar[rr, "\simeq"] \ar[rrrd, bend left=25mm, "\stab \delta^{[s-1,t-1]^c}"] && \Gamma^{[s-1, t-1]} \ar[rd, hook] & \\
			\stab \mMor_{t-s}(\F) \ar[ru, "\Theta^{-1}", "\simeq"'] \ar[rr, "\stab \delta^{[s, t]^c}", "\simeq"'] 
			&& \Gamma^{[s, t]} \ar[ll, "\stab \gamma^{[s, t]^c}" near start, "\simeq"' near start, bend left=7.5mm] \ar[ru, "R_{\Delta^{[s-1, t]}}", "\simeq"' near end] \ar[rr, hook'] \ar[rr, bend left=10mm]
			&& \stab \mMor_l(\F), \ar[ll, bend right=5mm] \ar[llll, bend left=10mm, "\stab {\hat \gamma}^{[s,t]^c}"] 
		\end{tikzcd}
	\]
	\[
		\begin{tikzcd}[sep={2cm,between origins}]
			\stab \mMor_{t-s}(\F) \ar[rr, "\stab \delta^{[s, t]^c}"', "\simeq"] \ar[rd, "\Theta"', "\simeq"]
			&& \Gamma^{[s, t]} \ar[ll, "\stab \gamma^{[s, t]^c}"' near start, "\simeq" near start, bend right=7.5mm] \ar[rd, "L_{\Delta^{[s, t+1]}}"', "\simeq" near end] \ar[rr, hook] \ar[rr, bend right=10mm]
			&& \stab \mMor_l(\F). \ar[ll,bend left=5mm] \ar[llll, bend right=10mm, "\stab {\check \gamma}^{[s, t]^c}"'] \\
			&\stab \mMor_{t-s}(\F) \ar[rr, "\simeq"']\ar[rrru, bend right=25mm, "\stab \delta^{[s+1,t+1]^c}"'] && \Gamma^{[s+1, t+1]} \ar[ru, hook]
		\end{tikzcd}
	\]
\end{proof}

Combining \Cref{lem: adj,lem: adj-compl,lem: adj-compl-mut} we obtain

\begin{thm} \label{thm: adj-chain}
	 Let $\F$ be a Frobenius category $\F$, $l \in \NN$, and $s, t \in \{0, \dots, l\}$ with $s \leq t$. There is the infinite adjoint chain
	 {\setlength{\jot}{10pt}
	 	\begin{gather*}
	 		\cdots \dashv \Theta^{t-s+1} \stab {\hat \gamma}^{[0, l-t+s-1]^c}\\
	 		\dashv \stab \delta^{[0, l-t+s-1]^c} \Theta^{-t+s-1} \dashv \cdots \dashv \Theta \stab {\hat \gamma}^{[t-s, l-1]^c} \dashv \stab \delta^{[t-s, l-1]^c} \Theta^{-1} \dashv \stab {\hat \gamma}^{[t-s+1, l]^c} \dashv \stab \delta^{[t-s+1, l]^c} \\
	 		\dashv \stab \gamma^{[0, t-s]} \dashv \cdots \dashv \stab \gamma^{[s-1,t-1]} \dashv \stab \delta^{[s-1, t]} \dashv \stab \gamma^{[s, t]} \dashv \stab \delta^{[s, t+1]} \dashv \stab \gamma^{[s+1,t+1]} \dashv \cdots \dashv \stab \gamma^{[l-t+s, l]} \\
	 		\dashv \stab \delta^{[0, l-t+s-1]^c} \dashv \stab{\check \gamma}^{[0, l-t+s-1]^c} \dashv \stab \delta^{[1, l-t+s]^c} \Theta \dashv \Theta^{-1} \stab{\check \gamma}^{[1, l-t+s]^c} \dashv \cdots \dashv \stab \delta^{[t-s+1, l]^c} \Theta^{t-s+1} \\
	 		\dashv \Theta^{-t+s-1} \stab{\check \gamma}^{[t-s+1, l]^c} \dashv \cdots,
	 	\end{gather*}
	 }
	 where $\stab {\hat \gamma}^{[0, l-t+s-1]^c}=\stab { \gamma}^{[l-t+s,l]}$ and $\stab{\check \gamma}^{[t-s+1, l]^c}=\stab \gamma^{[0,t-s]}$, see \Cref{con: gamma-compl}. \qed
\end{thm}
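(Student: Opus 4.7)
The proof is essentially a bookkeeping exercise: the theorem records the infinite chain obtained by stitching together the finitely many adjunctions already established in \Cref{lem: adj}, \Cref{lem: adj-compl}, and \Cref{lem: adj-compl-mut}, and then propagating outward by composing with powers of the auto-equivalence $\Theta$. The only nontrivial input is the standard fact that adjunctions compose: if $F \dashv G$ and $H \dashv K$ are composable in the indicated way, then $HF \dashv GK$, together with the trivial adjunctions $\Theta^k \dashv \Theta^{-k}$ coming from the fact that $\Theta$ is a triangle equivalence (\Cref{con: theta}).

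My plan is to split the chain into three blocks and verify each separately. The \emph{middle block}
\[
\stab \gamma^{[0, t-s]} \dashv \cdots \dashv \stab \gamma^{[s, t]} \dashv \stab \delta^{[s, t+1]} \dashv \stab \gamma^{[s+1,t+1]} \dashv \cdots \dashv \stab \gamma^{[l-t+s, l]}
\]
is immediate from iterated application of \Cref{lem: adj}.\ref{lem: adj-standard-1} and \ref{lem: adj-standard-2} with the first index varied between $0$ and $l-t+s$. Attached to its right end, the adjunction $\stab \gamma^{[l-t+s, l]} \dashv \stab \delta^{[0, l-t+s-1]^c}$ is precisely \Cref{lem: adj}.\ref{lem: adj-boundary-right} with $t$ replaced by $l-t+s$; symmetrically, $\stab \delta^{[t-s+1, l]^c} \dashv \stab \gamma^{[0, t-s]}$ attached to the left end is \Cref{lem: adj}.\ref{lem: adj-boundary-left} with $s$ replaced by $t-s$. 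Next, $\stab \delta^{[0, l-t+s-1]^c} \dashv \stab{\check \gamma}^{[0, l-t+s-1]^c}$ and $\stab {\hat \gamma}^{[t-s+1, l]^c} \dashv \stab \delta^{[t-s+1, l]^c}$ are the two halves of \Cref{lem: adj-compl}.

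For the two semi-infinite outer blocks, I would argue by induction on the "distance from the middle." On the lower side, suppose we already have $\Theta^{-k} \stab{\check \gamma}^{[k, l-t+s+k-1]^c}$ in the chain as a right adjoint. \Cref{lem: adj-compl-mut}(b) with $(s,t)$ replaced by $(k, l-t+s+k-1)$ gives $\stab {\check \gamma}^{[k, l-t+s+k-1]^c} \dashv \stab \delta^{[k+1, l-t+s+k]^c} \Theta$; composing with the adjunction $\Theta^{-k} \dashv \Theta^k$ on the left yields
\[
\Theta^{-k} \stab{\check \gamma}^{[k, l-t+s+k-1]^c} \dashv \stab \delta^{[k+1, l-t+s+k]^c} \Theta^{k+1}.
\]
Then \Cref{lem: adj-compl}(b) with the corresponding indices, together with the composition with $\Theta^{k+1} \dashv \Theta^{-(k+1)}$, produces the next entry $\Theta^{-(k+1)} \stab{\check \gamma}^{[k+1, l-t+s+k]^c}$ as a right adjoint. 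This reproduces exactly the alternating pattern in row three of the statement. An entirely parallel argument, using \Cref{lem: adj-compl}(a) and \Cref{lem: adj-compl-mut}(a) instead, propagates the upper block leftward through the pattern $\Theta^k \stab{\hat \gamma}^{[t-s-k+1, l-k]^c} \dashv \stab \delta^{[t-s-k+1, l-k]^c} \Theta^{-k}$.

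There is no genuine obstacle; the only point requiring care is to verify that the shifts in the first index (the "$+1$" appearing each time one applies \Cref{lem: adj-compl-mut}) match precisely the index shifts written in the statement, and that the ambient interval $[\,\cdot\,,\,\cdot\,]\subseteq\{0,\dots,l\}$ is not violated for the finitely many entries where the proof genuinely applies. (The chain is written as a formal infinite chain; for sufficiently large $k$ the indices fall outside the admissible range, at which point the adjunctions simply cease to exist, and the symbol "$\cdots$" must be interpreted as continuing only as long as the index conditions $s \le t$ and $0 \le s,t \le l$ of the three lemmas are satisfied.) Modulo this bookkeeping, the theorem follows by the two inductions above together with the three cited lemmas. \hfill$\qed$
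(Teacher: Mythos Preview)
Your proposal is correct and follows precisely the paper's approach: the paper's own proof is the single sentence ``Combining \Cref{lem: adj,lem: adj-compl,lem: adj-compl-mut} we obtain,'' and you have spelled out in detail exactly how these three lemmas are stitched together, including the propagation via composition with $\Theta^k \dashv \Theta^{-k}$.

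One small comment on your final caveat. The chain is not merely \emph{formally} infinite: it is genuinely infinite because it records the iterated adjoints of an inclusion arising from the $(2l+4)$-gon of recollements in \Cref{cor: polygon}, and in any polygon of recollements every inclusion admits infinitely many left and right adjoints by repeated mutation around the cycle. What happens when the written indices reach the boundary of $\{0,\dots,l\}$ is not that the adjunctions cease to exist, but that the explicit formula must be continued using the periodicity implicit in the polygon (and ultimately in $\Theta^{l+2}\cong\Sigma^2$, \Cref{prp: Theta-Sigma-mMor}). The paper does not make this wrap-around explicit either, so your reading is understandable; but the adjoint chain does continue, and the ``$\cdots$'' is meant literally rather than as a convention for ``as long as it makes sense.''
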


Using \Cref{cor: append-I} and the functors from \Cref{con: delta-compl,con: gamma-compl}, we can interpret the distinguished triangles in \Cref{prp: SOD-Gamma-Gamma,prp: SODs} in the language of \Cref{prp: SOD-adjoints}:

\begin{cor} 
	Let $\F$ be a Frobenius category, $l \in \NN$, and $s, t \in \{0, \dots, l\}$ with $s \leq t$. Any $X \in \stab \mMor_{l}(\F)$ fits into distinguished triangles as follows:
	\begin{enumerate}[wide]
		\item \label{cor: sos-triangles-Gamma-Gamma} For $s < l$, we have
		\[
			\begin{tikzcd}[sep={15mm,between origins}]
				\stab \delta^{[s+1, l]^c} \stab \gamma^{[0,s]}(X) \ar[d] & 0 \ar[r, equal] \ar[d] & \cdots \ar[r, equal] & 0 \ar[r, tail] \ar[d] & X^{s+1} \ar[r, tail, "\alpha^{s+1}"] \ar[d, equal] & X^{s+2} \ar[r, tail, "\alpha^{s+2}"] \ar[d, equal] & \cdots \ar[r, tail, "\alpha^{l-1}"] & X^l \ar[d, equal] \\
				X \ar[d] & X^0 \ar[r, tail, "\alpha^0"] \ar[d, equal] & \cdots \ar[r, tail, "\alpha^{s-1}"] & X^{s} \ar[r, tail, "\alpha^{s}"] \ar[d, equal] & X^{s+1} \ar[r, tail, "\alpha^{s+1}"] \ar[d, ] & X^{s+2} \ar[r, tail, "\alpha^{s+2}"] \ar[d] & \cdots \ar[r, tail, "\alpha^{l-1}"] & X^l \ar[d] \\
				\stab \delta^{[0, s]^c} \stab \gamma^{[s+1,l]}(X) &X^0 \ar[r, tail, "\alpha^0"] & \cdots \ar[r, tail, "\alpha^{s-1}"] & X^{s} \ar[r, tail, "i_{X^s}"] & I(X^{s}) \ar[r, equal] & I(X^{s}) \ar[r, equal] & \cdots \ar[r, equal] & I(X^{s}).
			\end{tikzcd}
		\]
		
		\item \label{cor: sos-triangles-Gamma-Delta} For $t < l$, we have
		\[
			\begin{tikzcd}[sep={15mm,between origins}]
				\stab \delta^{[s, t]^c} \stab {\check \gamma}^{[s,t]^c}(X) \ar[d] & 0 \ar[r, equal] \ar[d, tail] & \cdots \ar[r, equal] & 0 \ar[r, tail] \ar[d, tail] & Y^s \ar[r, tail] \ar[d, two heads] & \cdots \ar[r, tail] \ar[rd, phantom, "\square"] & Y^t \ar[r, tail, "i_{Y^t}"] \ar[d, two heads] & I(Y^t) \ar[r, equal] \ar[d] & \cdots \ar[r, equal] & I(Y^t) \ar[d]\\
				X \ar[d] & X^0 \ar[r, tail, "\alpha^0"] \ar[d, equal] & \cdots \ar[r, tail, "\alpha^{s-2}"] & X^{s-1} \ar[r, tail, "\alpha^{s-1}"] \ar[d, equal] & X^s \ar[r, tail, "\alpha^s"] \ar[d, tail, "\alpha^t \cdots \alpha^s"] \ar[ru, phantom, "\square"] & \cdots \ar[r, tail, "\alpha^{t-1}"] & X^t \ar[r, tail, "\alpha^t"] \ar[d, tail, "\alpha^t"] & X^{t+1} \ar[r, tail, "\alpha^{t+1}"] \ar[d, equal] & \cdots \ar[r, tail, "\alpha^{l-1}"] & X^l \ar[d, equal]\\
				\stab \delta^{[s, t+1]} \stab \gamma^{[s,t]}(X) & X^0 \ar[r, tail, "\alpha^0"] & \cdots \ar[r, tail, "\alpha^{s-2}"] & X^{s-1} \ar[r, tail] & X^{t+1} \ar[r, equal] & \cdots \ar[r, equal] & X^{t+1} \ar[r, equal] & X^{t+1} \ar[r, tail, "\alpha^{t+1}"] & \cdots \ar[r, tail, "\alpha^{l-1}"] & X^l,
			\end{tikzcd}
		\]
		where $Y^t$ is defined by
		\[
			\begin{tikzcd}[sep={15mm,between origins}]
				Y^t \ar[r, tail] \ar[d, two heads] \ar[rd, "\square", phantom] & P(X^{t+1}) \ar[d, two heads, "p_{X^{t+1}}"]\\
				X^t \ar[r, tail, "\alpha^t"] & X^{t+1}.
			\end{tikzcd}
		\]
		\item \label{cor: sos-triangles-Delta-Gamma} For $0 < s$, we have
		\[
			\begin{tikzcd}[sep={15mm,between origins}]
				\stab \delta^{[s-1, t]} \stab \gamma^{[s,t]}(X) \ar[d] & X^0 \ar[r, tail, "\alpha^0"] \ar[d, equal] & \cdots \ar[r, tail, "\alpha^{s-2}"] & X^{s-1} \ar[r, equal] \ar[d, equal] & X^{s-1} \ar[r, equal] \ar[d, tail, "\alpha^{s-1}"] & \cdots \ar[r, equal] & X^{s-1} \ar[r, tail] \ar[d, tail, "\alpha^{t-1} \cdots \alpha^{s-1}"] & X^{t+1} \ar[r, tail, "\alpha^{t+1}"] \ar[d, equal] & \cdots \ar[r, tail, "\alpha^{l-1}"] & X^l \ar[d, equal] \\
				X \ar[d] & X^0 \ar[r, tail, "\alpha^0"] \ar[d, two heads] & \cdots \ar[r, tail, "\alpha^{s-2}"] & X^{s-1} \ar[r, tail, "\alpha^{s-1}"] \ar[d, two heads] \ar[rd, phantom, "\square"] & X^s \ar[r, tail, "\alpha^s"] \ar[d, two heads] & \cdots \ar[r, tail, "\alpha^{t-1}"] \ar[rd, phantom, "\square"] & X^{t} \ar[r, tail, "\alpha^t"] \ar[d, two heads] & X^{t+1} \ar[r, tail, "\alpha^{t+1}"] \ar[d] & \cdots \ar[r, tail, "\alpha^{l-1}"] & X^l \ar[d] \\
				\stab \delta^{[s, t]^c} \stab {\hat \gamma}^{[s,t]^c}(X) & 0 \ar[r, equal] & \cdots \ar[r, equal] & 0 \ar[r, tail] & Y^s \ar[r, tail] \ar[ru, phantom, "\square"] & \cdots \ar[r, tail] & Y^t \ar[r, tail, "i_{Y^t}"] & I(Y^t) \ar[r, equal] & \cdots \ar[r, equal] & I(Y^t).
			\end{tikzcd}
		\]
		 \qed
	\end{enumerate}
\end{cor}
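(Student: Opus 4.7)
The plan is to derive each of the three distinguished triangles directly from the distinguished triangles already produced in the proofs of the semiorthogonal decomposition results, by identifying the upper and lower rows with the claimed composites of contraction and expansion functors.

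For each part, I would first unpack the explicit termwise description of the composite functor appearing in the statement, using the definitions of $\stab \gamma^{[\cdot,\cdot]}$ and $\stab \delta^{[\cdot,\cdot]}$ in \Cref{dfn: contraction-expansion}, of $\stab \delta^{[\cdot,\cdot]^c}$ in \Cref{con: delta-compl}, and of $\stab {\hat \gamma}^{[\cdot,\cdot]^c}$ and $\stab {\check \gamma}^{[\cdot,\cdot]^c}$ in \Cref{con: gamma-compl}. For instance, in \ref{cor: sos-triangles-Gamma-Gamma}, the functor $\stab \delta^{[s+1, l]^c} \stab \gamma^{[0,s]}$ sends $X$ to $0 \to \cdots \to 0 \to X^{s+1} \to \cdots \to X^l$, matching the upper row; in \ref{cor: sos-triangles-Gamma-Delta}, iterating the pullback description of $\stab {\check \gamma}^{[s,t]^c}$ from \Cref{con: gamma-compl}.\ref{con: gamma-compl-P} yields precisely the objects $Y^s, \ldots, Y^t$ defined in the statement; in \ref{cor: sos-triangles-Delta-Gamma}, iterating the pushout description of $\stab {\hat \gamma}^{[s,t]^c}$ from \Cref{con: gamma-compl}.\ref{con: gamma-compl-I} yields the cokernels $Y^k=X^k/X^{s-1}$ appearing in the lower row.

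Next, I would invoke the distinguished triangles from \Cref{prp: SOD-Gamma-Gamma}.\ref{prp: SOD-Gamma-Gamma-diag}, \Cref{prp: SODs}.\ref{prp: SODs-Gamma-Delta}.\ref{prp: SODs-Gamma-Delta-diag}, and \Cref{prp: SODs}.\ref{prp: SODs-Delta-Gamma}.\ref{prp: SODs-Delta-Gamma-diag} for parts \ref{cor: sos-triangles-Gamma-Gamma}, \ref{cor: sos-triangles-Gamma-Delta}, \ref{cor: sos-triangles-Delta-Gamma} respectively. In each case the middle row is already $X$, and one of the outer rows agrees on the nose with the claimed composite. The remaining outer row differs only in its "tail segment" where the statement uses a cap by $I(X^s)$, $I(Y^t)$, or $I(Y^t)$ via the monic $i_{(-)}$, whereas the existing triangle uses a different choice of cap by an admissible monic into a projective-injective (namely $I(X^{s+1})$, $P(X^{t+1})$, or $I(\tilde C^t)$).

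The main technical step is to reconcile these differing caps. This is precisely what \Cref{cor: append-I} provides: given any two admissible monics from the common prefix into projective-injectives, there is a unique isomorphism between the resulting chains in $\stab \mMor_l(\F)$ extending the identity on the prefix. Applied to our situation, this isomorphism fits into a commutative ladder with the respective triangles whose middle column is $\id_X$, and rotating along it yields a distinguished triangle isomorphic to the original one but whose outer row is the one displayed in the statement. I expect the only genuine obstacle to be the book-keeping required to verify that the unique isomorphism from \Cref{cor: append-I} is indeed compatible with the pullback/pushout diagrams constructed in the proofs of \Cref{prp: SODs}, so that the resulting connecting morphism in the triangle agrees with the one described by the bicartesian squares in the statement; this is a routine diagram chase using the universal properties, not a conceptual difficulty.
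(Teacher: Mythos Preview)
Your proposal is correct and follows the paper's own approach essentially verbatim: the corollary is stated in the paper without proof, with the preceding sentence indicating that it follows from \Cref{cor: append-I} together with the explicit descriptions of the functors in \Cref{con: delta-compl,con: gamma-compl}, applied to the distinguished triangles of \Cref{prp: SOD-Gamma-Gamma,prp: SODs}. Your identification of the cap-reconciliation via \Cref{cor: append-I} as the only nontrivial step is exactly the point; note that for part \ref{cor: sos-triangles-Delta-Gamma} the cap $I(Y^t)$ already appears in \Cref{prp: SODs}.\ref{prp: SODs-Delta-Gamma}.\ref{prp: SODs-Delta-Gamma-diag}, so no adjustment is needed there.
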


\section{Dualized hom-functors}

In this section, we review the construction of Bondal and Kapranov to lift representations of cohomological functors from semiorthogonal decompositions. Based on the decompositions from \Cref{sec: SOD}, we lift representations of dualized hom-functors from an algebraic triangulated category to the larger stable monomorphism categories.

\begin{ntn} \label{ntn: trans} Let $\T$ be a category, linear over a field $K$, and denote the category of $K$-vector spaces by $\Vect$.
	\begin{enumerate}
		\item \label{ntn: trans-h} For objects $X, Y \in \T$, we abbreviate 
		\[h_X := h_X^\T := \Hom_\T(-, X), \hspace{2.5mm} h^X := h^X_\T := \Hom_\T(X, -), \hspace{2.5mm} \text{and} \hspace{2.5mm} h^X_Y := \Hom_{\T}(X,Y). \]
		
		\item \label{ntn: trans-u} Consider a contravariant linear functor $h\colon \T \to \mathrm{Vect}$, an object $X \in \T$, and the set $\mathrm{Nat}(h_X, h)$ of natural transformations. The Yoneda lemma yields a bijection
		\[
			\begin{tikzcd}[row sep={7.5mm,between origins}, column sep={15mm,between origins}]
				\mathllap{\mathrm{Nat}(h_X, h)} \ar[r, <->] & \mathrlap{h(X),} \\
				\mathllap{\eta} \ar[r, |->] & \mathrlap{\eta_X(\id_X) =: e_\eta,} \\
				\mathllap{\left( h^Y_X \ni f \mapsto h(f)(e) \in h(Y) \right)_{Y \in \T}} & \mathrlap{e.} \ar[l, |->]
			\end{tikzcd}
		\]
		Note that any such natural transformation is automatically (component-wise) linear.
		\item \label{ntn: trans-u_X} We abbreviate $(-)^\ast := \Hom_K(-, K)$. Given a natural transformation $\eta_X \colon h^\T_{\tilde X} \to \left(h^X_\T\right)^\ast$ of contravariant functors $\T \to \Vect$, where $X, {\tilde X} \in \T$, we denote $e_X := e_{\eta_X} \in \left(h^X_{\tilde X}\right)^\ast$, see \ref{ntn: trans-u}.
	\end{enumerate}
\end{ntn}

We include the following statement for lack of reference:

\begin{lem} \label{lem: rep-comp}
	Let $\T$ be a triangulated category, linear over a field. Suppose that there are isomorphisms of functors
	\[\eta_X \colon h^\T_{\tilde X} \cong \left(h^X_\T\right)^\ast \hspace{2.5mm} \text{and} \hspace{2.5mm} \eta_Y \colon h^\T_{\tilde Y} \cong \left(h^Y_\T\right)^\ast,\]
	where $X, Y, \tilde X, \tilde Y \in \T$. Then each $f \in \Hom_\T(X, Y)$ defines a unique $\tilde f \in \Hom_\T(\tilde X, \tilde Y)$ such that $e_X(- \circ f) = e_Y(\tilde f \circ -)$ on $\Hom_\T(Y, \tilde X)$, see \Cref{ntn: trans}.\ref{ntn: trans-u_X}. This assignment is compatible with compositions.
\end{lem}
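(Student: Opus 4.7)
The plan is to construct $\tilde f$ via the contravariant Yoneda lemma. Precomposition with $f$ defines a natural transformation $h^Y_\T \to h^X_\T$ of covariant functors; dualizing and composing with the given isomorphisms $\eta_X$ and $\eta_Y^{-1}$ produces a natural transformation $h^\T_{\tilde X} \to h^\T_{\tilde Y}$. By \Cref{ntn: trans}.\ref{ntn: trans-u}, this corresponds to a unique $\tilde f \in \Hom_\T(\tilde X, \tilde Y)$ whose postcomposition realizes the transformation.

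To match this with the characterization in the statement, I would unwind the explicit formula behind \Cref{ntn: trans}.\ref{ntn: trans-u}: since $\eta_X(\id_{\tilde X}) = e_X$, naturality of $\eta_X$ forces $\eta_{X, Z}(g)(h) = e_X(g \circ h)$ for $g \in \Hom_\T(Z, \tilde X)$ and $h \in \Hom_\T(X, Z)$, and analogously for $\eta_Y$. Evaluating the commutative square for the composed natural transformation at $Z = \tilde X$ on $\id_{\tilde X}$ and applying the resulting linear functional to $k \in \Hom_\T(Y, \tilde X)$ produces exactly $e_Y(\tilde f \circ k) = e_X(k \circ f)$. Conversely, any morphism satisfying this identity is forced to equal $\eta_{Y, \tilde X}^{-1}\bigl(e_X(-\circ f)\bigr)$, since $\eta_{Y, \tilde X}$ is an isomorphism; this yields the claimed uniqueness.

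For compatibility with compositions, given $f \colon X \to Y$ and $f' \colon Y \to Y'$, I would directly verify that $\tilde f' \circ \tilde f$ satisfies the characterizing equation for $\widetilde{f' \circ f}$: for any $k \in \Hom_\T(Y', \tilde X)$, two applications of the defining identities of $\tilde f$ and $\tilde f'$ yield
\[
e_X\bigl(k \circ (f' \circ f)\bigr) = e_X\bigl((k \circ f') \circ f\bigr) = e_Y\bigl((\tilde f \circ k) \circ f'\bigr) = e_{Y'}\bigl((\tilde f' \circ \tilde f) \circ k\bigr),
\]
so $\widetilde{f' \circ f} = \tilde f' \circ \tilde f$ by uniqueness. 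The argument is entirely formal; the only care needed is in tracking the variance of the functors involved and the direction of the natural isomorphisms, and no step should present a genuine obstacle.
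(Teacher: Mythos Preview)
Your proposal is correct and follows essentially the same approach as the paper: both construct $\tilde f$ via the Yoneda lemma from the natural transformation $h^\T_{\tilde X} \to (h^X_\T)^\ast \to (h^Y_\T)^\ast \to h^\T_{\tilde Y}$, unwind the defining element to obtain the identity $e_X(- \circ f) = e_Y(\tilde f \circ -)$, and verify compatibility with composition by chaining two instances of this identity and invoking uniqueness.
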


\begin{proof}
	By the Yoneda lemma, any morphism $f \in \Hom_\T(X, Y)$ corresponds to a unique morphism $\tilde f \in \Hom_\T(\tilde X, \tilde Y)$ via the following commutative diagram of natural transformations:
	\[
		\begin{tikzcd}[row sep={17.5mm,between origins}, column sep={32.5mm,between origins}]
			h^\T_{\tilde X} \ar[r, "\eta_X", "\cong"'] \ar[d, dashed, "h^\T_{\tilde f}"] & \left(h^X_\T\right)^\ast \ar[d, "\left(h^f_\T\right)^\ast"] \\
			h^\T_{\tilde Y} \ar[r, "\eta_Y", "\cong"'] & \left(h^Y_\T\right)^\ast,
		\end{tikzcd}
		\begin{tikzcd}[row sep={17.5mm,between origins}, column sep={32.5mm,between origins}]
			\id_{\tilde X} \ar[r, <->] \ar[d, |->] & e_X \ar[d, |->] \\
			\tilde f \ar[r, <->] & e_Y(\tilde f \circ -) = e_X(- \circ f).
		\end{tikzcd}
	\]
	This yields the first claim. For the second, consider another natural transformation $\eta_Z \colon h^\T_{\tilde Z} \cong \left(h^Z_\T\right)^\ast$, where $Z, \tilde Z \in \T$, $g \in \Hom_\T(Y, Z)$, and $\tilde g \in \Hom_\T(\tilde Y, \tilde Z)$ such that $e_Y(- \circ g) = e_Z(\tilde g \circ -)$. Then $e_X(- \circ gf) = e_Y(\tilde f \circ - \circ g) = e_Z(\tilde g \tilde f \circ -)$, and hence~$\tilde g \tilde f = \widetilde{gf}$ by uniqueness.
\end{proof}

We review the construction of Bondal and Kapranov from the proofs of {\cite[Thm.~2.10, Prop.~3.8]{BK89}}, which is explicitly used for the main result of this section.

\begin{thm} \label{thm: BK-2.10}
	Let $\T$ be a triangulated category, linear over a field, with a semiorthogonal decomposition $(\U, \V)$. If all contravariant linear cohomological functors $\U \to \mathrm{Vect}$ and $\V \to \mathrm{Vect}$ are representable, then so are all such functors $\T \to \mathrm{Vect}$.
\end{thm}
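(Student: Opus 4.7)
The plan is to adapt the construction of Bondal and Kapranov. Start with a contravariant linear cohomological functor $h \colon \T \to \Vect$. The hypothesis applied to the restrictions of $h$ yields objects $U_0 \in \U$ and $V_0 \in \V$ representing $h|_\U$ and $h|_\V$, respectively. Under \Cref{ntn: trans}.\ref{ntn: trans-u}, these representations correspond to Yoneda elements $e_U \in h(U_0)$ and $e_V \in h(V_0)$, and the Yoneda lemma applied on all of $\T$ turns them into natural transformations $\phi_U \colon h_{U_0}^\T \to h$ and $\phi_V \colon h_{V_0}^\T \to h$ whose restrictions to $\U$ and $\V$ are isomorphisms.

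The main task is then to construct a connecting morphism $w \colon V_0 \to \Sigma U_0$ and a distinguished triangle
\[
U_0 \xrightarrow{a} \tilde T \xrightarrow{b} V_0 \xrightarrow{w} \Sigma U_0
\]
in $\T$ such that $\tilde T$ represents $h$. To produce $w$ from the data of $h$, apply the cohomological functor $h$ to the SOD triangle $i_! i^! T \to T \to j_\ast j^\ast T \xrightarrow{c_T} \Sigma i_! i^! T$ of \Cref{prp: SOD-adjoints} for each $T \in \T$ and rewrite the flanking terms of the resulting long exact sequence using $\phi_U$ and $\phi_V$; this yields a map
\[
\Hom_\T(\Sigma i_! i^! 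T, U_0) \to \Hom_\T(j_\ast j^\ast T, V_0)
\]
that is natural in $T$. A Yoneda-style argument, together with the orthogonality $\Hom_\T(\U, \V) = 0$ and the naturality of $c_T$ in $T$, identifies this family as composition with a unique morphism $w \colon V_0 \to \Sigma U_0$. Let $\tilde T$ denote the middle term of the associated distinguished triangle.

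To verify that $\tilde T$ represents $h$, I would construct a natural transformation $\Phi \colon h_{\tilde T}^\T \to h$ as follows. For each $T \in \T$, compare the long exact $\Hom$-sequence obtained by applying $\Hom_\T(T, -)$ to the triangle defining $\tilde T$ with the long exact sequence of $h$ applied to the SOD triangle for $T$, using $\phi_U(T)$ and $\phi_V(T)$ as vertical arrows. The construction of $w$ is precisely what makes the ladder commute, and the five lemma then forces an isomorphism $\Phi(T) \colon \Hom_\T(T, \tilde T) \xrightarrow{\cong} h(T)$. Naturality of $\Phi$ in $T$ follows by a standard diagram chase. The main obstacle is the canonical construction of $w$: one must verify that the natural transformation assembled from the flanking long-exact terms actually factors through composition with a single morphism $V_0 \to \Sigma U_0$, an argument that crucially exploits the semiorthogonality condition and the Yoneda lemma; once $w$ is in hand, the five lemma step is routine.
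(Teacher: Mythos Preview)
Your ansatz that the representing object sits in a distinguished triangle $U_0 \to \tilde T \to V_0 \xrightarrow{w} \Sigma U_0$ with $U_0 \in \U$ and $V_0 \in \V$ cannot work in general. Any such triangle is, by uniqueness, the semiorthogonal decomposition triangle of $\tilde T$; in particular $j_\ast j^\ast \tilde T \cong V_0$. Now test your candidate on $V \in \V$: the long exact sequence gives
\[
\Hom_\T(V,U_0) \to \Hom_\T(V,\tilde T) \to \Hom_\T(V,V_0) \xrightarrow{w\circ -} \Hom_\T(V,\Sigma U_0),
\]
and you need the middle map to be an isomorphism onto $h(V)\cong\Hom_\T(V,V_0)$. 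Taking $V=V_0$ forces $w=0$, whence $\tilde T\cong U_0\oplus V_0$ and $\Hom_\T(V,\tilde T)\cong\Hom_\T(V,U_0)\oplus\Hom_\T(V,V_0)$. This agrees with $h(V)$ only if $\Hom_\T(\V,\U)=0$, which is \emph{not} part of the hypothesis (only $\Hom_\T(\U,\V)=0$ is). The same asymmetry breaks your proposed ladder for the five lemma: while $\Hom_\T(T,V_0)\cong\Hom_\V(j^\ast T,V_0)\cong h(j_\ast j^\ast T)$ via the adjunction $j^\ast\dashv j_\ast$, there is no analogous identification $\Hom_\T(T,U_0)\cong h(i_! i^! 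T)$, since $i_!$ is only known to have a \emph{right} adjoint. Your ``Yoneda-style'' extraction of $w$ runs into the same obstruction.

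The paper's construction circumvents this by invoking the representability hypothesis on $\V$ a \emph{second} time: one picks $V'\in\V$ with $h^\V_{V'}\cong h^\T_{\tilde U}\big|_\V$, obtaining a canonical morphism $u\colon V'\to \tilde U$, and then uses $h$ to produce a companion morphism $v\colon V'\to \tilde V$. The representing object $\tilde X$ is defined as the cone of $\left(\begin{smallmatrix}u\\-v\end{smallmatrix}\right)\colon V'\to \tilde U\oplus\tilde V$, i.e.\ a homotopy pushout rather than a two-term extension. The auxiliary object $V'$ is precisely what absorbs the nonvanishing of $\Hom_\T(\V,\U)$, and it is the key idea missing from your outline.
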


\begin{proof} Let $h \colon \T \to \mathrm{Vect}$ be a contravariant linear cohomological functor. We proceed in several steps.
	\begin{enumerate}[wide, label=(\arabic*)]
		\item \label{thm: BK-2.10-start} By assumption, there are objects $\tilde U \in \U$, $\tilde V, V' \in \V$, and isomorphisms of functors
		\[\eta^\U\colon h_{\tilde U}^\U \cong h\vert_\U, \hspace{5mm} \eta^\V\colon h_{\tilde V}^\V \cong h\vert_\V, \hspace{5mm} \text{ and } \hspace{5mm} \theta\colon h_{V'}^\V \cong h_{\tilde U}^\V.\]
		Set $e_{\U} := e_{\eta^\U}$ and $e_{\V} := e_{\eta^\V}$, see \Cref{ntn: trans}.\ref{ntn: trans-u}.
		
		\item \label{thm: BK-2.10-v} Let $u := e_\theta \in h^{V'}_{\tilde U}$, and define $v \in h^{V'}_{\tilde V}$ by the commutative square
		\begin{align} \label{diag: def-v}
			\begin{tikzcd}[column sep={30mm,between origins}, row sep={17.5mm,between origins}, ampersand replacement=\&]
				h^{\tilde U}_{\tilde U} \ar[d, "\eta^\U_{\tilde U}", "\cong"'] \ar[r, dashed] \& h^{V'}_{\tilde V} \ar[d, "\eta^\V_{V'}", "\cong"'] \& \id_{\tilde U} \ar[d, <->] \ar[r, |->, dashed] \& v \ar[d, <->] \\
				h(\tilde U) \ar[r, "h(u)"]\& h(V'), \&	e_{\U} \ar[r, |->] \& h(u)(e_{\U}) = h(v)(e_{\V}).
			\end{tikzcd}
		\end{align}
		
		\item \label{thm: BK-2.10-hom-PO} Complete the morphism defined by $u$ and $-v$ to a distinguished triangle in $\T$ as follows:
		\begin{align} \label{diag: total-triangle}
			\begin{tikzcd}[ampersand replacement=\&]
				V' \ar{r}{\begin{pmatrix} u \\ -v \end{pmatrix}} \& \tilde U \oplus \tilde V \ar{r}{\begin{pmatrix} p & q \end{pmatrix}} \& \tilde X
			\end{tikzcd}
		\end{align}
		It defines a homotopy cartesian square, which fits into a morphism
		\begin{align} \label{diag: hom-cart}
			\begin{tikzcd}[sep={17.5mm,between origins}, ampersand replacement=\&]
				V'' \ar[r] \ar[d, equal] \& V' \ar[r, "v"] \ar[d, "u"] \ar[rd, phantom, "\square"] \& \tilde V \ar[d, "q"] \\
				V'' \ar[r, dashed] \& \tilde U \ar[r, "p"] \& \tilde X
			\end{tikzcd}
		\end{align}
		of distinguished triangles in $\T$, see {\cite[Lem.~1.4.4]{Nee01}}. In the sequel, we establish an isomorphism of functors $h_{\tilde X}^\T \cong h$.
		
		\item \label{thm: BK-2.10-prep-isos} We have isomorphisms of functors 
		\begin{align}\label{diag: Hom(-,t)}
			h^\U_p \colon h^\U_{\tilde U} \cong h^\U_{\tilde X} \hspace{2.5mm} \textup{ and } \hspace{2.5mm}
			h^\V_q\colon h^\V_{\tilde V} \cong h^\V_{\tilde X}.
		\end{align}
		For $h^\U_p$ this is immediate from \eqref{diag: hom-cart} since $h^U$ is homological for all $U \in \U$ and $h^\U_\V=0$. For each $V \in \V$ and $f \in h^V_{V'}$ the naturality of $\theta$ and the bifunctoriality of $\Hom_\T(-,-)$ give rise to a commutative diagram
		\[
			\begin{tikzcd}[sep={17.5mm,between origins}]
				h^{V'}_{V'} \ar[r, "\theta_{V'}", "\cong"'] \ar[d, "h^f_{V'}"] & h^{V'}_{\tilde U} \ar[d, "h^f_{\tilde U}"] & h^{V'}_{V'} \ar[l, "h^{V'}_u"'] \ar[d, "h^f_{V'}"]& \id_{V'} \ar[r, <->] \ar[d, |->] & u \ar[d, |->] \ar[r, <-|] & \id_{V'} \ar[d, |->] \\
				h^V_{V'} \ar[r, "\theta_V", "\cong"'] & h^V_{\tilde U} & h^V_{V'}, \ar[l, "h^{V}_u"'] & f \ar[r, <->] & u \circ f \ar[r, <-|] & f,
			\end{tikzcd}
		\]
		by which $h^\V_u = \theta\vert_\V$ is an isomorphism. Applying the homological functor $h^V$ to \eqref{diag: hom-cart} yields a morphism 
		\[
			\begin{tikzcd}[sep={17.5mm,between origins}]
				\cdots \ar[r] & h^V_{V''} \ar[r] \ar[d, equal] & h^V_{V'} \ar[r, "h^V_v"] \ar[d, "h^V_u", "\cong"'] & h^V_{\tilde V} \ar[r] \ar[d, "h^V_q"] & \cdots \\
				\cdots \ar[r] & h^V_{V''} \ar[r] & h^V_{\tilde U} \ar[r, "h^V_p"] & h^V_{\tilde X} \ar[r] & \cdots
			\end{tikzcd}
		\]
		of long exact sequences of vector spaces. By the five lemma, $h^\V_q$ is an isomorphism as well.
		
		\item \label{thm: BK-2.10-res} Combining \ref{thm: BK-2.10-start} and \ref{thm: BK-2.10-prep-isos}, we obtain the following isomorphisms of functors:
		\begin{align} \label{diag: rho-U}
			\begin{tikzcd}[sep={17.5mm,between origins}, ampersand replacement=\&]
				h_{\tilde X}^\U \ar[rr, dashed, "\rho^\U", "\cong"'] \&\& h\vert_\U \& p \ar[rd, <->] \ar[rr, <->] \&\& e_{\U} \\
				 \& h_{\tilde U}^\U, \ar[ru, "\eta^\U"', "\cong"] \ar[lu, "h^\U_p", "\cong"'] \&\&\& \id_{\tilde U}, \ar[ru, <->]
			\end{tikzcd}
		\end{align}
		\begin{align} \label{diag: rho-V}
			\begin{tikzcd}[sep={17.5mm,between origins}, ampersand replacement=\&]
				h_{\tilde X}^\V \ar[rr, dashed, "\rho^\V", "\cong"'] \& \& h\vert_\V \& q \ar[rd, <->] \ar[rr, <->] \&\& e_{\V} \\
				\& h_{\tilde V}^\V, \ar[ru, "\eta^\V"', "\cong"] \ar[lu, "h^\V_q", "\cong"'] \&\&\& \id_{\tilde V}. \ar[ru, <->] 
			\end{tikzcd}
		\end{align}
		
		\item \label{thm: BK-2.10-prep} To prepare for the next step, we show that
		\begin{align} \label{equ: help-commute}
			h(u) \circ \rho^\U_{\tilde U} = \rho^\V_{V'} \circ h^u_{\tilde X}.
		\end{align}
		Given $f \in h^{\tilde U}_{\tilde X}$, we use \eqref{diag: Hom(-,t)} to obtain a (unique) $r \in h^{\tilde U}_{\tilde U}$ such that $f= h^{\tilde U}_p(r)$ and a (unique) $s \in h^{V'}_{\tilde V}$ such that $pru=h^u_{\tilde X}(f)=h^{V'}_q(s) = qs\in h^{V'}_{\tilde X}$. Since homotopy cartesian squares are weak pullbacks, we obtain a $w \in h^{V'}_{V'}$ such that 
		\begin{align} \label{diag: hom-PB}
			\begin{tikzcd}[sep={17.5mm,between origins}, ampersand replacement=\&]
				V' \ar[r, dashed, "w"] \ar[d, "u"] \ar[rr, bend left=10mm, "s"] \& V' \ar[r, "v"] \ar[d, "u"] \ar[rd, phantom, "\square"] \& \tilde V \ar[d, "q"] \\
				\tilde U \ar[r, "r"] \ar[rr,, bend right=10mm, "f"'] \& \tilde U \ar[r, "p"] \& \tilde X
			\end{tikzcd}
		\end{align}
		commutes. This yields
		\begin{align*}
			h(u) \left(\rho^\U_{\tilde U}(f)\right) &= h(u)\left(\rho^{\U}_{\tilde U}\left(h_p^{\tilde U}(r)\right)\right) \overset{\eqref{diag: rho-U}}= h(u) \left(\eta^\U_{\tilde U}(r)\right) \overset{\textup{\ref{ntn: trans}.\ref{ntn: trans-u}}}{\underset{\textup{\ref{thm: BK-2.10-start}}}=} h(u)\left(h(r)(e_\U)\right) = h(ru)(e_\U)\\
			& \overset{\eqref{diag: hom-PB}}= h(uw)(e_\U) = h(w)\left(h(u)(e_\U)\right) \overset{\eqref{diag: def-v}}= h(w)\left(h(v)(e_\V)\right) = h(vw)(e_\V) \\
			& \overset{\eqref{diag: hom-PB}}= h(s)(e_\V) \overset{\textup{\ref{ntn: trans}.\ref{ntn: trans-u}}}{\underset{\textup{\ref{thm: BK-2.10-start}}}=} \eta^\V_{V'}(s) \overset{\eqref{diag: rho-V}}= \rho^\V_{V'}\left(h_q^{V'}(s)\right) = \rho^\V_{V'}\left(h^u_{\tilde X}(f)\right).
		\end{align*}
		
		\item \label{thm: BK-2.10-nat-trans}To obtain a natural transformation $h_{\tilde X} \to h$ on all of $\T$, we apply the cohomological functors $h_{\tilde X}$ and $h$ to the distinguished triangle \eqref{diag: total-triangle}. By \ref{thm: BK-2.10-prep} and the naturality of $\rho^\V \colon h^\V_{\tilde X} \to h \vert_\V$ applied to $-v\colon V' \to \tilde V$, we have a commutative square
		\[
			\begin{tikzcd}[row sep={17.5mm,between origins}, column sep={35mm,between origins}, ampersand replacement=\&]
				h^{\tilde U}_{\tilde X} \oplus h^{\tilde V}_{\tilde X} \ar{r}{\begin{pmatrix}
						h^u_{\tilde X} & h^{-v}_{\tilde X}
				\end{pmatrix}} \ar[d, color=white, "\textcolor{black}{\cong}"] \& h^{V'}_{\tilde X} \ar[d, "\rho^\V_{V'}"', "\cong"] \\
				h(\tilde U) \oplus h(\tilde V) \ar{r}{\begin{pmatrix}
						h(u) & h(-v)
				\end{pmatrix}} \ar[<-]{u}{\begin{pmatrix}
						\rho^\U_{\tilde U} & 0 \\ 0 & \rho^\V_{\tilde V}
				\end{pmatrix}} \& h(V').
			\end{tikzcd}
		\]
		By \Cref{lem: middle-non-unique} for $\E=\Vect$ and the five lemma, we then obtain an isomorphism of long exact sequences of vector spaces
		\begin{align} \label{diag: total-long-ex}
			\begin{tikzcd}[row sep={17.5mm,between origins}, column sep={35mm,between origins}, ampersand replacement=\&]
				\cdots \ar[r] \& h^{\tilde X}_{\tilde X} \ar[d, dashed, "\cong", "\eta_{\tilde X}"'] \ar{r}{\begin{pmatrix}
						h^p_{\tilde X} \\ h^q_{\tilde X}
				\end{pmatrix}} \& h^{\tilde U}_{\tilde X} \oplus h^{\tilde V}_{\tilde X} \ar{r}{\begin{pmatrix}
						h^u_{\tilde X} & -h^v_{\tilde X}
				\end{pmatrix}} \ar[d, color=white, "\textcolor{black}{\cong}"] \& h^{V'}_{\tilde X} \ar[r] \ar[d, "\rho^\V_{V'}"', "\cong"] \& \cdots \\
				\cdots \ar[r] \& h(\tilde X) \ar[pos=0.4]{r}{\begin{pmatrix}
						h(p) \\ h(q)
				\end{pmatrix}} \& h(\tilde U) \oplus h(\tilde V) \ar{r}{\begin{pmatrix}
						h(u) & -h(v)
				\end{pmatrix}} \ar[<-]{u}{\begin{pmatrix}
						\rho^\U_{\tilde U} & 0 \\ 0 & \rho^\V_{\tilde V}
				\end{pmatrix}} \& h(V') \ar[r] \& \cdots,\\
				\& \id_{\tilde X} \ar[r, |->] \ar[d, <->] \& \begin{pmatrix}
					p \\ q
				\end{pmatrix} \ar[d, <->] \\
				\& e \ar[r, |->] \& \begin{pmatrix}
					h(p)(e) \\ h(q)(e)
				\end{pmatrix} = \begin{pmatrix}
					e_{\U} \\ e_{\V}
				\end{pmatrix},
			\end{tikzcd}
		\end{align}
		which involves a (non-unique) dashed morphism defining an element $e := \eta_{\tilde X}(\id_{\tilde X}) \in h(\tilde X)$. This extends to a natural transformation $\eta\colon h_{\tilde X} \to h$, see \Cref{ntn: trans}.\ref{ntn: trans-u}.
		
		\item \label{thm: BK-2.10-res-iso} The restrictions $\eta\vert_\U$ and $\eta\vert_\V$ are isomorphisms: Indeed, given $U \in \U$ and $f \in h^U_{\tilde X}$, we use \eqref{diag: Hom(-,t)} to obtain a (unique) $\tilde f \in h^U_{\tilde U}$ such that $f=h^U_p(\tilde f) = p \tilde f$. This yields 
		\[
		\eta_U(f) \overset{\textup{\ref{ntn: trans}.\ref{ntn: trans-u}}}= h(f)(e) = h(\tilde f)\left(h(p)(e)\right) \overset{\eqref{diag: total-long-ex}}= h(\tilde f)(e_{\U}) \overset{\textup{\ref{ntn: trans}.\ref{ntn: trans-u}}}= \eta^\U_U(\tilde f) \overset{\eqref{diag: rho-U}}= \rho^\U_U\left(h^U_p(\tilde f)\right) = \rho^\U_U(f).
		\]
		Hence, $\eta \vert_\U = \rho^\U$ is an isomorphism, see \ref{thm: BK-2.10-res}. The proof of $\eta \vert_\V = \rho^\V$ is analogous.
		
		\item It remains to be seen that $\eta$ is an isomorphism. To this end, place an arbitrary object $X \in \T$ into a distinguished triangle \begin{tikzcd}[cramped, sep=small] U \ar[r] & X \ar[r] & V \end{tikzcd} with $U \in \U$ and $V \in \V$, see \Cref{dfn: sod}.\ref{dfn: sod-2}. Applying the natural transformation $\eta$ yields a morphism of long exact sequences of vector spaces
		\[
			\begin{tikzcd}[sep={17.5mm,between origins}]
				\cdots \ar[r] & h_{\tilde X}(V) \ar[r] \ar[d, "\eta_V", "\cong"'] & h_{\tilde X}(X) \ar[r] \ar[d, "\eta_X"] & h_{\tilde X}(U) \ar[r] \ar[d, "\eta_U", "\cong"'] & \cdots \\
				\cdots \ar[r] & h(V) \ar[r] & h(X) \ar[r] & h(U) \ar[r] & \cdots,
			\end{tikzcd}
		\]
		with isomorphisms as indicated due to \ref{thm: BK-2.10-res-iso}. By the five lemma, $\eta_X$ is an isomorphism. \qedhere
	\end{enumerate}
\end{proof}

\begin{lem} \label{lem: middle-non-unique}
	Let $\E$ be an exact category with $\Proj(\E)=\E$, or, equivalently, $\Inj(\E)=\E$. Then any solid commutative diagram over $\E$ with exact rows as below can be completed by a (non-unique) dashed morphism:
	\[
		\begin{tikzcd}[sep={15mm,between origins}]
			A \ar[r] \ar[d] & B \ar[r] \ar[d] & C \ar[r] \ar[d, dashed] & D \ar[r] \ar[d] & E \ar[d] \\
			A' \ar[r] & B' \ar[r] & C' \ar[r] & D' \ar[r] & E'
		\end{tikzcd}
	\]
	\qed
\end{lem}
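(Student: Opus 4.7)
The plan is to factor each row into short exact sequences and exploit the fact that $\Proj(\E)=\E$ (equivalently $\Inj(\E)=\E$) forces every short exact sequence in $\E$ to split. Concretely, decompose the admissible morphisms $B \to C$ and $C \to D$ into their admissible epic/monic factorizations $B \twoheadrightarrow K \rightarrowtail C$ and $C \twoheadrightarrow Q \rightarrowtail D$, where $K = \operatorname{im}(B\to C) = \ker(C\to D)$ and $Q = \operatorname{im}(C\to D) = \ker(D\to E)$ by exactness of the top row; similarly obtain $K' \rightarrowtail C' \twoheadrightarrow Q'$ from the bottom row. The assumption guarantees that the short exact sequences $K \rightarrowtail C \twoheadrightarrow Q$ and $K' \rightarrowtail C' \twoheadrightarrow Q'$ split, so fix a retraction $r\colon C \to K$ and a section $s'\colon Q' \to C'$.

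Next, I would produce the two induced morphisms on the "middle" factors. For $\bar g\colon K \to K'$: the composite $A \to B \to B' \twoheadrightarrow K'$ factors via $A \to A' \to B' \twoheadrightarrow K'$, which vanishes because $A' \to B' \to C'$ vanishes by exactness and $K' \rightarrowtail C'$ is monic. Hence $B \to B' \twoheadrightarrow K'$ factors uniquely through the cokernel $B \twoheadrightarrow K$ of $A \to B$, yielding $\bar g$. Dually, $Q \rightarrowtail D \to D' \to E'$ vanishes (the composite $Q \rightarrowtail D \to E$ is zero and $E \to E'$ is applied after), so $Q \to D'$ factors uniquely through the kernel $Q' \rightarrowtail D'$ of $D' \to E'$, giving $\bar h\colon Q \to Q'$.

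Finally, define the dashed morphism as the composite
\[
C \xrightarrow{\,(r,\,C \twoheadrightarrow Q)\,} K \oplus Q \xrightarrow{\,\bar g \,\oplus\, \bar h\,} K' \oplus Q' \xrightarrow{\,(K' \rightarrowtail C',\, s')\,} C'.
\]
Commutativity of the new left square amounts to $B \to C \to C'$ agreeing with $B \to B' \to C'$: both map $B$ into $K' \rightarrowtail C'$, and after composing with the monic $K' \rightarrowtail C'$ the equality reduces to the defining property of $\bar g$. Commutativity of the new right square is dual, using the defining property of $\bar h$ and the fact that postcomposing with $Q' \rightarrowtail D'$ recovers the structural factorization of $C' \to D'$.

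The main obstacle is purely bookkeeping: carefully identifying the two factorizations in each row and checking that the splittings interact correctly with the induced maps $\bar g$ and $\bar h$. The non-uniqueness of the completion reflects precisely the non-canonical choice of the retraction $r$ and section $s'$.
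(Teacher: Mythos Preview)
Your argument is correct. The paper itself omits the proof entirely (the statement is followed immediately by \qed), so there is no approach to compare against; your splitting argument via the image/kernel factorizations $K \rightarrowtail C \twoheadrightarrow Q$ and $K' \rightarrowtail C' \twoheadrightarrow Q'$ is exactly the kind of routine verification the authors are leaving to the reader, and it goes through as written. One minor remark: your construction of $\bar g$ tacitly uses exactness at $B$ (to identify $K$ with the cokernel of the image of $A \to B$) and your construction of $\bar h$ uses exactness at $D$ (to identify $Q$ with $\ker(D \to E)$), so the hypothesis ``exact rows'' really is being used at all three interior positions $B$, $C$, $D$ --- which is consistent with the application in the paper, where the rows are pieces of long exact sequences in $\Vect$.
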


With an additional hypothesis, a modification of the proof of \Cref{thm: BK-2.10} yields a refined result:

\begin{thm} \label{thm: BK-2.10-alt-hyp}
	Let $\T$ be a triangulated category, linear over a field, with semiorthogonal decompositions $(\U, \V)$ and $(\V, \ro \V)$. A contravariant linear cohomological functor $h \colon \T \to \mathrm{Vect}$ is representable if $h\vert_\U$ and $h\vert_\V$ are so.
\end{thm}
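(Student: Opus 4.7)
The plan is to adapt the proof of Theorem~\ref{thm: BK-2.10} almost verbatim, replacing the construction of the auxiliary object $V' \in \V$ and the natural isomorphism $\theta \colon h^\V_{V'} \cong h^\V_{\tilde U}$ by the one provided by the second semiorthogonal decomposition.

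Inspecting the proof of Theorem~\ref{thm: BK-2.10}, the full strength of the hypothesis on $\V$ (that \emph{all} contravariant linear cohomological functors on $\V$ are representable) is used only in step~\ref{thm: BK-2.10-start} to obtain $V'$ and $\theta$. The objects $\tilde U \in \U$ and $\tilde V \in \V$ with isomorphisms $\eta^\U \colon h_{\tilde U}^\U \cong h|_\U$ and $\eta^\V \colon h_{\tilde V}^\V \cong h|_\V$ are provided directly by the weaker representability of $h|_\U$ and $h|_\V$ assumed here.

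For $V'$ and $\theta$, I would invoke \Cref{prp: SOD-adjoints} applied to the second semiorthogonal decomposition $(\V, \ro\V)$: the inclusion $\V \hookrightarrow \T$ admits a right adjoint $i^! \colon \T \to \V$. Setting $V' := i^!\tilde U \in \V$, the adjunction isomorphism
\[
\theta_V \colon \Hom_\T(V, V') = \Hom_\V(V, i^!\tilde U) \xrightarrow{\;\cong\;} \Hom_\T(V, \tilde U), \qquad V \in \V,
\]
is natural in $V$ and automatically linear over the base field, hence defines the required $\theta \colon h^\V_{V'} \cong h^\V_{\tilde U}$. The associated element $u := e_\theta \in \Hom_\T(V', \tilde U)$ is, by the Yoneda correspondence, identified with the counit at $\tilde U$ of the adjunction.

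With these substitutions in place, the remaining steps~\ref{thm: BK-2.10-v}--\ref{thm: BK-2.10-res-iso} of the proof of Theorem~\ref{thm: BK-2.10} carry over without modification: one defines $v$ via the square \eqref{diag: def-v}, completes the morphism $V' \to \tilde U \oplus \tilde V$ given by $u$ and $-v$ to a distinguished triangle yielding the candidate representing object $\tilde X$, verifies the restricted isomorphisms $h_{\tilde X}^\U \cong h|_\U$ and $h_{\tilde X}^\V \cong h|_\V$ from the associated homotopy cartesian square, and extends them to a natural isomorphism $h_{\tilde X} \cong h$ on all of $\T$ by the five lemma combined with the decomposition $(\U, \V)$. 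The main technical obstacle of the original argument -- producing an object that represents $h^\V_{\tilde U}$ -- is precisely what the extra recollement hypothesis now supplies for free, so no further modifications to the argument are required.
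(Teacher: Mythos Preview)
Your proposal is correct and follows essentially the same approach as the paper: both obtain $V'$ and $u$ from the second semiorthogonal decomposition $(\V,\ro\V)$, and the $u$ you produce as the adjunction counit is exactly the morphism in the paper's chosen triangle $V' \xrightarrow{u} \tilde U \to \ro V$ (cf.\ \Cref{prp: SOD-adjoints}). The only cosmetic difference is that the paper discards $\theta$ entirely and, after extending diagram~\eqref{diag: hom-cart} by the column $\tilde U \to \ro V$ and its pushout $\tilde X \to \ro V$, obtains $h^\V_q$ as an isomorphism directly from $\Hom_\T(\V,\ro\V)=0$---thereby shortcutting the five-lemma argument in step~\ref{thm: BK-2.10-prep-isos} that you retain.
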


\begin{proof} We only describe the changes to the proof of \Cref{thm: BK-2.10} to eliminate $\theta$ in step \ref{thm: BK-2.10-start}. We pick a distinguished triangle $\begin{tikzcd}[cramped, sep=small] V' \ar[r, "u"] & \tilde U \ar[r] & \ro V \end{tikzcd}$ with $V' \in \V$ and $\ro V \in \ro \V$. In step \ref{thm: BK-2.10-v}, this $u$ is used instead of $e_\theta$ to define $v$. In step \ref{thm: BK-2.10-hom-PO}, we extend the diagram \eqref{diag: hom-cart} to
	\begin{align}
		\begin{tikzcd}[sep={17.5mm,between origins}, ampersand replacement=\&]
			V'' \ar[r] \ar[d, equal] \& V' \ar[r, "v"] \ar[d, "u"] \ar[rd, phantom, "\square"] \& \tilde V \ar[d, "q"] \\
			V'' \ar[r] \& \tilde U \ar[r, "p"] \ar[d] \& \tilde X \ar[d, dashed] \\
			\& \ro V \ar[r, equal] \& \ro V
		\end{tikzcd}
	\end{align}
	with columns distinguished triangles, see {\cite[Lem.~1.4.4]{Nee01}}. In step \ref{thm: BK-2.10-prep-isos}, the argument for $h^\U_p$ then also applies to $h^\V_q$. From step \ref{thm: BK-2.10-res} onward, the proof remains unchanged.
\end{proof}

The previous construction is now applied to dualized hom-functors, see {\cite[Prop.~3.8]{BK89}}:

\begin{cor} \label{prp: BK-3.8}
	Let $\T$ be a triangulated category, linear over a field, with semiorthogonal decompositions $(\lo \U, \U)$, $(\U, \V)$, and $(\V, \ro \V)$. Given an object $X \in \T$, consider distinguished triangles $\lo U \to X \xrightarrow{t_U} U$ and $U' \to X \xrightarrow{t_V} V$ with $\lo U \in \lo \U$, $U, U' \in \U$, and $V \in \V$. Then the functor $\left(h^X_\T\right)^\ast$ is representable if the functors $\left(h^U_\U\right)^\ast$ and $\left(h^V_\V\right)^\ast$ are so.
\end{cor}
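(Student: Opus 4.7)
The plan is to apply \Cref{thm: BK-2.10-alt-hyp} to the contravariant linear cohomological functor $h := \left(h^X_\T\right)^\ast \colon \T \to \Vect$, using the semiorthogonal decompositions $(\U, \V)$ and $(\V, \ro \V)$ provided by the hypothesis. The functor $h$ is contravariant because the covariant hom-functor $\Hom_\T(X,-)$ becomes contravariant upon $K$-dualization, and it is cohomological since $(-)^\ast$ is exact on $\Vect$. It thus suffices to verify that $h\vert_\U$ and $h\vert_\V$ are representable in $\U$ and $\V$, respectively.

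For $h\vert_\U$, I would apply $\Hom_\T(-, U_\circ)$ for an arbitrary $U_\circ \in \U$ to the distinguished triangle $\lo U \to X \to U$. In the resulting long exact sequence both $\Hom_\T(\lo U, U_\circ)$ and $\Hom_\T(\lo U[1], U_\circ)$ vanish: this uses the semiorthogonality $\Hom_\T(\lo \U, \U) = 0$ of $(\lo \U, \U)$ together with the fact that $\lo \U$, as a triangulated subcategory of $\T$, is closed under shifts. Hence one obtains a natural isomorphism $\Hom_\T(X, U_\circ) \cong \Hom_\T(U, U_\circ)$ in $U_\circ$, and $K$-dualizing gives $h\vert_\U \cong \left(h^U_\U\right)^\ast$, representable in $\U$ by assumption. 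An entirely analogous argument applied to the triangle $U' \to X \to V$, invoking instead the semiorthogonality $\Hom_\T(\U, \V) = 0$, yields $h\vert_\V \cong \left(h^V_\V\right)^\ast$, again representable in $\V$ by assumption.

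With both restrictions representable, \Cref{thm: BK-2.10-alt-hyp} then produces an object $\tilde X \in \T$ representing $h = \left(h^X_\T\right)^\ast$, as desired. I do not anticipate any serious obstacle: the argument reduces to bookkeeping about variances and a careful use of semiorthogonality across shifts, since the substantive Bondal--Kapranov construction has already been carried out in \Cref{thm: BK-2.10-alt-hyp} and need only be specialized to the present dualized hom-functor.
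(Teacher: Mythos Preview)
Your proposal is correct and follows essentially the same approach as the paper: both apply \Cref{thm: BK-2.10-alt-hyp} to $h=\left(h^X_\T\right)^\ast$ and use the given triangles together with the semiorthogonality conditions to identify $h\vert_\U\cong\left(h^U_\U\right)^\ast$ and $h\vert_\V\cong\left(h^V_\V\right)^\ast$. The only difference is cosmetic: the paper additionally records the explicit representing elements $e_\U$, $e_\V$ via the diagrams \eqref{diag: eta-U-V} for later use in \Cref{thm: mMor-rep}, whereas you simply assert representability, which is all the corollary requires.
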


\begin{proof}
	We apply \Cref{thm: BK-2.10-alt-hyp} to $h := \left(h^X_\T\right)^\ast$. Note that $h^{t_U}_{\U} \colon h^U_\U \cong h^X_\U$ and $h^{t_V}_{\V}\colon h^V_\V \cong h^X_\V$ are isomorphisms of functors. By hypothesis, there are $\tilde U \in \U$, $\tilde V \in \V$, and isomorphisms of functors $\eta_U\colon h^\U_{\tilde U} \cong \left(h^U_\U\right)^\ast$ and $\eta_V\colon h^\V_{\tilde V} \cong \left(h^V_\V\right)^\ast$. By composition, we obtain isomorphisms of functors
	
	\begin{align} \label{diag: eta-U-V}
		\begin{tikzcd}[row sep={17.5mm,between origins}, column sep={25mm,between origins}, ampersand replacement=\&]
			\& \left(h^X_\U\right)^\ast \mathrlap{\; = h \vert_\U} \ar[d, "\left(h^{t_U}_{\U}\right)^\ast", "\cong"'] \\
			h^\U_{\tilde U} \ar[r, "\eta_U"', "\cong"] \ar[ru, dashed, "\eta^\U", "\cong"'] \& \left( h^U_\U \right)^\ast, \\
			\& e_\U \ar[d, <->] \\
			\id_{\tilde U} \ar[r, <->] \ar[ru, dashed, <->] \& e_U \mathrlap{\; = e_\U\left(- \circ t_U\right),}
		\end{tikzcd}
		\hspace{4cm}
		\begin{tikzcd}[row sep={17.5mm,between origins}, column sep={25mm,between origins}, ampersand replacement=\&]
			\mathllap{h \vert_\V = \; } \left( h^X_\V \right)^\ast \ar[d, "\left(h^{t_V}_\V\right)^\ast"', "\cong"] \& \\
			\left(h^V_\V\right)^\ast \& h^\V_{\tilde V}, \ar[l, "\eta_V", "\cong"'] \ar[lu, dashed, "\eta^\V"', "\cong"] \\
			e_\V \ar[d, <->] \\
			\mathllap{e_\V\left(- \circ t_V\right) = \;} e_V \ar[r, <->] \& \id_{\tilde V}. \ar[lu, dashed, <->]
		\end{tikzcd}
	\end{align}
	This shows that $h \vert_\U$ and $h\vert_\V$ are representable and the claim follows from \Cref{thm: BK-2.10-alt-hyp}.
	\qedhere
\end{proof}

We apply the preceeding construction to prove \Cref{thmA: rep} from the introduction:

\begin{thm} \label{thm: mMor-rep}
	Let $\F$ be a Frobenius category, linear over field, $l \in \NN$, and $(X, \alpha) \in \mMor_{l}(\F)$. Consider two diagrams
	\begin{align} \label{diag: start-pushout}
		\begin{tikzcd}[sep={20mm,between origins}, ampersand replacement=\&]
			X^{0,0} \ar[r, tail, "\alpha^{0,0}"] \ar[d, two heads] \ar[rd, phantom, "\square"] \& X^{0,1} \ar[r, tail, "\alpha^{0,1}"] \ar[d, two heads, "\beta^{0,1}"] \ar[rd, phantom, "\square"] \& X^{0,2} \ar[r, tail, "\alpha^{0,2}"] \ar[d, two heads, "\beta^{0,2}"] \& \cdots \ar[r, tail, "\alpha^{0,l-3}"] \ar[rd, phantom, "\square"] \& X^{0,l-2} \ar[r, tail, "\alpha^{0,l-2}"] \ar[d, two heads, "\beta^{0,l-2}"] \ar[rd, phantom, "\square"] \& X^{0,l-1} \ar[r, tail, "\alpha^{0,l-1}"] \ar[d, two heads, "\beta^{0,l-1}"] \ar[rd, phantom, "\square"] \& X^{0,l} \ar[d, two heads, "\beta^{0,l}"] \\
			0 \ar[r, tail] \& X^{1,1} \ar[r, tail, "\alpha^{1,1}"] \ar[d, two heads] \ar[rd, phantom, "\square"] \& X^{1,2} \ar[r, tail, "\alpha^{1,2}"] \ar[d, two heads, "\beta^{1,2}"] \ar[ru, phantom, "\square"] \& \cdots \ar[r, tail, "\alpha^{1,l-3}"] \ar[rd, phantom, "\square"] \& X^{1,l-2} \ar[r, tail, "\alpha^{1,l-2}"] \ar[d, two heads, "\beta^{1,l-2}"] \ar[rd, phantom, "\square"] \& X^{1,l-1} \ar[r, tail, "\alpha^{1,l-1}"] \ar[d, two heads, "\beta^{1,l-1}"] \ar[rd, phantom, "\square"] \& X^{1,l} \ar[d, two heads, "\beta^{1,l}"] \\
			\& 0 \ar[r, tail] \& X^{2,2} \ar[r, tail, "\alpha^{2,2}"] \ar[d, two heads] \ar[ru, phantom, "\square"] \& \cdots \ar[r, tail, "\alpha^{2,l-3}"] \& X^{2,l-2} \ar[r, tail, "\alpha^{2,l-2}"] \ar[d, two heads, "\beta^{2,l-2}"] \& X^{2,l-1} \ar[r, tail, "\alpha^{2,l-1}"] \ar[d, two heads, "\beta^{2,l-1}"] \& X^{2,l} \ar[d, two heads, "\beta^{2,l}"] \\
			\&\& 0 \ar[r, tail] \ar[ru, phantom, "\square"] \& \cdots \ar[ru, phantom, "\square"] \& \ar[d, two heads] \ar[rd, phantom, "\square"] \ar[ru, phantom, "\square"] \myvdots \& \myvdots \ar[d, two heads, "\beta^{l-2, l-1}" near start] \ar[rd, phantom, "\square"] \ar[ru, phantom, "\square"] \& \myvdots \ar[d, two heads, "\beta^{l-2, l}"] \\
			\&\&\&\& 0 \ar[r, tail] \& X^{l-1,l-1} \ar[r, tail, "\alpha^{l-1,l-1}"] \ar[d, two heads] \ar[rd, phantom, "\square"] \& X^{l-1,l} \ar[d, two heads, "\beta^{l-1,l}"] \\
			\&\&\&\&\& 0 \ar[r, tail] \& X^{l,l},
		\end{tikzcd}
	\end{align}
	\begin{align} \label{diag: Serre-pushout}
		\begin{tikzcd}[sep={20mm,between origins}, ampersand replacement=\&]
			\tilde X^{0,0} \ar[r, "\tilde \beta^{0,0}"] \ar[d, tail] \ar[rd, phantom, "\square"] \& \tilde X^{0,1} \ar[r, "\tilde \beta^{0,1}"] \ar[d, tail, "\tilde \alpha^{0,1}"] \ar[rd, phantom, "\square"] \& \tilde X^{0,2} \ar[r, "\tilde \beta^{0,2}"] \ar[d, tail, "\tilde \alpha^{0,2}"] \& \cdots \ar[r, "\tilde \beta^{0,l-3}"] \ar[rd, phantom, "\square"] \& \tilde X^{0,l-2} \ar[r, "\tilde \beta^{0,l-2}"] \ar[d, tail, "\tilde \alpha^{0,l-2}"] \ar[rd, phantom, "\square"] \& \tilde X^{0,l-1} \ar[r, "\tilde \beta^{0,l-1}"] \ar[d, tail, "\tilde \alpha^{0,l-1}"] \ar[rd, phantom, "\square"] \& \tilde X^{0,l} \ar[d, tail, "\tilde \alpha^{0,l}"] \\
			I^0 \ar[r] \& \tilde X^{1,1} \ar[r, "\tilde \beta^{1,1}"] \ar[d, tail] \ar[rd, phantom, "\square"] \& \tilde X^{1,2} \ar[r, "\tilde \beta^{1,2}"] \ar[d, tail, "\tilde \alpha^{1,2}"] \ar[ru, phantom, "\square"] \& \cdots \ar[r, "\tilde \beta^{1,l-3}"] \ar[rd, phantom, "\square"] \& \tilde X^{1,l-2} \ar[r, "\tilde \beta^{1,l-2}"] \ar[d, tail, "\tilde \alpha^{1,l-2}"] \ar[rd, phantom, "\square"] \& \tilde X^{1,l-1} \ar[r, "\tilde \beta^{1,l-1}"] \ar[d, tail, "\tilde \alpha^{1,l-1}"] \ar[rd, phantom, "\square"] \& \tilde X^{1,l} \ar[d, tail, "\tilde \alpha^{1,l}"] \\
			\& I^1 \ar[r] \& \tilde X^{2,2} \ar[r, "\tilde \beta^{2,2}"] \ar[d, tail] \ar[ru, phantom, "\square"] \& \cdots \ar[r, "\tilde \beta^{2,l-3}"] \& \tilde X^{2,l-2} \ar[r, "\tilde \beta^{2,l-2}"] \ar[d, tail, "\tilde \alpha^{2,l-2}"] \& \tilde X^{2,l-1} \ar[r, "\tilde \beta^{2,l-1}"] \ar[d, tail, "\tilde \alpha^{2,l-1}"] \& \tilde X^{2,l} \ar[d, tail, "\tilde \alpha^{2,l}"] \\
			\&\& I^2 \ar[r] \ar[ru, phantom, "\square"] \& \cdots \& \myvdots \ar[d, tail] \ar[rd, phantom, "\square"] \ar[ru, phantom, "\square"] \& \myvdots \ar[d, tail, "\tilde \alpha^{l-2, l-1}" near start] \ar[rd, phantom, "\square"] \ar[ru, phantom, "\square"] \& \myvdots \ar[d, tail, "\tilde \alpha^{l-2, l}"] \\
			\&\&\&\& I^{l-2} \ar[r] \& \tilde X^{l-1,l-1} \ar[r, "\tilde \beta^{l-1,l-1}"] \ar[d, tail] \ar[rd, phantom, "\square"] \& \tilde X^{l-1,l} \ar[d, tail, "\tilde \alpha^{l-1,l}"] \\
			\&\&\&\&\& I^{l-1} \ar[r] \& \tilde X^{l,l}
		\end{tikzcd}
	\end{align}
	of bicartesian squares in $\F$, where $(X^{0, \bullet}, \alpha^{0, \bullet}) = (X, \alpha)$ and $I^k \in \Inj(\F)$ for $k \in \{0, \dots, l-1\}$. Set $(\tilde X, \tilde \alpha) := (\tilde X^{\bullet, l}, \tilde \alpha^{\bullet, l})$. Suppose that there are isomorphisms of functors \[\eta^{i,j} := \eta_{X^{i,j}}\colon h^{\M_0}_{\tilde X^{i,j}} \cong \left(h^{X^{i,j}}_{\M_0}\right)^\ast,\]
	where $0 \leq i \leq j \leq l$, such that $e_{X^{i,j}}\left(- \circ \ol{\beta^{i,j}}\right) = e_{X^{i+1,j}}\left(\ol{\tilde \alpha^{i,j}} \circ -\right)$ on $\Hom_{\M_0}(X^{i+1, j}, \tilde X^{i,j})$ for all $0\leq i < j \leq l$, see \Cref{ntn: trans}.\ref{ntn: trans-u_X}. Then there exists an isomorphism of functors
	\[\eta_X\colon h^{\M_l}_{\tilde X} \cong \left(h^X_{\M_l}\right)^\ast\]
	such that $e_X\left(\ol{(0, \dots, 0, \psi \, \beta^{l-1,l} \cdots \beta^{0, l})}\right) = e_{X^{l,l}}\left(\ol \psi\right)$ for all $\psi \in \Hom_\F(X^{l,l}, \tilde X^l)$.
\end{thm}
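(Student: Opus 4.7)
The plan is to prove the statement by induction on $l$, at each step invoking \Cref{prp: BK-3.8} to glue representations across a suitable semiorthogonal decomposition of $\M_l$. The base case $l = 0$ is immediate: the diagrams \eqref{diag: start-pushout} and \eqref{diag: Serre-pushout} reduce to the single entries $X^{0, 0}, \tilde X^{0, 0}$, the composite $\beta^{l-1, l} \cdots \beta^{0, l}$ is empty, and setting $\eta_X := \eta^{0, 0}$ works. For $l \geq 1$, introduce the shorter chain $U := (X^{1, 1} \rightarrowtail X^{1, 2} \rightarrowtail \cdots \rightarrowtail X^{1, l}) \in \mMor_{l-1}(\F)$. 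Dropping the first row and the first column of \eqref{diag: start-pushout} and \eqref{diag: Serre-pushout} (reindexing $(i, j) \mapsto (i-1, j-1)$ and choosing $I_U^{k} := I^{k+1}$) yields bicartesian diagrams of the required form for $U$, and the Serre-duality compatibilities at indices $(i, j)$ with $i \geq 1$ are precisely those needed to invoke the inductive hypothesis. This produces an isomorphism $\eta_U \colon h^{\M_{l-1}}_{\tilde U_0} \cong (h^U_{\M_{l-1}})^\ast$, where $\tilde U_0 := (\tilde X^{1, l}, \ldots, \tilde X^{l, l})$, satisfying
\[ e_U(\ol{(0, \ldots, 0, \psi\, \beta^{l-1, l} \cdots \beta^{1, l})}) = e_{X^{l, l}}(\ol\psi). \]

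Apply \Cref{prp: BK-3.8} to $X \in \M_l$ with the three consecutive semiorthogonal decompositions $(\Delta^{[0, l]}, \Gamma^{[1, l]})$, $(\Gamma^{[1, l]}, \Gamma^{[0, 0]})$, $(\Gamma^{[0, 0]}, \Delta^{[0, 1]})$ furnished by \Cref{thmA: SODs}. The distinguished triangle for the first SOD yields an object of $\Gamma^{[1, l]}$ corresponding to $U$ under the equivalence $\stab\gamma^{[0, 0]}\colon \Gamma^{[1, l]} \xrightarrow{\simeq} \M_{l-1}$ of \Cref{con: delta-compl}; the triangle $U'' \to X \to V$ from the second SOD (see \Cref{prp: SOD-Gamma-Gamma}.\ref{prp: SOD-Gamma-Gamma-diag} with $s = 0$) has $V \in \Gamma^{[0, 0]}$ corresponding to $X^{0, 0}$ under $\stab\gamma^{[1, l]}\colon \Gamma^{[0, 0]} \xrightarrow{\simeq} \M_0$. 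The inductive step and the hypothesis at $(0, 0)$ supply representing objects $\tilde U := \stab\delta^{[1, l]^c}(\tilde U_0) = (0, \tilde X^{1, l}, \ldots, \tilde X^{l, l}) \in \Gamma^{[1, l]}$ and $\tilde V := \stab\delta^{[0, 0]^c}(\tilde X^{0, 0}) \simeq (\tilde X^{0, 0}, I^0, \ldots, I^0) \in \Gamma^{[0, 0]}$ (using the monic $\tilde X^{0, 0} \rightarrowtail I^0$ from \eqref{diag: Serre-pushout}). Thus \Cref{prp: BK-3.8} yields a representing object $\tilde X^{\mathrm{BK}} \in \M_l$ for $(h^X_{\M_l})^\ast$.

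The final step is to identify $\tilde X^{\mathrm{BK}}$ with $\tilde X := (\tilde X^{0, l}, \ldots, \tilde X^{l, l})$ and to derive the evaluation formula. Tracing through the proof of \Cref{thm: BK-2.10-alt-hyp}, $\tilde X^{\mathrm{BK}}$ is the cocone of $(u, -v)\colon V' \to \tilde U \oplus \tilde V$ for an object $V' \in \Gamma^{[0, 0]}$ and morphisms $u, v$ pinned down by the SOD $(\Gamma^{[0, 0]}, \Delta^{[0, 1]})$ together with the defining equation \eqref{diag: def-v}. The cascade of bicartesian squares in the top row of \eqref{diag: Serre-pushout}, combined with the explicit form of the left mutation $L_{\Delta^{[s, t+1]}}$ in \Cref{con: mutations}.\ref{con: mutations-1}, realizes this cocone as $(\tilde X^{0, l}, \tilde X^{1, l}, \ldots, \tilde X^{l, l})$ with structural monics $\tilde\alpha^{i, l}$: the new position-$0$ entry $\tilde X^{0, l}$ is obtained by repeated pushout of $\tilde X^{0, 0} \rightarrowtail I^0$ along the top row $\tilde X^{0, 0} \to \cdots \to \tilde X^{0, l}$. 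The evaluation formula then follows from the long exact sequence \eqref{diag: total-long-ex}: the morphism $(0, \ldots, 0, \psi\, \beta^{l-1, l} \cdots \beta^{0, l})$ factors through $p\colon \tilde U \to \tilde X^{\mathrm{BK}}$ via a morphism whose image under $\stab\gamma^{[0, 0]}$ in $\M_{l-1}$ is $(0, \ldots, 0, \psi\, \beta^{l-1, l} \cdots \beta^{1, l})\colon U \to \tilde U_0$, to which the inductive formula applies.

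The main obstacle is precisely this identification: verifying that the Serre-duality-type compatibilities $e_{X^{i, j}}(- \circ \ol{\beta^{i, j}}) = e_{X^{i+1, j}}(\ol{\tilde\alpha^{i, j}} \circ -)$ along the top row of \eqref{diag: Serre-pushout} translate exactly into the defining equation \eqref{diag: def-v} for $v\colon V' \to \tilde V$ in the BK construction. This is the principal place where the bicartesian-square structure of \eqref{diag: Serre-pushout} meets the BK gluing, requiring careful bookkeeping through the equivalences $\stab\delta^{[s, t]^c}$ and the mutations of \Cref{con: mutations}.
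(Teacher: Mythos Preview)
Your overall strategy—induction on $l$ via \Cref{prp: BK-3.8} across a semiorthogonal decomposition of $\M_l$—is the paper's strategy, but you have chosen the \emph{mirror} decomposition. The paper takes $\U=\Delta^{[l-1,l]}$, $\V=\Gamma^l$ (peeling the \emph{right} end), so that under the equivalences $\U\simeq\M_{l-1}$ and $\V\simeq\M_0$ the pieces are $\gamma^{l-1}(X)$ and $X^{l,l}$; you take $\U=\Gamma^{[1,l]}$, $\V=\Gamma^0$ (peeling position $0$), giving the second row $U=(X^{1,1},\dots,X^{1,l})$ and $X^{0,0}$.

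This is not a harmless symmetry, and it is exactly where your acknowledged ``main obstacle'' becomes a genuine gap. In the Bondal--Kapranov step one must verify that the abstractly defined $v\colon V'\to\tilde V$ from \eqref{diag: def-v} equals a specific morphism, and this means evaluating $e_\U$ on $u\circ f$ for \emph{all} $f\in\Hom_{\M_l}(X,V')$. In the paper's setup $V,V'\in\Gamma^l$, so a general $f$ is $(0,\dots,0,\psi)$ for $\psi\colon X^{l,l}\to\tilde X^{l-1}$, and the composite $u\circ f\circ t_V$ is literally $(0,\dots,0,\psi\,\beta^{l-1,l}\cdots\beta^{0,l})$—precisely the shape on which the inductive evaluation formula speaks. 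One application of the single compatibility at $(l-1,l)$ then identifies $v$ with $\stab\delta^{l^c}\ol{\tilde\alpha^{l-1}}$, and the explicit short exact sequence $V'\rightarrowtail\tilde U\oplus\tilde V\twoheadrightarrow\tilde X$ in $\mMor_l(\F)$ finishes the job.

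In your setup $V,V'\in\Gamma^0$, so a general $f$ corresponds to $\phi\colon X^0\to\Sigma^{-1}\tilde X^{1,l}$. When you push $u\circ f$ through $t_U$ and $\stab\gamma^0$, the resulting morphism $U\to\tilde U_0$ is generically nonzero in \emph{every} component (it factors through the constant object $(\tilde X^{1,l},\dots,\tilde X^{1,l})$), not of the form $(0,\dots,0,*)$ that your inductive formula can evaluate. Correspondingly, the candidate $v^0$ you need is the connecting map $\Sigma^{-1}\tilde X^{1,l}\to\tilde X^{0,0}$ of the triangle built from the \emph{entire} top row of \eqref{diag: Serre-pushout}, not a single structure map; pinning it down would require weaving together all $l$ compatibilities at $i=0$ rather than one. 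This is doable in principle but is substantially more than ``careful bookkeeping,'' and would likely force a stronger inductive hypothesis than the single evaluation formula you carry. The paper's choice of SOD is made precisely so that the test morphisms in the BK equation already match the inductive formula, reducing the identification to a two-line computation.
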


\begin{proof}
	The claim is trivial for $l=0$. For $l > 0$, we proceed as in \Cref{thm: BK-2.10-alt-hyp} and \Cref{prp: BK-3.8} with the notation used there. To this end, we set
	\begin{align} \label{eqn: input}
		h := \left(h^X_{\M_l}\right)^\ast, \; \lo \U := \Gamma^{l-1}, \; \U := \Delta^{[l-1, l]}, \; \V := \Gamma^l, \; \textup{and} \; \ro \V := \Gamma^{[0, l-1]},
	\end{align}
	see \Cref{prp: SOD-Gamma-Gamma,prp: SODs}. Contracting the $(l-1)$st column and the $l$th row of both \eqref{diag: start-pushout} and \eqref{diag: Serre-pushout} establishes the given setup for $\gamma^{l-1}(X) \in \M_{l-1}$ and $\widetilde{\gamma^{l-1}(X)}=\gamma^l(\tilde X)$, see \Cref{lem: rep-comp}. By induction, we obtain an isomorphism of functors $\eta_{\gamma^{l-1}(X)}\colon h^{\M_{l-1}}_{\gamma^{l}(\tilde X)} \xrightarrow{\cong} \left(h^{\gamma^{l-1}(X)}_{\M_{l-1}}\right)^\ast$ such that
	\begin{align} \label{diag: eta-induction}
		e_{\gamma^{l-1}(X)}\left(\ol{(0, \dots, 0, \psi \, \beta^{l-2,l} \cdots \beta^{0, l})}\right) = e_{X^{l-1,l}}\left(\ol \psi\right)
	\end{align}
	for all $\psi \in \Hom_\F(X^{l-1,l}, \tilde X^{l-1})$. To process \Cref{prp: BK-3.8}, consider the distinguished triangles
	
	\begin{align} \label{diag: 1st-triangle}
		\begin{tikzcd}[sep={17.5mm,between origins}, ampersand replacement=\&]
			\lo U \ar[d] \& 0 \ar[r, equal] \ar[d] \& \cdots \ar[r, equal] \& 0 \ar[r, tail] \ar[d] \& Y^{l-1} \ar[r, tail] \ar[rd, phantom, "\square"] \ar[d, two heads] \& P(X^l) \ar[d, two heads, "p_{X^l}"] \\
			X \ar[d, "t_U"] \& X^0 \ar[r, tail, "\alpha^{0}"] \ar[d, equal] \& \cdots \ar[r, tail, "\alpha^{l-3}"] \& X^{l-2} \ar[r, tail, "\alpha^{l-2}"] \ar[d, equal] \& X^{l-1} \ar[r, tail, "\alpha^{l-1}"] \ar[d, tail, "\alpha^{l-1}"] \& X^l \ar[d, equal, "\phantom{\beta^{l-1,l} \cdots \beta^{0,l}}"] \\
			U \& X^0 \ar[r, tail, "\alpha^{0}"] \& \cdots \ar[r, tail, "\alpha^{l-3}"] \& X^{l-2} \ar[r, tail, "\alpha^{l-1}\alpha^{l-2}"] \& X^l \ar[r, equal] \& X^l,
		\end{tikzcd}
	\end{align}
	\begin{align} \label{diag: 2nd-triangle}
		\begin{tikzcd}[sep={17.5mm,between origins}, ampersand replacement=\&]
			U' \ar[d] \& X^0 \ar[r, tail, "\alpha^0"] \ar[d, equal] \& \cdots \ar[r, tail, "\alpha^{l-3}"] \& X^{l-2} \ar[r, tail, "\alpha^{l-2}"] \ar[d, equal] \& X^{l-1} \ar[r, equal] \ar[d, equal] \& X^{l-1} \ar[d, tail, "\alpha^{l-1}"] \\
			X \ar[d, "t_V"] \& X^0 \ar[r, tail, "\alpha^{0}"] \ar[d] \& \cdots \ar[r, tail, "\alpha^{l-3}"] \& X^{l-2} \ar[r, tail, "\alpha^{l-2}"] \ar[d] \& X^{l-1} \ar[r, tail, "\alpha^{l-1}"] \ar[d, two heads] \ar[rd, phantom, "\square"] \& X^l \ar[d, two heads, "\beta^{l-1,l} \cdots \beta^{0,l}"] \\
			V \& 0 \ar[r, equal] \& \cdots \ar[r, equal] \& 0 \ar[r, equal] \& 0 \ar[r, tail] \& X^{l,l}
		\end{tikzcd}
	\end{align}
	in $\M_l$, see Propositions \ref{prp: SODs}.\ref{prp: SODs-Gamma-Delta}.\ref{prp: SODs-Gamma-Delta-diag} and \ref{prp: SODs}.\ref{prp: SODs-Delta-Gamma}.\ref{prp: SODs-Delta-Gamma-diag}.\\
	To represent $\left(h^U_\U\right)^\ast$ and hence $h \vert_\U$, we use the equivalence $\U \xrightarrow{\simeq} \M_{l-1}$, given by the restriction of $\stab \gamma^{l-1}$, see \Cref{lem: Delta-mMor}, and \eqref{diag: eta-U-V}: Set
	\[
		\begin{tikzcd}[row sep={17.5mm,between origins}]
			\tilde U \colon \; \tilde X^0 \ar[r, tail, "\tilde \alpha^0"] & \tilde X^1 \ar[r, tail, "\tilde \alpha^1"] & \dots \ar[r, tail, "\tilde \alpha^{l-2}"] & \tilde X^{l-1} \ar[r, equal] & \tilde X^{l-1}.
		\end{tikzcd}
	\]
	Since $\gamma^{l-1}(U) = \gamma^{l-1}(X)$ and $\gamma^{l-1}(\tilde U)=\gamma^l(\tilde X)$, we obtain a commutative diagram
	\begin{equation} \label{diag: left-wing}
		\begin{tikzcd}[row sep={17.5mm,between origins}, column sep=large, ampersand replacement=\&]
			\& h\vert_\U \ar[d, "\left(h^{t_U}_\U\right)^\ast", "\cong"']\\
			h^\U_{\tilde U} \ar[r, dashed, "\cong", "\eta_U"'] \ar[d, "\stab\gamma^{l-1}"', "\cong"] \ar[ru, dashed, "\eta^\U", "\cong"'] \& \left(h^U_\U\right)^\ast \\
			h^{\stab \gamma^{l-1}(\U)}_{\gamma^{l-1}(\tilde U)} \ar[d, equal] \ar[r, dashed, "\cong"'] \& \left(h^{\gamma^{l-1}(X)}_{\stab \gamma^{l-1}(\U)}\right)^\ast \ar[u, "\left(\stab\gamma^{l-1}\right)^\ast"', "\cong"] \ar[d, equal]\\
			 h^{\M_{l-1}}_{\gamma^l(\tilde X)} \circ \stab \gamma^{l-1} \ar[r, "\eta_{\gamma^{l-1}(X)} \, \circ \, \stab \gamma^{l-1}", "\cong"'] \& \left(h^{\gamma^{l-1}(X)}_{\M_{l-1}} \right)^\ast \circ \stab \gamma^{l-1},
		\end{tikzcd}\\
		\begin{tikzcd}[row sep={17.5mm,between origins}, column sep=large, ampersand replacement=\&]
			\& e_\U \ar[ld,<->] \ar[d, <->] \\
			\id_{\tilde U} \ar[dd, <->] \ar[r, <->] \& e_\U(- \circ t_U) = e_{\gamma^{l-1}(X)} \circ \stab \gamma^{l-1} \ar[dd, <->] \\\\
			\id_{\gamma^l(\tilde X)} \ar[r, <->] \& e_{\gamma^{l-1}(X)},
		\end{tikzcd}
	\end{equation}
	of isomorphisms of functors. To represent $\left(h^V_\V\right)^\ast$ and hence $h \vert_\V$, we use the equivalence $\V \xrightarrow{\simeq} \M_0$, given by the restriction of $\stab \gamma^{l^c}$, see \Cref{con: delta-compl}, and \eqref{diag: eta-U-V}: Set
	\[
		\begin{tikzcd}[row sep={17.5mm,between origins}]
			\tilde V \colon \; 0 \ar[r, equal] & 0 \ar[r, equal] & \dots \ar[r, equal] & 0 \ar[r, tail] & \tilde X^{l}.
		\end{tikzcd}
	\]
	Since $\gamma^{l^c}(V)=X^{l,l}$ and $\gamma^{l^c}(\tilde V)=\tilde X^l$, we obtain a commutative diagram
	\begin{align} \label{diag: right-wing}
		\begin{tikzcd}[row sep={17.5mm,between origins}, column sep=large, ampersand replacement=\&]
			h\vert_\V \ar[d, "\left(h^{t_V}_\U\right)^\ast"', "\cong"] \\
			\left(h^V_\V\right)^\ast \& h^\V_{\tilde V} \ar[lu, dashed, "\eta^\V"', "\cong"] \ar[l, dashed, "\cong"', "\eta_V"] \ar[d, "\stab \gamma^{l^c}", "\cong"'] \\
			\left(h^{\gamma^{l^c}(V)}_{\stab \gamma^{l^c}(\V)} \right)^\ast \ar[d, equal] \ar[u, "\left(\stab \gamma^{l^c}\right)^\ast", "\cong"'] \&  h^{\stab\gamma^{l^c}(\V)}_{\gamma^{l^c}(\tilde V)} \ar[d, equal] \ar[l, dashed, "\cong"] \\
			\left(h^{X^{l,l}}_{\M_0}\right)^\ast \circ \stab \gamma^{l^c} \& h^{\M_0}_{\tilde X^l} \circ \stab \gamma^{l^c}, \ar[l, "\eta^{l,l} \, \circ \, \stab \gamma^{l^c}"', "\cong"]
		\end{tikzcd}
		\begin{tikzcd}[row sep={17.5mm,between origins}, column sep=large, ampersand replacement=\&]
			e_\V \ar[rd, <->] \ar[d, <->] \\
			e_{X^{l,l}} \circ \stab \gamma^{l^c} = e_\V(- \circ t_V) \ar[r, <->] \ar[dd, <->] \& \id_{\tilde V} \ar[dd, <->] \\\\
			e_{X^{l,l}} \ar[r, <->] \& \id_{\tilde X^l},
		\end{tikzcd}
	\end{align}
	of isomorphisms of functors. As in the proof of \Cref{thm: BK-2.10-alt-hyp}, we consider the distinguished triangle
	\begin{align} \label{diag: 3rd-triangle}
		\begin{tikzcd}[sep={17.5mm,between origins}, ampersand replacement=\&]
			V' \ar[d, "u"] \& 0 \ar[r, equal] \ar[d] \& \cdots \ar[r, equal] \& 0 \ar[r, equal] \ar[d] \& 0 \ar[r, tail] \ar[d] \& \tilde X^{l-1} \ar[d, equal] \\
			\tilde U \ar[d] \& \tilde X^0 \ar[r, tail, "\tilde \alpha^{0}"] \ar[d, equal] \& \cdots \ar[r, tail, "\tilde \alpha^{l-3}"] \& \tilde X^{l-2} \ar[r, tail, "\tilde \alpha^{l-2}"] \ar[d, equal] \& \tilde X^{l-1} \ar[r, equal] \ar[d, equal] \& \tilde X^{l-1} \ar[d, tail, "i_{\tilde X^{l-1}}"] \\
			\ro V \& \tilde X^0 \ar[r, tail, "\tilde \alpha^{0}"] \& \cdots \ar[r, tail, "\tilde \alpha^{l-3}"] \& \tilde X^{l-2} \ar[r, tail, "\tilde \alpha^{l-2}"] \& \tilde X^{l-1} \ar[r, tail] \& I(\tilde X^{l-1}),
		\end{tikzcd}
	\end{align}
	in $\M_l$, see \Cref{prp: SOD-Gamma-Gamma}.\ref{prp: SOD-Gamma-Gamma-diag}.\\
	Finally, we show that $\tilde X$ results from the construction in \Cref{thm: BK-2.10}: To this end, we compute $v \in \Hom_{\M_l}(V', \tilde V)$ by combining \eqref{diag: left-wing} and \eqref{diag: right-wing}, see \eqref{diag: def-v}:
	\begin{equation*}
		\begin{tikzcd}[row sep={17.5mm,between origins}, column sep={42.5mm,between origins}, ampersand replacement = \&]
					\& h(\tilde U) \ar[r, "h(u)"] \ar[d, "\left(h^{t_U}_{\tilde U}\right)^\ast", "\cong"'] \& h(V') \ar[d, "\left(h^{t_V}_{V'}\right)^\ast"', "\cong"] \& \\
					h^{\tilde U}_{\tilde U} \ar[r, "\cong"] \ar[ru, "\eta^\U_{\tilde U}", "\cong"'] \ar[d, "\stab \gamma^{l-1}"', "\cong"] \& \left(h^U_{\tilde U}\right)^\ast \ar[r, dashed] \& \left(h^V_{V'}\right)^\ast \& h^{V'}_{\tilde V} \ar[l, "\cong"'] \ar[lu, "\cong", "\eta^\V_{V'}"'] \ar[d, "\stab \gamma^{l^c}", "\cong"'] \\
					h^{\gamma^l(\tilde X)}_{\gamma^l(\tilde X)} \ar[r, "\left(\eta_{\gamma^{l-1}(X)} \, \circ \, \stab \gamma^{l-1}\right)_{\gamma^l(\tilde X)}", "\cong"'] \& \left(h^{\gamma^{l-1}(X)}_{\gamma^l(\tilde X)}\right)^\ast \ar[u, "\left(\stab \gamma^{l-1}\right)^\ast"', "\cong"] \ar[r, dashed] \& \left(h^{X^{l,l}}_{\tilde X^{l-1}}\right)^\ast \ar[u, "\left(\stab \gamma^{l^c}\right)^\ast", "\cong"'] \& h^{\tilde X^{l-1}}_{\tilde X^l} \ar[l, "\left(\eta^{l,l} \, \circ \, \stab \gamma^{l^c}\right)_{\tilde X^{l-1}}"', "\cong"]
				\end{tikzcd}
	\end{equation*}
	\begin{equation*}
				\begin{tikzcd}[row sep={17.5mm,between origins}, column sep={42.5mm,between origins}, ampersand replacement = \&]
					\& e_{\gamma^{l-1}(X)} \circ \stab \gamma^{l-1} \ar[r, |->] \ar[d, "(1)", <->] \& \left(e_{\gamma^{l-1}(X)} \circ \stab \gamma^{l-1}\right)(u \circ -) \ar[d, <->] \& \\
					\id_{\tilde U} \ar[r, dashed, <->] \ar[ru, <->, dashed] \ar[d, <->] \& e_{\gamma^{l-1}(X)} \circ \stab \gamma^{l-1} \ar[r, dashed, |->] \& \left(e_{\gamma^{l-1}(X)} \circ \stab \gamma^{l-1}\right)(u \circ - \circ t_V) \& \stab \delta^{l^c} \ol{\tilde \alpha^{l-1}} = v \ar[l, <->, dashed] \ar[lu, <->, dashed] \ar[d, <->] \\
					\id_{\gamma^l(\tilde X)} \ar[r, <->] \& e_{\gamma^{l-1}(X)} \ar[r, |->, dashed] \ar[u, <->] \& e_{X^{l,l}}\left(\ol{\tilde \alpha^{l-1}} \circ -\right) \ar[u, "(2)", <->] \& \ol{\tilde \alpha^{l-1}} \ar[l, <->]
				\end{tikzcd}
		\end{equation*}
		
	The correspondence (1) holds since $\stab \gamma^{l-1}(t_U) = \id_{\gamma^{l-1}(X)}$, see \eqref{diag: 1st-triangle}. To see (2), note that a general element of $h^V_{V'}$ is of the form $\ol{(0, \dots, 0, \psi)}$, where $\psi \in \Hom_\F(X^{l,l}, \tilde X^{l-1})$. Using the hypothesis $e_{X^{l-1,l}}\left(- \circ \ol{\beta^{l-1,l}}\right) = e_{X^{l,l}}\left(\ol{\tilde \alpha^{l-1,l}} \circ -\right)$, we compute
	\begin{align*}
		e_{X^{l,l}} \left( \ol{\tilde \alpha^{l-1}} \circ \stab \gamma^{l^c} \left( \ol{(0, \dots, 0, \psi)} \right) \right) &= e_{X^{l,l}} \left( \ol{\tilde \alpha^{l-1}} \ol{\psi}\right) = e_{X^{l-1, l}}\left( \ol{\psi} \ol{\beta^{l-1,l}} \right)\\
		&\overset{\textup{\eqref{diag: eta-induction}}}= e_{\gamma^{l-1}(X)} \left(\ol{(0, \dots, 0, \psi \beta^{l-1,l} \beta^{l-2,l} \cdots \beta^{0,l})}\right) \\
		&\underset{\textup{\eqref{diag: 3rd-triangle}}}{\overset{\textup{\eqref{diag: 2nd-triangle}}}=} \left(e_{\gamma^{l-1}(X)}\circ \stab \gamma^{l-1} \right) \left(u \circ \ol{(0, \dots, 0, \psi)} \circ t_V \right).
	\end{align*}
	All other correspondences are obvious or due to the diagram's commutativity.\\
	We put $u$, $v$, and $\tilde X$ in a commutative square
	\[
		\begin{tikzcd}[sep={12.5mm,between origins}]
			& \tilde V \ar[dd, "q"] && 0 \ar[rr, equal] \ar[dd] && \cdots \ar[rr, equal] && 0 \ar[rr, tail] \ar[dd] && \tilde X^l \ar[dd, equal] \\
			V' \ar[dd, "u"] \ar[ru, tail, "v"] && 0 \ar[rr, equal, crossing over] \ar[ru, equal] && \cdots \ar[rr, equal] && 0 \ar[rr, tail, crossing over] \ar[ru, equal] && \tilde X^{l-1} \ar[ru, tail, "\tilde \alpha^{l-1}"] \\
			& \tilde X && \tilde X^0 \ar[rr, tail, "\tilde \alpha^0"] && \cdots \ar[rr, tail, "\tilde \alpha^{l-2}" near start] && \tilde X^{l-1} \ar[rr, tail, "\tilde \alpha^{l-1}" near start] && \tilde X^l \\
			\tilde U \ar[ru, tail, "p"'] && \tilde X^0 \ar[rr, tail, "\tilde \alpha^0"] \ar[ru, equal] \ar[uu, <-, crossing over] && \cdots \ar[rr, tail, "\tilde \alpha^{l-2}"] && \tilde X^{l-1} \ar[rr, equal] \ar[ru, equal] \ar[uu, <-, crossing over] && \tilde X^{l-1} \ar[ru, tail, "\tilde \alpha^{l-1}"'] \ar[uu, equal, crossing over]
		\end{tikzcd}
	\]
	in $\mMor_l(\F)$. It yields a (termwise) short exact sequence \begin{tikzcd}[cramped, ampersand replacement=\&]
		V' \ar[>->]{r}{\begin{pmatrix} u \\ -v \end{pmatrix}} \& \tilde U \oplus \tilde V \ar[->>]{r}{\begin{pmatrix} p & q \end{pmatrix}} \& \tilde X
	\end{tikzcd} in $\mMor_l(\F)$, where $p = (\id_{\tilde X^0}, \dots, \id_{\tilde X^{l-1}}, \tilde \alpha^{l-1})$ and $q = (0, \dots, 0, \, \id_{\tilde X^l})$. The corresponding distinguished triangle, see \Cref{lem: ses-triangle}, matches \eqref{diag: total-triangle}. Therefore, \Cref{thm: BK-2.10} yields an isomorphism of functors $\eta_X := \eta \colon h_{\tilde X}^{\M_l} \cong h$ defined by $e_X := e \in h(\tilde X)$, see step \ref{thm: BK-2.10-nat-trans} of the proof. \\
	It remains to verify that $e_X\left(\ol{(0, \dots, 0, \psi \, \beta^{l-1,l} \cdots \beta^{0, l})}\right) = e_{X^{l,l}}\left(\ol \psi\right)$ for all $\psi \in \Hom_\F(X^{l,l}, \tilde X^l)$:
	\begin{align*}
		e_X\left(\ol{(0, \dots, 0, \psi \, \beta^{l-1,l} \cdots \beta^{0, l})}\right) & = e\left(q \circ \ol{(0, \dots, 0, \psi \, \beta^{l-1,l} \cdots \beta^{0, l})}\right)\\
		&\overset{\eqref{eqn: input}}= h(q)(e)\left(\ol{(0, \dots, 0, \psi \, \beta^{l-1,l} \cdots \beta^{0, l})}\right) \\
		&\overset{\eqref{diag: total-long-ex}}= e_{\V}\left(\ol{(0, \dots, 0, \psi \, \beta^{l-1,l} \cdots \beta^{0, l})}\right) \overset{\eqref{diag: 2nd-triangle}}= e_{\V}\left(\ol{(0, \dots, 0, \psi)} \circ t_V \right)\\
		& \overset{\eqref{diag: right-wing}}= \left( e_{X^{l,l}} \circ \stab \gamma^{l^c} \right)\left(\ol{(0, \dots, 0, \psi)}\right) = e_{X^{l,l}}\left(\ol \psi\right) \qedhere
	\end{align*}
\end{proof}

It remains to construct a diagram of the form \eqref{diag: Serre-pushout}, suitable for \Cref{thm: mMor-rep}:

\begin{prp} \label{lem: mMor-hyp}
	Let $\F$ be a Frobenius category, linear over a field, such that $\stab \F$ is hom-finite. Given any diagram in $\F$ of the form \eqref{diag: start-pushout}, suppose that any object $X \in \F$ in the diagram admits an isomorphism $\eta_X \colon h_{\tilde X} \cong \left(h^X\right)^\ast$ of contravariant functors $\stab \F \to \Vect$, where $\tilde X \in \F$. Then any diagram in $\F$ of the form \eqref{diag: Serre-pushout} with $e_{X^{0, j}}\left(- \circ \ol{\alpha^{0,j}}\right) = e_{X^{0,j+1}}\left(\ol{\tilde \beta^{0,j}} \circ -\right)$ on $\Hom_{\stab \F}(X^{0, j+1}, \tilde X^{0,j})$ for all $j \in \{0, \dots, l-1\}$ meets the requirements stated in \Cref{thm: mMor-rep}. 
\end{prp}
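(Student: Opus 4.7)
The plan is to verify the hypotheses of \Cref{thm: mMor-rep}: existence of isomorphisms $\eta^{i,j} \colon h^{\M_0}_{\tilde X^{i,j}} \cong (h^{X^{i,j}}_{\M_0})^\ast$ for all $0 \le i \le j \le l$, together with the vertical compatibility for all $0 \le i < j \le l$. I would proceed simultaneously by induction on $i + j$, establishing at each pair $(i, j)$: (a) the isomorphism $\eta^{i,j}$ exists (defining $e_{X^{i,j}}$); (b) if $i \ge 1$, vertical compatibility between rows $i-1$ and $i$ at column $j$ holds; and (c) if $j > i$, horizontal compatibility between columns $j-1$ and $j$ at row $i$ holds. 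For $i = 0$, (a) and (c) are given by the lemma's hypothesis while (b) is vacuous.

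For the inductive step at $(i, j)$ with $i \ge 1$, the bicartesian square in \eqref{diag: start-pushout} at position $(i, j)$ (with $X^{i, j-1} = 0$ on the diagonal $j = i$) yields, by \Cref{prp: Buehler2.12}.\ref{prp: Buehler2.12-push} and \Cref{lem: ses-triangle}, a distinguished triangle
\[ X^{i-1, j-1} \to X^{i-1, j} \oplus X^{i, j-1} \to X^{i, j} \to \Sigma X^{i-1, j-1} \]
in $\M_0 = \stab \F$, whose first map is essentially $\binom{\ol{\alpha^{i-1,j-1}}}{-\ol{\beta^{i-1,j-1}}}$. The analogous bicartesian square in \eqref{diag: Serre-pushout} (with $\tilde X^{i, j-1}$ replaced by $I^{i-1} \cong 0$ in $\M_0$ on the diagonal) yields a distinguished triangle
\[ \tilde X^{i-1, j-1} \to \tilde X^{i-1, j} \oplus \tilde X^{i, j-1} \to \tilde X^{i, j} \to \Sigma \tilde X^{i-1, j-1} \]
with first map built from $\ol{\tilde \beta^{i-1, j-1}}$ and $\ol{\tilde \alpha^{i-1, j-1}}$. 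Applying $\Hom_{\M_0}(-, Y)^\ast$ to the first triangle and $\Hom_{\M_0}(Y, -)$ to the second produces long exact sequences in $\Vect$, natural in $Y \in \M_0$. By the inductive hypothesis (all invoked indices have strictly smaller sum), $\eta^{i-1, j-1}, \eta^{i-1, j}, \eta^{i, j-1}$ identify the first three positions of the two LES, while the compatibilities relating $\ol{\alpha^{i-1,j-1}}$ with $\ol{\tilde \beta^{i-1,j-1}}$ (established earlier as (c) at step $(i-1, j)$) and $\ol{\beta^{i-1,j-1}}$ with $\ol{\tilde \alpha^{i-1,j-1}}$ (established earlier as (b) at step $(i, j-1)$) ensure that the first connecting morphisms of the two LES coincide. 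A five-lemma argument, carried out naturally in $Y$, then produces $\eta^{i,j}$.

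Compatibilities (b) and (c) at step $(i, j)$ follow by comparing the second connecting morphisms of the two LES under this identification: the Serre-pushout side contributes $\ol{\tilde \alpha^{i-1, j}}$ and $\ol{\tilde \beta^{i, j-1}}$, while the dualized start-pushout LES contributes the morphisms induced by $\ol{\beta^{i-1, j}}$ and $\ol{\alpha^{i, j-1}}$. By \Cref{lem: rep-comp}, the uniqueness of morphisms between representing objects forces the claimed relations.

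The main obstacle will be the careful bookkeeping involved in the five-lemma step: verifying the naturality in $Y$ of the produced isomorphism and that the extracted morphisms match precisely, rather than only up to a non-unique isomorphism of triangles. This parallels the explicit Yoneda chase in step \ref{thm: BK-2.10-nat-trans} of the proof of \Cref{thm: BK-2.10}, where \Cref{lem: middle-non-unique} provides a (non-unique) extension that is subsequently checked to be a natural isomorphism.
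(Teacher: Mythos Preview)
Your overall inductive strategy—stepping through the staircase using the homotopy cartesian squares from \eqref{diag: start-pushout} and \eqref{diag: Serre-pushout}—matches the paper's. The issue lies in the inductive step, where you propose to build $\eta^{i,j}$ via a five-lemma argument on the two long exact sequences.

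The difficulty you flag is real, and the remedy you cite does not transfer. In the proof of \Cref{thm: BK-2.10}, the element $e$ produced by \Cref{lem: middle-non-unique} yields a natural transformation via Yoneda, but showing that this transformation makes the \emph{adjacent} square commute (step~\ref{thm: BK-2.10-res-iso}) relies on the semiorthogonal decomposition: one uses that $h^\U_p$ and $h^\V_q$ are isomorphisms, so every morphism from $\U$ (resp.\ $\V$) into $\tilde X$ factors through $\tilde U$ (resp.\ $\tilde V$). In your single-triangle situation there is no such factoring, and commutativity of the square between positions $2$ and $3$ at the Yoneda point $Y=\tilde X^{i,j}$ does not imply commutativity at $Y=\tilde X^{i-1,j}\oplus\tilde X^{i,j-1}$, which is where Yoneda for position $2$ requires it.

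The paper closes this gap by using the full hypothesis: $\left(h^{X^{i,j}}\right)^\ast$ is \emph{already} representable, say by $\hat X^{i,j}$ with element $\hat e$. \Cref{lem: rep-comp} then produces morphisms $\hat b\colon \tilde X^{i-1,j}\to\hat X^{i,j}$ and $\hat d\colon \tilde X^{i,j-1}\to\hat X^{i,j}$ satisfying the desired compatibilities by construction; {\cite[Prop.~3.3]{BK89}} shows that these fit into a distinguished triangle on the same base as the $\tilde X$-triangle, whence an isomorphism of objects $f\colon \tilde X^{i,j}\xrightarrow{\cong}\hat X^{i,j}$. Setting $e_{X^{i,j}}:=\hat e(f\circ -)$ gives both compatibilities for free and is automatically natural. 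This is packaged in \Cref{lem: square-rep}; you should invoke the a-priori representability of each $X^{i,j}$ rather than attempt to reconstruct it.
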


\begin{proof}
	Using the notation from \eqref{diag: start-pushout}, for each $i,j \in \{0,\dots,l\}$ with $i \leq j$, there is an isomorphism of functors $\hat \eta_{X^{i,j}} \colon h_{\hat X^{i,j}} \cong \left(h^{X^{i,j}}\right)^\ast$ defining $\hat e_{X^{i,j}} := e_{\hat \eta_{X^{i,j}}} \in \left(h^{X^{i,j}}_{\hat X^{i,j}}\right)^\ast$ , where $\hat X^{i,j} \in \F$, and morphisms $\hat \alpha^{i,j} \colon \hat X^{i,j} \to \hat X^{i+1,j}$ and $\hat \beta^{i,j} \colon \hat X^{i,j} \to \hat X^{i,j+1}$ in $\F$ such that
	\begin{itemize}
		\item $\hat e_{X^{i,j}}\left(- \circ \ol{\alpha^{i,j}}\right) = \hat e_{X^{i,j+1}}\left(\ol{\hat \beta^{i,j}} \circ -\right)$ on $\Hom_{\stab \F}(X^{i, j+1}, \hat X^{i,j})$ for all $0\leq i\leq j < l$, 
		\item $\hat e_{X^{i,j}}\left(- \circ \ol{\beta^{i,j}}\right) = \hat e_{X^{i+1,j}}\left(\ol{\hat \alpha^{i,j}} \circ -\right)$ on $\Hom_{\stab \F}(X^{i+1, j}, \hat X^{i,j})$ for all $0\leq i < j \leq l$.
	\end{itemize}
	For $j \in \{0, \dots, l\}$, set $\tilde X^{0,j} := \hat X^{0,j}$ and $\tilde \beta^{0,j} := \hat \beta^{0, j}$ if $j<l$ to construct \eqref{diag: Serre-pushout} by pushouts and a choice of admissible monics $\monic{\tilde \alpha^{j,j} \colon \tilde X^{j,j}}{I^j =: \tilde X^{j+1,j} \in \Inj(\F)}$ for $j \in \{0, \dots, l-1\}$. It remains to establish isomorphisms of functors $\eta_{X^{i,j}} \colon h_{\tilde X^{i,j}} \cong \left( h^{X^{i,j}} \right)^\ast$ such that 
	\begin{itemize}
		\item $e_{X^{i,j}}\left(- \circ \ol{\alpha^{i,j}}\right) = e_{X^{i,j+1}}\left(\ol{\tilde \beta^{i,j}} \circ -\right)$ on $\Hom_{\stab \F}(X^{i, j+1}, \tilde X^{i,j})$ for all $0\leq i\leq j < l$,
		
		\item $e_{X^{i,j}}\left(- \circ \ol{\beta^{i,j}}\right) = e_{X^{i+1,j}}\left(\ol{\tilde \alpha^{i,j}} \circ -\right)$ on $\Hom_{\stab \F}(X^{i+1, j}, \tilde X^{i,j})$ for all $0\leq i < j \leq l$.
	\end{itemize}
	We proceed inductively: Set $\eta_{X^{0,j}} := \hat \eta_{X^{0,j}}$ for $j \in \{0,\dots,l\}$. Then $e_{X^{0,j}} = \hat e_{X^{0,j}}$ and the required properties hold. To construct $\eta_{X^{i+1,j+1}}$ for $i,j \in \{0, \dots, l-1\}$ with $i \leq j$, suppose that $\eta_{X^{i',j'}}$ with the required properties is defined whenever $i' \leq i$ or $i'=i+1$ and $j' \leq j$. We may then assume that $\hat X^{i,j} = \tilde X^{i,j}$, $\hat X^{i,j+1} = \tilde X^{i,j+1}$, $\hat X^{i+1,j} = \tilde X^{i+1,j}$, $\ol{\hat \alpha^{i,j}} = \ol{\tilde \alpha^{i,j}}$, and $\ol{\hat \beta^{i,j}} = \ol{\tilde \beta^{i,j}}$. To complete the induction, apply \Cref{lem: square-rep} to the homotopy cartesian squares
	\[
		\begin{tikzcd}[sep={22.5mm,between origins}]
			X^{i,j} \ar[r, "\ol{\alpha^{i,j}}"] \ar[d, "\ol{\beta^{i,j}}"] \ar[rd, phantom, "\square"] & X^{i,j+1} \ar[d, "\ol{\beta^{i,j+1}}"] & \tilde X^{i,j} \ar[r, "\ol{\tilde \beta^{i,j}}"] \ar[d, "\ol{\tilde \alpha^{i,j}}"] \ar[rd, phantom, "\square"] & \tilde X^{i,j+1} \ar[d, "\ol{\hat \alpha^{i,j+1}}"] \\
			X^{i+1, j} \ar[r, "\ol{\alpha^{i+1,j}}"] & X^{i+1,j+1}, & \tilde X^{i+1, j} \ar[r, "\ol{\hat \beta^{i+1,j}}"] & \tilde X^{i+1,j+1}
		\end{tikzcd}
	\]
	in $\stab \F$, see \Cref{prp: Buehler2.12} and \Cref{lem: ses-triangle}. For $i=j$, note that both $X^{j+1,j}$ and $\tilde X^{j+1,j}$, and the associated isomorphism of functors $h_{\tilde X^{j+1,j}} \cong \left(h^{X^{j+1,j}}\right)^\ast$ are zero.
\end{proof}

\begin{lem} \label{lem: square-rep}
	In a triangulated category $\T$, hom-finite over a field, consider two homotopy cartesian squares:
	\[
		\begin{tikzcd}[sep={17.5mm,between origins}]
			A \ar[r, "a"] \ar[d, "c"] \ar[rd, phantom, "\square"] & B \ar[d, "b"] & \tilde A \ar[r, "\tilde a"] \ar[d, "\tilde c"] \ar[rd, phantom, "\square"] & \tilde B \ar[d, "\tilde b"] \\
			C \ar[r, "d"] & D & \tilde C \ar[r, "\tilde d"] & \tilde D
		\end{tikzcd}
	\]
	Suppose that there are isomorphisms of functors $\eta_A \colon h_{\tilde A} \cong \left(h^A\right)^\ast$, $\eta_B \colon h_{\tilde B} \cong \left(h^B\right)^\ast$, $\eta_C \colon h_{\tilde C} \cong \left(h^C\right)^\ast$, and $\hat \eta_D \colon h_{\hat D} \cong \left(h^D\right)^\ast$, where $\hat D \in \T$, such that
	\begin{itemize}
		\item $e_A(-\circ a) = e_B(\tilde a \circ -)$ on $\Hom(B, \tilde A)$,
		\item $e_A(-\circ c) = e_C(\tilde c \circ -)$ on $\Hom(C, \tilde A)$.
	\end{itemize}
	Then there is an isomorphism of functors $\eta_D \colon h_{\tilde D} \cong \left(h^D\right)^\ast$ such that
	\begin{itemize}
		\item $e_B(-\circ b) = e_D(\tilde b \circ -)$ on $\Hom(D, \tilde B)$,
		\item $e_C(-\circ d) = e_D(\tilde d \circ -)$ on $\Hom(D, \tilde C)$.
	\end{itemize}
\end{lem}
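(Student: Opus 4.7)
The two homotopy cartesian squares yield distinguished triangles
\[A \xrightarrow{\binom{a}{-c}} B\oplus C \xrightarrow{(b,d)} D \xrightarrow{\partial} \Sigma A \quad\text{and}\quad \tilde A \xrightarrow{\binom{\tilde a}{-\tilde c}} \tilde B\oplus\tilde C \xrightarrow{(\tilde b,\tilde d)} \tilde D \xrightarrow{\tilde\partial} \Sigma\tilde A\]
in $\T$, see {\cite[Lem.~1.4.4]{Nee01}}. By the Yoneda lemma, the hypothesis $e_A(-\circ a) = e_B(\tilde a\circ -)$ on $\Hom(B,\tilde A)$ is equivalent to the commutativity of the square of natural transformations with vertices $h_{\tilde A}, h_{\tilde B}, (h^A)^\ast, (h^B)^\ast$, horizontal arrows $h_{\tilde a}$ and $(h^a)^\ast$, and vertical arrows $\eta_A$ and $\eta_B$; the analogous square commutes for $c,\tilde c$.

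The plan is to construct a specific morphism $\phi\colon\tilde D \to \hat D$ and then set $\eta_D := \hat\eta_D\circ h_\phi$. Define the natural transformation
\[\xi := \hat\eta_D^{-1}\circ\bigl((h^b)^\ast + (h^d)^\ast\bigr)\circ(\eta_B\oplus\eta_C)\colon h_{\tilde B} \oplus h_{\tilde C} \to h_{\hat D}.\]
By Yoneda, $\xi = h_\beta + h_\delta$ for uniquely determined $\beta\colon\tilde B\to\hat D$ and $\delta\colon\tilde C\to\hat D$. The commutative squares above combined with the identity $(h^b)^\ast\circ(h^a)^\ast = (h^d)^\ast\circ(h^c)^\ast$, which follows from $ba = dc$ in the first homotopy cartesian square, give $\xi\circ\binom{h_{\tilde a}}{-h_{\tilde c}} = 0$ and hence $\beta\tilde a = \delta\tilde c$. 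The second distinguished triangle then supplies a lift $\phi\colon\tilde D\to\hat D$ with $\phi\tilde b = \beta$ and $\phi\tilde d = \delta$.

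The compatibility with $b$ and $d$ is then immediate: for $g\in\Hom(D,\tilde B)$,
\[e_D(\tilde b\circ g) = \hat e_D(\phi\tilde b\circ g) = \hat e_D(\beta\circ g) = e_B(g\circ b),\]
the last equality obtained by unwinding the definition of $\xi$ at the test object $D$ on input $(g,0)$ and evaluating at $r = \id_D$; the case for $d,\tilde d$ is symmetric. To see that $\eta_D$ is an isomorphism of functors, it suffices to prove that $\phi$ is an isomorphism in $\T$; this I would do by applying the five lemma at each test object $X$ to the morphism of long exact sequences obtained from $\Hom(X,-)$ on the second triangle and $\Hom(-,X)^\ast$ on the first, with surrounding vertical isomorphisms $\eta_A, \eta_B\oplus\eta_C$ and their $\Sigma$-shifted analogues $\eta_{\Sigma A}, \eta_{\Sigma B}\oplus\eta_{\Sigma C}$, canonically induced from $\eta_A, \eta_B, \eta_C$ by reindexing the test object through the shift equivalence. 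The principal technical hurdle is the commutativity of the square involving the connecting morphisms $\partial$ and $\tilde\partial$, which is not automatic from $\phi\tilde b = \beta, \phi\tilde d = \delta$; since the admissible $\phi$ form a torsor over $\Hom(\Sigma\tilde A,\hat D)$, one may correct $\phi$ by an appropriate element of this torsor to achieve this, or alternatively invoke an octahedron argument rotating the two triangles compatibly.
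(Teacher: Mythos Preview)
Up to the construction of $\phi$ your argument coincides with the paper's: your $\beta,\delta$ are its $\hat b,\hat d$ (obtained via \Cref{lem: rep-comp}), the relation $\beta\tilde a=\delta\tilde c$ holds, and the definition $\eta_D:=\hat\eta_D\circ h_\phi$ together with the verification of the two compatibility conditions is exactly what the paper does. The gap is in showing that $\phi$ is an isomorphism, which you acknowledge but do not resolve.

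The paper does not run your five-lemma argument on natural transformations. Instead it invokes \cite[Prop.~3.3]{BK89}: applied to the distinguished triangle $A\to B\oplus C\to D$ of the first homotopy cartesian square and the representing data $\eta_A,\eta_B\oplus\eta_C,\hat\eta_D$, that result yields a \emph{distinguished} triangle
\[
\tilde A \xrightarrow{\binom{\tilde a}{-\tilde c}} \tilde B\oplus\tilde C \xrightarrow{(\hat b,\,\hat d)} \hat D,
\]
sharing its first morphism with the triangle of the second homotopy cartesian square. The TR3 fill-in $f\colon\tilde D\to\hat D$ extending the identities on $\tilde A$ and $\tilde B\oplus\tilde C$ is then automatically an isomorphism. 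Your route cannot bypass this input: an arbitrary lift $\phi$ of $(\beta,\delta)$ through the cone $\tilde D$ need not be invertible, and your torsor-correction idea---finding the right lift---requires knowing that one exists which also respects the connecting morphisms, which is precisely the content of \cite[Prop.~3.3]{BK89}. The octahedron suggestion is too vague to evaluate.
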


\begin{proof}
	Set $\hat e_D := e_{\hat \eta_D} \in \left(h^D_{\hat D}\right)^\ast$ and consider the morphisms $\hat b \colon \tilde B \to \hat D$ and $\hat d \colon \tilde C \to \hat D$ such that $e_B(-\circ b) = \hat e_D(\hat b \circ -)$ on $\Hom(D, \tilde B)$ and $e_C(-\circ d) = \hat e_D(\hat d \circ -)$ on $\Hom(D, \tilde C)$, see \Cref{lem: rep-comp}. The second homotopy cartesian square and {\cite[Prop.~3.3]{BK89}} yield an isomorphism
	\[
		\begin{tikzcd}[sep={22.5mm,between origins}, ampersand replacement=\&]
			\tilde A \ar{r}{\begin{pmatrix} \tilde a \\ - \tilde c \end{pmatrix}} \ar[d, equal] \& \tilde B \oplus \tilde C \ar{r}{\begin{pmatrix} \tilde b & \tilde d \end{pmatrix}} \ar[d, equal] \& \tilde D \ar[d, "f", "\cong"', dashed] \\
			\tilde A \ar{r}{\begin{pmatrix} \tilde a \\ - \tilde c \end{pmatrix}} \& \tilde B \oplus \tilde C \ar{r}{\begin{pmatrix} \hat b & \hat d \end{pmatrix}} \& \hat D
		\end{tikzcd}
	\]
	of distinguished triangles in $\T$. Define $\eta_D$ as the composition
	\[
		\begin{tikzcd}[sep={17.5mm,between origins}]
			h_{\tilde D} \ar[rr, dashed, "\eta_D", "\cong"'] \ar[rd, "h_f"', "\cong"] && \left(h^D\right)^\ast & \id_{\tilde D} \ar[rr, <->] \ar[rd, <->] && \hat e_D(f \circ -) \mathrlap{\; = e_D} \\
			& h_{\hat D}, \ar[ru, "\hat \eta_ D"', "\cong"] &&& f. \ar[ru, <->]
		\end{tikzcd}
	\]
	We have
	\begin{itemize}
		\item $e_B(-\circ b) = \hat e_D(\hat b \circ -) = \hat e_D(f \tilde b \circ -) = e_D(\tilde b \circ -)$ on $\Hom(D, \tilde B)$,
		\item $e_C(-\circ d) = \hat e_D(\hat d \circ -) = \hat e_D(f \tilde d \circ -) = e_D(\tilde d \circ -)$ on $\Hom(D, \tilde C)$,
	\end{itemize}
	as desired.
\end{proof}


\printbibliography

\end{document}